\documentclass[10pt,reqno,a4paper]{article}
\usepackage{amsmath,amsfonts,amscd,amsthm,amssymb}
\usepackage{graphicx}
\usepackage{subcaption}
\usepackage[dvipsnames]{xcolor}
\usepackage{cite}
\usepackage{tikz}
\definecolor{lightblue}{rgb}{0.22,0.45,0.70}% for references
\usepackage[colorlinks=true,breaklinks=true]{hyperref}\hypersetup{urlcolor=blue, citecolor=lightblue}
\usepackage{scalerel,stackengine}
\usepackage{bm}
\usepackage{enumerate}
\usepackage{multicol}
\allowdisplaybreaks

\newcommand{\R}{\mathbb{R}}                                                              
\newcommand{\pt}{\partial_t}

\newcommand{\bw}{{\bm w}}

\newcommand{\bv}{{\bm v}}

\newcommand{\bS}{{\bm S}}
\newcommand{\bV}{{\bm V}}

\newcommand{\bH}{{\bm H}}
\newcommand{\bL}{{\bm L}}

\newcommand{\f}{{\bm f}}

\newcommand{\bu}{{\bm u}}

\newcommand{\hbu}{\widehat{\bu}}

\newcommand{\bn}{{\bm n}}

\newcommand{\tc}{\tilde{c}}
\newcommand{\Dt}{\Delta t}

\newcommand{\bphi}{\mbox{\boldmath $\phi$}}
\newcommand{\bpsi}{\mbox{\boldmath $\psi$}}
\newcommand{\bxi}{\mbox{\boldmath $\xi$}}

\newcommand{\bta}{\mbox{\boldmath $\eta$}}
\newcommand{\hbta}{\hat{\bta}}

\newcommand{\e}{{\bm e}}
\newcommand{\eu}{{\bm e}_u}
\newcommand{\heu}{\hat{\bm e}_u}
\newcommand{\eci}{e_{c_i}}
\newcommand{\eco}{e_{c_1}}
\newcommand{\ect}{e_{c_2}}
\newcommand{\ephi}{e_\phi}
\newcommand{\ep}{e_p}

\newcommand{\Eh}{\mathcal{E}_h}
\newcommand{\dx}{\mathrm{\,d}x}
\newcommand{\ds}{\mathrm{\,d}s}
\newcommand{\dt}{\mathrm{\,d}t}

\newcommand{\cA}{\mathcal{A}}
\newcommand{\cN}{\mathcal{N}}
\newcommand{\cD}{\mathcal{D}}
\newcommand{\cG}{\mathcal{G}}
\newcommand{\cT}{\mathcal{T}}

\stackMath 
\newcommand\reallywidehat[1]{%
\savestack{\tmpbox}{\stretchto{%
		\scaleto{%
			\scalerel*[\widthof{\ensuremath{#1}}]{\kern-.6pt\bigwedge\kern-.6pt}%
			{\rule[-\textheight/2]{1ex}{\textheight}}%WIDTH-LIMITED BIG WEDGE
		}{\textheight}% 
	}{0.5ex}}%
\stackon[1pt]{#1}{\tmpbox}%
}
\newcommand{\vertiiidg}[1]{{\left\vert\kern-0.25ex\left\vert\kern-0.25ex\left\vert #1 
	\right\vert\kern-0.25ex\right\vert\kern-0.25ex\right\vert}}
\newcommand{\vertiii}[1]{{\vert\kern-0.25ex\vert #1 
\vert\kern-0.25ex\vert}_{dG}}
\newcommand{\jump}[1]{[\kern-0.85ex[ #1 ]\kern-0.85ex]} %
\newcommand{\avr}[1]{\{\kern-0.85ex\{ #1 \}\kern-0.85ex\}}	
\DeclareSymbolFont{myletters}{OML}{ztmcm}{m}{it}
\DeclareMathSymbol{\uplambda}{\mathord}{myletters}{"15}

\numberwithin{equation}{section}
\numberwithin{figure}{section}
\numberwithin{table}{section}

\begin{document}

\newcommand{\se}{\setcounter{equation}{0}}
\def\theequation{\thesection.\arabic{equation}}

\newtheorem{theorem}{Theorem}[section]
\newtheorem{lemma}{Lemma}[section]
\newtheorem{remark}{Remark}[section]
\newtheorem{example}{Example}[section]
\newtheorem{proposition}{Proposition}[section]

\def\cydot{\leavevmode\raise.4ex\hbox{.}}

\title
{ \large\bf Discontinuous Galerkin IMEX Pressure Correction Scheme for the Poisson-Nernst-Planck-Navier-Stokes Equations}
%{ \large\bf Optimal Error Estimates of  IMEX Discontinuous Galerkin Scheme for the Poisson-Nernst-Planck-Navier-Stokes Equations}

\author{Bikram Bir\thanks{Department of Mathematics, 
		The Assam Royal Global University, Guwahati-781035, India. Email: bikram@math.iitb.ac.in, bbir@rgi.edu.in}
	~~and
	Amiya K. Pani\thanks{Department of Mathematics, 
		BITS-Pilani, KK Birla Goa Campus, NH 17 B, Zuarinagar, Goa-403726, India. Email: amiyap@goa.bits-pilani.ac.in,  akp@math.iitb.ac.in}}

\date{}
\maketitle

\begin{abstract}
	Based on a discontinuous Galerkin method in the spatial directions and an improved implicit-explicit pressure-correction scheme in the temporal direction, this paper discusses a fully discrete scheme for the Poisson-Nernst-Planck-Navier-Stokes equations. Optimal error estimates are derived in $L^2$ and in the energy norms for the concentrations of positive and negative ions, the electrostatic potential, the fluid velocity, and the $L^2$ norm of the fluid pressure. The discrete mass conservation properties of both ions are established. Finally, numerical simulations are performed, whose results confirm our theoretical findings.
\end{abstract}

\vspace{1em} 
\noindent
{\bf Key Words}. Poisson-Nernst-Planck-Navier-Stokes system, discontinuous Galerkin finite element method, IMEX-pressure correction scheme, optimal $L^2$ error estimates.

\section{Introduction}
The coupled system of the Poisson-Nernst-Planck and Navier-Stokes (PNP-NS) equations has been studied in the context of the dynamics of electrically charged fluids. It describes the motion and interactive behaviour of charged colloidal particles, ions, or molecules under the influence of electric fields and surrounding fluids. These phenomena have widely appeared in many engineering and biological processes, such as battery and fuel cell technology, microfluidic devices or systems, electrokinetic flows in electrophysiology, biomedical lab-on-a-chip devices, lens micro-circulation, capillary electrophoresis, manipulation of biological cells and vesicles, self-assembly or separation of colloidal particles, desalination of water, etc., see \cite{BTA04, LW20, PLZJH24} and references, therein.

The PNP-NS system consists of three parts: the Poisson equation for the internal electric potential, the Planck-Nernst equations for the concentrations of charged particles, and the incompressible Navier-Stokes equations for the motion of the fluid field under the influence of the internal and external electric fields. The mathematical model gives rise to  the following coupled PNP-NS system in a space-time domain $\Omega\times (0, T],~T>0$ for $i=\{1,2\}$ 
\begin{align}
	& - \mu\Delta \phi  = c_1-c_2, & \mbox {in}~& \Omega\times(0,T], \label{eqphi}\\
	&\pt c_i - \kappa_i \Delta c_i + (\bu\cdot\nabla) c_i - \beta_i \nabla\cdot( c_i\nabla \phi) = 0, &\mbox {in}~& \Omega\times(0,T], \label{eqc}\\
	&\pt\bu - \nu\Delta\bu + (\bu\cdot\nabla)\bu + \nabla p =  -(c_1-c_2)\nabla\phi, &\mbox {in}~& \Omega\times(0,T], \label{equ}\\
	&\nabla \cdot \bu=0, & \mbox {in}~& \Omega\times(0,T] \label{eqdiv}.
\end{align}
Here,  $\Omega$ is a convex bounded polygonal domain in $\mathbb{R}^2$ with boundary $\partial \Omega$. $\pt$ represents the partial derivative with respect to time and $c_i(x,t),~i=\{1,2\}$ are the concentrations of positive and negative charged particles, respectively. $\phi(x,t)$ denotes the quasi-electrostatic potential generated by the heterogeneous distribution of positively and negatively charged particles, and $\bu(x,t)$ and $p(x,t)$ denote the fluid velocity and pressure, respectively. The value of the constant $\beta_1=1$ and $\beta_2=-1$. The constants $\kappa_i>0, i=\{1,2\}$ are the diffusion/mobility coefficients, $\mu>0$ is the dielectric coefficient and $\nu>0$ is the viscosity of the fluid. 

This system is appended by the no-slip boundary condition for fluid velocity $\bu$ and homogeneous Neumann boundary conditions for the concentrations of charged particles $c_1, c_2$ and electrostatic potential $\phi$:
\begin{equation}
	\nabla c_1\cdot \bn = \nabla c_2\cdot \bn = \nabla \phi\cdot \bn = 0, ~~\bu = 0,\quad\mbox {on}~\partial\Omega\times (0,T], \label{eqbd}
\end{equation}
where $\bn$ is the outward normal to $\partial\Omega$ and the following initial conditions:
\begin{align}
	c_1(\cdot,0) =  c_{10},~~ c_2(\cdot,0) = c_{20},~~\bu(\cdot,0) = \bu_0,\quad\mbox {in}~\Omega. \label{eqint}
\end{align}
Here, the initial data $ c_{10}, c_{20}, \bu_0$ are given functions in their respective domains of definition. 

It is observed that for non-negative initial data $c_{10}\ge 0, c_{20}\ge 0$ a.e. in $\Omega$, the solutions of \eqref{eqc} are non-negative a.e on $\Omega\times (0,T]$ \cite[Lemma 1, pp. 1003]{Sch09}. In particular, if the initial data $ c_{10}, c_{20}$ are strictly positive, then the solution of \eqref{eqc} is also strictly positive. 

To show the well-posedness of the PNP-NS system with homogeneous boundary conditions \eqref{eqbd}, in particular, the Poisson equation with pure Neumann boundary condition, one needs the following compatibility (or initial electroneutrality) condition:
\begin{equation}\label{inmass}
	\int_\Omega \left(c_1(x,t) - c_2(x,t)\right) \dx = 0 \quad \text{i.e} \int_\Omega c_1(x,t) \dx = \int_\Omega c_2(x,t) \dx.
\end{equation}
From \eqref{eqc} and \eqref{inmass}, it is observed that the mass is conserved for both the concentrations of charged ions, and they are equal \cite{Sch09}, that is,
\begin{equation*}
	\int_\Omega c_1(x,t) \dx = \int_\Omega c_{10}(x) \dx = \int_\Omega c_2(x,t) \dx = \int_\Omega c_{20}(x) \dx.
\end{equation*}
The system \eqref{eqphi}-\eqref{eqbd} satisfy the following electric potential energy dissipation \cite{HC24}
\begin{equation} \label{electricpotential}
	\frac{d}{dt}E_{elec} (\bu, \phi) \le -\int_\Omega \left(\nu|\nabla\bu|^2 + \frac{1}{\mu}\left|\sqrt{\kappa_1}c_1-\sqrt{\kappa_2}c_2\right|^2 + (c_1+c_2)|\nabla\phi|^2\right) \dx \quad \forall t>0,
\end{equation}
where 
\begin{equation}
	E_{elec}(\bu, \phi) = \int_\Omega \left(\frac{1}{2}|\bu|^2 + \frac{\mu}{2}|\nabla\phi|^2\right) \dx.
\end{equation}
Note that the equality of \eqref{electricpotential} holds when $\kappa_1=\kappa_2$. 
If $c_{10}>0, c_{20}>0$ a.e. in $\Omega$, the solution of the system \eqref{eqphi}-\eqref{eqbd} satisfy the following total free energy dissipation \cite{HC24}
\begin{equation}\label{total_energy}
	\frac{d}{dt}E_{total} (c_1,c_2,\bu, \phi) = -\int_\Omega \left(\nu|\nabla\bu|^2 + c_1|\nabla(\kappa_1\log c_1+\phi)|^2 + c_2|\nabla(\kappa_2\log c_2 - \phi)|^2\right) \dx \quad \forall t>0,
\end{equation}
where 
\begin{equation}
	E_{total} (c_1,c_2,\bu, \phi) = \int_\Omega \left(\frac{1}{2}|\bu|^2 + \frac{\mu}{2}|\nabla\phi|^2 + \kappa_1c_1(\log c_1-1)+\kappa_2c_2(\log c_2-1)\right) \dx.
\end{equation}
Note that the electrostatic potential $\phi$ and the fluid pressure $p$ are unique up to a constant. For uniqueness, it is then assumed the zero mean solutions $\phi$ and $p$, that is,   $\int_\Omega \phi(x,t) \dx = 0$ and $\int_\Omega p(x,t) \dx = 0$, respectively.

This model has been studied by several researchers from different perspectives. From a theoretical point of view, several works have been developed on existence and uniqueness, regularity results, and asymptotic behaviour, see \cite{Jer02, Ryh09, Sch09, BFS14, ZY15, LW20, ZL24} and references therein. 
The local existence and uniqueness results for the weak solution of smooth initial data with nonnegativity-preserving properties for the ion concentrations have been established by Jerome \cite{Jer02} using a semigroup-theoretic approach.
In \cite{Sch09}, the global existence of weak solutions in two and three dimensions and uniqueness in two dimensions have been proved for a bounded domain with blocking boundary conditions on the ion concentrations and the homogeneous Neumann boundary condition for the electric potential with no-slip boundary condition for the fluid velocity. In particular, it is shown that for non-negative initial data $c_{10}\ge 0, c_{20}\ge 0$, the solutions of \eqref{eqc} are non-negative a.e. on $\Omega\times (0, T]$ \cite[Lemma 1, pp. 1003]{Sch09}.
In \cite{Ryh09}, the author established the well-posedness and regularity of the solution for nonsmooth initial data in two dimensions and, in three dimensions, under smallness assumptions on the data.
Existence and uniqueness of the local strong solution for $\R^d, d\ge 2$ and the existence of a unique global strong solution and exponential convergence have been shown for $d=2$ in \cite{BFS14}. 
Furthermore, Constantin and their group have discussed the existence, uniqueness, long-time behaviour and regularity results of a global solution for $d=2, 3$ in a series of papers \cite{CIL22, Lee22} and references, therein.  Recently, the global existence of a unique large solution to the three-dimensional  PNP-NS system has been proved by Zhao and Li \cite{ZL24}.

From a numerical point of view, there are only a few results available in the literature. For example, Prohl and Schmuck \cite{PS10} have first designed two fully discrete, nonlinear, and first-order accurate-in-time schemes based on the finite element method.
Their scheme is an energy-stable scheme with and without unconditional preservation of the ions' non-negativity, and convergence analysis is discussed for both schemes.
Subsequently, Liu and Xu \cite{LX17} have developed several first- and second-order non-negativity-preserving schemes based on a time-stepping spectral method and discussed their stability. 
A mixed finite element method for the PNP-Stokes system has been analyzed by He and Sun \cite{HS18}, who obtained optimal error bounds. On the computational side, at each time step, they must solve a nonlinear algebraic system. 
Later, they have extended this work for modified PNP-NSE \cite{HS21}, where the Poisson equation in the PNP system is replaced by a fourth-order elliptic equation and have obtained optimal convergence error bounds in energy norm.
Recently, Zhou and Xu \cite{ZX23} have applied an efficient first- and second-order time-stepping scheme based on the SAV approach to the PNP-NS system, and they have derived stability, discrete energy decay, discrete positivity preservation, and discrete mass conservation properties. It is to be noted that no error analysis has been carried out here.
In \cite{QW25}, Qin and Wang have discussed a second-order accurate scheme for both time and space based on a cell finite difference scheme and have shown that it preserves positivity, is uniquely solvable, is total-energy stable, and converges at an optimal rate under the assumption of high regularity of the solutions. 
Furthermore, in \cite{LL24}, a finite element method along with the backward Euler scheme has been applied to the PNP-NS system, and an error analysis has been derived, but it is sub-optimal in the $\ell^\infty(L^2)$-norm. 
Pan \textit{et. al.} \cite{PLZJH24} have developed a linear, second-order-accurate, positivity-preserving, and unconditionally energy-stable scheme based on a staggered-grid finite-difference method in the spatial direction. 
Subsequently, He and Chen \cite{HC25} have implemented a non-negativity-preserving, mass-conserving SAV-type pressure correction scheme and have derived an optimal convergence rate in the temporal direction only.
Recently, Li \textit{et. al.} \cite{LZW26} have developed a new linear, second-order accurate, fully-decoupled and unconditionally energy stable finite element method scheme based on two-step backward differentiation formula (BDF2) and rotational pressure-projection method. A Banach space-based mixed finite element method has been applied to the system in \cite{CGHRS24}, and the associated convergence analysis has been established.

The present work develops an efficient decoupled linearized scheme based on the discontinuous Galerkin method in the spatial direction. The discontinuous Galerkin finite element method, initially introduced in the 1970s  for the elliptic, parabolic and hyperbolic problems by Douglas and Dupont \cite{DD75}, Arnold \cite{Arn82}, and Reed and Hill \cite{RH73}, is becoming increasingly popular in the numerical approximation of a wide variety of mathematical problems. Its broad use can be attributed to its adaptability and noteworthy features, such as arbitrary-order precision, local mesh adaptivity, local mass conservation, and a comparatively simple implementation compared to other approaches, such as the finite volume technique.  The application of the discontinuous Galerkin technique to problems involving the Stokes and Navier-Stokes equations has been the subject of a significant amount of research, with noteworthy references found in  \cite{GRW05domain, GRW05spliting, MLR23, BGR23} and references, therein.  On the other hand, an implicit-explicit (IMEX) pressure correction scheme has been applied in the temporal direction in this paper. It is well known that an IMEX scheme helps linearize the nonlinear terms, and the pressure-correction scheme helps decouple the velocity and pressure of the Navier-Stokes part. The main idea of the pressure correction scheme is that we first find an intermediate solution by solving a liniearized convection-diffusion type problem, then we use that intermediate solution to find the pressure by solving a Poisson-type problem, and finally we update the velocity using the pressure and the intermediate solution. 
For this reason, the use of the pressure correction scheme in the context of the Navier-Stokes system and coupled Navier-Stokes system \cite{LSL22, WZWZ23, HC25} has received growing interest nowadays.

The main contributions of this paper are:
\begin{itemize}
    \item An efficient decoupled linearized scheme based on the discontinuous Galerkin method in the spatial direction and the  Euler incremental pressure correction known as pressure projection scheme is employed to discretize in the temporal direction is developed an analysed.  
    \item The discrete mass conservation property is established. 
	\item Optimal error bounds for the fully discrete solution of order $\mathcal{O}(h^{k+1}+\Delta t)$ in $L^2$-norm and order $\mathcal{O}(h^{k}+\Delta t)$ in energy-norm as well as $L^2$-norm for the fluid pressure are derived using a modified version of elliptic projection, Stokes projection, duality argument and the discrete Gronwall's type Lemma, where $h$ and $\Delta t$ are the spatial and temporal discretizing parameter and $k$ denotes the degree of polynomial for spatial approximations.
	\item Numerical simulations are performed to prove the accuracy of the fully discrete scheme, as well as to show mass and positivity preserving properties of the fully discrete ions and the energy dissipation of the proposed fully discrete scheme.
\end{itemize}

The rest of the paper is organized as follows: In section \ref{sec:pre}, we define the functional spaces, broken Sobolev spaces and their respective norms. We also define bilinear and trilinear forms, as well as standard inequalities and regularity assumptions. In Section \ref{sec:ds}, we construct discrete schemes based on the pressure-correction method along with the discontinuous Galerkin method. A few discrete inequalities and the properties of bilinear and trilinear forms are discussed here. Lastly, we define some projection operators and their approximation properties. Section \ref{Sec:ee} discusses on {\it a priori} error analysis for the fully discrete scheme. 
Finally, in Section \ref{Sec:numerical}, a few numerical results have been presented to test the accuracy of the schemes and support the discrete preserving properties.

Throughout the paper, we will use $C$ as a generic constant that does not depend on the discretizing parameter $h$ and $\Delta t$ but may depend on the given data. Let $a\lesssim b$ denotes $a\le Cb$.

\section{Preliminaries} \label{sec:pre}
%\se

We begin this section by introducing some standard functional spaces.
For a non-negative integer $m$ and $p$ with $1\le p\le \infty$, we denote by $W^{m,p}(\Omega)$ and $L^p(\Omega)$ the usual Sobolev spaces and the Lebesgue spaces with associated norms $\|\cdot\|_{W^{m,p}}$ and $\|\cdot\|_{L^p}$, respectively. If $p=2$, $W^{m,2}(\Omega)=H^m(\Omega)$ is the Hilbert spaces equipped with norm $\|\cdot\|_{H^m}$. We define $(\cdot,\cdot)$ as the $L^2$-inner product equipped with induced norm $\|\cdot\|.$
Let $H^m/\mathbb{R}$ be the quotient space of equivalent classes of functions in $H^m(\Omega)$ differ by constant with norm $\|\phi\|_{H^m/\mathbb{R}}=\inf_{c\in \mathbb{R}}\|\phi+c\|_{H^m}$. When $m=0$, it is denoted by $L_0^2(\Omega)$, that is, $L_0^2(\Omega) = \left\{\psi\in L^2(\Omega): \int_\Omega \psi(s) \rm{d}s = 0\right\}$. Further, $H_0^1(\Omega)$ is defined as a subspace of $H^1(\Omega)$ whose elements vanish on $\partial\Omega$ in the sense of trace. For simplicity, we denote $H^m(\Omega)$ as $H^m$ and its norm by $\|\cdot\|_{m}$. 
For our subsequent analysis, we denote the vector-valued function spaces by boldface letters such as
\begin{align*}
	\bH^m=[H^m(\Omega)]^2, \quad \bH_0^1 = [H_0^1(\Omega)]^2, \quad\mbox{and}\quad\bL^2 = [L^2(\Omega)]^2.
\end{align*}

\subsection{dG Formulation}
To motivate dG formulation, we consider a shape-regular family of triangulations $\Eh$ of $\bar{\Omega}$ \cite{Cia78}, consisting of triangles or quadrilaterals of maximum diameter $h$. Let $h_E$ denote the diameter of a triangle $E$ and $\pi_E$ the radius of the largest ball inscribed in $E$.  Then, the shape regular triangulation means there exists a constant $\varrho>0$ (mesh regularity parameter) such that
\begin{equation}
	\varrho h_E \leq \pi_E \quad \forall~ E\in \Eh.\label{regular}
\end{equation}
We denote by $\Gamma_h^0$ the set of all interior edges of $\Eh$ and set $\Gamma_h = \Gamma_h^0\cup\partial\Omega$. With each edge $e$, we associate a unit normal vector $\bn_e$. If $e$ is on the boundary $\partial\Omega$, then $\bn_e$ is taken to be the unit outward vector normal to $\partial\Omega$. Let $e$ be an edge shared by two elements $E_i$ and $E_j$ of $\Eh$; we associate with $e$, once and for all, a unit normal vector $\bn_e$ directed from $E_i$ to $E_j$. We define the jump $\jump{\psi}$ and the average $\avr{\psi}$ of a function $\psi$ on $e$ by
\[ \jump{\psi} = (\psi|_{E_i})-(\psi|_{E_j}), \quad \avr{\psi}=\frac{1}{2}\left((\psi|_{E_i})+(\psi|_{E_j})\right).\] 
If $e$ is adjacent to $\partial\Omega$, then the jump and the average of $\psi$ on $e$ coincide with the value of $\psi$ on $e$. Similar definitions apply to vector-valued functions.

\noindent
For our subsequent analysis, define the following discontinuous spaces \cite{GRW05domain}(broken Sobolev spaces):
\begin{align*}
	X^0 &= \left\{ \chi\in L_0^2(\Omega) : \chi|_E \in W^{2, \frac{4}{3}}(E)~~\forall~ E\in\Eh \right\},\\
	\bV &= \left\{ \bv\in \bL^2 : \bv|_E \in \left(W^{2, \frac{4}{3}}(E)\right)^2~~\forall~ E\in\Eh \right\},\\
	M &= \left\{ q\in L_0^2(\Omega) : q|_E \in W^{1, \frac{4}{3}}(E)~~\forall~ E\in\Eh \right\}.
\end{align*} 
For $\chi\in X_0$ or $M$, define the norm
\begin{align*}
	\vertiii{\chi} &= \left(\sum_{E\in \Eh} \|\nabla\chi\|_{L^2(E)}^{2} + \sum_{e\in \Gamma_h} \frac{\sigma_e}{h_e} \|\jump{\chi}\|_{L^2(e)}^{2}\right)^\frac{1}{2},
\end{align*}
and for $\bv\in \bV$, set
\begin{align*}
	\vertiii{\bv} &= \left(\sum_{E\in \Eh} \|\nabla\bv\|_{L^2(E)}^{2} + \sum_{e\in \Gamma_h} \frac{\sigma_e}{h_e} \|\jump{\bv}\|_{L^2(e)}^{2}\right)^\frac{1}{2},
\end{align*}
where $h_e = |e|: $ length of edge and $\sigma_e >0$ is the penalty parameter. 

Moving towards the dG method, we define the bilinear forms: $\cA_1: Z\times Z\to\R$,  $\cA_2:\bV\times\bV\to\R$ and $\cD:\bV\times M\to\R$ as
\begin{align}
	\cA_1(\psi,\chi) = & \sum_{E\in \Eh} \int_E \nabla\psi \cdot \nabla\chi \dx - \sum_{e\in\Gamma_h^0} \int_e \avr{\nabla\psi}\cdot\bn_e\jump{\chi}\ds \nonumber\\
	&\qquad\qquad - \sum_{e\in\Gamma_h^0} \int_e \avr{\nabla\chi}\cdot\bn_e \jump{\psi} \ds + \sum_{e\in\Gamma_h^0} \frac{\sigma_e}{h_e}\int_e \jump{\psi}\jump{\chi}\ds\quad\forall~ \psi, \chi \in Z, \label{bia1}\\
	\cA_2(\bv,\bw) = & \sum_{E\in \Eh} \int_E \nabla\bv : \nabla\bw \dx - \sum_{e\in\Gamma_h} \int_e \avr{\nabla\bv}\bn_e\cdot\jump{\bw}\ds \nonumber\\
	&\quad - \sum_{e\in\Gamma_h} \int_e \avr{\nabla\bw}\bn_e\cdot\jump{\bv} \ds + \sum_{e\in\Gamma_h} \frac{\sigma_e}{h_e}\int_e \jump{\bv}\cdot\jump{\bw}\ds \quad\forall~ \bv, \bw \in \bV, \label{bia2} \\
	\cD(\bv,q) = & \sum_{E\in\Eh} \int_E q \nabla\cdot \bv \dx - \sum_{e\in\Gamma_h} \int_e \avr{q}\bn_e\cdot\jump{\bv} \ds \nonumber\\
	= & -\sum_{E\in\Eh} \int_E \bv \cdot\nabla q \dx + \sum_{e\in\Gamma_h} \int_e \avr{\bv}\cdot\bn_e \jump{q} \ds \quad \forall~ \bv\in \bV, q\in M,
\end{align}
where $Z = X^0$ or $M$.

\noindent
The trilinear forms $\cN_1:\bV \times X^0 \times X^0 \to \R$, $\cN_2:\bV \times \bV \times \bV \to \R$, $\cG:X^0\times X^0\times X^0\to\R$ and $\cT:X^0\times X^0\times \bV\to\R$ are defined as
\begin{align}
	\cN_1(\bv,\psi, \chi) =& \sum_{E\in\Eh} \int_E (\bv\cdot\nabla\psi) \chi \dx + \frac{1}{2}\sum_{E\in\Eh}\int_E (\nabla\cdot\bv)\psi \chi \dx \nonumber
	\\
	&\quad - \sum_{e\in\Gamma_h^0} \int_{e} \avr{\bv}\cdot\bn_e \jump{\psi}\avr{ \chi} \ds  - \frac{1}{2}\sum_{e\in\Gamma_h^0}\int_e \jump{\bv}\cdot\bn_e \avr{\psi \chi} \ds \quad\forall~ \bv\in\bV, \psi, \chi\in X^0, \label{tribc}
	\\
	\cN_2(\bv,\bw,\bphi) =& \sum_{E\in\Eh}  \int_E (\bv\cdot\nabla)\bw\cdot\bphi \dx + \frac{1}{2}\sum_{E\in\Eh}\int_E (\nabla\cdot\bv)\bw\cdot\bphi \dx  \nonumber\\
	&~~ - \sum_{e\in\Gamma_h} \int_{e} \avr{\bv}\cdot\bn_e \jump{\bw}\cdot\avr{\bphi} \ds - \frac{1}{2}\sum_{e\in\Gamma_h}\int_e \jump{\bv}\cdot\bn_e \avr{\bw\cdot\bphi} \ds \quad\forall~\bv, \bw, \bphi \in \bV. \label{tribu} 
	\\
	\cG(\chi,\psi,\zeta) =& \sum_{E\in\Eh} \int_E \chi\nabla\psi\cdot\nabla\zeta \ds - \sum_{e\in\Gamma_h^0} \int_e \avr{\chi\nabla\psi}\cdot\bn_e \jump{\zeta} \ds\nonumber\\
	&\qquad - \sum_{e\in\Gamma_h^0} \int_e \avr{\chi\nabla\zeta}\cdot\bn_e \jump{\psi} \ds \quad \forall~ \chi, \psi,\zeta \in X^0. \label{defg}
	\\
	\cT(\chi,\psi,\bw) =& \sum_{E\in\Eh} \int_E \chi\nabla\psi\cdot\bw \ds - \sum_{e\in\Gamma_h^0} \int_e \avr{\chi}\jump{\psi}\avr{\bw}\cdot\bn_e \ds \quad \forall~ \chi, \psi \in X^0, ~ \bw\in \bV. \label{defT}
\end{align}

Now, the dG formulation of \eqref{eqphi}-\eqref{eqint} reads as: Find $(\phi, c_1, c_2, \bu, p)\in X^0 \times X^0 \times X^0 \times \bV \times M$ such that for all $t>0$
\begin{align}
	& \mu \cA_1(\phi,\psi) = (\tc_1-\tc_2,\psi) \quad\forall~\psi\in X^0, \label{dg1}\\
	&(\pt \tc_i,\chi) + \kappa_i \cA_1(\tc_i,\chi) + \cN_1(\bu, \tc_i, \chi) + \beta_i \cG(\tc_i+m_0, \phi, \chi) = 0 \quad\forall~\chi\in X^0, \label{dg2}\\
	&(\pt\bu,\bv) + \nu \cA_2(\bu,\bv) + \cN_2(\bu,\bu,\bv) - \cD(\bv,p) = -\cT((\tc_1-\tc_2),\phi,\bv)  \quad\forall~\bv\in\bV, \label{dg3}\\
	&\cD(\bu,q)=0\quad\forall~ q\in M, \label{dg4}
\end{align}
where $\tc_i = c_i-m_0,~i=\{1,2\}$ is a new variable with $m_0=\frac{1}{|\Omega|}\int_\Omega c_{10}\dx =\frac{1}{|\Omega|}\int_\Omega c_{20}\dx$ and the new initial data $\tc_{i0} = c_{i0}-m_0$.

\subsection{Some Useful Inequalities}
The following standard trace inequalities will be used for our subsequence analysis.
\begin{lemma}[Continuous trace inequality {\cite[Section 2.1.3]{Riv08}}]\label{CTI}
	For each element $E\in\Eh$ with diameter $h_E$, there exists a constant $C$ may depends on dimension $d$ and mesh regularity parameter $\varrho$, but independent of $h_E$ and $\psi$ such that the followings hold:
	\begin{align*}
		\|\psi\|_{L^2(e)} & \lesssim h_E^{-\frac{1}{2}}\|\psi\|_{L^2(E)} + h_E^{\frac{1}{2}}\|\nabla\psi\|_{L^2(E)} \quad \forall~\psi\in H^1(E),\\
		\|\nabla\psi\|_{L^2(e)} & \lesssim h_E^{-\frac{1}{2}}\|\nabla\psi\|_{L^2(E)} + h_E^{\frac{1}{2}}\|\nabla^2\psi\|_{L^2(E)} \quad \forall~\psi\in H^2(E),
	\end{align*}
	where $e$ is an edge of the element $E$. These results also hold for the vector-valued function $\bpsi$. Additionally, if $\psi\in W^{1,p}(E)$, then the following holds \cite[(1.18)]{DE11} %\cite[Lemma 1.31]{}
    \begin{align} \label{LPCTI}
        \|\psi\|_{L^p(e)} \lesssim \|\psi\|_{L^p(E)}^{1-\frac{1}{p}}\|\nabla\psi\|_{L^p(E)}^{\frac{1}{p}}.
    \end{align}
\end{lemma}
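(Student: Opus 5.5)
The plan is to reduce everything to a fixed reference element by an affine map and then rescale back. Let $\hat E$ be the reference triangle (or square) and $F_E:\hat E\to E$, $F_E(\hat x)=B_E\hat x+b_E$, the affine map. Shape regularity \eqref{regular} gives $\|B_E\|\lesssim h_E$, $\|B_E^{-1}\|\lesssim \varrho^{-1}h_E^{-1}$ and $|\det B_E|\simeq h_E^2$. For an edge $e\subset\partial E$ with reference edge $\hat e$, we also have $|e|/|\hat e|\simeq h_E$. For $\hat\psi=\psi\circ F_E$ the change of variables then gives
\[
\|\psi\|_{L^p(e)}^p\simeq h_E\|\hat\psi\|_{L^p(\hat e)}^p,\qquad \|\psi\|_{L^p(E)}^p\simeq h_E^2\|\hat\psi\|_{L^p(\hat E)}^p,\qquad \|\nabla\psi\|_{L^p(E)}^p\simeq h_E^{2-p}\|\hat\nabla\hat\psi\|_{L^p(\hat E)}^p .
\]
The constants depend only on $\varrho$ and $d$.

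\textbf{The two $L^2$ inequalities.} On $\hat E$ the trace theorem $H^1(\hat E)\to L^2(\partial\hat E)$ gives $\|\hat\psi\|_{L^2(\hat e)}^2\lesssim \|\hat\psi\|_{L^2(\hat E)}^2+\|\hat\nabla\hat\psi\|_{L^2(\hat E)}^2$. Mapping back with the scalings above gives
\[
h_E^{-1}\|\psi\|_{L^2(e)}^2\lesssim h_E^{-2}\|\psi\|_{L^2(E)}^2+\|\nabla\psi\|_{L^2(E)}^2 .
\]
Multiplying by $h_E$ and taking square roots yields the first inequality. The second follows by applying the first to each component $\partial_j\psi\in H^1(E)$. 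The vector-valued statements follow componentwise.

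\textbf{The multiplicative $L^p$ estimate \eqref{LPCTI}.} Here I would argue directly rather than by scaling, since a scaling argument only produces the additive form $h_E^{-1/p}\|\psi\|_{L^p(E)}+h_E^{1-1/p}\|\nabla\psi\|_{L^p(E)}$. The direct argument uses the standard vector field construction. Let $a_e$ be the vertex of $E$ opposite $e$ and set $\bm\sigma(x)=(x-a_e)\,|e|/(d|E|)$. On $e$ we have $\bm\sigma\cdot\bn=1$. On the other faces of $E$ we have $\bm\sigma\cdot\bn=0$. Moreover $\nabla\cdot\bm\sigma=|e|/|E|\lesssim h_E^{-1}$ and $|\bm\sigma|\lesssim 1$. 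By the divergence theorem applied to $|\psi|^p\bm\sigma$ (by density, first for smooth $\psi$),
\[
\|\psi\|_{L^p(e)}^p=\int_E |\psi|^p\nabla\cdot\bm\sigma\dx+p\int_E|\psi|^{p-2}\psi\,\nabla\psi\cdot\bm\sigma\dx
\lesssim h_E^{-1}\|\psi\|_{L^p(E)}^p+\|\psi\|_{L^p(E)}^{p-1}\|\nabla\psi\|_{L^p(E)},
\]
where the last term uses H\"older's inequality with exponents $p/(p-1)$ and $p$. The second term is exactly the $p$-th power of the right side of \eqref{LPCTI}.

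The main obstacle is the first term $h_E^{-1}\|\psi\|_{L^p(E)}^p$, which is not dominated by $\|\psi\|^{p-1}\|\nabla\psi\|$ in general: constants give a counterexample. So \eqref{LPCTI} as displayed must be understood the way it is in \cite[(1.18)]{DE11}, with the lower-order contribution included, or for functions where it is absorbed. The honest form I would prove is
\[
\|\psi\|_{L^p(e)}\lesssim h_E^{-1/p}\|\psi\|_{L^p(E)}+\|\psi\|_{L^p(E)}^{1-\frac1p}\|\nabla\psi\|_{L^p(E)}^{\frac1p}.
\]
I would note that this is the version actually needed later, typically applied with an inverse inequality to discrete functions, where $h_E^{-1}\|\psi\|$ and $\|\nabla\psi\|$ are comparable.
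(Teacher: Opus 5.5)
Your proof is correct, and you are right that \eqref{LPCTI} cannot be proved as printed; one remark about discrete functions in your last sentence needs fixing. The paper gives no argument of its own here, only citations. Your treatment of the two $L^2$ inequalities is the standard reference-element scaling proof behind the citation to Rivi\`ere, and it is fine. For a general quadrilateral the reference map is bilinear rather than affine, but shape regularity still gives the same scalings.

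On \eqref{LPCTI}, your diagnosis holds. For $p<\infty$ and $\psi\equiv 1$ on $E$, the left side is $|e|^{1/p}>0$ while the right side is $0$. Since $X_h^0$ only imposes a global mean-zero constraint, elementwise constants lie in every space the paper uses, so no argument rescues the displayed form. The standard multiplicative trace inequality on a fixed domain carries the full $W^{1,p}$ norm, not the seminorm. Transporting it to $E$ by scaling gives
\[
\|\psi\|_{L^p(e)}\lesssim \|\psi\|_{L^p(E)}^{1-\frac1p}\bigl(h_E^{-1}\|\psi\|_{L^p(E)}+\|\nabla\psi\|_{L^p(E)}\bigr)^{\frac1p},
\]
which is equivalent to your corrected bound. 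So scaling does not only produce the additive form. Still, your divergence-theorem argument with $\bm{\sigma}=(x-a_e)|e|/(d|E|)$ is correct and self-contained. Its case $p=2$, combined with Young's inequality, already yields the first $L^2$ inequality, so it could replace the reference-element step entirely. On a quadrilateral, run it on the triangle spanned by $e$ and the vertex of $E$ farthest from the line through $e$. That triangle lies in $E$ by convexity and has height $\gtrsim \varrho h_E$. For $p=\infty$ the estimate is trivial.

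The inaccurate remark: for discrete functions, $h_E^{-1}\|\psi_h\|_{L^p(E)}$ and $\|\nabla\psi_h\|_{L^p(E)}$ are not comparable. The inverse inequality \eqref{inv.hypo} gives only $\|\nabla\psi_h\|_{L^p(E)}\lesssim h_E^{-1}\|\psi_h\|_{L^p(E)}$, and the reverse fails for constants. So for discrete arguments your corrected estimate collapses to the discrete trace inequality \eqref{pDTI}: the lower-order term is what survives, not what gets absorbed. Absorbing it into the multiplicative term works only for functions with zero mean on $E$, via the Poincar\'e inequality on $E$.

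Consequently, the factors $T_p(\cdot)$ in the edge terms of Lemma \ref{lem:estnon} should be augmented by $h_E^{-1/p}\|\cdot\|_{L^p(E)}$. This matters for the non-discrete arguments used in Lemma \ref{lem:trilinear}, such as $\bxi_u^{m-1}$ and $\xi_{c_i}^m$. The extra terms appear to be of the same order as terms already present there, but they have to be carried explicitly.
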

We recall the following $L^p$ bound in broken Sobolev space {\cite[Lemma 6.2]{GRW05domain}, \cite[Remark 3.5]{GRW05spliting}, \cite[Lemma 5.2]{MLR23}}: For any $p\in [2, \infty)$, there exist a constant $C_p$ depends on $p$ but not in $h$ such that 
	\begin{equation} \label{Lptoenergy}
		\|\psi\|_{L^p(\Omega)} \le C_p \vertiii{\psi} \quad \forall~\psi_h\in X^0~ \mbox{or}~ \bV.
	\end{equation}
    
\noindent
We also recall the Gagliardo-Nirenberg inequality \cite[pp. 638]{HS00}:
\begin{equation} \label{GNI}
	\|\psi\|_{L^p(\Omega)} \lesssim \|\psi\|^{2/p}\|\nabla\psi\|^{1-2/p}\quad\forall~\psi\in H^1(\Omega),
\end{equation}
where $2\le p<\infty$ and also the Agmon's inequality \cite[pp. 638]{HS00}:
\begin{equation}\label{AI}
	\|\psi\|_{L^\infty(\Omega)} \lesssim \|\psi\|^{1/2}\|\nabla^2\psi\|^{1/2}\quad\forall~\psi\in H^2(\Omega).
\end{equation}

%\newpage
Now, we prove some boundedness property of $\cN_1(\cdot,\cdot,\cdot)$, $\cN_2(\cdot,\cdot,\cdot)$,  $\cG(\cdot,\cdot,\cdot)$ and $\cT(\cdot,\cdot,\cdot)$:
\begin{lemma} \label{lem:estnon}
Define $S:\bV\times \bV \times \bV \to \R$ (or $S:\bV\times X^0 \times X^0 \to \R$, or $S:X^0\times X^0 \times X^0 \to \R$) and $T:\bV\times \bV\to \R$ (or $T:\bV\times X^0\to \R$ or $T:X^0\times X^0\to \R$) such that
\begin{align*}
	S(\cdot,\cdot,\cdot)  = \left\{\|\cdot\|_{\bL^{p}(E)} \|\cdot\|_{\bL^q(E)} \|\cdot\|_{\bL^r(E)}: \frac{1}{p}+\frac{1}{q}+\frac{1}{r}=1, ~~1\le p,q,r \le \infty\right\}
\end{align*}
and
\begin{align*}
	T_p(\cdot)  = \left\{\|\cdot\|_{\bL^{p}(E)}^{1-\frac{1}{p}} \|\nabla\cdot\|_{\bL^p(E)}^{\frac{1}{p}}\right\} ~\text{and}~ T_q(\cdot)  = \left\{\|\cdot\|_{\bL^{q}(E)}^{1-\frac{1}{q}} \|\nabla\cdot\|_{\bL^q(E)}^{\frac{1}{q}}\right\}, \text{for} ~ 2\le p,q \le \infty ~\text{with}~ \frac{1}{p}+\frac{1}{q}=\frac{1}{2}.
\end{align*}
Also, we set 
		$$J(\cdot) = \left(\sum_{e\in\Gamma_h}  \frac{\sigma_e}{|e|}\|\jump{\,\cdot\,}\|_{\bL^2(e)}^2 \right)^{\frac{1}{2}}.$$ 
% {\xb
% 	Define $S:\bV\times \bV \times \bV \to \R$ (or $S:\bV\times X^0 \times X^0 \to \R$, or $S:X^0\times X^0 \times X^0 \to \R$), $T:\bV \to \R$ (or $T:X^0\to \R$) and and $J:\bV \to \R$ (or $T:X^0\to \R$) such that for all $0\le p,q,r \le \infty$ with $\frac{1}{p}+\frac{1}{q}+\frac{1}{r}=1$
% 	\begin{align*}
% 		S(\cdot,\cdot,\cdot) & = \left\{\|\cdot\|_{\bL^{p}(E)} \|\cdot\|_{\bL^q(E)} \|\cdot\|_{\bL^r(E)}\right\}, ~~ 
% 		T_p(\cdot) & = \left\{\|\cdot\|_{\bL^{p}(E)}^{1-\frac{1}{p}} \|\cdot\|_{\bL^p(E)}^{\frac{1}{p}}\right\}, ~~ J_r(\cdot) = \left(\sum_{e\in\Gamma_h}  \frac{\sigma_e}{|e|}\|\jump{\,\cdot\,}\|_{\bL^r(e)}^2 \right)^{\frac{1}{2}}.
% 	\end{align*}
	Then, the followings hold for all $\bv,\bw,\bphi\in \bV$ when $i=2$ and $\bv\in\bV$, $\bw, \bphi\in X^0$ when $i=1$
	\begin{align}
		|\cN_i(\bv, \bw, \bphi)| \lesssim &~ \sum_{E\in\Eh}S(\bv, \nabla\bw, \bphi) + J(\bw)\sum_{E\in\Eh} h_E^{\frac{1}{2}} T_p(\bv) T_q(\bphi)  \nonumber\\
		&\quad+ \sum_{E\in\Eh}S(\nabla\bv, \bw, \bphi) + J(\bv)\sum_{E\in\Eh} h_E^{\frac{1}{2}} T_p(\bw) T_q(\bphi) \label{NN1} \\
		|\cN_i(\bv, \bw, \bphi)| \lesssim &~ \sum_{E\in\Eh}S(\bv, \nabla\bw, \bphi) + J(\bw)\sum_{E\in\Eh} h_E^{\frac{1}{2}} T_p(\bv) T_q(\bphi) \nonumber \nonumber\\
		&\quad+ \sum_{E\in\Eh}S(\bv, \bw, \nabla\bphi) + J(\bphi)\sum_{E\in \Eh} h_E^{\frac{1}{2}} T_p(\bv) T_q(\bw). \label{NN2}
	\end{align}
	Also, the followings hold true: For all $\chi, \psi,\zeta \in X^0$
	\begin{align}
		|\cG( \chi, \psi, \zeta)| \lesssim &~ \sum_{E\in\Eh}S( \chi, \nabla\psi, \nabla\zeta) + J(\psi)\sum_{E\in\Eh} h_E^{\frac{1}{2}} T_p(\chi) T_q( \nabla\zeta) + J(\zeta)\sum_{E\in\Eh} h_E^{\frac{1}{2}} T_p(\chi) T_q(\nabla\psi), \label{GG}
	\end{align}
    and for all $\chi, \psi\in X^0$, $\bw\in\bV$
    \begin{align}
        |\cT( \chi, \psi, \bw)| \lesssim &~ \sum_{E\in\Eh}S( \chi, \nabla\psi, \bw) + J(\psi)\sum_{E\in\Eh}h_E^{\frac{1}{2}} T_p(\chi) T_q(\bw). \label{TT}
	\end{align}
	
\end{lemma}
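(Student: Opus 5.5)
The plan is to bound each trilinear form term by term. Volume integrals are handled with the generalized Hölder inequality elementwise. Edge integrals are handled by Cauchy–Schwarz on each edge, which separates the jump factor from a product of two other traces, followed by the $L^p$ trace inequality \eqref{LPCTI} of Lemma \ref{CTI}.

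\textbf{Volume terms.} For $\cN_i$ the first volume term $\int_E(\bv\cdot\nabla)\bw\cdot\bphi$ is bounded by $\|\bv\|_{L^p(E)}\|\nabla\bw\|_{L^q(E)}\|\bphi\|_{L^r(E)}$, i.e.\ $S(\bv,\nabla\bw,\bphi)$. The term $\frac12\int_E(\nabla\cdot\bv)\bw\cdot\bphi$ is bounded by $S(\nabla\bv,\bw,\bphi)$, which gives \eqref{NN1}. For \eqref{NN2} I will first integrate the divergence term by parts on each $E$:
\[
\tfrac12\int_E(\nabla\cdot\bv)\bw\cdot\bphi=-\tfrac12\int_E \bv\cdot\nabla(\bw\cdot\bphi)+\tfrac12\int_{\partial E}(\bv\cdot\bn_E)\bw\cdot\bphi .
\]
The volume part then yields $S(\bv,\nabla\bw,\bphi)+S(\bv,\bw,\nabla\bphi)$. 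The new boundary contributions are regrouped edge by edge using $\jump{ab}=\jump{a}\avr{b}+\avr{a}\jump{b}$. They combine with the existing edge terms of $\cN_i$, so the $\jump{\bv}$-terms cancel and only edge terms carrying $\jump{\bw}$ or $\jump{\bphi}$, multiplied by averages of the other two factors, remain. The same regrouping applies to $\cN_1$ with scalar $\psi,\chi$.

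For $\cG$ and $\cT$ the volume terms are immediate: they give $S(\chi,\nabla\psi,\nabla\zeta)$ and $S(\chi,\nabla\psi,\bw)$ respectively.

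\textbf{Edge terms.} A typical edge term is $\int_e \avr{\bv}\cdot\bn_e\jump{\bw}\cdot\avr{\bphi}$. I will bound it by
\[
\|\jump{\bw}\|_{L^2(e)}\,\|\avr{\bv}\|_{L^p(e)}\,\|\avr{\bphi}\|_{L^q(e)}, \qquad \tfrac1p+\tfrac1q=\tfrac12 .
\]
Then I write $\|\jump{\bw}\|_{L^2(e)}=(h_e/\sigma_e)^{1/2}\bigl(\tfrac{\sigma_e}{h_e}\bigr)^{1/2}\|\jump{\bw}\|_{L^2(e)}$. By shape regularity $h_e\simeq h_E$, and $\sigma_e$ is bounded below. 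Each average is bounded by the traces from the (at most two) neighbouring elements, and \eqref{LPCTI} gives $\|\bv\|_{L^p(e)}\lesssim T_p(\bv)$ on $E$, and similarly $\|\bphi\|_{L^q(e)}\lesssim T_q(\bphi)$.

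Next I sum over edges, applying the discrete Cauchy–Schwarz inequality over the edge sum and pulling out $J(\bw)$. What remains is a sum of $h_E^{1/2}T_p(\bv)T_q(\bphi)$ over elements, each element being counted a bounded number of times. (Strictly, Cauchy–Schwarz produces an $\ell^2$ sum, which is dominated by the $\ell^1$ sum written in the statement.) This gives the $J(\bw)h_E^{1/2}T_pT_q$ contributions.

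For the $\jump{\bv}\cdot\bn_e\avr{\bw\cdot\bphi}$ term in \eqref{NN1}, I use $\avr{\bw\cdot\bphi}$, bounded via traces of $\bw$ and $\bphi$ on both sides, to obtain $J(\bv)h_E^{1/2}T_p(\bw)T_q(\bphi)$. For $\cG$ the edge factors are $\avr{\chi\nabla\psi}\jump{\zeta}$ and $\avr{\chi\nabla\zeta}\jump{\psi}$, which give $J(\zeta)T_p(\chi)T_q(\nabla\psi)$ and $J(\psi)T_p(\chi)T_q(\nabla\zeta)$. For $\cT$ the edge factor gives $J(\psi)T_p(\chi)T_q(\bw)$. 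Boundary edges are treated identically, since there the average and the jump both equal the trace.

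\textbf{Main obstacle.} The delicate step is the regrouping in \eqref{NN2}. One must check that after the elementwise integration by parts, the boundary contributions from the two neighbouring elements combine with the original edge terms so that no edge term with $\jump{\bv}$ survives. This requires careful sign bookkeeping with the product-jump identity, and attention to whether the sums run over $\Gamma_h$ or $\Gamma_h^0$ (on boundary edges, $\bv=0$ is not assumed, so these edges must be tracked separately). If an exact cancellation fails, I expect residual terms of the form $\jump{\bw}\avr{\bphi}$ or $\avr{\bw}\jump{\bphi}$, which are still of the admissible form, so the bound survives in any case.
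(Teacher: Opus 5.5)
Your route is the paper's: H\"older on the element integrals, H\"older with exponents $(p,2,q)$ on each edge, Cauchy--Schwarz over edges to extract $J(\cdot)$, the trace bound \eqref{LPCTI}, and, for \eqref{NN2}, elementwise integration by parts followed by the product-jump regrouping that leads to \eqref{tribuanother}.

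Two of your choices are better than the paper's.
\begin{itemize}
\item You bound $\avr{\bw\cdot\bphi}=\frac12\bigl(\bw|_{E_e^1}\cdot\bphi|_{E_e^1}+\bw|_{E_e^2}\cdot\bphi|_{E_e^2}\bigr)$ directly. This produces only same-element products. The paper's splitting via $\avr{ab}=\avr{a}\avr{b}+\frac14\jump{a}\jump{b}$ does not.
\item Your hedge on boundary edges is actually needed. Both edge sums of $\cN_2$ run over all of $\Gamma_h$, so the regrouping leaves $-\sum_{e\subset\partial\Omega}\int_e(\bv\cdot\bn_e)(\bw\cdot\bphi)\ds$ in addition to \eqref{tribuanother}. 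This term is admissible because on $\partial\Omega$ it equals $-\int_e\avr{\bv}\cdot\bn_e\jump{\bw}\cdot\avr{\bphi}\ds$.
\end{itemize}

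There is a genuine gap, and the paper's proof makes the same leap. It is the step ``what remains is a sum of $h_E^{1/2}T_p(\bv)T_q(\bphi)$ over elements''.

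Some edge terms contain two separate averages: $\avr{\bv}\cdot\bn_e\jump{\bw}\cdot\avr{\bphi}$ in $\cN_i$ (and its counterparts after integration by parts), and $\avr{\chi}\jump{\psi}\avr{\bw}\cdot\bn_e$ in $\cT$. For these, $\|\avr{\bv}\|_{L^p(e)}\|\avr{\bphi}\|_{L^q(e)}$ contains cross products such as $T_p(\bv|_{E_e^1})\,T_q(\bphi|_{E_e^2})$, which couple the two neighbours. No same-element sum controls them.

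Here is a concrete counterexample on a triangular mesh. Let $E_1,E_2$ share the interior edge $e$, with $\bn_e$ pointing from $E_1$ to $E_2$.
\begin{itemize}
\item Let $\chi=\frac13-\lambda$ on $E_1$ and $\chi=0$ elsewhere, where $\lambda$ is the barycentric coordinate of the vertex of $E_1$ opposite $e$. Then $\chi$ has zero mean and $\chi=\frac13$ on $e$.
\item Let $\psi\in X^0$ be piecewise constant with $\psi|_{E_1}=1$ and $\psi|_{E_2}=0$.
\item Let $\bw=\bn_e$ on $E_2$ and $\bw=0$ elsewhere.
\end{itemize}
Then $\cT(\chi,\psi,\bw)=-|e|/12\neq0$. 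Yet every term on the right of \eqref{TT} vanishes, because $\nabla\psi=0$ elementwise and $\chi,\bw$ have disjoint supports.

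The analogous choice breaks \eqref{NN1}: take $\bv,\bw$ supported in $E_1$ and $\bphi$ in $E_2$. Then $\cN_2(\bv,\bw,\bphi)=-\frac14\int_e(\bv\cdot\bn_e)(\bw|_{E_1}\cdot\bphi|_{E_2})\ds\neq0$, while the right-hand side of \eqref{NN1} is zero.

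So the statement itself must be corrected. What your argument actually proves is the bound with $\sum_{E}h_E^{1/2}T_p(\bv)T_q(\bphi)$ replaced by $\sum_{e\in\Gamma_h}h_e^{1/2}\,T_p(\bv;\omega_e)\,T_q(\bphi;\omega_e)$. Here $\omega_e=E_e^1\cup E_e^2$ and $T_p(\cdot;\omega_e)=T_p(\cdot|_{E_e^1})+T_p(\cdot|_{E_e^2})$. This loses nothing for later use. Any H\"older-type summation over elements that one would apply to the same-element sum applies equally to the patch sum, since each element lies in a bounded number of patches.

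Finally, \eqref{LPCTI}, which you and the paper both invoke, omits the lower-order term $h_E^{-1/p}\|\psi\|_{L^p(E)}$; as stated it fails for constants. So $T_p(\cdot)$ must also be enlarged to $h_E^{-1/p}\|\cdot\|_{L^p(E)}+\|\cdot\|_{L^p(E)}^{1-1/p}\|\nabla\cdot\|_{L^p(E)}^{1/p}$.
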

\begin{proof}
	For $i=2$, we recall the definition of $\cN_2(\cdot,\cdot,\cdot)$: For all $\bv, \bw, \bphi \in \bV$,
	\begin{align} \label{tribuddd}
		\cN_2(\bv,\bw,\bphi) =& \sum_{E\in\Eh}  \int_E (\bv\cdot\nabla)\bw\cdot\bphi \dx + \frac{1}{2}\sum_{E\in\Eh}\int_E (\nabla\cdot\bv)\bw\cdot\bphi \dx  \nonumber\\
		&~~ - \sum_{e\in\Gamma_h} \int_{e} \avr{\bv}\cdot\bn_e \jump{\bw}\cdot\avr{\bphi} \ds - \frac{1}{2}\sum_{e\in\Gamma_h}\int_e \jump{\bv}\cdot\bn_e \avr{\bw\cdot\bphi} \ds = I_1+I_2+I_3+I_4.
	\end{align}
	From the H\"{o}lder inequality, it is obvious that
	\begin{align}  \label{tribuddd1}
		|I_1|+|I_2| \le \sum_{E\in \Eh} S(\bv, \nabla\bw, \bphi) + \frac{1}{2}\sum_{E\in \Eh} S(\nabla\bv, \bw, \bphi).
	\end{align}
    
	Using the H\"{o}lder inequality with $\|\bn_e\|_{\bL^\infty(e)}\le 1$ and the Cauchy-Schwarz inequality, the term $I_3$ can be bounded as 
		\begin{align}  \label{tribuddd2}
			|I_3| & \le \sum_{e\in\Gamma_h} \|\avr{\bv}\|_{\bL^p(e)}\|\bn_e\|_{\bL^\infty(e)} \|\jump{\bw}\|_{\bL^2(e)}\|\avr{\bphi}\|_{\bL^q(e)} \nonumber\\
			& \lesssim \left(\sum_{e\in\Gamma_h} \frac{\sigma_e}{|e|} \|\jump{\bw}\|_{\bL^2(e)}^2\right)^{\frac{1}{2}}
			\left(\sum_{e\in\Gamma_h} \frac{|e|}{\sigma_e} \|\avr{\bv}\|_{\bL^p(e)}^2\|\avr{\bphi}\|_{\bL^q(e)}^2\right)^{\frac{1}{2}}
		\end{align} 
		Let $e$ be the edge shared by triangle $E_e^1$ and $E_e^2$, then a use of the trace inequality \eqref{LPCTI} yields
		\begin{align}
			\|\avr{\bv}\|_{\bL^p(e)} \lesssim  \frac{1}{2}\left(\|\bv|_{E_e^1}\|_{\bL^p(e)} + \|\bv|_{E_e^2}\|_{\bL^p(e)}\right) \lesssim \|\bv\|_{\bL^p(E_e^1)}^{1-\frac{1}{p}}\|\nabla\bv\|_{\bL^p(E_e^1)}^{\frac{1}{p}} + \|\bv\|_{\bL^p(E_e^2)}^{1-\frac{1}{p}}\|\nabla\bv\|_{\bL^p(E_e^2)}^{\frac{1}{p}}. \label{tribudddd11}
		\end{align}
		We use \eqref{tribudddd11} in \eqref{tribuddd2} for both the average terms, and then apply the Cauchy-Schwarz inequality to find
		\begin{align} \label{tribuddd3}
			|I_3| \lesssim J(\bw)  \sum_{E\in\Eh} h_E^{\frac{1}{2}} \|\bv\|_{\bL^p(E)}^{1-\frac{1}{p}}\|\nabla\bv\|_{\bL^p(E)}^{\frac{1}{p}} \|\bphi\|_{\bL^q(E)}^{1-\frac{1}{q}}\|\nabla\bphi\|_{\bL^q(E)}^{\frac{1}{q}} = J(\bw)  \sum_{E\in\Eh} h_E^{\frac{1}{2}} T_p(\bv) T_q(\bphi).
		\end{align}
	An application of $\avr{ab}=\avr{a}\avr{b}+\frac{1}{4}\jump{a}\jump{b}$ with the H\"{o}lder inequality in $I_4$ yields
	\begin{align}  \label{tribuddd4}
		|I_4| & \lesssim \sum_{e\in\Gamma_h}\int_e \jump{\bv}\cdot\bn_e \left(\avr{\bw}\cdot\avr{\bphi}+\frac{1}{4}\jump{\bw}\cdot\jump{\bphi}\right) \ds \nonumber\\
        & \lesssim  \sum_{e\in\Gamma_h} \|\jump{\bv}\|_{\bL^2(e)} \left(\|\avr{\bw}\|_{\bL^p(e)}\|\avr{\bphi}\|_{\bL^q(e)}+\|\jump{\bw}\|_{\bL^p(e)}\|\jump{\bphi}\|_{\bL^q(e)}\right) \ds.
	\end{align}
    In the similar way of \eqref{tribudddd11}, one can write 
    \begin{align}
        \|\jump{\bv}\|_{\bL^p(e)} \lesssim  \left(\|\bv|_{E_e^1}\|_{\bL^p(e)} + \|\bv|_{E_e^2}\|_{\bL^p(e)}\right) \lesssim \|\bv\|_{\bL^p(E_e^1)}^{1-\frac{1}{p}}\|\nabla\bv\|_{\bL^p(E_e^1)}^{\frac{1}{p}} + \|\bv\|_{\bL^p(E_e^2)}^{1-\frac{1}{p}}\|\nabla\bv\|_{\bL^p(E_e^2)}^{\frac{1}{p}}. \label{tribudddd221}
    \end{align}
    Use \eqref{tribudddd11} for both the average terms and \eqref{tribudddd221} for both the jump terms of \eqref{tribuddd4}, one can bound $I_4$ as
    \begin{align}  \label{tribuddd41}
		|I_4| & \lesssim J(\bv)\sum_{E\in \Eh} h_E^{\frac{1}{2}} T_p(\bw) T_q( \bphi).
	\end{align}
	Substituting \eqref{tribuddd1}-\eqref{tribuddd41} in \eqref{tribuddd} complete the first proof of \eqref{NN1} for $i=2$. For the second one, that is \eqref{NN2} for $i=2$, first we use integration by parts in the second term of \eqref{tribuddd} and then, rewrite the boundary terms, we find that
	\begin{align} \label{tribuanother}
		\cN_2(\bv,\bw,\bphi) =& \frac{1}{2}\sum_{E\in\Eh}  \int_E (\bv\cdot\nabla)\bw\cdot\bphi \dx - \frac{1}{2}\sum_{E\in\Eh}\int_E (\bv\cdot \nabla)\bphi\cdot\bw \dx  \nonumber\\
		&~~ - \frac{1}{2}\sum_{e\in\Gamma_h} \int_{e} \avr{\bv}\cdot\bn_e \jump{\bw}\cdot\avr{\bphi} \ds + \frac{1}{2}\sum_{e\in\Gamma_h}\int_e \avr{\bv}\cdot\bn_e \jump{\bphi}\cdot\avr{\bw} \ds.
	\end{align}
	Now, proceeding similarly to the first case, we derive the second estimate \eqref{NN2} for $i=2$. The proof for $i=1$ follows the same pattern as the case $i=2$. 
    To establish \eqref{GG} and \eqref{TT}, we refer to the definition \eqref{defg} and \eqref{defT}, respectively. We can bound all terms on elements using the Hölder inequality, and all terms on edges exactly as in $I_4$ and $I_3$. This completes the proof.	
\end{proof}

\section{Discrete Formulation} \label{sec:ds}
This section focuses on the completely discrete schemes.

\subsection{Temporal Discretization}
For time discretization, we take a uniform partition of the time interval $[0, T]$ as $0=t_0<t_1<\cdots<t_N=T$ with time step $\Delta t = t_{m}-t_{m-1}, 0\le m\le N-1$ and $t_{m} = m\Delta t$. For a continuous function $v(t)$, we set $v^m=v(t_{m})$.

We now apply a pressure projection scheme, mainly the Euler incremental pressure correction scheme.Then the discrete formulation reads as:

\noindent
\textbf{Step I:} Given $(\tc_1^{m-1}, \tc_2^{m-1}, \bu^{m-1}, p^{m-1}) \in X^0 \times X^0 \times \bV \times M$, find $(\phi^{m}, \tc_1^{m}, \tc_2^{m}, \hbu^{m})\in X^0\times X^0 \times X^0\times \bV $ such that for $m\ge 0$
\begin{align}
	& -\mu\Delta \phi^{m} = \tc_1^{m-1}-\tc_2^{m-1}, \label{td1}\\
	& \frac{\tc_i^{m}-\tc_i^{m-1}}{ \Dt} - \kappa_i\Delta \tc_i^{m} + (\bu^{m-1}\cdot\nabla)\tc_i^{m} = \beta_i \nabla\cdot((\tc_i^{m-1}+m_0)\nabla\phi^{m}), \label{td2}\\
	&\frac{\hbu^{m}-\bu^{m-1}}{ \Dt} - \nu \Delta \hbu^{m} + (\bu^{m-1}\cdot \nabla \hbu^{m}) + \nabla p^{m-1} = -(\tc_1^{m-1}-\tc_2^{m-1})\nabla\phi^{m}, \label{td4}
\end{align}
with boundary condition $ \nabla \tc_i^{m} \cdot \bn = \nabla\phi^{m} \cdot \bn = 0, \hbu = 0$.

\noindent
\textbf{Step II:} Given $(\hbu^{m}, p^{m-1}) \in \bV \times M$, seek $(\bu^{m}, p^{m})\in \bV\times M$ such that
\begin{align}
	& \frac{\bu^{m}-\hbu^{m}}{ \Dt} + \nabla \left(p^{m}-p^{m-1}\right) = 0 \label{td6}\\
	& \nabla \cdot \bu^{m}= 0 \label{td7}\\
	& \bu^{m}|_{\partial\Omega} = 0. \label{td8}
\end{align}
%Here, the initial data $\tc_{ih}^{0}, \bu_h^0, p_h^{0}$ are %the appropriate approximation of $\tc_{i0}, \bu_0, p_0$ %respectively.
Taking divergence on the first equation of \textbf{Step II} and using the second equation, one may find
\begin{align} \label{recp}
	\Delta \left(p^{m}-p^{m-1}\right) = \frac{1}{ \Dt}\nabla \cdot \hbu^{m}
\end{align}
From \eqref{recp} with boundary conditions $\nabla(p^{m}-p^{m-1})\cdot \bn = 0$, we obtain $p^{m}$ first and then, using this $p^{m}$ we update $\bu^{m}$ by 
\begin{align}
	& \bu^{m} = \hbu^{m} - { \Dt} \nabla \left(p^{m}-p^{m-1}\right).
\end{align}

\subsection{Fully Discrete dG Formulation}
For space discretization, we first consider the dG finite element spaces $X^0_h \times \bV_h \times M_h \subset X^0 \times \bV \times M$, which are defined as follows: For any integer $k\ge 1$,
\begin{align*}
	X^0_h &= \left\{ \chi_h\in L_0^2(\Omega) : \chi_h|_E \in \mathbb{P}_{k}(E)~~\forall~ E\in\Eh \right\},\\
	\bV_h &= \left\{ \bv_h\in \bL^2 : \bv_h|_E \in \left(\mathbb{P}_{k}(E)\right)^2~~\forall~ E\in\Eh \right\},\\
	M_h &= \left\{ q_h\in L_0^2(\Omega) : q_h|_E \in \mathbb{P}_{k-1}(E)~~\forall~ E\in\Eh \right\},
\end{align*} 
where $\mathbb{P}_k(E)$ is the polynomial space of order less or equal to $k$ over $E$.

For our subsequent analysis, we use the following standard discrete versions of the trace inequalities, the inverse inequality, and the discrete Poincaré inequality.
\begin{itemize}
	\item Discrete trace inequality {\cite[Lemma 1.46]{DE11}}: 
	For each element $E\in\Eh$ with diameter $h_E$, there exists a constant $C,$ which may depend on polynomial degree $k$, dimension $d$ and mesh regularity parameter $\varrho$, but independent of $h_E$ such that the following holds: For all $ \psi_h\in X_h^0 ~\mbox{or}~\bV_h$
	\begin{align} \label{DTI}
		\| \psi_h\|_{L^2(e)} \lesssim h_E^{-\frac{1}{2}}\| \psi_h\|_{L^2(E)}, \quad \text{and}\quad 
		\|\nabla \psi_h\|_{L^2(e)}  \lesssim h_E^{-\frac{1}{2}}\|\nabla \psi_h\|_{L^2(E)},
	\end{align}
	where $e$ is an edge of the element $E$. The non-Hilbertian version of the discrete trace inequality is as follows \cite[Lemma 1.52]{DE11}: For $1\le p \le \infty$,
	\begin{align} \label{pDTI}
		\| \psi_h\|_{L^p(e)} & \lesssim h_E^{-\frac{1}{p}}\| \psi_h\|_{L^p(E)}.
	\end{align}
	\item Inverse inequality {\cite[Theorem 3.2.6]{Cia78}}: 
	For each element $E\in \Eh$ with diameter $h_E$, the following inverse hypothesis holds for all discrete $\psi_h\in  X_h^0 ~\mbox{or}~ \bV_h$,
	\begin{align} \label{inv.hypo}
		\| \psi_h\|_{W^{m,p}(E)^d} \lesssim h_E^{n-m-d(\frac{1}{q}-\frac{1}{p})} \|\psi_h\|_{W^{n,q}(E)^d},
	\end{align}
	where $0\le n \le m \le 1$, $0\le q \le p \le \infty$, $\| \cdot\|_{W^{m,p}(E)^d}$ is the norm in Sobolev space $W^{m,p}(E)^d$.
	Using the inverse inequality, one can easily show the following bound for the energy norm by $L^2$-norm:
	\begin{equation} \label{inv.DG}
		\vertiii{\psi_h} \lesssim h^{-1}\|\psi_h\| \quad \forall \psi_h \in X_h^0 ~\text{or}~ \bV_h.
	\end{equation}
	\item Discrete Poincar\'e inequality {\cite[Corollary 5.4, pp 192]{DE11}}:  
	For a positive constant $C>0$ independent of $h$ such that for all $\psi_h\in X_h^0$ (or $\bV_h$), the following holds
	\begin{align}\label{DPI}
		\|\psi_h\| \lesssim \vertiii{\psi_h}.
	\end{align}
\end{itemize}

Now, we apply dG method to the system \eqref{td1}-\eqref{td8}, then the fully discrete dG formulation reads as:

\noindent
\textbf{Step I:} Knowing $(\tc_{1h}^{m-1}, \tc_{2h}^{m-1}, \bu_h^{m-1}, p_{h}^{m-1}) \in X_h^0 \times X_h^0 \times \bV_h\times M_h$, find $\phi_h^{m}\in X_h^0$ in \eqref{fds11}, $\tc_{ih}^{m}\in X_h^0$ from \eqref{fds12} and $\hbu_{h}^{m}\in\bV_h $ from \eqref{fds2} such that for $m\ge 0$ and $i=1,2$
\begin{align}
	&  \mu\cA_1(\phi_h^{m},\psi_h) = (\tc_{1h}^{m-1}-\tc_{2h}^{m-1},\psi_h) \quad\forall~\psi_h\in X_h^0\label{fds11}\\
	\left(\frac{\tc_{ih}^{m}-\tc_{ih}^{m-1}}{ \Dt}, \chi_{ih}\right) + &\kappa_i\cA_1(\tc_{ih}^{m}, \chi_{ih}) + \cN_1(\bu_h^{m-1}, \tc_{ih}^{m}, \chi_{ih}) + \beta_i \cG( \tc_{ih}^{m-1}\hspace{-0.5em}+m_0, \phi_h^{m}, \chi_{ih}) = 0 ~\forall \chi_{ih}\in X_h^0 \label{fds12} \\
	&\left(\frac{\hbu_{h}^{m}-\bu_{h}^{m-1}}{ \Dt}, \bv_{h}\right) + \nu \cA_2(\hbu_{h}^{m}, \bv_{h}) + \cN_2(\bu_{h}^{m-1}, \hbu_{h}^{m}, \bv_{h}) - \cD(\bv_{h}, p_{h}^{m-1}) \nonumber\\
	& \qquad\qquad\qquad\qquad= -\cT((\tc_{1h}^{m-1}-\tc_{2h}^{m-1}), \phi_{h}^{m}, \bv_{h}) \quad\forall \bv_{h}\in\bV_{h}. \label{fds2}
\end{align}
\textbf{Step II:} Given $(\hbu_{h}^{m}, p_{h}^{m-1}) \in \bV_{h} \times M_{h}$, seek $p^{m}\in M_h$ such that
\begin{align}
	& \cA_1(p_{h}^{m}-p_{h}^{m-1}, q_{h}) = - \frac{1}{ \Dt}\cD(\hbu_{h}^{m}, q_{h}) \quad\forall~ q_{h} \in M_{h}. \label{fds3}
\end{align}
\textbf{Step III:} Given $(\hbu_{h}^{m}, p_{h}^{m}) \in \bV_{h} \times M_{h}$, update $\bu_{h}^{m}\in \bV_{h}$ such that
\begin{align}
	& \left(\frac{\bu_{h}^{m}-\hbu_{h}^{m}}{ \Dt}, \bw_{h}\right) -\cD \left(\bw_{h}, p_{h}^{m}-p_{h}^{m-1}\right) = 0 \quad \forall~ \bw_{h}\in \bV_{h}. \label{fds4}
\end{align}
\textbf{Step IV:} Update the solution $(c_{1h}^m, c_{2h}^{m})\in X_h^0\times X_h^0$ such that 
\begin{equation} \label{fds5}
	c_{1h}^{m} = \tc_{1h}^{m} + m_0 \quad \text{and}\quad  c_{2h}^{m} = \tc_{2h}^{m} + m_0.
\end{equation}

\noindent
Here, the initial data $\tc_{ih}^{0}, \bu_h^0, p_h^{0}$ are the appropriate approximation of $\tc_{i0}, \bu_0, p_0$ respectively.
To show the well-posedness of the above discrete system, first, we discuss the following properties:
\begin{lemma}[Discrete coercivity{\cite[Lemma 4.12]{DE11}}]\label{lem:prop_bilinear}
	For some sufficiently large positive constant $\sigma_e$ with $\sigma_e >\sigma_e^*$, there exists some $\gamma^* >0$ independent of $h$ such that the following hold
	\begin{align*}
		\cA_1(\chi_h,\chi_h) & \ge \gamma^* \vertiii{\chi_h}^2, \quad \chi_h\in X_h^0 ~\mbox{or}~ M_h,\\
		\cA_2(\bv_h,\bv_h) & \ge \gamma^* \vertiii{\bv_h}^2, \quad \bv_h\in \bV_h,
		%       \\
		% \cA_1(q_h,q_h) \ge \gamma^*\vertiii{q_h}^2, \quad q_h\in M_h.
	\end{align*}
	Additionally, for any $C>0$ independent of $h$, the following continuity properties \cite[Lemma 4.16]{DE11} hold:
	\begin{align*}
		\cA_1(\chi_h,\psi_h) &\le C\vertiii{\chi_h} \vertiii{\psi_h}, \quad \chi_h,\psi_h\in X_h^0~\mbox{or}~ M_h,\\
		\cA_2(\bv_h,\bw_h) &\le C\vertiii{\bv_h} \vertiii{\bw_h}, \quad \bv_h,\bw_h\in \bV_h, 
		%       \\
		% \cA_1(q_h, z_h) &\le C\vertiii{q_h} \vertiii{z_h}, \quad q_h,z_h\in M_h.
	\end{align*}
\end{lemma}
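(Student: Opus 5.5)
Coercivity and continuity of the SIPG forms $\cA_1$ and $\cA_2$ on the discrete spaces follow the standard interior penalty argument (cf. \cite[Lemmas 4.12, 4.16]{DE11}). I describe it for $\cA_2$ on $\bV_h$; the scalar case for $\cA_1$ on $X_h^0$ or $M_h$ is identical, the only difference being that in $\cA_1$ the edge sums run over interior edges only.

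\textbf{Coercivity.} Set $\bw=\bv_h$ in \eqref{bia2}. Then
\[
\cA_2(\bv_h,\bv_h)=\sum_{E}\|\nabla\bv_h\|_{L^2(E)}^2-2\sum_{e\in\Gamma_h}\int_e\avr{\nabla\bv_h}\bn_e\cdot\jump{\bv_h}\ds+\sum_{e}\frac{\sigma_e}{h_e}\|\jump{\bv_h}\|_{L^2(e)}^2 .
\]
The only term that needs work is the consistency cross term. On each edge $e$:
\begin{itemize}
\item Apply Cauchy--Schwarz, weighting by $h_e^{1/2}$ and $h_e^{-1/2}$.
\item Bound $\|\avr{\nabla\bv_h}\|_{L^2(e)}$ with the discrete trace inequality \eqref{DTI}, giving $\lesssim h_E^{-1/2}\|\nabla\bv_h\|_{L^2(E)}$ summed over the (at most two) neighbouring elements $E$.
\item Shape regularity \eqref{regular} makes $h_e\simeq h_E$. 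Each element has a bounded number of edges.
\end{itemize}
This gives, with $C_{tr}$ independent of $h$,
\[
2\Big|\sum_e\int_e\avr{\nabla\bv_h}\bn_e\cdot\jump{\bv_h}\ds\Big|\le 2C_{tr}\Big(\sum_E\|\nabla\bv_h\|_{L^2(E)}^2\Big)^{1/2}\Big(\sum_e\frac1{h_e}\|\jump{\bv_h}\|_{L^2(e)}^2\Big)^{1/2}.
\]
Young's inequality with parameter $\frac12$ then bounds this by
\[
\tfrac12\sum_E\|\nabla\bv_h\|^2_{L^2(E)}+2C_{tr}^2\sum_e\frac1{h_e}\|\jump{\bv_h}\|^2_{L^2(e)}.
\]
Hence
\[
\cA_2(\bv_h,\bv_h)\ge \tfrac12\sum_E\|\nabla\bv_h\|^2+\sum_e\frac{\sigma_e-2C_{tr}^2}{h_e}\|\jump{\bv_h}\|^2 .
\]
Choose $\sigma_e^*=4C_{tr}^2$. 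Then for $\sigma_e>\sigma_e^*$ we have $\sigma_e-2C_{tr}^2\ge\sigma_e/2$, and the claim holds with $\gamma^*=\frac12$.

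\textbf{Continuity.} Bound each of the four terms of $\cA_2(\bv_h,\bw_h)$ directly:
\begin{itemize}
\item The volume term is bounded by Cauchy--Schwarz.
\item The penalty term is bounded by weighted Cauchy--Schwarz; it is dominated by the jump parts of $\vertiii{\bv_h}\,\vertiii{\bw_h}$.
\item Each consistency term is handled as in the coercivity step. For instance, \eqref{DTI} gives $\sum_e\int_e\avr{\nabla\bw_h}\bn_e\cdot\jump{\bv_h}\ds\lesssim \|\nabla_h\bw_h\|\,(\sum_e h_e^{-1}\|\jump{\bv_h}\|^2_{L^2(e)})^{1/2}$, which is $\lesssim \sigma_*^{-1/2}\vertiii{\bw_h}\vertiii{\bv_h}$ since $\sigma_e$ is bounded below.
\end{itemize}
Summing gives $\cA_2(\bv_h,\bw_h)\le C\vertiii{\bv_h}\vertiii{\bw_h}$. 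Here $C$ depends on the penalty parameter, on $C_{tr}$ (i.e.\ on $k$ and $\varrho$) and on the maximal number of edges per element, but not on $h$. We assume $\sigma_e$ is uniformly bounded above as well.

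The main obstacle is the bookkeeping in the trace estimate. The constant $C_{tr}$ must be uniform: it depends only on $k$ and $\varrho$ through \eqref{DTI}, and $h_e$ must be comparable to the diameters of both adjacent elements. The latter is where shape regularity enters.
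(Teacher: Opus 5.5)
For $\cA_2$ on $\bV_h$ your argument is the standard SIPG proof behind the cited \cite[Lemmas 4.12, 4.16]{DE11}, and it is complete. The paper gives no proof beyond that citation. The edge set of $\cA_2$ and of the vector norm is $\Gamma_h$ in both cases, and your Young-inequality bookkeeping correctly gives $\gamma^*=\tfrac12$ once $\sigma_e>4C_{tr}^2$.

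The step that fails as written is the claim that the scalar case is identical.

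\begin{itemize}
\item The paper's scalar norm $\vertiii{\chi}$ on $X^0$ and $M$ sums the penalized jumps over all of $\Gamma_h$. That includes edges $e\subset\partial\Omega$, where $\jump{\chi}$ is the trace of $\chi$.
\item But $\cA_1$, built for the Neumann problem, has consistency and penalty terms on $\Gamma_h^0$ only.
\item Your argument therefore gives only $\cA_1(\chi_h,\chi_h)\ge\frac12\bigl(\sum_E\|\nabla\chi_h\|_{L^2(E)}^2+\sum_{e\in\Gamma_h^0}\frac{\sigma_e}{h_e}\|\jump{\chi_h}\|_{L^2(e)}^2\bigr)$. It gives no control of $\sum_{e\subset\partial\Omega}\frac{\sigma_e}{h_e}\|\chi_h\|_{L^2(e)}^2$.
\end{itemize}

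No argument can supply that boundary term. On $\Omega=(0,1)^2$, let $\chi_h$ be the continuous piecewise-linear interpolant of $\cos(\pi x)$, shifted by a constant to have zero mean. Then $\chi_h\in X_h^0$ and all interior jumps vanish, so $\cA_1(\chi_h,\chi_h)=\|\nabla\chi_h\|^2$ stays bounded as $h\to0$. The boundary part of $\vertiii{\chi_h}^2$, however, is of order $\sigma h^{-1}$.

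So the $\cA_1$ inequality, taken literally with the paper's norm, is false. It becomes true, and your proof goes through verbatim, only if the scalar energy norm takes its jump sum over $\Gamma_h^0$. You should state that explicitly rather than call the case identical. With that norm you still have a norm on $X_h^0$ and $M_h$: the interior-jump seminorm vanishes only on global constants, and the zero-mean condition excludes those. Continuity of $\cA_1$ holds with either norm.

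Two minor points.
\begin{itemize}
\item The upper bound on $\sigma_e$ you assume for continuity is unnecessary. Since $\vertiii{\cdot}$ carries the weights $\sigma_e$, the penalty term is bounded directly by the jump parts of the two norms, and $\sum_e h_e^{-1}\|\jump{\bv_h}\|_{L^2(e)}^2\le(\min_e\sigma_e)^{-1}\sum_e\frac{\sigma_e}{h_e}\|\jump{\bv_h}\|_{L^2(e)}^2$.
\item In the trace step you only need $h_e\le h_E$, which holds automatically since $e\subset\partial E$. Shape regularity enters solely through the constant in \eqref{DTI}.
\end{itemize}
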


\noindent
We also state below the discrete inf-sup condition \cite[Theorem 6.8]{Riv08}:
\begin{lemma}\label{lem:infsup}
	There exists a constant $\beta^*>0$, independent of $h$ such that 
	\begin{equation*}
		\inf_{q_h\in M_h}\sup_{\bv_h\in\tilde{\bV}_h} \frac{\cD(\bv_h,q_h)}{\vertiii{\bv_h}\|q_h\|} \ge \beta^*,
	\end{equation*}
	where
	\[\tilde{\bV}_h = \left\{\bv_h\in\bV_h: \jump{\bv_h}|_e\cdot \bn_e = 0 \quad\forall~ e \in \Gamma_h\right\}.\]
\end{lemma}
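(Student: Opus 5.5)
This is the discrete inf-sup condition on the restricted space $\tilde{\bV}_h$. I would prove it with a Fortin-type argument: start from the continuous inf-sup condition and build a discrete interpolant that preserves $\cD$. Fix $q_h\in M_h\subset L_0^2(\Omega)$. Because $\Omega$ is a bounded convex polygon, the continuous inf-sup (Ladyzhenskaya--Babu\v{s}ka--Brezzi) theory gives $\bv\in\bH_0^1$ with $\nabla\cdot\bv=q_h$ and $\|\bv\|_{1}\lesssim\|q_h\|$.

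\textbf{Step 1: build a conforming-in-normal-flux interpolant.} I would take $\Pi_h\bv\in\bV_h$ to be a Brezzi--Douglas--Marini-type interpolant of degree $k$, which is well defined since $\bV_h$ contains full $\mathbb{P}_k$ on each element. Its degrees of freedom are:
\begin{itemize}
\item the edge moments $\int_e(\Pi_h\bv-\bv)\cdot\bn_e\,r\ds=0$ for all $r\in\mathbb{P}_k(e)$;
\item the interior moments $\int_E(\Pi_h\bv-\bv)\cdot\nabla r\dx=0$ for all $r\in\mathbb{P}_{k-1}(E)$;
\item suitable remaining interior moments to make the choice unique.
\end{itemize}
The edge moments make the normal component of $\Pi_h\bv$ single-valued on every interior edge, and equal to $\bv\cdot\bn=0$ on $\partial\Omega$. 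Hence $\jump{\Pi_h\bv}\cdot\bn_e=0$ on all $e\in\Gamma_h$, so $\Pi_h\bv\in\tilde{\bV}_h$.

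\textbf{Step 2: check that $\cD$ is preserved.} I would use the second expression for $\cD$. Since $\bv\in\bH_0^1$, we have $\jump{\bv}=0$, so
\[
\cD(\bv,q_h)=-\sum_E\int_E\bv\cdot\nabla q_h\dx+\sum_e\int_e\avr{\bv}\cdot\bn_e\jump{q_h}\ds .
\]
The same formula holds for $\Pi_h\bv$. Because $\nabla q_h|_E\in\mathbb{P}_{k-2}\subset\nabla\mathbb{P}_{k-1}$ type test space, the interior moments make the volume terms agree. Because $\jump{q_h}|_e\in\mathbb{P}_{k-1}(e)$ and the normal trace of $\Pi_h\bv$ is single-valued, the edge moments make the edge terms agree. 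Therefore $\cD(\Pi_h\bv,q_h)=\cD(\bv,q_h)=(\nabla\cdot\bv,q_h)=\|q_h\|^2$.

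\textbf{Step 3: prove $H^1$-stability of the interpolant,} namely $\vertiii{\Pi_h\bv}\lesssim\|\bv\|_1$. This is the step I expect to be the main obstacle. For the broken gradients, I would use a scaling argument on the reference element together with the Bramble--Hilbert lemma; this relies on preserving constants and on shape regularity \eqref{regular}. The result is $\|\nabla(\bv-\Pi_h\bv)\|_{L^2(E)}\lesssim|\bv|_{H^1(E)}$ and $\|\bv-\Pi_h\bv\|_{L^2(E)}\lesssim h_E|\bv|_{H^1(E)}$. For the jump terms, I would note that $\jump{\Pi_h\bv}=\jump{\Pi_h\bv-\bv}$. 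The continuous trace inequality in Lemma \ref{CTI} then gives
\[
\frac{\sigma_e}{h_e}\|\jump{\Pi_h\bv-\bv}\|_{L^2(e)}^2\lesssim\sum_{E\supset e}\bigl(h_E^{-2}\|\bv-\Pi_h\bv\|_{L^2(E)}^2+\|\nabla(\bv-\Pi_h\bv)\|_{L^2(E)}^2\bigr)\lesssim\sum_{E\supset e}|\bv|_{H^1(E)}^2 .
\]
Summing over elements and edges, with bounded overlap, gives $\vertiii{\Pi_h\bv}\le C_\Pi\|\bv\|_1$.

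\textbf{Step 4: conclude.} Combining the three steps,
\[
\sup_{\bw_h\in\tilde{\bV}_h}\frac{\cD(\bw_h,q_h)}{\vertiii{\bw_h}}\ge\frac{\|q_h\|^2}{C_\Pi\|\bv\|_1}\ge\frac{\|q_h\|}{C_\Pi C}.
\]
This gives the statement with $\beta^*=(C_\Pi C)^{-1}$, which is independent of $h$. For quadrilateral meshes, I would use the analogous BDM/RT-type interpolant on the reference square. If the degree-of-freedom construction proves awkward there, I would fall back on a macro-element argument.
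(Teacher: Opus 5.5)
Your proof is correct for triangular meshes, and it is essentially the argument behind the result that the paper does not prove but simply cites from \cite[Theorem 6.8]{Riv08}: the continuous inf-sup condition combined with an $H^1$-stable Fortin interpolant that preserves $\cD(\cdot,q_h)$. Your choice of BDM degrees of freedom is precisely what places $\Pi_h\bv$ in $\tilde{\bV}_h$ rather than merely in $\bV_h$.

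The one loose end is quadrilateral meshes: there the listed BDM degrees of freedom cannot be imposed on $\mathbb{P}_k(E)^2$ (already for $k=1$ there are eight normal moments for a six-dimensional space), and your fallback is only sketched; the paper, however, does not treat that case separately either.
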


We now state the positivity and boundedness properties of trilinear forms in the following lemmas. We refer to \cite[Lemma 6.39, Lemma 6.40]{DE11} for proof.
\begin{lemma}[Skew-symmetric property] \label{lem:skew}
	Using integration by parts, the  trilinear forms $\cN_1(\cdot,\cdot,\cdot)$ and $\cN_2(\cdot,\cdot,\cdot)$ defined in \eqref{tribc}  and \eqref{tribu} satisfy the followings:
	\begin{align}
		\cN_1(\bv_h,\psi_h,\chi_h) = -\cN_1(\bv_h,\chi_h,\psi_h) \quad \mbox{and} \quad \cN_2(\bv_h,\bw_h,\bphi_h) = - \cN_2(\bv_h,\bphi_h,\bw_h).
	\end{align}
	In particular, if $\psi_h=\chi_h$, then $\cN_1(\bv_h,\psi_h,\psi_h)= 0$ and if $\bw_h=\bphi_h$, then $\cN_2(\bv_h,\bw_h,\bw_h)= 0$.
\end{lemma}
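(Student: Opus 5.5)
The plan is to prove the antisymmetry identity $\cN_2(\bv_h,\bw_h,\bphi_h) = -\cN_2(\bv_h,\bphi_h,\bw_h)$ directly, by elementwise integration by parts and regrouping of the resulting edge integrals. The statement $\cN_2(\bv_h,\bw_h,\bw_h)=0$ then follows by taking $\bphi_h=\bw_h$. The case of $\cN_1$ is identical with scalar $\psi_h,\chi_h$ in place of $\bw_h,\bphi_h$. The only change is that its edge sums run over $\Gamma_h^0$, so boundary edges must be treated separately; I address this at the end.

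\textbf{Step 1: integrate by parts.} Integrate by parts the divergence term $\frac12\sum_E\int_E(\nabla\cdot\bv_h)\bw_h\cdot\bphi_h\dx$ on each element:
\begin{align*}
\frac12\int_E(\nabla\cdot\bv_h)\bw_h\cdot\bphi_h\dx
= -\frac12\int_E (\bv_h\cdot\nabla)\bw_h\cdot\bphi_h\dx
-\frac12\int_E(\bv_h\cdot\nabla)\bphi_h\cdot\bw_h\dx
+\frac12\int_{\partial E}(\bv_h\cdot\bn_E)\,\bw_h\cdot\bphi_h\ds .
\end{align*}

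\textbf{Step 2: collect element boundaries into edge sums.} Sum over $E$ and regroup the boundary integrals edge by edge, using the product rule $\jump{ab}=\avr{a}\jump{b}+\jump{a}\avr{b}$. This gives
\[
\frac12\sum_{e\in\Gamma_h}\int_e\Big(\avr{\bv_h}\cdot\bn_e\jump{\bw_h\cdot\bphi_h}+\jump{\bv_h}\cdot\bn_e\avr{\bw_h\cdot\bphi_h}\Big)\ds .
\]
On boundary edges the jump and the average both equal the trace, so the formula is consistent there as well.

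\textbf{Step 3: cancel and expand.} Substitute into \eqref{tribu}. The term $\frac12\jump{\bv_h}\cdot\bn_e\avr{\bw_h\cdot\bphi_h}$ cancels the last term of $\cN_2$. Next expand $\jump{\bw_h\cdot\bphi_h}=\jump{\bw_h}\cdot\avr{\bphi_h}+\avr{\bw_h}\cdot\jump{\bphi_h}$. This yields exactly the representation \eqref{tribuanother}:
\[
\cN_2 = \tfrac12\sum_E\int_E\big[(\bv_h\cdot\nabla)\bw_h\cdot\bphi_h-(\bv_h\cdot\nabla)\bphi_h\cdot\bw_h\big]\dx
-\tfrac12\sum_e\int_e\avr{\bv_h}\cdot\bn_e\big[\jump{\bw_h}\cdot\avr{\bphi_h}-\jump{\bphi_h}\cdot\avr{\bw_h}\big]\ds .
\]

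\textbf{Step 4: conclude.} The expression in Step 3 is manifestly antisymmetric under swapping $\bw_h$ and $\bphi_h$, which proves the identity.

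\textbf{Main obstacle: boundary edges.} For $\cN_2$, with sums over all of $\Gamma_h$, every boundary contribution is already contained in the edge sums, so Steps 2 and 3 go through verbatim. For $\cN_1$, whose edge sums run only over $\Gamma_h^0$, the integration by parts in Step 1 leaves the extra term $\frac12\int_{\partial\Omega}(\bv_h\cdot\bn)\psi_h\chi_h\ds$. This term is symmetric in $\psi_h$ and $\chi_h$, so it does not drop out automatically. I would first try to show it vanishes by using that the discrete velocity satisfies the no-slip condition weakly, noting that \eqref{tribc} is designed precisely for velocities with $\bv\cdot\bn=0$ on $\partial\Omega$. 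If that fails, the result should be stated for $\bv_h$ with $\bv_h\cdot\bn|_{\partial\Omega}=0$, as in \cite[Lemma 6.39]{DE11}; alternatively, one accepts that this boundary term must be absorbed through the boundary penalty. All remaining steps are purely algebraic bookkeeping of signs and normals.
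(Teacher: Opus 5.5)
Your route is the paper's route: integrate the $\frac12(\nabla\cdot\bv_h)$ term by parts on each element, regroup element boundaries edge by edge, and expand the jump of a product. That is exactly how the paper obtains \eqref{tribuanother}, and your interior-edge bookkeeping is right.

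\textbf{The gap is on boundary edges.} On $e\subset\partial\Omega$ the jump and the average both equal the trace, so the product rule $\jump{ab}=\avr{a}\jump{b}+\jump{a}\avr{b}$ fails there: the right-hand side equals $2ab$. You use it twice, in Step 2 and again when expanding $\jump{\bw_h\cdot\bphi_h}$ in Step 3. Treating boundary edges directly gives the following.
\begin{itemize}
\item The integration-by-parts term $\frac12\int_{\partial\Omega}(\bv_h\cdot\bn)\,\bw_h\cdot\bphi_h\ds$ cancels exactly against the boundary part of the last term of \eqref{tribu}.
\item The boundary part of the third term, $-\int_{\partial\Omega}(\bv_h\cdot\bn)\,\bw_h\cdot\bphi_h\ds$, is left over, and it is symmetric in $\bw_h,\bphi_h$.
\end{itemize}
Combined with your own (correct) boundary term for $\cN_1$, the true identities are
\[
\cN_2(\bv_h,\bw_h,\bphi_h)+\cN_2(\bv_h,\bphi_h,\bw_h)=-2\int_{\partial\Omega}(\bv_h\cdot\bn)\,\bw_h\cdot\bphi_h\ds,
\qquad
\cN_1(\bv_h,\psi_h,\chi_h)+\cN_1(\bv_h,\chi_h,\psi_h)=\int_{\partial\Omega}(\bv_h\cdot\bn)\,\psi_h\chi_h\ds .
\]
Neither right-hand side vanishes for general $\bv_h\in\bV_h$. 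On any mesh of $\Omega=(0,1)^2$, take $\bv_h=(1,0)$ and $\bw_h=(x,0)$. They have no interior jumps, and the element, third and fourth terms give $\cN_2(\bv_h,\bw_h,\bw_h)=\frac12-1-\frac12=-1$. So Step 4 does not follow. Your final representation, exactly like \eqref{tribuanother} in the paper, silently drops this boundary term.

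Your remedies for the $\cN_1$ boundary term do not close the gap either. $\bV_h$ carries no boundary constraint, and no-slip enters only through the Nitsche terms in $\cA_2$. Discrete velocities such as $\bu_h^{m-1}$ therefore have $\bu_h^{m-1}\cdot\bn\neq 0$ on $\partial\Omega$ in general. Nor can a penalty absorb anything in an algebraic identity.

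The statement needs one of two changes.
\begin{enumerate}
\item[(a)] Add the hypothesis $\bv_h\cdot\bn=0$ on $\partial\Omega$. Then both boundary terms vanish and your Steps 1--4 are valid for both forms. However, this excludes $\bxi_u^{m-1}$, $\bta_u^{m-1}$ and $\bu_h^{m-1}$, which is where the paper actually uses the lemma (e.g.\ $I_{13}=I_{14}=0$ in Lemma \ref{lem:trilinear}).
\item[(b)] Change the edge sets in \eqref{tribc}--\eqref{tribu}. Sum the $\avr{\bv}\cdot\bn_e\jump{\cdot}\avr{\cdot}$ terms over $\Gamma_h^0$ only, and the $\jump{\bv}\cdot\bn_e\avr{\cdot}$ terms over all of $\Gamma_h$. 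Then on every boundary edge the integration-by-parts contribution is cancelled by the $\jump{\bv_h}$ term and nothing else remains. Skew-symmetry holds for all $\bv_h\in\bV_h$, and your argument goes through as written.
\end{enumerate}
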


\begin{lemma}[Boundedness of trilinear terms] \label{lem:bdd}
	There exists a positive constant $C$ such that the nonlinear terms $\cN_1(\cdot,\cdot,\cdot)$ and $\cN_2(\cdot,\cdot,\cdot)$ defined in \eqref{tribc}  and \eqref{tribu} satisfy the followings:
	\begin{align}
		\cN_1(\bv_h,\psi_h, \chi_h) &\lesssim \vertiii{\bv_h} \vertiii{\psi_h} \vertiii{ \chi_h} \quad\forall~ \bv_h\in\bV_h, \psi_h, \chi_h\in X_h^0, \\
		\cN_2(\bv_h,\bw_h,\bphi_h) &\lesssim \vertiii{\bv_h} \vertiii{\bw_h} \vertiii{\bphi_h} \quad\forall~\bv_h, \bw_h, \bphi_h \in \bV_h.
	\end{align}
\end{lemma}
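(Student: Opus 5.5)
The plan is to derive both bounds from Lemma~\ref{lem:estnon}, namely \eqref{NN1} with $i=1$ and $i=2$. Each resulting piece will then be controlled by the energy norm, using the broken $L^p$ embedding \eqref{Lptoenergy} together with the inverse and discrete trace inequalities for polynomials. The argument for $\cN_1$ and $\cN_2$ is identical, so I describe it for $\cN_2$ with $\bv_h,\bw_h,\bphi_h\in\bV_h$.

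\textbf{Element terms.} In \eqref{NN1} I choose $p=4$, $q=2$, $r=4$ for $S(\bv_h,\nabla\bw_h,\bphi_h)$. Summing over $E$ and applying the discrete Hölder inequality gives
\[
\sum_E S(\bv_h,\nabla\bw_h,\bphi_h)\le \|\bv_h\|_{\bL^4}\Big(\sum_E\|\nabla\bw_h\|_{L^2(E)}^2\Big)^{1/2}\|\bphi_h\|_{\bL^4}.
\]
This is bounded by $\vertiii{\bv_h}\vertiii{\bw_h}\vertiii{\bphi_h}$ via \eqref{Lptoenergy}. The term $S(\nabla\bv_h,\bw_h,\bphi_h)$ is handled in the same way, with the $L^2$ factor placed on the broken gradient of $\bv_h$.

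\textbf{Face terms.} Here I take $p=q=4$ in $T_p$ and $T_q$. For a polynomial, the inverse inequality \eqref{inv.hypo} gives $\|\nabla\bv_h\|_{L^4(E)}\lesssim h_E^{-1}\|\bv_h\|_{L^4(E)}$. Hence
\[
h_E^{1/2}T_4(\bv_h)T_4(\bphi_h)\lesssim h_E^{1/2}h_E^{-1/4}h_E^{-1/4}\|\bv_h\|_{L^4(E)}\|\bphi_h\|_{L^4(E)}=\|\bv_h\|_{L^4(E)}\|\bphi_h\|_{L^4(E)}.
\]
The powers of $h_E$ cancel exactly, so
\[
J(\bw_h)\sum_E h_E^{1/2}T_4(\bv_h)T_4(\bphi_h)\lesssim J(\bw_h)\,\|\bv_h\|_{\bL^2(\Omega)\,\text{-type}}\ldots
\]
More precisely, applying Cauchy--Schwarz in the sum gives the bound $J(\bw_h)\|\bv_h\|_{\bL^4}\|\bphi_h\|_{\bL^4}$. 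Since $J(\bw_h)\le\vertiii{\bw_h}$, this is again at most $\vertiii{\bv_h}\vertiii{\bw_h}\vertiii{\bphi_h}$ by \eqref{Lptoenergy}. The $J(\bv_h)$ term is treated symmetrically.

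\textbf{Index sets and scalar case.} There is one bookkeeping point: for $\cN_1$ the edge sums run over $\Gamma_h^0$ only, and $J$ restricted there is still dominated by the energy norm, so nothing changes. The scalar case $\bw_h=\psi_h$, $\bphi_h=\chi_h\in X_h^0$ follows verbatim.

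The main obstacle is the face terms, where a naive trace estimate would leave negative powers of $h$. The key step is using the inverse inequality on $\nabla$ inside $T_p$, which makes the exponents balance to $h^0$. If the constants in Lemma~\ref{lem:estnon} are too coarse for this, the fallback is to bound the edge integrals directly with the $L^p$ discrete trace inequality \eqref{pDTI}: $h_E^{1/2}$ from the $J$-weighting times $h_E^{-1/4}h_E^{-1/4}$ again balances.
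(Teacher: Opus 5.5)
\textbf{Element terms.} This part of your proposal is fine. The generalized H\"older inequality over elements with exponents $(4,2,4)$, followed by \eqref{Lptoenergy}, gives exactly what you need.

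\textbf{Face terms: the gap.} After the inverse inequality you correctly have $h_E^{1/2}T_4(\bv_h)T_4(\bphi_h)\lesssim\|\bv_h\|_{L^4(E)}\|\bphi_h\|_{L^4(E)}$. The next step is false. Cauchy--Schwarz over elements only gives
\[
\sum_{E\in\Eh}\|\bv_h\|_{L^4(E)}\|\bphi_h\|_{L^4(E)}\le\Bigl(\sum_{E\in\Eh}\|\bv_h\|_{L^4(E)}^2\Bigr)^{1/2}\Bigl(\sum_{E\in\Eh}\|\bphi_h\|_{L^4(E)}^2\Bigr)^{1/2}.
\]
The $\ell^2$-sum of elementwise $L^4$ norms dominates $\|\bv_h\|_{L^4(\Omega)}=\bigl(\sum_E\|\bv_h\|_{L^4(E)}^4\bigr)^{1/4}$; it is not dominated by it. Take $\bv_h=\bphi_h$ interpolating a fixed smooth bump. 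Then $\|\bv_h\|_{L^4(E)}^2\sim h$ on $\sim h^{-2}$ elements, so the left side is $\sim h^{-1}$ while $\vertiii{\bv_h}=O(1)$.

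In fact \eqref{NN1} as stated cannot deliver the lemma for any admissible $(p,q)$, because it places an $\ell^1$ sum over elements next to $J(\bw_h)$. Use the same bump together with a piecewise constant $\bw_h$ whose jumps have size $h$ on every edge, so $J(\bw_h)=\vertiii{\bw_h}\sim1$. Then the face contribution in \eqref{NN1} is of order $h^{1/2}\cdot h\cdot h^{-2}=h^{-1/2}$.

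So the real obstacle is not the power counting in $h$, which you handle correctly. It is the norm in which the face contributions are summed. Your fallback via \eqref{pDTI}, as described, only fixes the powers of $h$ and hits the same problem if you again sum in $\ell^1$.

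\textbf{How to repair it.} Do not use the final (lossy) step of the proof of Lemma~\ref{lem:estnon}. Keep instead the edgewise $\ell^2$ sum from \eqref{tribuddd2}. With $p=q=4$, the term $I_3$ there satisfies
\begin{align*}
|I_3| &\lesssim J(\bw_h)\Bigl(\sum_{e\in\Gamma_h}|e|\,\|\avr{\bv_h}\|_{L^4(e)}^2\|\avr{\bphi_h}\|_{L^4(e)}^2\Bigr)^{1/2}\\
&\le J(\bw_h)\Bigl(\sum_{e\in\Gamma_h}|e|\,\|\avr{\bv_h}\|_{L^4(e)}^4\Bigr)^{1/4}\Bigl(\sum_{e\in\Gamma_h}|e|\,\|\avr{\bphi_h}\|_{L^4(e)}^4\Bigr)^{1/4}.
\end{align*}
By \eqref{pDTI} with $p=4$ and shape regularity, $|e|\,\|\avr{\bv_h}\|_{L^4(e)}^4\lesssim\sum_{E:\,e\subset\partial E}\|\bv_h\|_{L^4(E)}^4$. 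Each element has a bounded number of edges, so $|I_3|\lesssim J(\bw_h)\|\bv_h\|_{L^4(\Omega)}\|\bphi_h\|_{L^4(\Omega)}$.

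The average--average and jump--jump products in $I_4$ are handled the same way, with $J(\bv_h)$ in front. Then \eqref{Lptoenergy} and $J(\cdot)\le\vertiii{\cdot}$ close the argument. The paper gives no proof of its own and only cites Di Pietro--Ern \cite{DE11}; I expect the cited argument to follow this same H\"older, discrete-trace and discrete-Sobolev route.
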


\begin{lemma}[Boundedness of nonlinear terms] \label{lem:gbdd}
	There exists a positive constant $C$ such that the terms $\cG(\cdot,\cdot,\cdot)$ and $\cT(\cdot,\cdot,\cdot)$  defined in \eqref{defg} and \eqref{defT} satisfy the following:
	\begin{align}
		\cG(\psi_h,\phi_h,\chi_h) &\lesssim \|\psi_h\|_{L^\infty} \vertiii{\phi_h} \vertiii{\chi_h} \quad\forall~ \chi_h,\phi_h, \psi_h\in X_h^{0} \\
		\cT(\psi_h,\phi_h,\bw_h) &\lesssim \|\psi_h\|_{L^\infty} \vertiii{\phi_h} \|\bw_h\| \quad\forall~\phi_h, \psi_h\in X_h^{0}, \bw\in \bV_h.
	\end{align}
\end{lemma}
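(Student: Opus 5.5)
We bound $\cG$ and $\cT$ term by term from their definitions \eqref{defg} and \eqref{defT}: the element integrals by H\"older's inequality, the edge integrals by Cauchy--Schwarz against the jump weights $\sigma_e/h_e$ together with the discrete trace inequality \eqref{DTI}. Since the first argument $\psi_h$ always enters through $\|\psi_h\|_{L^\infty}$, we may pull it out pointwise everywhere; this is the main simplification compared with Lemma \ref{lem:estnon}.

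For $\cG(\psi_h,\phi_h,\chi_h)$ we proceed in three steps.
\begin{enumerate}
\item The volume term is bounded by
\[
\Big|\sum_E\int_E \psi_h\nabla\phi_h\cdot\nabla\chi_h\Big| \le \|\psi_h\|_{L^\infty}\Big(\sum_E\|\nabla\phi_h\|_{L^2(E)}^2\Big)^{1/2}\Big(\sum_E\|\nabla\chi_h\|_{L^2(E)}^2\Big)^{1/2} \le \|\psi_h\|_{L^\infty}\vertiii{\phi_h}\vertiii{\chi_h}.
\]
\item For the first edge term we use $|\avr{\psi_h\nabla\phi_h}|\le \|\psi_h\|_{L^\infty}\avr{|\nabla\phi_h|}$ and Cauchy--Schwarz:
\[
\Big|\sum_{e}\int_e \avr{\psi_h\nabla\phi_h}\cdot\bn_e\jump{\chi_h}\Big|\le \|\psi_h\|_{L^\infty}\Big(\sum_e \tfrac{h_e}{\sigma_e}\|\avr{\nabla\phi_h}\|_{L^2(e)}^2\Big)^{1/2}\Big(\sum_e \tfrac{\sigma_e}{h_e}\|\jump{\chi_h}\|_{L^2(e)}^2\Big)^{1/2}.
\]
The second factor is at most $\vertiii{\chi_h}$. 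For the first factor, \eqref{DTI} gives $h_e\|\nabla\phi_h|_E\|_{L^2(e)}^2\lesssim (h_e/h_E)\|\nabla\phi_h\|_{L^2(E)}^2$. By shape regularity \eqref{regular}, $h_e\simeq h_E$, and each element has a bounded number of edges. Since $\sigma_e\ge\sigma_e^*>0$, summing over edges gives a bound $\lesssim\vertiii{\phi_h}$.
\item The second edge term, with the roles of $\phi_h$ and $\chi_h$ exchanged, is handled identically.
\end{enumerate}
Collecting the three bounds gives the first inequality.

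For $\cT(\psi_h,\phi_h,\bw_h)$ the volume term is again simple:
\[
\Big|\sum_E\int_E\psi_h\nabla\phi_h\cdot\bw_h\Big|\le\|\psi_h\|_{L^\infty}\vertiii{\phi_h}\|\bw_h\|.
\]
The edge term is the main obstacle, because the right-hand side only allows $\|\bw_h\|$ and not $\vertiii{\bw_h}$. We write
\[
\Big|\sum_e\int_e\avr{\psi_h}\jump{\phi_h}\avr{\bw_h}\cdot\bn_e\Big|\le\|\psi_h\|_{L^\infty}\Big(\sum_e\tfrac{\sigma_e}{h_e}\|\jump{\phi_h}\|^2_{L^2(e)}\Big)^{1/2}\Big(\sum_e\tfrac{h_e}{\sigma_e}\|\avr{\bw_h}\|^2_{L^2(e)}\Big)^{1/2}.
\]
Here the weight $h_e$ exactly compensates the $h_E^{-1}$ from \eqref{DTI}: $h_e\|\bw_h|_E\|_{L^2(e)}^2\lesssim\|\bw_h\|_{L^2(E)}^2$. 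Hence the last factor is $\lesssim\|\bw_h\|$ with no loss of powers of $h$, and the $\cT$ estimate follows.

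The only points needing care are this balancing of $h_e$ against the discrete trace inequality, which is where discreteness of $\bw_h$ is essential, and the uniform bound on $1/\sigma_e$. Both are covered by mesh regularity and the choice $\sigma_e>\sigma_e^*$ of Lemma \ref{lem:prop_bilinear}. Finally, note that $\cG$ and $\cT$ only involve interior edges $\Gamma_h^0$, so no boundary-edge terms arise.
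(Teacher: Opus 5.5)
Your argument is correct and complete. Pulling out $\|\psi_h\|_{L^\infty}$ pointwise is legitimate, since each trace of a piecewise polynomial is bounded by its elementwise sup. Together with H\"older on elements, the weighted Cauchy--Schwarz split ($\sigma_e/h_e$ against $h_e/\sigma_e$) on interior edges, and \eqref{DTI} with $h_e\le h_E$ and $\sigma_e\ge\sigma_e^*$, this gives both bounds.

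The paper gives no argument of its own. It only makes a blanket reference to \cite[Lemmas 6.39--6.40]{DE11} for Lemmas \ref{lem:skew}--\ref{lem:gbdd}. The nearest in-paper reasoning is the proof of Lemma \ref{lem:estnon}. There, edge terms are handled through the multiplicative trace inequality \eqref{LPCTI} with general H\"older exponents $p,q$, so non-polynomial arguments such as $\tc_i$ or projection errors are admissible.

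Your route gives up that generality for simplicity and sharpness. Restricting to discrete functions means you only need the $L^2$ estimate \eqref{DTI}. Keeping the $\ell^2$ structure of the edge sum also shows why $\cT$ costs only $\|\bw_h\|$: the weight $h_e$ cancels the $h_E^{-1}$ from \eqref{DTI}.

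Specializing \eqref{TT} instead, with $p=\infty$, $q=2$ and an inverse inequality on $T_2(\bw_h)$, would be weaker. Because the edge contribution there is written as the $\ell^1$ sum $\sum_{E}h_E^{1/2}T_pT_q$, it would only give $\sum_{E\in\Eh}\|\bw_h\|_{L^2(E)}$, which can exceed $\|\bw_h\|$ by a factor of order $h^{-1}$. In addition, \eqref{LPCTI} as stated lacks the lower-order term $h_E^{-1/p}\|\psi\|_{L^p(E)}$ (it fails for constants), an issue your use of \eqref{DTI} avoids entirely.
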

Now, the discrete system \eqref{fds11}-\eqref{fds5} yields a system of linear algebraic equations. A use of the discrete coercivity (Lemma \ref{lem:prop_bilinear}) with  the discrete discrete inf-sup condition (Lemma \ref{lem:infsup}), and the boundedness properties from Lemmas \ref{lem:bdd}, \ref{lem:gbdd} shows the existence of unique dG solutions $(\phi_{h}^{m}, c_{1h}^{m}, c_{2h}^{m},\bu_{h}^{m},p_{h}^{m})\in X_h^0 \times X_h^0 \times X_h^0 \times \bV_h \times M_h$ at each time level $t_m$ satisfying \eqref{fds11}-\eqref{fds5}.

\noindent 
Now, one can easily prove the following qualitative property. 
\begin{lemma}\label{lem:mass}
	The fully discrete solutions $\tc_{1h}^{m}$ and $\tc_{2h}^{m}$ satisfy the following mass conservation property:
	\begin{equation}
		\sum_{E\in \Eh}\int_E \tc_{1h}^{m}  \dx = \sum_{E\in \Eh}\int_E c_{10}  \dx = \sum_{E\in \Eh}\int_E \tc_{2h}^{m}  \dx = \sum_{E\in \Eh}\int_E c_{20}  \dx = m_0|\Omega|.
	\end{equation}
\end{lemma}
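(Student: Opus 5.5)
The plan is to read the identity as a statement about the physical concentrations $c_{ih}^m=\tc_{ih}^m+m_0$ recovered in Step IV \eqref{fds5}. The shifted unknowns $\tc_{ih}^m$ live in $X_h^0\subset L_0^2(\Omega)$, so their own integral is $0$. The quantity that equals $m_0|\Omega|$ is therefore $\sum_{E}\int_E c_{ih}^m\dx$, and the argument works with that.

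\emph{Step 1 (structural argument).} Since $\tc_{ih}^{m}\in X_h^0$, the definition of $X_h^0$ gives $\sum_{E\in\Eh}\int_E \tc_{ih}^m\dx=\int_\Omega \tc_{ih}^m\dx=0$ for every $m\ge 0$ and $i=1,2$. By \eqref{fds5},
\begin{equation*}
\sum_{E\in\Eh}\int_E c_{ih}^m\dx=\sum_{E\in\Eh}\int_E \tc_{ih}^m\dx+m_0|\Omega|=m_0|\Omega|.
\end{equation*}
By the definition $m_0=\frac{1}{|\Omega|}\int_\Omega c_{10}\dx=\frac{1}{|\Omega|}\int_\Omega c_{20}\dx$ (this uses the electroneutrality condition \eqref{inmass}), the right-hand side equals $\int_\Omega c_{10}\dx=\int_\Omega c_{20}\dx$. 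This yields the whole chain of equalities, uniformly in $m$. The only requirement on the initial data is that $\tc_{ih}^0$ be taken in $X_h^0$, for instance as the $L^2$ or elliptic projection of $\tc_{i0}=c_{i0}-m_0$, which has zero mean.

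\emph{Step 2 (consistency check).} I would add a short remark that this zero-mean constraint is compatible with the scheme. If \eqref{fds12} is posed on the full broken space $\{\chi_h:\chi_h|_E\in\mathbb{P}_k(E)\}$, one can test with $\chi_{ih}\equiv 1$. The jumps and gradients of a constant vanish, so $\cA_1(\tc_{ih}^m,1)=0$ and $\cG(\tc_{ih}^{m-1}+m_0,\phi_h^m,1)=0$ directly from \eqref{bia1} and \eqref{defg}. What remains is
\begin{equation*}
\int_\Omega(\tc_{ih}^m-\tc_{ih}^{m-1})\dx=-\Dt\,\cN_1(\bu_h^{m-1},\tc_{ih}^m,1).
\end{equation*}
Using the skew-symmetry of Lemma \ref{lem:skew}, $\cN_1(\bu_h^{m-1},\tc_{ih}^m,1)=-\cN_1(\bu_h^{m-1},1,\tc_{ih}^m)$. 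This reduces to $-\tfrac12\sum_E\int_E(\nabla\cdot\bu_h^{m-1})\tc_{ih}^m\dx$ plus the edge term $\tfrac12\sum_{e\in\Gamma_h^0}\int_e\jump{\bu_h^{m-1}}\cdot\bn_e\avr{\tc_{ih}^m}\ds$. Induction on $m$ then propagates the zero mean from $\tc_{ih}^0$.

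\emph{Main obstacle.} The hard part is exactly this last trilinear remainder. It vanishes only if $\bu_h^{m-1}$ is discretely divergence-free in the dG sense, i.e.\ $\cD(\bu_h^{m-1},q)=0$ for the relevant $q$. The projection Steps II--III \eqref{fds3}--\eqref{fds4} enforce this only approximately. I would therefore rely on Step 1 as the proof, since the mean constraint is built into $X_h^0$. Step 2 I would present only as a remark: the mean mode can be removed from the test space without loss, because the Neumann problem determines $\tc_{ih}^m$ only up to the constraint defining $X_h^0$.
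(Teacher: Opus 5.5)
Your Step 1 is correct and proves the lemma, and you are right to read the left-hand sides as $c_{ih}^m=\tc_{ih}^m+m_0$. Since $X_h^0\subset L_0^2(\Omega)$, one has $\sum_{E\in\Eh}\int_E\tc_{ih}^m\dx=0$, so the identity as printed would force $m_0=0$. The paper gives no proof beyond ``one can easily prove''. The argument that phrase usually refers to, testing \eqref{fds12} with $\chi_{ih}\equiv 1$, is not available here, because $1\notin X_h^0$. The whole mechanism is the zero-mean constraint built into $X_h^0$, together with \eqref{fds5} and the definition of $m_0$ via \eqref{inmass}, exactly as you argue.

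Your Step 2 remark, which you rightly do not rely on, has two inaccuracies worth fixing.

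\emph{The boundary term.} The edge sums in \eqref{tribc} run only over $\Gamma_h^0$. Integration by parts therefore gives $\cN_1(\bv,\psi,\chi)+\cN_1(\bv,\chi,\psi)=\sum_{e\subset\partial\Omega}\int_e\bv\cdot\bn_e\,\psi\chi\ds$. So Lemma~\ref{lem:skew} needs $\bv\cdot\bn=0$ on $\partial\Omega$, which a dG velocity satisfies only weakly. The exact remainder is $\cN_1(\bu_h^{m-1},\tc_{ih}^m,1)=-\tfrac12\cD(\bu_h^{m-1},\tc_{ih}^m)+\tfrac12\int_{\partial\Omega}\bu_h^{m-1}\cdot\bn\,\tc_{ih}^m\ds$. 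Moreover, $\tc_{ih}^m$ is piecewise $\mathbb{P}_k$ and not an element of $M_h$. Even exact discrete incompressibility against $M_h$ would therefore not remove the first term.

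\emph{The closing justification.} The $\frac{1}{\Dt}$ mass term means that \eqref{fds12}, posed on the full broken space, has no constant null mode. That is unlike the pure Neumann problems \eqref{fds11} and \eqref{fds3}, so $\tc_{ih}^m$ is not determined ``only up to a constant''. Dropping the constant test function is therefore not without loss; it changes the scheme. The $X_h^0$ formulation is equivalent to the full-space formulation with an extra constant source $\lambda^m$ on the right-hand side, where $|\Omega|\lambda^m=\cN_1(\bu_h^{m-1},\tc_{ih}^m,1)$. This source vanishes when evaluated at the exact solution, so consistency is kept, but it is nonzero in general. Mass conservation is thus enforced by the choice of space rather than inherited from the unconstrained dG equations. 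That is precisely why your Step 1, and not a test-with-$1$ argument, is the right proof.
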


\noindent
\textbf{Projection Operator:}

\noindent
Define the elliptic projection operator $R_h\phi\in X_h$ for a given $\phi$ as
\begin{align} \label{eqritz}
	\cA_1(\phi-R_h\phi, \psi_h) = 0 \quad \forall~ \psi_h \in X_h^{0}\quad \text{with}\quad (\phi-R_h\phi,1) = 0,
\end{align}
which satisfies the following approximation properties (\cite[(1.34)]{GRW05spliting}).
\begin{lemma}\label{lem:ritz}
	For $\phi\in H^s$, there exists a positive constant $C$ independent of $h$ such that the followings hold for all $s\in\{1,2,\dots,k+1\}$:
	\begin{align*}
		\|\phi-R_h\phi\| + h \vertiii{\phi-R_h\phi} \le C h^{s}\|\phi\|_{H^{s}}.
	\end{align*}
\end{lemma}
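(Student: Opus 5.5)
The estimate is the standard one for the symmetric interior penalty elliptic projection, so I would argue in three steps: Galerkin orthogonality plus Céa for the energy norm, then an Aubin--Nitsche duality argument for the $L^2$ norm, with the zero-mean constraint handled throughout.

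\textbf{Well-posedness of $R_h$.} Since $X_h^0\subset L_0^2(\Omega)$, the constraint $(\phi-R_h\phi,1)=0$ simply fixes the constant: we work with $\phi$ of zero mean (as assumed for the potential). By the discrete coercivity of Lemma \ref{lem:prop_bilinear}, \eqref{eqritz} then has a unique solution $R_h\phi\in X_h^0$.

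\textbf{Energy estimate.} The first ingredient is consistency. For $\phi\in H^2$ with homogeneous Neumann data, $\jump{\phi}=0$ on interior edges and $\nabla\phi$ is single-valued. Elementwise integration by parts then gives $\cA_1(\phi,\psi_h)=(-\Delta\phi,\psi_h)$ for all $\psi_h\in X_h^0$; no boundary-edge terms appear because $\cA_1$ only sums over $\Gamma_h^0$ and $\nabla\phi\cdot\bn=0$. For $s\ge2$ this makes the orthogonality \eqref{eqritz} the Galerkin orthogonality of an SIPG scheme. For $s=1$ I would use a lifting or the modified norm $\vertiii{\cdot}_*$, which adds $\sum_e h_e\|\avr{\nabla\cdot}\|_{L^2(e)}^2$; alternatively one can just quote the result.

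Next, let $I_h\phi\in X_h^0$ be a mean-zero (shifted) elementwise $L^2$ or Lagrange interpolant. Write $\phi-R_h\phi=(\phi-I_h\phi)+(I_h\phi-R_h\phi)=\eta+\theta_h$. By coercivity and orthogonality,
\[
\gamma^*\vertiii{\theta_h}^2\le \cA_1(\theta_h,\theta_h)=-\cA_1(\eta,\theta_h).
\]
The right-hand side is bounded by $C\vertiii{\eta}_*\vertiii{\theta_h}$. To justify this, bound the edge term $\int_e\avr{\nabla\eta}\cdot\bn_e\jump{\theta_h}$ using the continuous trace inequality of Lemma \ref{CTI} applied to $\eta$, and bound the term with $\avr{\nabla\theta_h}$ using the discrete trace inequality \eqref{DTI}. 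Standard interpolation estimates give $\vertiii{\eta}_*\lesssim h^{s-1}\|\phi\|_{H^s}$. Hence $\vertiii{\phi-R_h\phi}\lesssim h^{s-1}\|\phi\|_{H^s}$.

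\textbf{$L^2$ estimate by duality.} Set $e=\phi-R_h\phi\in L_0^2$. Solve the Neumann problem $-\Delta z=e$ in $\Omega$, $\nabla z\cdot\bn=0$ on $\partial\Omega$, with $\int_\Omega z=0$; this is solvable precisely because $e$ has zero mean. Convexity of $\Omega$ gives $\|z\|_{H^2}\lesssim\|e\|$. By consistency and the symmetry of $\cA_1$,
\[
\|e\|^2=(e,-\Delta z)=\cA_1(e,z)=\cA_1(e,z-I_hz).
\]
The consistency identity is applied here with $z$ smooth and $e$ only broken $H^1$; the identity $\cA_1(z,e)=(-\Delta z,e)$ still holds because $z$ has no jumps. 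The last equality uses \eqref{eqritz}. Continuity in the starred norm then yields
\[
\|e\|^2\lesssim\vertiii{e}_*\,h\|z\|_{H^2}\lesssim h\vertiii{e}_*\|e\|.
\]
Combined with the energy estimate, this gives $\|e\|\lesssim h^{s}\|\phi\|_{H^s}$.

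\textbf{Main obstacle.} The delicate point is controlling the edge averages $\avr{\nabla e}$ of the non-discrete error. This requires $\vertiii{e}_*$ rather than $\vertiii{e}$, and $\vertiii{e}_*$ is reduced to $\vertiii{\theta_h}$ plus interpolation errors via \eqref{DTI}. A second, minor obstacle is the low-regularity case $s=1$, where consistency fails and one needs either a lifting-operator argument or simply to cite \cite{GRW05spliting}.
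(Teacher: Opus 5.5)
Your argument is correct for $s\ge 2$, but it takes a different route from the paper, which gives no proof and simply quotes \cite[(1.34)]{GRW05spliting}. You rebuild the standard SIPG argument in two steps. First, a C\'ea-type bound from coercivity and continuity of $\cA_1$ in the norm augmented by $\sum_e h_e\|\avr{\nabla\cdot}\|_{L^2(e)}^2$, with \eqref{DTI} absorbing $\avr{\nabla\theta_h}$. Second, Aubin--Nitsche with the Neumann dual problem: its solvability comes from $\phi-R_h\phi\in L_0^2(\Omega)$, and its consistency $\cA_1(z,\cdot)=(-\Delta z,\cdot)$ comes from $\nabla z\cdot\bn=0$ and $\jump{z}=0$. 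The self-contained route shows exactly where convexity of $\Omega$, the zero-mean normalization and the interior-edge-only structure of $\cA_1$ enter. In particular, the Neumann dual is the right choice: a Dirichlet dual, as used later in Theorem~\ref{thm:l2phi}, is not consistent with $\cA_1$, which has no boundary-edge terms. The citation buys only brevity.

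Three refinements.

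\textbf{Neumann data for $\phi$.} It is never used. The identity $\cA_1(\theta_h,\theta_h)=-\cA_1(\eta,\theta_h)$ is just \eqref{eqritz}, and consistency is needed only for $z$. So your proof covers every zero-mean $\phi\in H^s$, as the statement requires.

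\textbf{The case $s=1$.} The augmented norm cannot help here. For $\phi\in H^1$ the edge trace $\avr{\nabla\phi}$ does not exist, so neither the augmented norm of $\phi-I_h\phi$ nor \eqref{eqritz} itself is defined. You need a lifting-based extension of $\cA_1$, or you restrict to $s\ge 2$, which is all the paper uses later since it applies the lemma with $s=k+1$.

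\textbf{Coercivity and the norm.} Because $\cA_1$ penalizes only interior edges, the coercivity you take from Lemma~\ref{lem:prop_bilinear} holds for $\vertiii{\cdot}$ with the penalty summed over $\Gamma_h^0$, not over $\Gamma_h$ as the paper writes. To see this, take the interpolant of a smooth mean-zero function that does not vanish on $\partial\Omega$: its $\cA_1$-energy stays bounded while its boundary penalty grows like $h^{-1}$. Your argument therefore gives the estimate in the interior-edge norm. The boundary-edge contribution follows afterwards from Lemma~\ref{CTI} and your two bounds:
\[
\sum_{e\subset\partial\Omega}\frac{\sigma_e}{h_e}\|\phi-R_h\phi\|_{L^2(e)}^2\lesssim h^{-2}\|\phi-R_h\phi\|^2+\sum_{E\in\Eh}\|\nabla(\phi-R_h\phi)\|_{L^2(E)}^2\lesssim h^{2s-2}\|\phi\|_{H^s}^2,
\]
which uses the quasi-uniformity already implicit in \eqref{inv.DG}.
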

\noindent
We now define a modified elliptic projection operator $Q_h\tc_{i}\in X_h^0$ of the solution $\tc_{i}$ of \eqref{dg2} for a given $\phi$ and $R_h\phi$, satisfying
\begin{align}\label{modi:eq}
	\kappa_{i} \cA_1(\tc_{i}-Q_h\tc_{i}, \chi_h) +  \beta_i \cG(\tc_{i}+m_0, \phi-R_h\phi,\chi_h) = 0 \quad \forall~ \chi_h \in X_h^{0}.
\end{align}
Using the duality argument, one can easily find the following approximation properties for the operator mentioned above \cite{BHP25}.
\begin{lemma}\label{lem:modi}
	There exists a positive constant $C$ independent of $h$ such that the followings hold for all $s\in\{1,2,\dots,k+1\}$:
	\begin{align*}
		\| \tc_{i}-Q_h \tc_{i}\| + h  \vertiii{\tc_{i}-Q_h \tc_{i}} \le C h^{s}\left(\| \tc_{i}\|_{H^{s}}+\|\phi\|_{H^s}\right)\quad\forall~ \tc_{i}, \phi\in H^s.
	\end{align*}
\end{lemma}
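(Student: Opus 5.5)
Set $\eta = \tc_i - Q_h\tc_i$ and split it with the standard dG interpolant, or equivalently with $R_h\tc_i$ from Lemma \ref{lem:ritz}: $\eta = (\tc_i - R_h\tc_i) + (R_h\tc_i - Q_h\tc_i) =: \rho + \theta_h$, where $\theta_h\in X_h^0$. The bound on $\rho$ is Lemma \ref{lem:ritz}, so it remains to control $\theta_h$. Subtracting \eqref{eqritz} from \eqref{modi:eq} gives the discrete identity
\begin{equation*}
\kappa_i \cA_1(\theta_h,\chi_h) = -\beta_i\,\cG(\tc_i+m_0,\phi-R_h\phi,\chi_h)\quad\forall~\chi_h\in X_h^0 .
\end{equation*}

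For the energy estimate, take $\chi_h=\theta_h$ and use the coercivity in Lemma \ref{lem:prop_bilinear}. Bound the right-hand side with \eqref{GG}, taking $p=\infty$ on the coefficient. Since $\tc_i+m_0\in H^2\subset W^{1,\infty}$ is assumed smooth, its $T_p$ factor is $O(1)$. The element terms give $\|\phi-R_h\phi\|_{\vertiii{}}\vertiii{\theta_h}$. The edge terms require the broken $H^2$ seminorm of $\phi-R_h\phi$ through the continuous trace inequality (Lemma \ref{CTI}), and this is of order $h^{s-1}$ by standard approximation and inverse estimates. Together these give $\vertiii{\theta_h}\lesssim h^{s-1}\|\phi\|_{H^s}$, which with Lemma \ref{lem:ritz} yields the energy-norm part of the claim.

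For the $L^2$ estimate, use a duality argument. Let $z$ be the zero-mean solution of the adjoint Neumann problem $-\kappa_i\Delta z = \eta$ on the convex domain, so that $\|z\|_{H^2}\lesssim\|\eta\|$. By consistency of $\cA_1$, $\|\eta\|^2=\kappa_i\cA_1(\eta,z)$. Write this as $\kappa_i\cA_1(\eta,z-R_hz)+\kappa_i\cA_1(\eta,R_hz)$. The first term is $\lesssim \vertiii{\eta}\,h\|z\|_{H^2}$. The second equals $-\beta_i\cG(\tc_i+m_0,\phi-R_h\phi,R_hz)$. In that term, split $R_hz=z-(z-R_hz)$; the piece with $z-R_hz$ is $O(h^{s-1}\cdot h)\|z\|_{H^2}$ by the argument above.

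The main obstacle is $\cG(\tc_i+m_0,\phi-R_h\phi,z)$ with $z$ smooth, because the naive bound gives only $h^{s-1}$. Since $z\in H^2$ has no jumps, integrate the element term by parts elementwise: $\int_E(\tc_i+m_0)\nabla(\phi-R_h\phi)\cdot\nabla z = -\int_E(\phi-R_h\phi)\nabla\cdot((\tc_i+m_0)\nabla z)$ plus edge terms. Those edge terms cancel against the consistency face terms of $\cG$, leaving terms bounded by $\|\phi-R_h\phi\|\,\|z\|_{H^2}$ and by $J(\phi-R_h\phi)$ times traces of $z$ weighted by $h^{1/2}$. Altogether this is $\lesssim h^{s}\|\phi\|_{H^s}\|z\|_{H^2}$. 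An alternative is to write $(\tc_i+m_0)\nabla z=\nabla w+\text{(small)}$ and invoke the Galerkin orthogonality of $R_h$. Dividing by $\|\eta\|$ then gives $\|\eta\|\lesssim h^s(\|\tc_i\|_{H^s}+\|\phi\|_{H^s})$.
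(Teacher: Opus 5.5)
Your proposal is correct and follows essentially the route the paper intends: the paper gives no argument beyond invoking a duality argument and citing \cite{BHP25}, and your energy estimate for $R_h\tc_i-Q_h\tc_i$ followed by an Aubin--Nitsche argument supplies the missing details. The key step is the exact cancellation of the interior-edge terms in $\cG(\tc_i+m_0,\phi-R_h\phi,z)$, which uses $\jump{z}=0$ and $\nabla z\cdot\bn=0$ on $\partial\Omega$ and leaves $-\sum_{E\in\Eh}\int_E(\phi-R_h\phi)\nabla\cdot((\tc_i+m_0)\nabla z)\dx$.

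Two small corrections are needed. First, $H^2\not\subset W^{1,\infty}$ in two dimensions, but you only use $\|\tc_i+m_0\|_{L^\infty}$ and $\|\nabla\tc_i\|_{L^4}$, and both are controlled by $\|\tc_i\|_{H^2}$. Second, no jump terms involving $J(\phi-R_h\phi)$ survive the cancellation. This matters, because such terms would only be $O(h^{s-1})$.
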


\noindent
We also define the Stokes operator  $(\bS_h\bu,\bS_h p)\in \bV_h \times M_h$ for the weak solution $(\bu(t), p(t))$ of \eqref{dg3}-\eqref{dg4}  satisfying
\begin{equation}\label{stokesproj}
	\left.
	\begin{aligned}
		&\nu \cA_2(\bu-\bS_h\bu, \bv_h) - \cD(\bv_h, p-\bS_hp) = 0 \quad \forall~ \bv_h \in \bV_h, \\
		& ~\cD(\bu-\bS_h\bu, q_h) = 0 \quad \forall~ q_h\in M_h.
	\end{aligned}
	\right\}
\end{equation}
One can easily derive the following approximation properties for the operator mentioned above. For a proof, see \cite{GR79}.
\begin{lemma}\label{lem:stokes}
	There exists a positive constant $C$ independent of $h$ such that the followings hold for all $s\in\{1,2,\dots,k+1\}$:
	\begin{align*}
		&\|\bu-\bS_h\bu\| + h \left(\vertiii{\bu-\bS_h\bu} + \|p-\bS_hp\|\right) \le C h^{s}\left(\|\bu\|_{\bH^{s}} + \|p\|_{H^{s-1}}\right).
	\end{align*}
\end{lemma}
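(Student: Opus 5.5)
The plan is to treat the Stokes projection \eqref{stokesproj} as a dG discretization of a Stokes problem with data $(\bu,p)$, and to follow the standard Céa-type argument combined with an Aubin--Nitsche duality argument.

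\textbf{Step 1: energy estimate.} Set $(\bu_I,p_I)\in\bV_h\times M_h$, with $\bu_I$ a divergence-preserving interpolant (for instance a BDM-type projection, lying in $\tilde{\bV}_h$) satisfying $\cD(\bu-\bu_I,q_h)=0$ for all $q_h\in M_h$, and with $p_I$ the $L^2$ projection of $p$. Standard interpolation gives
\[
\vertiii{\bu-\bu_I}\lesssim h^{s-1}\|\bu\|_{\bH^s},\qquad \|p-p_I\|\lesssim h^{s-1}\|p\|_{H^{s-1}}.
\]
Write $\bu-\bS_h\bu=(\bu-\bu_I)+\bxi_h$ with $\bxi_h=\bu_I-\bS_h\bu$, and $p-\bS_hp=(p-p_I)+\eta_h$.

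Testing the first equation of \eqref{stokesproj} with $\bv_h=\bxi_h$ and using the second equation with $q_h=\eta_h$ gives $\cD(\bxi_h,\eta_h)=0$. Coercivity (Lemma \ref{lem:prop_bilinear}) then yields
\[
\nu\gamma^*\vertiii{\bxi_h}^2\lesssim \vertiii{\bu-\bu_I}_*\vertiii{\bxi_h}+|\cD(\bxi_h,p-p_I)|.
\]
Here $\vertiii{\cdot}_*$ is the energy norm augmented by $h_e^{1/2}\|\avr{\nabla\cdot}\|_{L^2(e)}$. This augmented norm is needed because $\bu-\bu_I$ is not discrete, so continuity of $\cA_2$ must be taken on $\bV$ rather than only on $\bV_h$; it is controlled via the continuous trace inequality of Lemma \ref{CTI}. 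The pressure term is treated similarly:
\[
|\cD(\bxi_h,p-p_I)|\lesssim\bigl(\|p-p_I\|+h\|\nabla(p-p_I)\|_{L^2(\Eh)}\bigr)\vertiii{\bxi_h}.
\]
This gives $\vertiii{\bu-\bS_h\bu}\lesssim h^{s-1}(\|\bu\|_{\bH^s}+\|p\|_{H^{s-1}})$.

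\textbf{Step 2: pressure estimate.} Apply the inf-sup condition (Lemma \ref{lem:infsup}) to $\eta_h$:
\[
\beta^*\|\eta_h\|\le \sup_{\bv_h}\frac{\cD(\bv_h,\eta_h)}{\vertiii{\bv_h}}=\sup_{\bv_h}\frac{\nu\cA_2(\bu-\bS_h\bu,\bv_h)-\cD(\bv_h,p-p_I)}{\vertiii{\bv_h}}.
\]
The right-hand side is bounded by the velocity estimate from Step 1 together with $\|p-p_I\|$. This gives the $h^{s-1}$ bound for $\|p-\bS_hp\|$.

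\textbf{Step 3: $L^2$ duality.} Solve the dual Stokes problem
\[
-\nu\Delta\bpsi+\nabla r=\bu-\bS_h\bu,\qquad \nabla\cdot\bpsi=0,\qquad \bpsi|_{\partial\Omega}=0.
\]
Since $\Omega$ is convex, this problem has $H^2\times H^1$ regularity:
\[
\|\bpsi\|_{\bH^2}+\|r\|_{H^1}\lesssim\|\bu-\bS_h\bu\|.
\]
Consistency of the symmetric interior-penalty forms gives
\[
\|\bu-\bS_h\bu\|^2=\nu\cA_2(\bu-\bS_h\bu,\bpsi)-\cD(\bu-\bS_h\bu,r)+\cD(\bpsi,p-\bS_hp).
\]
Subtract the Galerkin orthogonality \eqref{stokesproj} tested with $(\bpsi_I,r_I)$, which are interpolants of the dual solution. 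Each resulting term is a product of an $O(h^{s-1})$ primal error and an $O(h)$ dual interpolation error, giving $\|\bu-\bS_h\bu\|\lesssim h^s(\|\bu\|_{\bH^s}+\|p\|_{H^{s-1}})$. Combining Steps 1--3 gives the stated bound.

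The main obstacle is bookkeeping rather than a conceptual gap. One point is that the consistency identities need $\avr{\cdot}$ and $\jump{\cdot}$ of the exact solution handled via the augmented norm. The other is that the pressure space $M_h$ is discontinuous, so the edge term $\cD(\bpsi,p-\bS_hp)$ must be controlled using $\jump{\bpsi}=0$ and the continuity of $\bpsi$.
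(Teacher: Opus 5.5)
The paper does not prove this lemma. It asserts the estimates and refers to Girault--Raviart \cite{GR79}, whose analysis is for conforming (continuous-velocity) discretizations: Brezzi's saddle-point theory followed by an Aubin--Nitsche duality argument. Your proposal is that same standard argument, carried out for the interior-penalty forms $\cA_2$ and $\cD$ that actually define \eqref{stokesproj}, and it is correct. In that sense it is more complete than the paper. The cited theory does not directly cover weakly imposed Dirichlet data, discontinuous pressures, or non-discrete arguments in $\cA_2$. You supply the dG-specific ingredients the citation glosses over:
\begin{enumerate}
\item the energy norm augmented by $h_e^{1/2}\|\avr{\nabla\cdot}\|_{L^2(e)}$, needed for continuity of $\cA_2$ off $\bV_h$;
\item a BDM$_k$ interpolant with $\cD(\bu-\bu_I,q_h)=0$, which holds because its normal trace is single-valued, it vanishes on $\partial\Omega$, and $\nabla\cdot\bu_I$ is the $L^2$ projection of $\nabla\cdot\bu=0$ onto piecewise $\mathbb{P}_{k-1}$;
\item consistency of the dual problem, using $\jump{\bpsi}=0$ on all of $\Gamma_h$.
\end{enumerate}
A dG reference such as \cite{GRW05domain} would have been the more apt citation.

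Three small bookkeeping points.

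\textbf{Range of $s$.} The augmented norm and the term $h\|\nabla(p-p_I)\|_{L^2(\Eh)}$ require $\bu\in\bH^2$ and $p\in H^1$. So your argument gives the lemma for $2\le s\le k+1$. For $s=1$ the projection \eqref{stokesproj} is not even defined, since the forms involve edge traces of $\nabla\bu$ and $p$. That is a defect of the statement, not of your proof, and only $s=k+1$ is used later.

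\textbf{Sign in the duality identity.} Since $\cD(\bpsi,q)=0$ for every $q$, insert that term with a minus sign. Subtracting the orthogonality relation tested with $(\bpsi_I,r_I)$ then gives
\[
\|\bu-\bS_h\bu\|^2=\nu\cA_2(\bu-\bS_h\bu,\bpsi-\bpsi_I)-\cD(\bpsi-\bpsi_I,p-\bS_hp)-\cD(\bu-\bS_h\bu,r-r_I).
\]

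\textbf{Which expression for $\cD$.} Derive $\sum_{E\in\Eh}\int_E\nabla r\cdot(\bu-\bS_h\bu)\dx=-\cD(\bu-\bS_h\bu,r)$ from the first (defining) expression for $\cD$. The paper's second expression has its edge sum over $\Gamma_h$ rather than $\Gamma_h^0$. Because $r$ does not vanish on $\partial\Omega$, using it would add a spurious term $\int_{\partial\Omega}r\,(\bu-\bS_h\bu)\cdot\bn\ds$.
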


\section{A Priori Error Analysis} \label{Sec:ee}
%\se

This section analyses the error due to space and temporal discretization.
Define any continuous in time function $\phi(t)$ at a time level $t_{m}$ as $\phi^m = \phi(t_{m})$. Set
\begin{align}\label{errorsplit}
	e_\phi^m = \phi^m - \phi_h^m, \quad
	e_{c_{i}}^m = \tc_{i}^m - \tc_{ih}^m = c_{i}^m - c_{ih}^m, \quad \e_u^m = \bu^m - \bu_h^m, \quad \heu^m = \bu^m - \hbu_h^m, \quad e_p^m = p^m - p_h^m.	
\end{align}
Then, from \eqref{dg1}-\eqref{dg4} and \eqref{fds11}-\eqref{fds5}, there hold
\begin{align}
	&\quad \mu \cA_1(\ephi^{m}, \psi_h) = \left(\eco^{m-1}-\ect^{m-1}, \psi_h\right) + \left((\tc_1^{m}-\tc_1^{m-1})-(\tc_2^{m}-\tc_2^{m-1}), \psi_h\right) \quad \forall ~ \psi \in X_h^{0} \label{eephi} \\
	& \left(\frac{\eci^{m}-\eci^{m-1}}{ \Dt}, \chi_{ih}\right) + \kappa_i\cA_1(\eci^{m},\chi_{ih}) = E_{c_i}(\chi_{ih}) + \Lambda_{c_i}(\chi_{ih}) + F_{c_i}(\chi_{ih}) \quad \forall~ \chi_{ih}\in X_h^{0} \label{eeci} \\
	& 
	\begin{aligned}
		\left(\frac{\heu^{m}-\eu^{m-1}}{\Dt}, \bv_h \right) + \nu \cA_2(\heu^{m},\bv_h) - \cD(\bv_h, \ep^{m-1}) =  & ~  E_{u}(\bv_h) + \Lambda_u(\bv_h) + F_u(\bv_h)  \\
		& + \cD(\bv_h, p^{m}-p^{m-1}) \quad \forall~ \bv_h \in \bV_h
	\end{aligned} \label{eeu}\\
	& \cA_1(\ep^{m}-\ep^{m-1},q_h) = \cA_1(p^{m}-p^{m-1}, q_h) - \frac{1}{\Dt} \cD(\heu^{m},q_h), \quad \forall~ q_h \in M_h \label{eep} \\
	& \left(\frac{\eu^{m}-\heu^{m}}{\Dt}, \bw_h\right) - \cD(\bw_h, \ep^{m}-\ep^{m-1}) = -\cD(\bw_h, p^{m}-p^{m-1}) \quad \forall ~ \bw_h \in \bV_h, \label{eeuf}
\end{align}
where
\begin{align}
	& E_{c_i}(\chi_{ih}) = \left(\frac{\tc_{i}^{m}-\tc_{i}^{m-1}}{\Dt} -  \pt \tc_{i}^{m}, \chi_{ih}\right), \quad  %\label{Ec}\\
	E_u(\bv_h) = \left(\frac{\bu^{m}-\bu^{m-1}}{\Dt}-  \pt \bu^{m}, \bv_h\right) \label{Eu} \\
	& \Lambda_{c_i}(\chi_{ih}) = \cN_1(\bu_h^{m-1}, \tc_{ih}^{m}, \chi_{ih}) - \cN_1(\bu^{m},\tc_{i}^{m},\chi_{ih}) \label{Lambdac} \\
	& \Lambda_{u}(\bv_h) = \cN_2(\bu_h^{m-1}, \hbu_h^{m}, \bv_h) - \cN_2(\bu^{m},\bu^{m},\bv_h) \label{Lambdau}\\
	& F_{c_i}(\chi_{ih}) = \beta_i \cG(\tc_{ih}^{m-1}+m_0, \phi_h^{m}, \chi_{ih}) - \beta_i \cG(\tc_{i}^{m}+m_0, \phi^{m}, \chi_{ih}) \label{Fc}\\
	& F_u(\bv_h) = \cT(\tc_{1h}^{m-1}-\tc_{2h}^{m-1}, \phi_h^{m}, \bv_h) - \cT(\tc_1^{m} - \tc_2^{m}, \phi^{m}, \bv_h).\label{Fu}
\end{align}

\noindent
Using the projection operator, we split the errors as follows
\begin{equation}\label{usplit}
	\left.
	\begin{aligned}
		\ephi^{m} = \phi^m - \phi_h^{m} = \underbrace{\phi^{m} - R_h\phi^{m}}_{\xi_\phi^{m}} + \underbrace{R_h\phi^{m} - \phi_h^{m}}_{\eta_{\phi}^{m}},
		\quad & \quad
		\eci^{m} = \tc_i^m - \tc_{ih}^{m} = \underbrace{\tc_i^{m} - Q_h\tc_i^{m}}_{\xi_{c_i}^m} + \underbrace{Q_h\tc_{i}^{m} - \tc_{ih}^{m}}_{\eta_{c_i}^m} \\
		\eu^m = \bu^m - \bu_h^m = \underbrace{\bu^m - \bS_h\bu^m}_{\bxi_u^m} + \underbrace{\bS_h\bu^m - \bu_h^m}_{\bta_u^m},
		\quad & \quad
		\heu^m = \bu^m - \hbu_h^m =  \underbrace{\bu^m - \bS_h\bu^m}_{\bxi_u^m} + \underbrace{\bS_h\bu^m - \hbu_h^m}_{\hbta_u^m}, \\
		e_p^m = p^m - p_h^m = & \underbrace{p^m - \bS_hp^m}_{\xi_p^m} + \underbrace{\bS_hp^m - p_h^m}_{\eta_p^m},		
	\end{aligned}\right\}
\end{equation}
Now, we rewrite \eqref{eephi}-\eqref{eeuf} using \eqref{eqritz}, \eqref{modi:eq} and \eqref{stokesproj} as: For all $(\psi_h, \chi_{ih}, \bv_h, q_h, \bw_h)\in X_h^{0}\times X_h^{0}\times \bV_h \times M_h \times \bV_h$
\begin{align}
	&\quad \mu \cA_1(\eta_\phi^{m}, \psi_h) = \left(\xi_{c_1}^{m-1}-\xi_{c_2}^{m-1}, \psi_h\right) + \left(\eta_{c_1}^{m-1}-\eta_{c_2}^{m-1}, \psi_h\right) + \left((\tc_1^{m}-\tc_1^{m-1})-(\tc_2^{m}-\tc_2^{m-1}), \psi_h\right), \label{erretaphi}
	\\
	& \left(\frac{\eta_{c_i}^{m}-\eta_{c_i}^{m-1}}{ \Dt}, \chi_{ih}\right) + \kappa_i\cA_1(\eta_{c_i}^{m},\chi_{ih}) = - \left(\frac{\xi_{c_i}^{m}-\xi_{c_i}^{m-1}}{ \Dt}, \chi_{ih}\right) + E_{c_i}(\chi_{ih}) + \Lambda_{c_i}(\chi_{ih}) + H_{c_i}(\chi_{ih}), \label{erretaci} 
	\\
	& 
	\begin{aligned}\label{erretau}
		\left(\frac{\hbta_u^{m}-\bta_u^{m-1}}{\Dt}, \bv_h \right) + \nu \cA_2(\hbta_u^{m},\bv_h) - \cD(\bv_h, \eta_p^{m-1}) = -\left(\frac{\bxi_u^{m}-\bxi_u^{m-1}}{\Dt}, \bv_h \right) +   ~  E_{u}(\bv_h) + \Lambda_u(\bv_h)   \\
		+ F_u(\bv_h) + \cD(\bv_h, p^{m}-p^{m-1}) - \cD(\bv_h, \xi_p^{m}-\xi_p^{m-1}),
	\end{aligned} \\
	& \cA_1(\eta_p^{m}-\eta_p^{m-1},q_h) = - \cA_1(\xi_p^{m}-\xi_p^{m-1},q_h) + \cA_1(p^{m}-p^{m-1}, q_h) - \frac{1}{\Dt} \cD(\hbta_u^{m},q_h), \label{erretap} 
	\\
	& \left(\frac{\bta_u^{m}-\hbta_u^{m}}{\Dt}, \bw_h\right) - \cD(\bw_h, \eta_p^{m}-\eta_p^{m-1}) =  \cD(\bw_h, \xi_p^{m}-\xi_p^{m-1}) - \cD(\bw_h, p^{m}-p^{m-1}), \label{erretauf}
\end{align}
where 
\begin{align} \label{Hc}
	H_{c_i}(\chi_{ih}) & = F_{c_i}(\chi_{ih}) + \beta_i \cG(\tc_{i}^{m}+m_0, \xi_\phi^{m}, \chi_{ih}) \nonumber\\
	& = \beta_i \cG(\tc_{ih}^{m-1}+m_0, \phi_h^{m}, \chi_{ih}) - \beta_i \cG(\tc_{i}^{m}+m_0, \phi^{m}, \chi_{ih}) + \beta_i \cG(\tc_{i}^{m}+m_0, \xi_\phi^{m}, \chi_{ih}).
\end{align}

Before the error estimates, we prove several lemmas that help our subsequent analysis.
\begin{lemma} \label{lem:trilinear}
	For any $\eta_{c_i}^m\in X_h^0$ and $\hbta_u^m\in \bV_h$ defined in \eqref{usplit}, there holds
	\begin{align*}%\label{lelambdaest}
		\Lambda_{c_i}(\eta_{c_i}^{m}) \lesssim & ~ \vertiii{\eta_{c_i}^{m}} \bigg(h^{-1}\|\bxi_{u}^{m-1}\|\vertiii{\xi_{c_i}^{m}} + h^{-1}\|\xi_{c_i}^{m}\|\vertiii{\bxi_{u}^{m-1}} + \vertiii{\bxi_u^{m-1}}\vertiii{\xi_{c_i}^{m}}   \nonumber\\
		& + \left(\|\bxi_u^{m-1}\| + h\vertiii{\bxi_u^{m-1}} \right) \|\tc_{i}^{m}\|_{H^2} +\left( \|\bu^{m}-\bu^{m-1}\|+ \|\nabla(\bu^{m}-\bu^{m-1})\|\right)\|\tc_{i}^{m}\|_{H^2}  \nonumber\\
		& + \left(\|\xi_{c_i}^{m}\|+ h\vertiii{\xi_{c_i}^{m}}\right) \|\bu^{m-1}\|_{H^2} + \left(h^{-2}\|\xi_{c_i}^{m}\| + h^{-1}\vertiii{\xi_{c_{i}}^{m}} + \|\tc_{i}^{m}\|_{H^2}^{2}\right)\|\bta_u^{m-1}\|\bigg),
	\end{align*}
	and 
	\begin{align*}%\label{lelambdaest}
		\Lambda_{u}(\hbta_u^{m}) \lesssim & ~ \vertiii{\hbta_u^{m}} \bigg(h^{-1}\|\bxi_{u}^{m-1}\|\vertiii{\bxi_u^m} + h^{-1}\|\bxi_u^m\|\vertiii{\bxi_{u}^{m-1}} + \vertiii{\bxi_u^{m-1}}\vertiii{\bxi_u^m}   \nonumber\\
		& + \left(\|\bxi_u^{m-1}\| + h\vertiii{\bxi_u^{m-1}} \right) \|\bu^{m}\|_{H^2} +\left( \|\bu^{m}-\bu^{m-1}\|+ \|\nabla(\bu^{m}-\bu^{m-1})\|\right)\|\bu^{m}\|_{H^2}  \nonumber\\
		& + \left(\|\bxi_u^m\|+ h\vertiii{\bxi_u^m}\right) \|\bu^{m-1}\|_{H^2} + \left(h^{-2}\|\bxi_u^m\| + h^{-1}\vertiii{\xi_{c_{i}}^{m}} + \|\bu^{m}\|_{H^2}^{2}\right)\|\bta_u^{m-1}\|\bigg).
	\end{align*}
\end{lemma}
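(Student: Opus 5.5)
The plan is to split $\Lambda_{c_i}$ into pieces, each carrying one error component, and to bound each piece with Lemma \ref{lem:estnon}, Lemma \ref{lem:bdd}, the inverse inequalities \eqref{inv.hypo}--\eqref{inv.DG}, and the approximation properties of Lemmas \ref{lem:modi} and \ref{lem:stokes}. Write $\bu_h^{m-1}=\bu^{m-1}-\bxi_u^{m-1}-\bta_u^{m-1}$ and $\tc_{ih}^m=\tc_i^m-\xi_{c_i}^m-\eta_{c_i}^m$. Trilinearity then gives
\begin{align*}
\Lambda_{c_i}(\eta_{c_i}^m) = {}& \cN_1(\bu^{m-1}-\bu^m,\tc_i^m,\eta_{c_i}^m) - \cN_1(\bu^{m-1},\xi_{c_i}^m,\eta_{c_i}^m) - \cN_1(\bxi_u^{m-1},\tc_i^m,\eta_{c_i}^m) \\
&+ \cN_1(\bxi_u^{m-1},\xi_{c_i}^m,\eta_{c_i}^m) - \cN_1(\bta_u^{m-1},\tc_i^m,\eta_{c_i}^m) + \cN_1(\bta_u^{m-1},\xi_{c_i}^m,\eta_{c_i}^m),
\end{align*}
because $\cN_1(\bu_h^{m-1},\eta_{c_i}^m,\eta_{c_i}^m)=0$ by Lemma \ref{lem:skew}. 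Jumps of the smooth functions $\tc_i^m$ and $\bu^{m-1}$ vanish, so the $J(\cdot)$ terms in \eqref{NN1}--\eqref{NN2} survive only for discrete or projection-error arguments.

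The terms are handled as follows.
\begin{enumerate}
\item For $\cN_1(\bu^{m-1}-\bu^m,\tc_i^m,\eta)$, apply \eqref{NN1} with the $L^4\times L^4\times L^2$ and $L^2\times L^4\times L^4$ Hölder splits, together with \eqref{Lptoenergy} and Sobolev embedding of $\tc_i^m\in H^2$. This gives $(\|\bu^m-\bu^{m-1}\|+\|\nabla(\bu^m-\bu^{m-1})\|)\|\tc_i^m\|_{H^2}\vertiii{\eta}$.
\item For $\cN_1(\bu^{m-1},\xi_{c_i}^m,\eta)$, use \eqref{NN2} to move the derivative off $\xi$. With $\bu^{m-1}\in L^\infty\cap W^{1,4}$, this gives $\|\xi\|\,\|\bu^{m-1}\|_{H^2}\vertiii{\eta}$. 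The edge term $J(\xi)$ is controlled by the continuous trace inequality, Lemma \ref{CTI}, which produces the $h\vertiii{\xi}$ contribution.
\item $\cN_1(\bxi_u^{m-1},\tc_i^m,\eta)$ is treated symmetrically with $\tc_i^m\in W^{1,\infty}$. This gives $(\|\bxi_u^{m-1}\|+h\vertiii{\bxi_u^{m-1}})\|\tc_i^m\|_{H^2}$.
\item For the product term $\cN_1(\bxi_u^{m-1},\xi_{c_i}^m,\eta)$, use \eqref{NN1} with $L^4$ norms. Bound $\|\cdot\|_{L^4}$ via $\|\cdot\|^{1/2}\vertiii{\cdot}^{1/2}$ and the trace quantities $h^{1/2}T_p T_q$, then apply Young's inequality. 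This gives $h^{-1}\|\bxi_u^{m-1}\|\vertiii{\xi}+h^{-1}\|\xi\|\vertiii{\bxi_u^{m-1}}+\vertiii{\bxi_u^{m-1}}\vertiii{\xi}$.
\item The last two terms must produce only $\|\bta_u^{m-1}\|$, not its energy norm, so that the Gronwall argument closes. For $\cN_1(\bta_u^{m-1},\tc_i^m,\eta)$, use the form \eqref{NN2} and place $\bta$ in $L^2$ with $\nabla\tc_i^m\in L^4$ and $\eta\in L^4$. The edge terms containing $J(\bta)$ are converted using the discrete trace inequality \eqref{pDTI} applied to $\bta$ alone, which gives $h^{-1/2}\|\bta\|_{L^2(E)}$; this is paired with $h^{1/2}$ and the norms of $\tc_i^m$, whose square appears via Agmon's inequality \eqref{AI}. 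For $\cN_1(\bta_u^{m-1},\xi_{c_i}^m,\eta)$, put $\bta$ in $L^2$, $\nabla\xi$ in $L^4$ and $\eta$ in $L^4$. Then use the inverse estimate $\|\bta\|_{L^\infty}\lesssim h^{-1}\|\bta\|$ and insert $\xi=\tc_i-Q_h\tc_i$ elementwise with a local interpolant, which produces $h^{-2}\|\xi\|+h^{-1}\vertiii{\xi}$.
\end{enumerate}

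The estimate for $\Lambda_u(\hbta_u^m)$ follows by the same six-term split with $\cN_2$ and $\hbu_h^m=\bu^m-\bxi_u^m-\hbta_u^m$. Skew-symmetry again kills the term $\cN_2(\bu_h^{m-1},\hbta_u^m,\hbta_u^m)$.

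The main obstacle is the two $\bta_u^{m-1}$ terms in step 5, specifically the face integrals. There $\bta$ appears inside averages and jumps and must be controlled only by its $L^2$ norm. I would use the discrete trace and inverse inequalities for every factor that is a polynomial, and the continuous trace inequality \eqref{LPCTI} for the smooth factor, tracking powers of $h$ so that they cancel. If some term does not cancel, I would use the fact that $\xi$ terms carry extra powers of $h$ by Lemma \ref{lem:modi}, which is what the $h^{-2}\|\xi\|$ weight in the statement suggests.
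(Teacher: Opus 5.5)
Your decomposition is the paper's. In your order, your six terms are its $I_{19}$, $I_{17}$, $I_{15}$, $I_{11}$, $I_{16}$, $I_{12}$, and you merge its three vanishing terms $I_{13}+I_{14}+I_{18}$ into $-\cN_1(\bu_h^{m-1},\eta_{c_i}^m,\eta_{c_i}^m)=0$; the toolkit is also the same. However, step 2 fails as written. The bound \eqref{NN2} does not take the derivative off $\xi_{c_i}^m$: it still contains $S(\bu^{m-1},\nabla\xi_{c_i}^m,\eta_{c_i}^m)$ and a face term in which $\xi_{c_i}^m$ enters through its jump. These only give quantities of size $\vertiii{\xi_{c_i}^m}$ or $h^{-1}\|\xi_{c_i}^m\|$, i.e.\ $O(h^k)$, rather than the $O(h^{k+1})$ quantity $\|\xi_{c_i}^m\|+h\vertiii{\xi_{c_i}^m}$ you claim. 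After Young's inequality this would destroy the $h^{2k+2}$ rate in Lemma~\ref{lem:l2cfinal}. The paper handles $I_{17}$ in two steps. First it uses the antisymmetry $\cN_1(\bu^{m-1},\xi_{c_i}^m,\eta_{c_i}^m)=-\cN_1(\bu^{m-1},\eta_{c_i}^m,\xi_{c_i}^m)$, which is valid because $\bu^{m-1}$ vanishes on $\partial\Omega$. Then it applies \eqref{NN1}. Now $J(\bu^{m-1})=0$, and the derivatives fall only on $\eta_{c_i}^m$ and $\bu^{m-1}$; this is where $\nabla\bu^{m-1}\in L^4$ is used. Also $\xi_{c_i}^m$ appears on faces only through $\avr{\xi_{c_i}^m}$, paired with $\jump{\eta_{c_i}^m}$. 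Lemma~\ref{CTI} gives $h_e\|\avr{\xi_{c_i}^m}\|_{L^2(e)}^2\lesssim\|\xi_{c_i}^m\|_{L^2(E)}^2+h^2\|\nabla\xi_{c_i}^m\|_{L^2(E)}^2$, which yields $(\|\xi_{c_i}^m\|+h\vertiii{\xi_{c_i}^m})\|\bu^{m-1}\|_{H^2}\vertiii{\eta_{c_i}^m}$. The same fix is needed for $\cN_2(\bu^{m-1},\bxi_u^m,\hbta_u^m)$ in $\Lambda_u$.

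There are also three smaller points.

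\textbf{Step 3.} $H^2(\Omega)$ does not embed in $W^{1,\infty}(\Omega)$ in two dimensions. Use the \eqref{NN2} splitting with $\bxi_u^{m-1}\in L^2$, $\nabla\tc_i^m\in L^4$, $\eta_{c_i}^m\in L^4$ and $\tc_i^m\in L^\infty$, as in the paper's $I_{15}$ and in your own step 5.

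\textbf{Step 5, first term.} In the \eqref{NN2} form the face term carries $\avr{\bta_u^{m-1}}$ against $\jump{\eta_{c_i}^m}$, not $J(\bta_u^{m-1})$. This is the reason for using that form: in the \eqref{NN1} form, the terms with $\nabla\cdot\bta_u^{m-1}$ and $\jump{\bta_u^{m-1}}$ would only give $h^{-1}\|\bta_u^{m-1}\|\vertiii{\eta_{c_i}^m}$.

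\textbf{Step 5, second term.} For $\cN_1(\bta_u^{m-1},\xi_{c_i}^m,\eta_{c_i}^m)$ you do not need $\|\nabla_h\xi_{c_i}^m\|_{L^4}$. Lemma~\ref{lem:modi} does not control it, and your interpolant detour would leave an extra interpolation error outside the stated form. The paper keeps $\xi_{c_i}^m$ in $L^2$-based norms and puts the $L^\infty$ norm on the discrete factors, using $\|\eta_{c_i}^m\|_{L^\infty(E)}\lesssim h^{-1}\|\eta_{c_i}^m\|_{L^2(E)}$, $\|\nabla\eta_{c_i}^m\|_{L^\infty(E)}\lesssim h^{-1}\|\nabla\eta_{c_i}^m\|_{L^2(E)}$ and $\|\bta_u^{m-1}\|_{L^\infty}\lesssim h^{-1}\|\bta_u^{m-1}\|$. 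This is where the weights $h^{-2}\|\xi_{c_i}^m\|+h^{-1}\vertiii{\xi_{c_i}^m}$ in the statement come from.
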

\begin{proof}
	Choose $\chi_{ih} = \eta_{c_i}^m$ in \eqref{Lambdac}. Then,  we rewrite it using \eqref{usplit} as
	\begin{align*}
		\Lambda_{c_i}(\eta_{c_i}^{m}) & = \cN_1(\eu^{m-1},\eci^{m},\eta_{c_i}^{m}) - \cN_1(\eu^{m-1},\tc_{i}^{m},\eta_{c_i}^{m}) - \cN_1(\bu^{m-1},\eci^{m},\eta_{c_i}^{m}) - \cN_1(\bu^{m}-\bu^{m-1},\tc_{i}^{m},\eta_{c_i}^{m}) \nonumber\\
		& = \cN_1(\bxi_u^{m-1},\xi_{c_i}^{m},\eta_{c_i}^{m}) + \cN_1(\bta_u^{m-1},\xi_{c_{i}}^{m},\eta_{c_i}^{m}) + \cN_1(\bxi_u^{m-1},\eta_{c_i}^{m},\eta_{c_i}^{m}) +
		\cN_1(\bta_u^{m-1},\eta_{c_i}^{m},\eta_{c_i}^{m}) \nonumber\\
		& \quad - \cN_1(\bxi_u^{m-1},\tc_{i}^{m},\eta_{c_i}^{m}) - \cN_1(\bta_u^{m-1},\tc_{i}^{m},\eta_{c_i}^{m}) - \cN_1(\bu^{m-1},\xi_{c_i}^{m},\eta_{c_i}^{m}) - \cN_1(\bu^{m-1},\eta_{c_i}^{m},\eta_{c_i}^{m}) \nonumber\\
		&\quad - \cN_1(\bu^{m}-\bu^{m-1},\tc_{i}^{m},\eta_{c_i}^{m}) = I_{11}+\cdots+I_{19}. 
	\end{align*} %\label{lelambda}
	Due to the skew-symmetry property of trilinear terms (Lemma \ref{lem:skew}), $I_{13}=I_{14}=I_{18}=0$. A use of \eqref{NN1} of Lemma \ref{lem:estnon} with the inverse hypothesis \eqref{inv.hypo} and the discrete Poincar\'e inequality \eqref{DPI} yields
	\begin{align*}
		 |I_{11}| & \lesssim \sum_{E\in\Eh} \left(\|\bxi_u^{m-1}\|_{L^2(E)}\|\nabla\xi_{c_{i}}^{m}\|_{L^2(E)}\|\eta_{c_i}^{m}\|_{L^\infty(E)} + \|\nabla\bxi_u^{m-1}\|_{L^2(E)}\|\xi_{c_{i}}^{m}\|_{L^2(E)}\|\eta_{c_i}^{m}\|_{L^\infty(E)}\right) \nonumber\\
		& \qquad + J(\xi_{c_i}^{m})\sum_{E\in\Eh}\left( h_E^{\frac{1}{2}} \|\bxi_u^{m-1}\|_{\bL^2(E)}^{\frac{1}{2}}\|\nabla\bxi_u^{m-1}\|_{L^2(E)}^{\frac{1}{2}} \|\eta_{c_i}^{m}\|_{L^\infty(E)} \right) \nonumber\\
		&\qquad + J(\bxi_{u}^{m-1})\sum_{E\in\Eh}\left(h_E^{\frac{1}{2}}\|\xi_{c_i}^{m}\|_{L^2(E)}^{\frac{1}{2}}\|\nabla\xi_{c_i}^{m}\|_{L^2(E)}^{\frac{1}{2}} \|\eta_{c_i}^{m}\|_{L^\infty(E)} \right) \nonumber\\
		&	\lesssim h^{-1} \left(\|\bxi_{u}^{m-1}\|\vertiii{\xi_{c_i}^{m}} + \|\xi_{c_i}^{m}\|\vertiii{\bxi_{u}^{m-1}} + h\vertiii{\bxi_u^{m-1}}\vertiii{\xi_{c_i}^{m}}\right)\vertiii{\eta_{c_i}^{m}}.
	\end{align*}
	To estimate $I_{12}$, we apply the estimate \eqref{NN2} of the Lemma \ref{lem:estnon} with the inverse hypothesis \eqref{inv.hypo} and the discrete Poincar\'e inequality \eqref{DPI} to derive
	\begin{align*}
		|I_{12}| & \lesssim \sum_{E\in\Eh} \left(\|\bta_u^{m-1}\|_{L^2(E)}\|\nabla\xi_{c_{i}}^{m}\|_{L^2(E)}\|\eta_{c_i}^{m}\|_{L^\infty(E)} + \|\bta_u^{m-1}\|_{L^2(E)}\|\xi_{c_{i}}^{m}\|_{L^2(E)}\|\nabla\eta_{c_i}^{m}\|_{L^\infty(E)}\right) \nonumber\\
		&  \qquad+ J(\xi_{c_i}^{m})\sum_{E\in\Eh}\left( h_E^{\frac{1}{2}} \|\bta_u^{m-1}\|_{L^2(E)}^{\frac{1}{2}}\|\nabla\bta_u^{m-1}\|_{L^2(E)}^{\frac{1}{2}}\|\eta_{c_i}^{m}\|_{L^\infty(E)}  \right) \nonumber\\
		&\qquad + J(\eta_{c_i}^{m})\sum_{E\in\Eh}\left( h_E^{\frac{1}{2}} \|\bta_u^{m-1}\|_{L^\infty(E)}\|\xi_{c_i}^{m}\|_{L^2(E)}^{\frac{1}{2}}\|\nabla\xi_{c_i}^{m}\|_{L^2(E)}^{\frac{1}{2}} \right) \nonumber\\
		&\lesssim  h^{-2}\left(\|\xi_{c_{i}}^{m}\|+ h\vertiii{\xi_{c_{i}}^{m}}\right) \|\bta_u^{m-1}\| \vertiii{\eta_{c_i}^{m}}.
	\end{align*}
	One can also find the following bound for $I_{15}$ using \eqref{NN2} of Lemma \ref{lem:estnon} with the $L^p$ bound in broken Sobolev space \eqref{Lptoenergy}, the Gagliardo-Nirenberg inequality \eqref{GNI} and the Agmon's inequality \eqref{AI} as
	\begin{align*}
		|I_{15}| & \lesssim \|\bxi_{u}^{m-1}\|\|\nabla\tc_i^{m}\|_{L^4(\Omega)}\|\eta_{c_i}^{m}\|_{L^4(\Omega)} + \|\bxi_{u}^{m-1}\| \|\nabla\eta_{c_i}^{m}\| \|\tc_i^{m}\|_{L^\infty(\Omega)}  + J(\eta_{c_i}^{m})\left( h^{\frac{1}{2}} \|\bxi_{u}^{m-1}\|^{\frac{1}{2}} \|\nabla\bxi_{u}^{m-1}\|^{\frac{1}{2}}\|\tc_i^{m}\|_{L^\infty(\Omega)} \right) \\
		& \lesssim \left(\|\bxi_{u}^{m-1}\|+ h\vertiii{\bxi_{u}^{m-1}}\right) \|\tc_i^{m}\|_{H^2} \vertiii{\eta_{c_i}^{m}},
	\end{align*}
	In a similar way, additionally use of \eqref{inv.hypo} and \eqref{inv.DG}, $I_{16}$ can be bounded as
	\begin{align*}
		|I_{16}| \lesssim \left(\|\bta_{u}^{m-1}\|+ h\vertiii{\bta_{u}^{m-1}}\right) \|\tc_i^{m}\|_{H^2} \vertiii{\eta_{c_i}^{m}} \lesssim \|\bta_{u}^{m-1}\| \|\tc_i^{m}\|_{H^2} \vertiii{\eta_{c_i}^{m}}.
	\end{align*}
	For the term $I_{17}$, we first rewrite it using Lemma \ref{lem:skew} and then use Lemma \ref{lem:estnon} with the H\"{o}lder's inequality, the Young inequality,  the inverse hypothesis \eqref{inv.hypo}, the discrete Poincar\'e inequality \eqref{DPI}, the Gagliardo-Nirenberg inequality \eqref{GNI} and the Agmon's inequality \eqref{AI} to deduce
	\begin{align*}
		|I_{17}|  = |-\cN_1(\bu^{m-1},\eta_{c_i}^{m},\xi_{c_{i}}^{m})|
		\lesssim  \left(\|\xi_{c_i}^{m}\|+ h\vertiii{\xi_{c_i}^{m}}\right) \|\bu^{m-1}\|_{H^2}\vertiii{\eta_{c_i}^{m}}.
	\end{align*}
	One can bound the last term $I_{19}$ using Lemma \ref{lem:estnon} as
	\begin{align*}
		|I_{19}| & \lesssim  \left(\|\bu^{m}-\bu^{m-1}\|+ \|\nabla(\bu^{m}-\bu^{m-1})\|\right) \|\tc_{i}^{m}\|_{H^2}\vertiii{\eta_{c_i}^{m}}.
	\end{align*}
	We now combine all the bounds from $I_{11}$ to $I_{19}$ to complete the proof of the estimate $\Lambda_{c_i}(\eta_{c_i}^{m})$. Exactly in the similar way by replacing $\eta_{c_i}^m$ and $\tc_i^m$ by $\hbta_u^m$ and $\bu^m$, we obtain the estimate $\Lambda_u{(\hbta_{u}^{m})}$ of this lemma. This completes the rest of the proof.
\end{proof}

\begin{lemma} \label{lem:Hci}
	For any $\eta_{c_i}^m\in X_h^0$ defined in \eqref{usplit}, the following holds true:
	\begin{align*} 
		H_{c_i}(\eta_{c_i}^{m}) &\lesssim \bigg( \left(\|\phi^{m}\|_{H^3}+ h^{-1}\vertiii{\xi_\phi^{m}}\right)\left(\|\xi_{c_i}^{m-1}\| + h \vertiii{\xi_{c_i}^{m-1}} + \|\tc_{i}^{m}-\tc_{i}^{m-1}\| + h\|\nabla(\tc_{i}^{m}-\tc_{i}^{m-1})\|\right) \nonumber\\
		& \quad\qquad + \left(\|\phi^{m}\|_{H^3}+ h^{-1}\vertiii{\xi_\phi^{m}} \right) \|\eta_{c_i}^{m-1}\| 
		+ \left(h^{-1} \|\xi_{c_i}^{m-1}\| +  \vertiii{\xi_{c_i}^{m-1}}  + \|\tc_{i}^{m-1}\|_{H^2}\right) \vertiii{\eta_{\phi}^{m}} \nonumber\\
		&\quad\qquad +   h^{-1}\|\eta_{c_i}^{m-1}\|\vertiii{\eta_{\phi}^{m}}\bigg) \vertiii{\eta_{c_i}^{m}}.
	\end{align*}
\end{lemma}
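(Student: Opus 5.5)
We first rewrite $H_{c_i}(\eta_{c_i}^m)$ as a sum of trilinear terms built from the error splitting \eqref{usplit}. The form $\cG$ is linear in each argument. We write $\tc_{ih}^{m-1}=\tc_i^{m-1}-\xi_{c_i}^{m-1}-\eta_{c_i}^{m-1}$ and $\phi_h^m=\phi^m-\xi_\phi^m-\eta_\phi^m$. Expanding $\cG(\tc_{ih}^{m-1}+m_0,\phi_h^m,\cdot)$ then produces $\cG(\tc_i^{m-1}+m_0,\phi^m,\cdot)$, which is cancelled (up to a time difference) by the second term of \eqref{Hc}, and $-\cG(\tc_i^{m-1}+m_0,\xi_\phi^m,\cdot)$, which pairs with the third term of \eqref{Hc}. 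Up to the factor $\beta_i=\pm1$, this gives the decomposition
\begin{align*}
H_{c_i}(\eta_{c_i}^m)=&-\cG(\tc_i^m-\tc_i^{m-1},\phi^m-\xi_\phi^m,\eta_{c_i}^m)-\cG(\xi_{c_i}^{m-1}+\eta_{c_i}^{m-1},\phi^m-\xi_\phi^m,\eta_{c_i}^m)\\
&-\cG(\tc_i^{m-1}+m_0-\xi_{c_i}^{m-1},\eta_\phi^m,\eta_{c_i}^m)+\cG(\eta_{c_i}^{m-1},\eta_\phi^m,\eta_{c_i}^m).
\end{align*}
Each of the four groups is then matched to one line of the claimed bound. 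Note that $\phi^m-\xi_\phi^m=R_h\phi^m$, so wherever $\phi$ appears in the second slot we may split it as $\phi^m$ plus $\xi_\phi^m$.

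\emph{Groups with $\phi^m$ in the second slot.} For $\cG(\chi,\phi^m,\eta_{c_i}^m)$ with $\chi\in\{\tc_i^m-\tc_i^{m-1},\,\xi_{c_i}^{m-1},\,\eta_{c_i}^{m-1}\}$, we use \eqref{GG}. Since $\phi^m$ is smooth, $J(\phi^m)=0$, so only the volume term and the $J(\eta_{c_i}^m)$ edge term remain. In the volume term we put $\nabla\phi^m$ in $L^\infty$ (bounded by $\|\phi^m\|_{H^3}$ via Agmon \eqref{AI} or Sobolev embedding), $\chi$ in $L^2$ and $\nabla\eta_{c_i}^m$ in $L^2$. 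In the edge term $h^{1/2}T_2(\chi)T_\infty(\nabla\phi^m)$, the factor $h^{1/2}\|\chi\|^{1/2}\|\nabla\chi\|^{1/2}$ is bounded by $\|\chi\|+h\|\nabla\chi\|$. This yields the first-line factors $\|\xi_{c_i}^{m-1}\|+h\vertiii{\xi_{c_i}^{m-1}}$ and $\|\tc_i^m-\tc_i^{m-1}\|+h\|\nabla(\tc_i^m-\tc_i^{m-1})\|$. When $\chi=\eta_{c_i}^{m-1}$, the discrete trace and inverse inequalities \eqref{DTI}, \eqref{inv.hypo} collapse this factor to $\|\eta_{c_i}^{m-1}\|$.

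\emph{Groups with $\xi_\phi^m$ in the second slot.} For $\cG(\chi,\xi_\phi^m,\eta_{c_i}^m)$ we again use \eqref{GG}. We would like $\chi$ in $L^\infty$, but $\xi_{c_i}^{m-1}$ and $\tc_i^m-\tc_i^{m-1}$ are not discrete. So we instead place $\nabla\eta_{c_i}^m$ in $L^\infty$ and apply the inverse estimate $\|\nabla\eta_{c_i}^m\|_{L^\infty}\lesssim h^{-1}\|\nabla\eta_{c_i}^m\|$. The $J(\xi_\phi^m)$ and $J(\eta_{c_i}^m)$ edge terms are handled with discrete trace inequalities in the same way. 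Together these produce the multiplier $h^{-1}\vertiii{\xi_\phi^m}$ that appears alongside $\|\phi^m\|_{H^3}$.

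\emph{Group with $\eta_\phi^m$ in the second slot and the final group.} For $\cG(\tc_i^{m-1}+m_0-\xi_{c_i}^{m-1},\eta_\phi^m,\eta_{c_i}^m)$, the smooth part is bounded by $\|\tc_i^{m-1}+m_0\|_{L^\infty}\lesssim\|\tc_i^{m-1}\|_{H^2}$; the constant $m_0$ is absorbed into $C$. For the $\xi_{c_i}^{m-1}$ part we use \eqref{GG} with $\xi_{c_i}^{m-1}$ in $L^2$ and $\nabla\eta_\phi^m$, $\nabla\eta_{c_i}^m$ in $L^4$ (or one factor in $L^\infty$ via the inverse inequality). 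The edge terms give $h^{1/2}T_2(\xi_{c_i}^{m-1})$, and after inverse estimates this yields $h^{-1}\|\xi_{c_i}^{m-1}\|+\vertiii{\xi_{c_i}^{m-1}}$. The last term is handled by Lemma \ref{lem:gbdd} together with $\|\eta_{c_i}^{m-1}\|_{L^\infty}\lesssim h^{-1}\|\eta_{c_i}^{m-1}\|$ in two dimensions, which gives $h^{-1}\|\eta_{c_i}^{m-1}\|\vertiii{\eta_\phi^m}$.

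\emph{Main obstacle.} The delicate part is the bookkeeping of the edge terms in \eqref{GG} when a non-discrete factor ($\xi_{c_i}^{m-1}$ or $\tc_i^m-\tc_i^{m-1}$) sits in the first slot while $\xi_\phi^m$ sits in the second slot. There the continuous trace inequality \eqref{LPCTI} must be used for the non-discrete factor and inverse estimates only for the discrete ones, and the powers of $h$ have to come out exactly as stated. I would treat these terms first, choosing $p=2$ and $q=\infty$ in $T_p$ and $T_q$.
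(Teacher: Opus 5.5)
Your decomposition is the paper's four-term splitting of $H_{c_i}(\eta_{c_i}^m)$, only regrouped: the paper keeps $e_{c_i}^{m-1}$ and $R_h\phi^m$ where you write $\xi_{c_i}^{m-1}+\eta_{c_i}^{m-1}$ and $\phi^m-\xi_\phi^m$. Your bounds for the $\phi^m$-part, for the $\eta_\phi^m$ group and for $\cG(\eta_{c_i}^{m-1},\eta_\phi^m,\eta_{c_i}^m)$ are fine.

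\textbf{The gap.} It comes from splitting $R_h\phi^m=\phi^m-\xi_\phi^m$. Take $\cG(\chi,\xi_\phi^m,\eta_{c_i}^m)$ with $\chi\in\{\xi_{c_i}^{m-1},\tc_i^m-\tc_i^{m-1}\}$. Its edge term
\[
\sum_{e\in\Gamma_h^0}\int_e \avr{\chi\nabla\xi_\phi^m}\cdot\bn_e\,\jump{\eta_{c_i}^m}\ds
\]
has no discrete function inside the average, so the discrete trace inequality you invoke does not apply. With your choice $p=2$, $q=\infty$ you are left with $\|\nabla\xi_\phi^m\|_{L^\infty(E)}$.
\begin{itemize}
\item Lemma~\ref{lem:ritz} does not control this quantity; it gives only $L^2$ and energy bounds.
\item The inverse inequality cannot reach it, because $\xi_\phi^m\notin X_h^0$.
\item The alternatives ($\chi$ in $L^\infty$, or $L^4$ on both factors via \eqref{LPCTI}) need $L^\infty$ bounds on $\chi$ or $L^4$/broken $H^2$ bounds on $\xi_\phi^m$. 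Neither Lemma~\ref{lem:ritz} nor the claimed right-hand side provides these.
\end{itemize}
The same problem arises for $\chi=\eta_{c_i}^{m-1}$: putting $\chi$ in $L^\infty$ leaves the trace of $\nabla\xi_\phi^m$, which needs a broken $H^2$ bound on $\xi_\phi^m$. You flagged this term as the main obstacle but did not supply what removes it.

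\textbf{The fix.} The missing ingredient is an elementwise $W^{1,\infty}$ bound. You already noted $\phi^m-\xi_\phi^m=R_h\phi^m$; the paper simply does not split it.
\begin{itemize}
\item It keeps the discrete function $R_h\phi^m$ in the second slot.
\item It uses $\|\nabla R_h\phi^m\|_{L^\infty(E)}\lesssim\|\phi^m\|_{H^3}$ in the volume term and in the $J(\eta_{c_i}^m)$ edge term.
\item It uses $\jump{\phi^m}=0$ to replace $J(R_h\phi^m)$ by $J(\xi_\phi^m)$ in the other edge term. There $\|\nabla\eta_{c_i}^m\|_{L^\infty(E)}\lesssim h^{-1}\|\nabla\eta_{c_i}^m\|_{L^2(E)}$ produces the multiplier $h^{-1}\vertiii{\xi_\phi^m}$.
\end{itemize}
If you want to keep your splitting, you must prove $\|\nabla\xi_\phi^m\|_{L^\infty(E)}\lesssim\|\phi^m\|_{H^3}+h^{-1}\vertiii{\xi_\phi^m}$. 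One way is to insert an interpolant $I_h\phi^m$. Bound $\nabla(\phi^m-I_h\phi^m)$ in $L^\infty$ by $\|\phi^m\|_{H^3}$. Bound the discrete part $\nabla(I_h\phi^m-R_h\phi^m)$ by the inverse inequality together with $\vertiii{\phi^m-I_h\phi^m}+\vertiii{\xi_\phi^m}$. This is the same $W^{1,\infty}$ stability of $R_h$ that the paper uses, so the splitting gains nothing.

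A minor point: $m_0$ cannot literally be absorbed into a multiple of $\|\tc_i^{m-1}\|_{H^2}$. Write $\|\tc_i^{m-1}+m_0\|_{L^\infty}=\|c_i^{m-1}\|_{L^\infty}\lesssim\|c_i^{m-1}\|_{H^2}$ instead; the paper glosses over this as well.
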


\begin{proof}
	Choose $\chi_{ih} = \eta_{c_i}^m$ in \eqref{Hc} and use \eqref{usplit} to rewrite it as
	\begin{align*} %\label{l2g1}
		H_{c_i}(\eta_{c_i}^{m}) &= \beta_i\left( \cG(\eci^{m-1},\eta_{\phi}^{m},\eta_{c_i}^{m}) - \cG(\eci^{m-1},R_h\phi^{m},\eta_{c_i}^{m})  - \cG(\tc_{i}^{m-1}+m_0, \eta_{\phi}^{m},\eta_{c_i}^{m}) - \cG(\tc_{i}^{m}-\tc_{i}^{m-1},R_h\phi^{m},\eta_{c_i}^{m})\right) \nonumber\\
		&= I_{21}+I_{22}+I_{23}+I_{24}.
	\end{align*}
	An application of \eqref{GG} of Lemma \ref{lem:estnon} with a proper use of the H\"{o}lder's inequality and the inverse hypothesis \eqref{inv.hypo} helps to bound the following term as
	\begin{align*}%\label{l2g2}
		|I_{21}|  
		& \lesssim \sum_{E\in \Eh}\|e_{c_i}^{m-1}\|_{L^2(E)}\|\nabla\eta_{\phi}^{m}\|_{L^2(E)}\|\nabla\eta_{c_i}^{m}\|_{L^\infty(E)} \nonumber\\
		&~~ + J(\eta_{\phi}^{m})\sum_{E\in \Eh} \left( h_E^{\frac{1}{2}} \|e_{c_i}^{m-1}\|_{L^2(E)}^{\frac{1}{2}}\|\nabla e_{c_i}^{m-1}\|_{L^2(E)}^{\frac{1}{2}}\|\nabla\eta_{c_i}^{m}\|_{L^\infty(E)} \right) \nonumber\\
		&~~ + J(\eta_{c_i}^{m})\sum_{E\in \Eh} \left(h_E^{\frac{1}{2}} \|e_{c_i}^{m-1}\|_{L^2(E)}^{\frac{1}{2}}\|\nabla e_{c_i}^{m-1}\|_{L^2(E)}^{\frac{1}{2}}\|\nabla\eta_{\phi}^{m}\|_{L^\infty(E)} \right) \nonumber\\
		& \lesssim \left(h^{-1}\|\eci^{m-1}\| + \vertiii{\eci^{m-1}}\right) \vertiii{\eta_{\phi}^{m}} \vertiii{\eta_{c_i}^{m}}.	
	\end{align*}
	Similarly, one can bound $I_{22}$ as
		\begin{align*}%\label{l2g3}
			|I_{22}|  
			& \lesssim  \sum_{E\in \Eh}\|e_{c_i}^{m-1}\|_{L^2(E)}\|\nabla R_h{\phi}^{m}\|_{L^\infty(E)} \|\nabla\eta_{c_i}^{m}\|_{L^2(E)} \nonumber\\
			 &\quad + J(R_h{\phi}^{m})\sum_{E\in \Eh} \left(h_E^{\frac{1}{2}}\|e_{c_i}^{m-1}\|_{L^2(E)}^{\frac{1}{2}}\|\nabla e_{c_i}^{m-1}\|_{L^2(E)}^{\frac{1}{2}}\|\nabla\eta_{c_i}^{m}\|_{L^\infty(E)} \right) \nonumber\\
			&\quad + J(\eta_{c_i}^{m})\sum_{E\in \Eh} \left(h^{\frac{1}{2}}\|e_{c_i}^{m-1}\|_{L^2(E)}^{\frac{1}{2}} \|\nabla e_{c_i}^{m-1}\|_{L^2(E)}^{\frac{1}{2}}\|\nabla R_h{\phi}^{m}\|_{L^\infty(E)}\right).
	\end{align*}
	Since $\jump{\phi^{m}} = 0$, we replace $J(R_h\phi^{m})$ by $J(R_h\phi^{m}-\phi^{m})$ and using the fact $\|\nabla  R_h\phi^m\|_{L^\infty(E)} \lesssim \| \phi^m\|_{H^3}$, we can bound the $I_{22}$ term as follows
	\begin{align*}
		|I_{22}| \lesssim \left(\|\phi^{m}\|_{H^3}+ h^{-1}\vertiii{\xi_\phi^{m}} \right)\left(\|\eci^{m-1}\| + h \vertiii{\eci^{m-1}} \right) \vertiii{\eta_{c_i}^{m}}.
	\end{align*}
	To bound $I_{23}$, we apply \eqref{GG} of Lemma \ref{lem:estnon} with appropriate choice of $p$ and $q$. Further, we use the Gagliardo-Nirenberg inequality \eqref{GNI} and the Agmon's inequality \eqref{AI} to find
	\begin{align*}
		|I_{23}| \lesssim \|\tc_{i}^{m}\|_{H^2} \vertiii{\eta_{\phi}^{m}} \vertiii{\eta_{c_i}^{m}}.
	\end{align*}
	Arguing a same way of $I_{22}$ with the appropriate choice of $p$ and $q$, we bound $I_{24}$ as follows
	\begin{align*}%\label{l2g6}
		|I_{24}| \lesssim \left(\|\phi^{m}\|_{H^3}+ h^{-1}\vertiii{\xi_\phi^{m}} \right)\left(\|\tc_{i}^{m}-\tc_{i}^{m-1}\| + h\|\nabla(\tc_{i}^{m}-\tc_{i}^{m-1})\| \right) \vertiii{\eta_{c_i}^{m}}.
	\end{align*}	
	Collecting all the bounds of $I_{21}$ to $I_{24}$ completes the rest of the proof.
\end{proof}

\begin{lemma} \label{lem:Tu}
	For any $\hbta_u^m\in \bV_h$ defined in \eqref{usplit}, there holds
	\begin{align*} %\label{eqfufinal}
		|F_u(\hbta_{u}^{m})| & \lesssim \Big( \left(h^{-1}\|\xi_{c_1}^{m-1}-\xi_{c_2}^{m-1}\|+\vertiii{\xi_{c_1}^{m-1}-\xi_{c_2}^{m-1}}\right)\left(\vertiii{\xi_{\phi}^{m}} + \vertiii{\eta_{\phi}^{m}}\right)  \nonumber\\
		& + \|\eta_{c_1}^{m-1}-\eta_{c_2}^{m-1}\|\left(\vertiii{\xi_{\phi}^{m}} + \vertiii{\eta_{\phi}^{m}}\right) +  \left(\|\xi_{c_1}^{m-1}-\xi_{c_2}^{m-1}\|+ \|\eta_{c_1}^{m-1}-\eta_{c_2}^{m-1}\|\right)\|\phi^{m}\|_{H^2} \nonumber\\
		& + \left(\|\xi_{\phi}^{m}\| + h\vertiii{\xi_{\phi}^{m}} + \|\eta_{\phi}^{m}\| \right)\|\tc_{1}^{m-1}-\tc_{2}^{m-1}\|_{H^2} + \left(\|\tc_1^{m}-\tc_1^{m-1}\|^2+\|\tc_2^{m}-\tc_2^{m-1}\|\right)\|\phi^{m}\|_{H^2} \Big)\vertiii{\hbta_{u}^{m}}.
	\end{align*}
\end{lemma}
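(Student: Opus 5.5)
I will follow the same pattern as Lemmas \ref{lem:trilinear} and \ref{lem:Hci}: decompose $F_u(\hbta_u^m)$ by the splitting \eqref{usplit} into trilinear pieces, and bound each piece with the $\cT$-estimate \eqref{TT} of Lemma \ref{lem:estnon}. The tools are the inverse hypothesis \eqref{inv.hypo}, the $L^p$ bound \eqref{Lptoenergy}, the discrete Poincar\'e inequality \eqref{DPI}, and the Gagliardo--Nirenberg and Agmon inequalities \eqref{GNI}--\eqref{AI}. Write $e_c^{m-1}:=\eco^{m-1}-\ect^{m-1}$ and similarly $\xi_c,\eta_c$. From \eqref{Fu}, $\tc_{1h}^{m-1}-\tc_{2h}^{m-1}=(\tc_1^{m-1}-\tc_2^{m-1})-e_c^{m-1}$ and $\phi_h^m=\phi^m-\xi_\phi^m-\eta_\phi^m$. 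Using bilinearity of $\cT$ in its first two arguments,
\begin{align*}
F_u(\hbta_u^m) = &\ \cT(e_c^{m-1},\xi_\phi^m+\eta_\phi^m,\hbta_u^m) - \cT(e_c^{m-1},\phi^m,\hbta_u^m) \\
&- \cT(\tc_1^{m-1}-\tc_2^{m-1},\xi_\phi^m+\eta_\phi^m,\hbta_u^m) - \cT\big((\tc_1^m-\tc_1^{m-1})-(\tc_2^m-\tc_2^{m-1}),\phi^m,\hbta_u^m\big),
\end{align*}
and in the first two terms I further split $e_c^{m-1}=\xi_c^{m-1}+\eta_c^{m-1}$.

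\textbf{Terms with $\xi_c^{m-1}$ as first argument and $\xi_\phi^m$ or $\eta_\phi^m$ as second.} In \eqref{TT} I take $p=\infty$ on the element integral, $\|\xi_c\|_{L^2(E)}\|\nabla(\cdot)\|_{L^2(E)}\|\hbta_u^m\|_{L^\infty(E)}$, and use the inverse hypothesis $\|\hbta_u^m\|_{L^\infty}\lesssim h^{-1}\|\hbta_u^m\|\lesssim h^{-1}\vertiii{\hbta_u^m}$. For the edge term $J(\cdot)\,h^{1/2}T_2(\xi_c)T_\infty(\hbta_u^m)$ I bound $T_2(\xi_c)\lesssim \|\xi_c\|^{1/2}\|\nabla\xi_c\|^{1/2}$, giving $h^{-1}\|\xi_c\|+\vertiii{\xi_c}$ after Young's inequality. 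Together these produce the first group in the statement.

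\textbf{Terms with $\eta_c^{m-1}$ as first argument.} Here the discrete structure is used directly. With second argument $\xi_\phi^m$ or $\eta_\phi^m$, I use H\"older as $\|\eta_c\|\,\vertiii{\cdot}\,\|\hbta_u^m\|_{L^\infty}$. The $h^{-1}$ loss from the $L^\infty$ bound must be avoided, since the statement has no $h^{-1}$ on $\|\eta_c\|$. I would try instead $\|\eta_c\|_{L^4}$ and $\|\hbta_u^m\|_{L^4}$, but then $\|\nabla\xi_\phi\|_{L^4}$ appears. This is the delicate step: I expect to place $L^\infty$ on $\hbta_u^m$ via \eqref{Lptoenergy}-type bounds with only a logarithmic loss, or to exploit $\|\nabla R_h\phi^m\|_{L^\infty}\lesssim\|\phi^m\|_{H^3}$ as in Lemma \ref{lem:Hci}.

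\textbf{Terms with $\phi^m$ as second argument.} These are easier, because $\jump{\phi^m}=0$ removes the edge part of \eqref{TT}. H\"older with $\|\nabla\phi^m\|_{L^4}\lesssim\|\phi^m\|_{H^2}$ and $\|\hbta_u^m\|_{L^4}\lesssim\vertiii{\hbta_u^m}$ gives $(\|\xi_c^{m-1}\|+\|\eta_c^{m-1}\|)\|\phi^m\|_{H^2}$. The same argument gives the time-difference term.

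\textbf{Term with the exact concentration difference as first argument.} Call it $\tc^{m-1}$. I move the derivative off $\xi_\phi^m+\eta_\phi^m$ by elementwise integration by parts, as in the passage to \eqref{tribuanother}. The volume terms become $(\xi_\phi+\eta_\phi)\nabla\cdot(\tc^{m-1}\hbta_u^m)$, with edge terms containing $\jump{\hbta_u^m}$ and $\jump{\xi_\phi+\eta_\phi}$. I then bound with $\|\tc^{m-1}\|_{L^\infty},\|\nabla\tc^{m-1}\|_{L^4}\lesssim\|\tc^{m-1}\|_{H^2}$ via Agmon and Gagliardo--Nirenberg. The trace terms on $\xi_\phi$ are handled by the continuous trace inequality, which gives $\|\xi_\phi\|+h\vertiii{\xi_\phi}$. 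For the jump of $\eta_\phi$ I use the discrete trace inequality together with $J(\hbta_u^m)\le\vertiii{\hbta_u^m}$, so that only $\|\eta_\phi^m\|$ appears.

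Collecting all the bounds gives the stated estimate. I expect the main obstacle to be the mixed discrete term $\cT(\eta_c^{m-1},\xi_\phi^m+\eta_\phi^m,\hbta_u^m)$: controlling it without an inverse factor requires the careful choice of Lebesgue exponents described above.
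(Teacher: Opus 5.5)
The genuine gap is the step you flag yourself: $\cT(\eta_c^{m-1},\xi_\phi^m+\eta_\phi^m,\hbta_u^m)$ is never bounded, and neither remedy you suggest closes it.

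\begin{itemize}
\item A logarithmic loss is still an $h$-dependent factor, which $\lesssim$ excludes. Also, \eqref{Lptoenergy} only covers $p<\infty$.
\item The bound $\|\nabla R_h\phi^m\|_{L^\infty}\lesssim\|\phi^m\|_{H^3}$ can at best treat the $\xi_\phi^m=\phi^m-R_h\phi^m$ part. Even there it gives $\|\phi^m\|_{H^3}+h^{-1}\vertiii{\xi_\phi^m}$ in place of $\vertiii{\xi_\phi^m}$, as in Lemma \ref{lem:Hci}.
\item It says nothing about $\eta_\phi^m=R_h\phi^m-\phi_h^m$. That term contains the discrete solution, which has no a priori $W^{1,\infty}$ bound.
\end{itemize}

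With only $L^2$ control of $\eta_c^{m-1}$ and of the elementwise gradient of $\eta_\phi^m$, H\"older forces $L^\infty$ on $\hbta_u^m$. In two dimensions $\vertiii{\cdot}$ does not control $L^\infty$ uniformly in $h$. Concentrate $\chi_h$ and $\psi_h$ on one element where a discrete $\bw_h$ has $\|\bw_h\|_{L^\infty}\sim|\log h|^{1/2}\vertiii{\bw_h}$. This shows that $|\cT(\chi_h,\psi_h,\bw_h)|\lesssim\|\chi_h\|\vertiii{\psi_h}\vertiii{\bw_h}$ cannot hold uniformly in $h$.

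Your first-level splitting, and your handling of the $\xi_c$-pieces, the $\phi^m$-pieces and the exact-concentration piece, agree with the paper's $I_{31}$--$I_{34}$. One correction on the last piece: after elementwise integration by parts in $\cT(\tc_1^{m-1}-\tc_2^{m-1},e_\phi^m,\hbta_u^m)$, the $\jump{e_\phi^m}$ edge terms cancel exactly against the edge term of $\cT$, because $\tc_1^{m-1}-\tc_2^{m-1}$ is continuous. Only $(\tc_1^{m-1}-\tc_2^{m-1})\avr{e_\phi^m}\jump{\hbta_u^m}\cdot\bn_e$ survives, which is why the discrete trace inequality plus $J(\hbta_u^m)$ gives $\|\eta_\phi^m\|$ with no inverse factor. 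Surviving $\jump{\eta_\phi^m}\avr{\hbta_u^m}$ terms would cost $h^{-1}$.

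The paper does not obtain the unweighted term either. It never splits $e_c^{m-1}$ before estimating. It bounds $I_{31}=\cT(e_c^{m-1},e_\phi^m,\hbta_u^m)$ using $\|\hbta_u^m\|_{L^\infty}\lesssim h^{-1}\vertiii{\hbta_u^m}$, which gives $(h^{-1}\|e_c^{m-1}\|+\vertiii{e_c^{m-1}})\vertiii{e_\phi^m}\vertiii{\hbta_u^m}$. Only then does it apply \eqref{usplit} and \eqref{inv.DG}. That produces $h^{-1}\|\eta_{c_1}^{m-1}-\eta_{c_2}^{m-1}\|(\vertiii{\xi_\phi^m}+\vertiii{\eta_\phi^m})$. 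So the middle line of the statement is stronger than what the paper's own argument proves.

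To finish your proof, bound the delicate term exactly as the paper bounds $I_{31}$ and carry the factor $h^{-1}$. This costs nothing downstream:
\begin{itemize}
\item After Young's inequality it gives $h^{-2}\|\eta_c^{m-1}\|^2\vertiii{\eta_\phi^m}^2$. Lemma \ref{lem:l2cfinal} already feeds this kind of term into the Gronwall/induction step of Theorem \ref{thm:l2}.
\item It also gives $h^{-2}\|\eta_c^{m-1}\|^2\vertiii{\xi_\phi^m}^2$, which is $\lesssim\|\phi^m\|_{H^2}^2\|\eta_c^{m-1}\|^2$ by Lemma \ref{lem:ritz} with $s=2$.
\end{itemize}
Separately, the square on $\|\tc_1^m-\tc_1^{m-1}\|$ in the statement is a typo; your argument and the paper's both give the linear term.
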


\begin{proof}
	Choose $\bv_h=\hbta_{u}^m$ in \eqref{Fu} and rewrite it as
	\begin{align*}
		F_u(\hbta_{u}^{m}) & = \cT(\eco^{m-1}-\ect^{m-1}, \ephi^{m},\hbta_{u}^{m}) - \cT(\eco^{m-1}-\ect^{m-1}, \phi^{m}, \hbta_{u}^{m}) - \cT(\tc_1^{m-1}-\tc_2^{m-1}, \ephi^{m}, \hbta_{u}^{m}) \nonumber\\
		& \qquad- \cT((\tc_1^{m}-\tc_1^{m-1})-(\tc_2^{m}-\tc_2^{m-1}), \phi^{m}, \hbta_u^{m}) = I_{31}+I_{32}+I_{33}+I_{34}.
	\end{align*}
	An application of \eqref{TT} of Lemma \ref{lem:estnon} with the H\"{o}lder's inequality, the Young's inequality, the inverse hypothesis \eqref{inv.hypo}, and the discrete Poincar\'e inequality \eqref{DPI} helps to bound the $I_{31}$ term as
	\begin{align*}
		|I_{31}| & \lesssim \sum_{E\in \Eh}\|\eco^{m-1}-\ect^{m-1}\|_{L^2(E)} \|e_{\phi}^{m}\|_{L^2(E)} \|\hbta_u^{m}\|_{L^\infty(E)}  \\
        &\qquad+ J(e_{\phi}^{m})\sum_{E\in \Eh} h^{\frac{1}{2}}\|\eco^{m-1}-\ect^{m-1}\|_{L^2(E)}^{\frac{1}{2}}\|\nabla(\eco^{m-1}-\ect^{m-1})\|_{L^2(E)}^{\frac{1}{2}}  \|\hbta_u^{m}\|_{L^\infty(E)} \nonumber\\
		& \lesssim  \left(h^{-1}\|\eco^{m-1}-\ect^{m-1}\| + \vertiii{\eco^{m-1}-\ect^{m-1}}\right)\vertiii{\ephi^{m}} \vertiii{\hbta_{u}^{m}}.
	\end{align*}
	The terms $I_{32}$ and $I_{34}$ can be estimated by applying Lemma \ref{lem:estnon} with the fact $J(\phi^{m}) = 0$ and the inverse hypothesis \eqref{inv.hypo}, the discrete Poincar\'e inequality \eqref{DPI} and the Agmon's inequality \eqref{AI} as follows
	\begin{align*}
		|I_{32}|+|I_{34}| & \lesssim \left( \|\eco^{m-1}-\ect^{m-1}\| +    \|\tc_1^{m}-\tc_1^{m-1}\|+\|\tc_2^{m}-\tc_2^{m-1}\|\right)\|\phi^{m}\|_{H^2} \vertiii{\hbta_{u}^{m}}.
	\end{align*}
	To estimate $I_{33}$, we first use integration by parts and then apply the H\"{o}lder's inequality, the Young's inequality, Lemma  \ref{CTI}, the discrete trace inequality \eqref{DTI}, the inverse inequality \eqref{inv.hypo}, the discrete Poincar\'e inequality \eqref{DPI}, the Gagliardo-Nirenberg inequality \eqref{GNI} and the Agmon's inequality \eqref{AI} to find the following
	\begin{align*}
		|I_{33}| & = \bigg|-\sum_{E\in \Eh}\int_E\left(\ephi^{m}\nabla(\tc_{1}^{m-1}-\tc_{2}^{m-1})\cdot \hbta_u^{m} + \ephi^{m}(\tc_{1}^{m-1}-\tc_{2}^{m-1})\nabla\cdot\hbta_{u}^{m}\right) \dx\nonumber\\
		&\quad + \sum_{e\in\Gamma_h^0} \int_e \left(\avr{\ephi^{m}}\jump{\tc_{1}^{m-1}-\tc_{2}^{m-1}}\avr{\hbta_{u}^{m}}\cdot \bn_e + \avr{\ephi^{m}(\tc_{1}^{m-1}-\tc_{2}^{m-1})}\jump{\hbta_{u}^{m}}\cdot\bn_e\right) \ds\bigg| \nonumber\\
		& \lesssim \left(\|\ephi^{m}\| + h\vertiii{\ephi^{m}}\right) \|\tc_{1}^{m-1}-\tc_{2}^{m-1}\|_{H^2} \vertiii{\hbta_{u}^{m}}.
	\end{align*}
	Combining all the above three estimates and using \eqref{usplit} and Lemma \ref{inv.DG}, we complete the rest of the proof.
\end{proof}

\subsection{Optimal \texorpdfstring{$L^2$}{L2} Error Bounds}

This section deals with optimal error bounds for a fully discrete solution in $L^2$-norm. The main result of this section is given below:
\begin{theorem}\label{thm:l2}
	Suppose that $(\phi(t_m), c_1(t_m), c_2(t_m), \bu(t_m), p(t_m)) $ and $(\phi_h^m, c_{1h}^{m}, c_{2h}^{m}, \bu_h^{m}, p_{h}^{m})$ are the solution of the PNP-NS system \eqref{eqphi}-\eqref{eqdiv} and the fully discrete system \eqref{fds11}-\eqref{fds5}, respectively. Then, under the assumption
	\begin{align*}
		& \bu\in L^\infty(0,T;\bH^2)\cap  L^2(0,T;\bH^{k+1}),~\pt\bu\in L^2(0,T;\bH^{k+1}), ~\partial_{tt}\bu\in L^2(0,T;\bL^2), \\
		& \tc_i\in L^\infty(0,T;H^2)\cap  L^2(0,T;H^{k+1}),~ \pt\tc_i\in L^2(0,T;H^{k+1}), ~\partial_{tt}\tc_{i}\in L^2(0,T;L^2),\\
		& p\in L^2(0,T;H^{k}),~p_t\in L^2(0,T;H^{k}),~\phi\in L^\infty(0,T;H^3)\cap L^2(0,T;H^{k+1}),
	\end{align*}
	there exists a positive constant $C$ such that the following relation holds for any $N\ge 1$
	\begin{align*}
		\|c_1(t_N)-c_{1h}^{N}\|  + \|c_2(t_N)-c_{2h}^{N}\|  + \|\bu(t_N)-\bu_h^{N}\|  \le  C \left(h^{k+1}  + \Dt \right).
	\end{align*}
\end{theorem}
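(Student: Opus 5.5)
By the splitting \eqref{usplit} and Lemmas \ref{lem:modi} and \ref{lem:stokes}, the projection errors $\xi_{c_i}^N$ and $\bxi_u^N$ are already $O(h^{k+1})$ in $L^2$. The theorem therefore reduces to showing
\[
\|\eta_{c_1}^N\|+\|\eta_{c_2}^N\|+\|\bta_u^N\|\lesssim h^{k+1}+\Dt .
\]
I would obtain this from a discrete energy argument on the error equations \eqref{erretaphi}--\eqref{erretauf}, closed with the discrete Gronwall lemma.

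\textbf{Potential.} In \eqref{erretaphi} take $\psi_h=\eta_\phi^m$. Using coercivity (Lemma \ref{lem:prop_bilinear}) and \eqref{DPI},
\[
\vertiii{\eta_\phi^m}\lesssim \|\xi_{c_1}^{m-1}\|+\|\xi_{c_2}^{m-1}\|+\|\eta_{c_1}^{m-1}\|+\|\eta_{c_2}^{m-1}\|+\Dt .
\]
So $\eta_\phi$ is controlled by the previous-step concentration errors.

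\textbf{Concentrations.} In \eqref{erretaci} take $\chi_{ih}=2\Dt\,\eta_{c_i}^m$ and use $2(a-b,a)=\|a\|^2-\|b\|^2+\|a-b\|^2$ together with coercivity. The right-hand side is handled term by term.
\begin{itemize}
\item The term with $(\xi_{c_i}^m-\xi_{c_i}^{m-1})/\Dt$ is $O(h^{k+1})$ in $L^2$ by Lemma \ref{lem:modi} applied to $\pt\tc_i$.
\item $E_{c_i}$ is bounded by $\Dt^{1/2}\|\partial_{tt}\tc_i\|_{L^2(t_{m-1},t_m;L^2)}$.
\item $\Lambda_{c_i}(\eta_{c_i}^m)$ is bounded by Lemma \ref{lem:trilinear}.
\item $H_{c_i}(\eta_{c_i}^m)$ is bounded by Lemma \ref{lem:Hci}, inserting the $\vertiii{\eta_\phi^m}$ bound above.
\end{itemize}
Every factor $\vertiii{\eta_{c_i}^m}$ is absorbed into the diffusion by Young's inequality. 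The products $h^{-1}\|\xi\|\vertiii{\xi}$ and $h^{-2}\|\xi\|$ are $O(h^{2k})$ and $O(h^{k-1})$ respectively, and they multiply either further $\xi$ terms or $\|\bta_u^{m-1}\|$. This leaves coefficients of $\|\bta_u^{m-1}\|^2$ and $\|\eta_{c_i}^{m-1}\|^2$ that are bounded uniformly for $k\ge1$.

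\textbf{Velocity and pressure.} In \eqref{erretau} take $\bv_h=2\Dt\,\hbta_u^m$. The pressure-correction steps \eqref{erretap}--\eqref{erretauf} are treated in the standard incremental way. Testing \eqref{erretauf} with $\bta_u^m+\hbta_u^m$ gives
\[
\|\bta_u^m\|^2-\|\hbta_u^m\|^2=\text{pressure terms}.
\]
Testing the pressure equation with $\eta_p^m$, or more precisely with $\Dt^2(\eta_p^m+\eta_p^{m-1})$, produces
\[
\Dt^2\bigl(\vertiii{\eta_p^m}^2-\vertiii{\eta_p^{m-1}}^2\bigr)
\]
and cancels the $\cD(\hbta_u^m,\eta_p^{m-1})$ coupling. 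Here one needs the identity relating $\cD(\bw_h,q_h)$ to $\cA_1$ on $M_h$, which is implicit in the discrete Poisson step \eqref{fds3}. The consistency terms are handled as follows.
\begin{itemize}
\item $\cD(\bv_h,p^m-p^{m-1})$ and the $\xi_p$ differences are bounded by $\Dt\|p_t\|$ and $h^k\Dt\|p_t\|_{H^k}$, times $\vertiii{\hbta_u^m}$.
\item $\Lambda_u$ is bounded by Lemma \ref{lem:trilinear}.
\item $F_u$ is bounded by Lemma \ref{lem:Tu}.
\end{itemize}

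\textbf{Closing the argument.} Add the concentration and velocity inequalities, sum over $m=1,\dots,N$, and apply the discrete Gronwall lemma. Together with optimal initial data this gives
\[
\max_m\bigl(\|\eta_{c_i}^m\|^2+\|\bta_u^m\|^2\bigr)+\Dt\sum_m\bigl(\vertiii{\eta_{c_i}^m}^2+\vertiii{\hbta_u^m}^2\bigr)+\Dt^2\vertiii{\eta_p^N}^2\lesssim (h^{k+1}+\Dt)^2 .
\]
The triangle inequality then finishes the proof.

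\textbf{Main obstacle.} I expect the hardest part to be the nonlinear terms in which discrete errors multiply each other, namely $h^{-1}\|\eta_{c_i}^{m-1}\|\vertiii{\eta_\phi^m}$ in Lemma \ref{lem:Hci} and $\|\eta_{c_1}^{m-1}-\eta_{c_2}^{m-1}\|\,\vertiii{\eta_\phi^m}$ in Lemma \ref{lem:Tu}. These are not linear in the error, so a uniform $L^\infty$-type bound on the discrete solution is needed. I would handle this by induction. Assume
\[
\|\eta_{c_i}^{j}\|+\|\bta_u^{j}\|\le h^{k+1}+\Dt\quad\text{for } j\le m-1 .
\]
Then
\[
h^{-1}\|\eta_{c_i}^{m-1}\|\le h^{k}+h^{-1}\Dt\lesssim 1
\]
under a mild coupling $\Dt\lesssim h$, or via the inverse estimate \eqref{inv.DG} in the case $k\ge1$. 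This makes the coefficient bounded and lets Gronwall close at step $m$. The constant must be chosen independently of $m$ and the time-step restriction checked to be mild. The treatment of the pressure-velocity cancellation in the dG setting, where $\cD$ and $\cA_1$ on $M_h$ are not exactly adjoint, is the second delicate point.
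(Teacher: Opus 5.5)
Your route is the paper's route. It uses the same splitting \eqref{usplit} and the same test functions: $\eta_\phi^m$, then $\eta_{c_i}^m$, then $\hbta_u^m$, $\tfrac12(\eta_p^m+\eta_p^{m-1})$, $\tfrac12(\bta_u^m+\hbta_u^m)$ in \eqref{erretau}--\eqref{erretauf}. It relies on the same Lemmas \ref{lem:trilinear}, \ref{lem:Hci}, \ref{lem:Tu}, and closes with the same Gronwall-plus-induction argument under $\Dt=\mathcal{O}(h)$. However, the two points you leave open are real gaps, and the paper only asserts its way past them, so you cannot borrow its treatment.

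\textbf{Pressure coupling.} The relation you attribute to \eqref{fds3} handles only the $\hbta_u^m$ couplings. What is left is $-\tfrac12\cD(\bta_u^m,p_h^m-p_h^{m-1})$, which the paper sets to zero by claiming $\cD(\e_u^m,q_h)=0$. That claim is false for this scheme. Write $(G_hq,\bw_h):=\cD(\bw_h,q)$ and $\delta p_h:=p_h^m-p_h^{m-1}$. Then Steps II--III give $\cD(\bu_h^m,q_h)=\Dt\bigl[(G_h\delta p_h,G_hq_h)-\cA_1(\delta p_h,q_h)\bigr]$. The SIPG form $\cA_1$ is not $(G_h\cdot,G_h\cdot)$ on $M_h$; for $k=1$ the latter does not even depend on $\sigma_e$. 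What closes the step is a sign, not an identity. Since $\cD(\bS_h\bu^m,q_h)=0$, the leftover equals $\tfrac{\Dt}{2}\bigl(\|G_h\delta p_h\|^2-\cA_1(\delta p_h,\delta p_h)\bigr)$. Integrating the first (defining) form of $\cD$ by parts gives $G_hq=-\nabla_hq+L\jump{q}$, where $L$ is the $\bV_h$-lifting of the jumps on interior edges $\Gamma_h^0$ only, and $\nabla_hq\in\bV_h$. Hence the leftover is $\tfrac{\Dt}{2}\bigl(\|L\jump{\delta p_h}\|^2-\sum_{e\in\Gamma_h^0}\tfrac{\sigma_e}{h_e}\|\jump{\delta p_h}\|_{L^2(e)}^2\bigr)$. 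This is $\le 0$ once $\sigma_e$ exceeds the constant $C_L$ in $\|L\jump{q}\|^2\le C_L\sum_{e\in\Gamma_h^0}h_e^{-1}\|\jump{q}\|_{L^2(e)}^2$, which follows from \eqref{DTI}. So the term can be dropped from the right-hand side.

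\textbf{Nonlinear coupling.} Your induction hypothesis $\|\eta_{c_i}^j\|+\|\bta_u^j\|\le h^{k+1}+\Dt$ cannot reproduce itself: Gronwall returns $C(h^{k+1}+\Dt)$ with a constant $C$ you cannot force to be $\le 1$. Putting a constant $C_*$ into the hypothesis does not help under a bare $\Dt\lesssim h$. You then only get $h^{-2}\|\eta_{c_i}^{m-1}\|^2\le C_*^2(h^k+\Dt/h)^2$, a Gronwall coefficient that depends on $C_*$, so the argument is circular. The paper's recursion for $M_N$ has the same defect. Run the induction on the threshold $\|\eta_{c_i}^j\|\le h$ for $j\le m-1$ instead.
\begin{itemize}
\item The coefficient is then at most $1$.
\item Gronwall yields $\|\eta_{c_i}^m\|\le C_*(h^{k+1}+\Dt)$ with $C_*$ independent of $m$.
\item The threshold is recovered at step $m$ provided $h\le h_0$ and $\Dt\le c_0h$, with $c_0$ small relative to $C_*$; a bare $\Dt\lesssim h$ is not enough.
\end{itemize}
The inverse-estimate alternative you mention does not remove the factor $h^{-1}$, since \eqref{inv.DG} only trades $\vertiii{\cdot}$ for $h^{-1}\|\cdot\|$.
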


Before proving this theorem, we first obtain a couple of lemmas that help to prove the optimal results.
\begin{lemma}\label{lem:estphif}
	Suppose that the assumptions of Theorem \ref{thm:l2} hold true. Then, the following bound holds:
	\begin{align*}
		\Dt \sum_{m=1}^{N} \left(\vertiii{\eta_\phi^{m}}^2 + \|\Delta_h\eta_\phi^m\|^2\right)  
		\lesssim h^{2k+2} +  (\Dt)^2    +  \Dt \sum_{m=1}^{N} \left(\|\eta_{c_1}^{m-1}\|^2  + \|\eta_{c_2}^{m-1}\|^2\right).
	\end{align*}
	Here, the discrete operator $\Delta_h:X_h^0\to X_h^0$ is defined via $(\Delta_h\psi_h,\xi_h) = \cA_1(\psi_h,\xi_h)$.
\end{lemma}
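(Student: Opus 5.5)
The plan is to work directly from the error equation \eqref{erretaphi} for $\eta_\phi^m$, testing it twice: once with $\psi_h=\eta_\phi^m$ for the energy-norm part, and once with $\psi_h=\Delta_h\eta_\phi^m$ for the discrete-Laplacian part. We then sum over $m$ with weight $\Dt$. The right-hand side of \eqref{erretaphi} consists of three kinds of terms:
\begin{itemize}
\item the projection errors $\xi_{c_1}^{m-1}-\xi_{c_2}^{m-1}$, controlled by Lemma \ref{lem:modi};
\item the discrete errors $\eta_{c_1}^{m-1}-\eta_{c_2}^{m-1}$, which are kept on the right;
\item the time increments $(\tc_1^m-\tc_1^{m-1})-(\tc_2^m-\tc_2^{m-1})$, controlled by $\pt\tc_i$.
\end{itemize}
Before testing, note that $\eta_\phi^m\in X_h^0$ has zero mean, so $\Delta_h\eta_\phi^m$ is well defined in $X_h^0$ and is an admissible test function. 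The zero-mean property is also what makes the discrete Poincar\'e inequality \eqref{DPI} applicable.

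For the energy-norm part we take $\psi_h=\eta_\phi^m$ and use the coercivity in Lemma \ref{lem:prop_bilinear} to get
\[
\mu\gamma^*\vertiii{\eta_\phi^m}^2\le \|g^m\|\,\|\eta_\phi^m\|,
\]
where $g^m$ denotes the full right-hand-side function. By \eqref{DPI} and Young's inequality this gives $\vertiii{\eta_\phi^m}^2\lesssim\|g^m\|^2$.

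For the discrete-Laplacian part we take $\psi_h=\Delta_h\eta_\phi^m$. By the definition of $\Delta_h$ (and its symmetry, since $\cA_1$ is symmetric),
\[
\mu\,\cA_1(\eta_\phi^m,\Delta_h\eta_\phi^m)=\mu\|\Delta_h\eta_\phi^m\|^2 .
\]
Cauchy--Schwarz then gives $\|\Delta_h\eta_\phi^m\|\lesssim\|g^m\|$ directly. Here $\Delta_h$ is really the $L^2$-representation of $\cA_1$ on $X_h^0$, and the sign convention is immaterial because only norms appear.

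It remains to bound $\|g^m\|^2$. By Lemma \ref{lem:modi} with $s=k+1$,
\[
\|\xi_{c_i}^{m-1}\|\lesssim h^{k+1}\big(\|\tc_i^{m-1}\|_{H^{k+1}}+\|\phi^{m-1}\|_{H^{k+1}}\big).
\]
These bounds must be summed over $m$ with weight $\Dt$. The hypothesis is only $L^2$ in time for the $H^{k+1}$ norms, so we control the pointwise values through
\[
\|v(t_{m-1})\|^2\lesssim \frac{1}{\Dt}\int_{t_{m-1}}^{t_m}\|v\|^2\dt+\Dt\int_{t_{m-1}}^{t_m}\|\pt v\|^2\dt .
\]
Since $\pt\tc_i\in L^2(0,T;H^{k+1})$, this yields $\Dt\sum_m\|\xi_{c_i}^{m-1}\|^2\lesssim h^{2k+2}$. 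For the $\phi$ part of that norm we have $\phi\in L^2(H^{k+1})$, and $\pt\phi$ is controlled through the Poisson equation by $\pt\tc_i$. This could alternatively be avoided by choosing the projection reference times carefully.

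For the time increments we write $\tc_i^m-\tc_i^{m-1}=\int_{t_{m-1}}^{t_m}\pt\tc_i\dt$, so
\[
\|\tc_i^m-\tc_i^{m-1}\|^2\le\Dt\int_{t_{m-1}}^{t_m}\|\pt\tc_i\|^2\dt ,
\]
and summing gives $\Dt\sum_m\|\tc_i^m-\tc_i^{m-1}\|^2\lesssim(\Dt)^2$. The $\eta_{c_i}^{m-1}$ terms pass directly to the right-hand side. Combining the two tests and summing completes the argument.

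The main obstacle is purely regularity bookkeeping: handling pointwise-in-time values of $H^{k+1}$ norms under only $L^2$-in-time assumptions, including the $\|\phi\|_{H^{k+1}}$ contribution hidden inside Lemma \ref{lem:modi}. If the time-trace estimate above is unavailable for $\phi$, we would instead use elliptic regularity of \eqref{eqphi}, namely $\|\phi\|_{H^{k+1}}\lesssim\|\tc_1-\tc_2\|_{H^{k-1}}$, together with $\tc_i\in L^\infty(0,T;H^2)$ interpolated with $\pt\tc_i\in L^2(0,T;H^{k+1})$.
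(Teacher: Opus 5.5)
Your proposal is correct and follows the paper's proof. Both test \eqref{erretaphi} with $\eta_\phi^m$ and with $\pm\Delta_h\eta_\phi^m$, apply Cauchy--Schwarz and \eqref{DPI}, bound the time increments by $\Dt\int_{t_{m-1}}^{t_m}\|\pt\tc_i\|^2$ and the projection errors by Lemma \ref{lem:modi}, and then multiply by $\Dt$ and sum. Two of your steps are simply more careful versions of the same argument: bounding the two tests separately avoids the sign mismatch between the paper's definition of $\Delta_h$ and its test function $-\Delta_h\eta_\phi^m$, and your time-trace bookkeeping handles the pointwise $H^{k+1}$ norms, including the $\|\phi\|_{H^{k+1}}$ contribution from Lemma \ref{lem:modi}, which the paper does not track explicitly.
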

\begin{proof}
	Choose $\psi_{h} = \eta_\phi^{m}$ and $\psi_h=-\Delta_h\eta_\phi^m$ in \eqref{erretaphi} and add the resulting equation. Then, we use the Cauchy-Schwartz inequality with the discrete Poincar\'e inequality \eqref{DPI} to obtain
	\begin{align} \label{l2est1}
		\vertiii{\eta_\phi^{m}}^2 + \|\Delta_h\eta_\phi^m\|^2 \lesssim \|\tc_1^{m}-\tc_1^{m-1}\|^2 + \|\tc_2^{m}-\tc_2^{m-1}\|^2 + \|\xi_{c_1}^{m-1}-\xi_{c_2}^{m-1}\|^2 + \|\eta_{c_1}^{m-1}-\eta_{c_2}^{m-1}\|^2.
	\end{align}
	We use the H\"{o}lder inequality on the first two terms of the right-hand side of \eqref{l2est1} to find
	\begin{align*}
		\|\tc_1^{m}-\tc_1^{m-1}\|^2 + \|\tc_2^{m}-\tc_2^{m-1}\|^2 \le \Dt \int_{t_{m-1}}^{t_{m}} \left(\|\pt\tc_1(s)\|^2 + \|\pt\tc_2(s)\|^2\right) \ds.
	\end{align*}
	Plug the above bound in \eqref{l2est1} and use approximation property of Lemma \ref{lem:modi} to derive
	\begin{align} 
		\vertiii{\eta_\phi^{m}}^2 + \|\Delta_h\eta_\phi^m\|^2 \lesssim ~& h^{2k+2}\left(\|\tc_{1}^{m-1}\|_{H^{k+1}}^{2} + \|\tc_{2}^{m-1}\|_{H^{k+1}}^{2}\right)  + \frac{\Dt}{2} \int_{t_{m-1}}^{t_{m}} \left(\|\pt\tc_1(s)\|^2 + \|\pt\tc_2(s)\|^2\right) \ds \nonumber\\
		& + \left(\|\eta_{c_1}^{m-1}\|^2 + \|\eta_{c_2}^{m-1}\|^2\right). \label{estphi1}
	\end{align}
	Now, multiply both sides by $\Dt$ and take the summation from $m=1$ to $N$. Then, a use of Lemma \ref{lem:ritz} with the assumption given in Theorem \ref{thm:l2} completes the rest of the proof.
\end{proof}
\begin{lemma}\label{lem:l2cfinal}
	Under the assumption of Theorem \ref{thm:l2}, the following holds:
	\begin{align*}
		\|\eta_{c_i}^{N}\|^2 + \kappa_i \Dt\sum_{m=1}^{N}\vertiii{\eta_{c_i}^{m}}^2 
		\lesssim   h^{2k+2}  + (\Dt)^2  + \Dt\sum_{m=1}^{N}  \left(\|\bta_u^{m-1}\|^2 + \|\eta_{c_i}^{m-1}\|^2 + \vertiii{\eta_{\phi}^{m}}^2   +  h^{-2}\|\eta_{c_i}^{m-1}\|^2\vertiii{\eta_{\phi}^{m}}^2\right).
	\end{align*}
\end{lemma}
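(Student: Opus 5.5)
The plan is the standard discrete energy argument for the concentration error equation \eqref{erretaci}. Take $\chi_{ih}=\eta_{c_i}^m$ and use the identity $2(a-b,a)=\|a\|^2-\|b\|^2+\|a-b\|^2$ on the discrete time derivative. Coercivity of $\cA_1$ (Lemma \ref{lem:prop_bilinear}) bounds the diffusion term below. This gives
\begin{align*}
\frac{1}{2\Dt}\left(\|\eta_{c_i}^m\|^2-\|\eta_{c_i}^{m-1}\|^2\right) + \kappa_i\gamma^*\vertiii{\eta_{c_i}^m}^2 \le \Big|\Big(\tfrac{\xi_{c_i}^m-\xi_{c_i}^{m-1}}{\Dt},\eta_{c_i}^m\Big)\Big| + |E_{c_i}(\eta_{c_i}^m)| + |\Lambda_{c_i}(\eta_{c_i}^m)| + |H_{c_i}(\eta_{c_i}^m)|.
\end{align*}
Every term on the right will then be bounded in the form $(\text{stuff})\vertiii{\eta_{c_i}^m}$. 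Young's inequality absorbs $\frac{\kappa_i\gamma^*}{2}\vertiii{\eta_{c_i}^m}^2$ into the left side, leaving $(\text{stuff})^2$. We then multiply by $2\Dt$ and sum over $m=1,\dots,N$. The initial term $\|\eta_{c_i}^0\|^2$ is $O(h^{2k+2})$ for a suitable choice of $\tc_{ih}^0$, e.g.\ $Q_h\tc_{i0}$.

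The linear terms are routine. Since $Q_h$ commutes with differences in time, $\xi_{c_i}^m-\xi_{c_i}^{m-1}=\int_{t_{m-1}}^{t_m}(I-Q_h)\pt\tc_i\,ds$. Lemma \ref{lem:modi} and the discrete Poincar\'e inequality \eqref{DPI} then give a contribution
\[
\Dt^{-1} h^{2k+2}\int_{t_{m-1}}^{t_m}\left(\|\pt\tc_i\|_{H^{k+1}}^2+\|\pt\phi\|_{H^{k+1}}^2\right)ds .
\]
This step needs $\pt\phi$ regularity, which follows from \eqref{eqphi} and the $\pt\tc_i$ assumption. The truncation term $E_{c_i}$ follows from Taylor's formula with integral remainder, which yields $\Dt\int_{t_{m-1}}^{t_m}\|\partial_{tt}\tc_i\|^2ds$. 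After summation these give $h^{2k+2}+(\Dt)^2$.

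For $\Lambda_{c_i}$ we invoke Lemma \ref{lem:trilinear}. With Lemmas \ref{lem:modi} and \ref{lem:stokes} and $k\ge1$, the products such as $h^{-1}\|\bxi_u^{m-1}\|\vertiii{\xi_{c_i}^m}$, $h^{-2}\|\xi_{c_i}^m\|$ and $h^{-1}\vertiii{\xi_{c_i}^m}$ are $O(h^{k+1})$ or $O(h^{k-1})\cdot$ bounded quantities. The terms multiplying $\|\bta_u^{m-1}\|$ are at least bounded, which gives the $\|\bta_u^{m-1}\|^2$ contribution. The term $\|\bu^m-\bu^{m-1}\|_{H^1}^2\le \Dt\int\|\pt\bu\|_{H^1}^2$ gives $(\Dt)^2$. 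The $L^\infty(H^2)$ norms of $\bu$ and $\tc_i$ are bounded by hypothesis.

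For $H_{c_i}$ we use Lemma \ref{lem:Hci}. The factor $\|\phi^m\|_{H^3}+h^{-1}\vertiii{\xi_\phi^m}$ is bounded by the $H^3$ assumption on $\phi$ and Lemma \ref{lem:ritz}. This yields the contributions $h^{2k+2}$, $(\Dt)^2$, $\|\eta_{c_i}^{m-1}\|^2$ and $\vertiii{\eta_\phi^m}^2$; in the last one the coefficient $h^{-1}\|\xi_{c_i}^{m-1}\|+\vertiii{\xi_{c_i}^{m-1}}+\|\tc_i^{m-1}\|_{H^2}$ is bounded. The genuinely nonlinear term $h^{-1}\|\eta_{c_i}^{m-1}\|\vertiii{\eta_\phi^m}\vertiii{\eta_{c_i}^m}$ is kept as is after Young's inequality, producing exactly $h^{-2}\|\eta_{c_i}^{m-1}\|^2\vertiii{\eta_\phi^m}^2$ on the right-hand side of the lemma.

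The main obstacle is bookkeeping the $h^{-1}$, $h^{-2}$ weights in Lemma \ref{lem:trilinear}, in particular $h^{-2}\|\xi_{c_i}^m\|\,\|\bta_u^{m-1}\|$. We need these to stay bounded, which requires the full-order estimates of Lemma \ref{lem:modi} with $s=k+1\ge2$. For the time-dependent regularity we use the $L^2(0,T;H^{k+1})$ norms. We pair these with $\Dt\sum$, so that terms like $h^{2k+2}\Dt\sum_m\|\tc_i^m\|_{H^{k+1}}^2$ are controlled. This final step uses a standard comparison of Riemann sums with the integral, with the help of the $\pt\tc_i\in L^2(H^{k+1})$ assumption.
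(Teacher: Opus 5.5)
Your proposal is correct and follows the paper's proof essentially step for step. You test \eqref{erretaci} with $\eta_{c_i}^m$, apply the polarization identity, bound the $\xi$-difference and $E_{c_i}$ terms by Cauchy--Schwarz and Young, handle $\Lambda_{c_i}$ and $H_{c_i}$ through Lemmas \ref{lem:trilinear} and \ref{lem:Hci} plus Young (keeping $h^{-1}\|\eta_{c_i}^{m-1}\|\vertiii{\eta_\phi^m}$ as the nonlinear remainder), then multiply by $\Dt$, sum, and invoke the approximation estimates. You also fill in details the paper leaves implicit (the coercivity constant, $\tc_{ih}^0=Q_h\tc_{i0}$, and the fact that $\xi_{c_i}^m-\xi_{c_i}^{m-1}$ involves $\pt\phi$ because $Q_h$ depends on $\phi(t)$). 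Boundedness of the $h^{-1},h^{-2}$ weights needs only $s=2$ in Lemmas \ref{lem:ritz} and \ref{lem:modi}; and for $k\ge2$, deriving $\pt\phi\in L^2(0,T;H^{k+1})$ from \eqref{eqphi} needs elliptic regularity beyond the $H^2$ guaranteed on a convex polygon, so it is cleaner to state it as an assumption.
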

\begin{proof}
	Take $\chi_{ih}=\eta_{c_i}^{m}$ in \eqref{erretaci} and use the fact $2(a-b)a=a^2-b^2+(a-b)^2$ to find
	\begin{align}\label{l2cest1}
		\frac{1}{2\Dt}\left(\|\eta_{c_i}^{m}\|^2-\|\eta_{c_i}^{m-1}\|^2 + \|\eta_{c_i}^{m}-\eta_{c_i}^{m-1}\|^2\right) + \kappa_i \vertiii{\eta_{c_i}^{m}}^2 = -\frac{1}{\Dt} \left( \xi_{c_i}^{m}-\xi_{c_i}^{m-1}, \eta_{c_i}^{m}\right) + E_{c_i}(\eta_{c_i}^{m}) \nonumber\\
		+ \Lambda_{c_i}(\eta_{c_i}^{m}) + H_{c_i}(\eta_{c_i}^{m}).
	\end{align}
	Using Cauchy-Schwartz inequality and Young's inequality with the discrete Poincar\'e inequality \eqref{DPI}, the first two terms on the right-hand side of \eqref{l2cest1} can be bounded as
	\begin{align}\label{l2cest2}
		|\left( \frac{\xi_{c_i}^{m}-\xi_{c_i}^{m-1}}{\Dt}, \eta_{c_i}^{m}\right)| \le \left(\frac{1}{\Dt}\int_{t_{m-1}}^{t_{m}}\|\pt\xi_{c_i}(s)\|\ds\right) \|\eta_{c_i}^{m}\| \le \frac{C}{\Dt}\int_{t_{m-1}}^{t_{m}}\|\pt\xi_{c_i}(s)\|^2\ds + \frac{\kappa_i}{8}\vertiii{\eta_{c_i}^{m}}^2,
	\end{align}
	and
	\begin{align}\label{l2cest3}
		|E_{c_i}(\eta_{c_i}^{m})| \le \left(\frac{1}{\Dt}\int_{t_{m-1}}^{t_{m}}(s-t_{m-1})\|\partial_{tt}\tc_i(s)\|\ds\right) \|\eta_{c_i}^{m}\| \le C \Dt \int_{t_{m-1}}^{t_{m}} \|\partial_{tt}\tc_i(s)\|^2\ds +  \frac{\kappa_i}{8}\vertiii{\eta_{c_i}^{m}}^2
	\end{align}
	Using Lemma \ref{lem:trilinear} with the Young's inequality, we bound the third term of the right-hand side of \eqref{l2cest1} as
	\begin{align}\label{lelambdaest}
		\Lambda_{c_i}(\eta_{c_i}^{m}) \le & ~ \frac{\kappa_{i}}{8} \vertiii{\eta_{c_i}^{m}}^{2} +  C \bigg(h^{-2}\|\bxi_{u}^{m-1}\|^{2}\vertiii{\xi_{c_i}^{m}}^2 + h^{-2}\|\xi_{c_i}^{m}\|^2\vertiii{\bxi_{u}^{m-1}}^2 + \vertiii{\bxi_u^{m-1}}^{2}\vertiii{\xi_{c_i}^{m}}^{2}   \nonumber\\
		& + \left(\|\bxi_u^{m-1}\|^{2} + h^{2}\vertiii{\bxi_u^{m-1}}^{2} \right)\|\tc_{i}^{m}\|_{H^2}^{2} +\left( \|\bu^{m}-\bu^{m-1}\|^{2}+ \|\nabla(\bu^{m}-\bu^{m-1})\|^{2}\right)\|\tc_{i}^{m}\|_{H^2}^{2}  \nonumber\\
		& + \left(\|\xi_{c_i}^{m}\|^2+ h^{2}\vertiii{\xi_{c_i}^{m}}^2\right) \|\bu^{m-1}\|_{H^2}^2 + \left(h^{-4}\|\xi_{c_i}^{m}\|^2 + h^{-2}\vertiii{\xi_{c_{i}}^{m}}^2 + \|\tc_{i}^{m}\|_{H^2}^{2}\right)\|\bta_u^{m-1}\|^2\bigg).
	\end{align}
	An application of Lemma \ref{lem:Hci} with the Young's inequality, we derive the following bound
	\begin{align} \label{l2hf}
		H_{c_1}(\eta_{c_i}^{m}) &\le C \bigg( \left(\|\phi^{m}\|_{H^3}^2+ h^{-2}\vertiii{\xi_\phi^{m}}^2\right)\left(\|\xi_{c_i}^{m-1}\|^2 + h^2 \vertiii{\xi_{c_i}^{m-1}}^2 + \|\tc_{i}^{m}-\tc_{i}^{m-1}\|^2 + h^2\|\nabla(\tc_{i}^{m}-\tc_{i}^{m-1})\|^2\right) \nonumber\\
		& \quad\qquad + \left(\|\phi^{m}\|_{H^3}^2+ h^{-2}\vertiii{\xi_\phi^{m}}^2 \right) \|\eta_{c_i}^{m-1}\|^2 
		+ \left(h^{-2} \|\xi_{c_i}^{m-1}\|^2 +  \vertiii{\xi_{c_i}^{m-1}}^2  + \|\tc_{i}^{m-1}\|_{H^2}^2\right) \vertiii{\eta_{\phi}^{m}}^2 \nonumber\\
		&\quad\qquad +   h^{-2}\|\eta_{c_i}^{m-1}\|^2\vertiii{\eta_{\phi}^{m}}^2\bigg) + \frac{\kappa_i}{8}\vertiii{\eta_{c_i}^{m}}.
	\end{align}
	Substitute \eqref{l2cest2}-\eqref{l2hf} in \eqref{l2cest1}. Then, we multiply by $\Dt$ and take the summation from $m=1$ to $N$. Finally, complete the rest of the proof using the approximation estimates with the assumption of Theorem \ref{thm:l2}.
\end{proof} 
\begin{lemma}\label{lem:eqetaf}
	Under the assumption of Theorem \ref{thm:l2}, the following holds:
	\begin{align*} 
		\|\bta_u^{N}\|^2  +  \nu\Dt\sum_{m=1}^{N}\vertiii{\hbta_u^{m}}^{2}    + \frac{(\Dt)^2}{2} \vertiii{\eta_p^{N}}^{2}  \lesssim  h^{2k+2} + (\Dt)^2   + \Dt\sum_{m=1}^{N}  \left(\|\bta_u^{m-1}\|^2   +   \|\eta_{c_1}^{m}\|^2 +\|\eta_{c_2}^{m}\|^2 \right)\\
		+ \Dt\sum_{m=1}^{N} \left(1+\|\eta_{c_1}^{m-1}\|^2+\|\eta_{c_2}^{m-1}\|^2\right)  \vertiii{\eta_{\phi}^{m}}^2   + (\Dt)^3 \sum_{m=1}^{N}  \vertiii{\eta_p^{m-1}}^2.
	\end{align*}
\end{lemma}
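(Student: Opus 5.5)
We combine the two velocity error equations \eqref{erretau} and \eqref{erretauf} through the pressure-increment equation \eqref{erretap}, following the standard energy argument for incremental pressure-correction schemes.

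\textbf{Step 1: momentum equation.} Test \eqref{erretau} with $\bv_h=\hbta_u^m$. Using $2(a-b)a=a^2-b^2+(a-b)^2$ this gives
\[
\frac{1}{2\Dt}\big(\|\hbta_u^m\|^2-\|\bta_u^{m-1}\|^2+\|\hbta_u^m-\bta_u^{m-1}\|^2\big)+\nu\cA_2(\hbta_u^m,\hbta_u^m)-\cD(\hbta_u^m,\eta_p^{m-1})=\text{RHS}.
\]
Lemma \ref{lem:prop_bilinear} turns $\nu\cA_2(\hbta_u^m,\hbta_u^m)$ into $\nu\gamma^*\vertiii{\hbta_u^m}^2$.

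\textbf{Step 2: projection step.} Test \eqref{erretauf} with $\bw_h=\bta_u^m$, and also rewrite it as $\bta_u^m+\Dt\,\nabla_h(\eta_p^m-\eta_p^{m-1})=\hbta_u^m+\text{consistency}$. Here $\nabla_h$ is the discrete gradient defined by $(\nabla_h q,\bw_h)=-\cD(\bw_h,q)$. This yields
\[
\|\bta_u^m\|^2-\|\hbta_u^m\|^2+\|\bta_u^m-\hbta_u^m\|^2 .
\]

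\textbf{Step 3: pressure term.} Test \eqref{erretap} with $q_h=\Dt\,\eta_p^{m-1}$ (and also $q_h=\Dt\,\eta_p^m$). The identity $2(a-b)b=a^2-b^2-(a-b)^2$ then converts $\Dt\,\cD(\hbta_u^m,\eta_p^{m-1})$ into
\[
\frac{\Dt^2}{2}\big(\cA_1(\eta_p^m,\eta_p^m)-\cA_1(\eta_p^{m-1},\eta_p^{m-1})\big)-\frac{\Dt^2}{2}\cA_1(\eta_p^m-\eta_p^{m-1},\eta_p^m-\eta_p^{m-1})
\]
plus consistency terms in $\xi_p$ and $p^m-p^{m-1}$. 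The negative pressure-increment term is the delicate one. It is controlled by \eqref{erretap} itself: testing with $q_h=\eta_p^m-\eta_p^{m-1}$ and using the boundedness of $\cD$ together with the inverse estimate gives
\[
\Dt^2\vertiii{\eta_p^m-\eta_p^{m-1}}^2\lesssim\|\hbta_u^m\|^2\text{-type terms}.
\]
These are absorbed into $\|\hbta_u^m-\bta_u^{m-1}\|^2$, $\nu\vertiii{\hbta_u^m}^2$, and the extra term $\Dt^3\vertiii{\eta_p^{m-1}}^2$ allowed on the right of the statement. The cross term in $\cD(\hbta_u^m-\bta_u^m,\cdot)$ that arises from the dG mismatch between $\cA_1$ on $M_h$ and $\cD$ is likewise bounded by $C\Dt^3\vertiii{\eta_p^{m-1}}^2$ plus Young-absorbable pieces. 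I expect this matching of discrete div/grad operators to be the \textbf{main obstacle}: in dG, $\cA_1$ on $M_h$ is not exactly $-\cD(\nabla_h\cdot,\cdot)$. I would handle it by brute Cauchy--Schwarz, accepting the $\Dt^3\sum\vertiii{\eta_p^{m-1}}^2$ remainder, which is exactly why that term appears in the statement.

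\textbf{Step 4: right-hand side and summation.} The time-truncation terms are handled as in \eqref{l2cest2}--\eqref{l2cest3} via Lemma \ref{lem:stokes}, giving $h^{2k+2}+\Dt^2$. The pressure consistency terms $\cD(\cdot,p^m-p^{m-1})$ and $\cD(\cdot,\xi_p^m-\xi_p^{m-1})$ are bounded by $\Dt\int\|p_t\|_{H^1}$-type quantities. Lemma \ref{lem:trilinear} bounds $\Lambda_u$ and contributes $\|\bta_u^{m-1}\|^2$ plus approximation terms. For $\|\bta_u^{m-1}\|^2$ it uses $h^{-4}\|\bxi_u^m\|^2\lesssim h^{2k-2}$, which is bounded since $k\ge1$. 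Lemma \ref{lem:Tu} bounds $F_u$: after Young, the piece $\|\eta_{c_1}^{m-1}-\eta_{c_2}^{m-1}\|\vertiii{\eta_\phi^m}$ produces $(1+\|\eta_{c_i}^{m-1}\|^2)\vertiii{\eta_\phi^m}^2$. The remaining pieces produce $\|\eta_\phi^m\|^2\lesssim\vertiii{\eta_\phi^m}^2$ and $\|\eta_{c_i}\|^2$ terms. Adding the three identities, multiplying by $2\Dt$, absorbing the $\nu$-terms, summing over $m=1,\dots,N$ and telescoping (with $\bta_u^0=0$, $\eta_p^0=0$ by the choice of initial data) gives the stated bound.
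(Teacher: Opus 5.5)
\textbf{There is a genuine gap in Step 3.} With your test functions, multiply the step identity by $2\Dt$. The left-hand side then contains the velocity terms $\|\bta_u^m\|^2-\|\bta_u^{m-1}\|^2+\|\hbta_u^m-\bta_u^{m-1}\|^2+\|\bta_u^m-\hbta_u^m\|^2+2\nu\gamma^*\Dt\vertiii{\hbta_u^m}^2$, the telescoping pressure terms, and the negative term
\[
-(\Dt)^2\cA_1(\eta_p^m-\eta_p^{m-1},\eta_p^m-\eta_p^{m-1}).
\]
Testing \eqref{erretap} with $\eta_p^m-\eta_p^{m-1}$ gives only $(\Dt)^2\vertiii{\eta_p^m-\eta_p^{m-1}}^2\le C\|\hbta_u^m\|^2+\cdots$. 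Here $C$ is a generic, non-small constant coming from $\gamma^*$ and the trace/inverse inequalities.

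A term $C\|\hbta_u^m\|^2$ carries no factor $\Dt$, so none of the proposed absorptions works:
\begin{itemize}
\item It cannot be absorbed by $2\nu\gamma^*\Dt\vertiii{\hbta_u^m}^2$; that would need $\Dt\gtrsim 1$.
\item Splitting $\|\hbta_u^m\|^2\le 2\|\hbta_u^m-\bta_u^{m-1}\|^2+2\|\bta_u^{m-1}\|^2$ does not help. The first piece needs $2C<1$. The second enters Gronwall with an $O(1)$ rather than $O(\Dt)$ coefficient, giving growth like $(1+2C)^N$.
\item The only left-hand quantity of the right size is $\|\bta_u^m-\hbta_u^m\|^2\approx(\Dt)^2\|\nabla_h(\eta_p^m-\eta_p^{m-1})\|^2$, with your $\nabla_h$. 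Absorbing into it would require $\cA_1(q_h,q_h)\le\|\nabla_h q_h\|^2$ on $M_h$ with constant one. That is exactly the compatibility you yourself say is missing. It fails, for instance, for piecewise constant $q_h$ supported away from $\partial\Omega$: there $\cA_1(q_h,q_h)$ grows linearly in $\sigma_e$, while $\nabla_h q_h$, defined through $\cD$, does not involve $\sigma_e$.
\end{itemize}
For the same reason, your mismatch cross term is of size $(\Dt)^2\vertiii{\eta_p^m-\eta_p^{m-1}}^2$ with an $O(1)$ constant. Brute Cauchy--Schwarz cannot turn it into $C(\Dt)^3\vertiii{\eta_p^{m-1}}^2$, which needs one more power of $\Dt$.

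\textbf{How the paper avoids this.} The paper never creates the negative increment. It takes $\bv_h=\hbta_u^m$ in \eqref{erretau}, $q_h=\frac12(\eta_p^m+\eta_p^{m-1})$ in \eqref{erretap} (scaled by $\Dt$), and $\bw_h=\frac12(\bta_u^m+\hbta_u^m)$ in \eqref{erretauf}.
\begin{itemize}
\item The velocity terms combine to $\frac{1}{2\Dt}\big(\|\bta_u^m\|^2-\|\bta_u^{m-1}\|^2+\|\hbta_u^m-\bta_u^{m-1}\|^2\big)$.
\item The pressure terms satisfy $-\cD(\hbta_u^m,\eta_p^{m-1})-\frac12\cD(\hbta_u^m,\eta_p^m-\eta_p^{m-1})=-\frac12\cD(\hbta_u^m,\eta_p^m+\eta_p^{m-1})$. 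By \eqref{erretap} and the symmetry of $\cA_1$, this becomes the exact difference $\frac{\Dt}{2}\big(\cA_1(\eta_p^m,\eta_p^m)-\cA_1(\eta_p^{m-1},\eta_p^{m-1})\big)$, with nothing left over.
\item The residual $\frac12\cD(\bta_u^m,\eta_p^m-\eta_p^{m-1})$ is combined with consistency terms into $-\frac12\cD(\bta_u^m,p_h^m-p_h^{m-1})$. It is then set to zero via $\cD(\eu^m,q_h)=0$ and $\cD(\bxi_u^m,q_h)=0$.
\end{itemize}
That identity is precisely the discrete incompressibility of $\bu_h^m$ your mismatch remark questions. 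The paper invokes it without deriving it from \eqref{fds3}--\eqref{fds4}, rather than estimating a remainder.

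The $(\Dt)^3\sum_m\vertiii{\eta_p^{m-1}}^2$ term in the statement has a different origin from the one you assign it. It comes from Young's inequality in \eqref{eqa3u} on the consistency terms $\frac{\Dt}{2}\cA_1\big((p^m-p^{m-1})-(\xi_p^m-\xi_p^{m-1}),\eta_p^m+\eta_p^{m-1}\big)$, which do carry the extra factor $\Dt$. Your Step 4 (truncation errors, $\Lambda_u$, $F_u$, summation) matches the paper.
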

\begin{proof}
	Choose $\bv_h = \hbta_u^{m}$ in \eqref{erretau} and $q_h = \frac{1}{2}(\eta_p^{m}+\eta_p^{m-1})$ in \eqref{erretap} and $\bw_h=\frac{1}{2}(\bta_u^{m}+\hbta_u^{m})$ in \eqref{erretauf} and add all resulting equations to obtain
	\begin{align} \label{eqetau}
		\frac{1}{2\Dt} & \left(\|\bta_u^{m}\|^2-\|\bta_u^{m-1}\|^2 + \|\hbta_u^{m}-\bta_u^{m-1}\|^2\right) + \nu\vertiii{\hbta_u^{m}}^{2}  + \frac{\Dt}{2} \left(\vertiii{\eta_p^{m}}^{2}-\vertiii{\eta_p^{m-1}}^2\right) \nonumber\\
		= & -\frac{1}{\Dt}\left(\bxi_u^{m}-\bxi_u^{m-1}, \hbta_u^{m}\right) + E_u(\hbta_u^{m}) + \Lambda_{u}(\hbta_u^{m}) + F_u(\hbta_u^{m}) + \frac{1}{2}\cD(\hbta_u^{m},p^{m}-p^{m-1}) \nonumber\\
		& - \frac{1}{2}\cD(\hbta_u^{m}, \xi_p^{m}-\xi_p^{m-1}) 
		- \frac{1}{2}\cD(\bta_u^{m},p^{m}-p^{m-1}) + \frac{1}{2}\cD(\bta_u^{m}, \eta_p^{m}-\eta_p^{m-1}) + \frac{1}{2}\cD(\bta_u^{m}, \xi_p^{m}-\xi_p^{m-1}) \nonumber\\
		& + \frac{\Dt}{2} \cA_1(p^{m}-p^{m-1},\eta_p^{m}+\eta_p^{m-1}) -  \frac{\Dt}{2} \cA_1(\xi_p^{m}-\xi_p^{m-1}, \eta_p^{m}+\eta_p^{m-1})
	\end{align}
	Arguing exactly similar way of \eqref{l2cest2}-\eqref{l2cest3}, first two terms on the right hand side of \eqref{eqetau} can be bounded as
	\begin{align}\label{eqetau1}
		|\left( \frac{\bxi_{u}^{m}-\bxi_{u}^{m-1}}{\Dt}, \hbta_{u}^{m}\right)| + |E_{u}(\hbta_{u}^{m})| \le \frac{C}{\Dt}\int_{t_{m-1}}^{t_{m}}\|\pt\bxi_{u}(s)\|^2\ds + C \Dt \int_{t_{m-1}}^{t_{m}} \|\partial_{tt}\bu(s)\|^2\ds + \frac{\nu}{4}\vertiii{\hbta_{u}^{m}}^2.
	\end{align}
	A use of Lemma \ref{lem:trilinear} with the Young's inequality gives
	\begin{align}\label{lulambdaest}
		\Lambda_{u}(\hbta_{u}^{m}) \le & ~\frac{\nu}{8} \vertiii{\hbta_{u}^{m}}^{2} + C \bigg(h^{-2}\|\bxi_{u}^{m-1}\|^{2}\vertiii{\bxi_{u}^{m}}^2 + h^{-2}\|\bxi_{u}^{m}\|^2\vertiii{\bxi_{u}^{m-1}}^2 + \vertiii{\bxi_u^{m-1}}^{2}\vertiii{\bxi_{u}^{m}}^{2}    \nonumber\\
		& + \left(\|\bxi_u^{m-1}\|^{2}+ h^{2}\vertiii{\bxi_u^{m-1}}^{2} \right)\|\bu^{m}\|_{H^2}^{2}  +\left( \|\bu^{m}-\bu^{m-1}\|^{2}+ \|\nabla(\bu^{m}-\bu^{m-1})\|^{2}\right)\|\bu^{m}\|_{H^2}^{2}  \nonumber\\
		& + \left(\|\bxi_{u}^{m}\|^2+ h^{2}\vertiii{\bxi_{u}^{m}}^2\right) \|\bu^{m-1}\|_{H^2}^2 + \left(h^{-4}\|\bxi_{u}^{m}\|^2 + h^{-2}\vertiii{\bxi_{u}^{m}}^2 + \|\bu^{m}\|_{H^2}^{2}\right)\|\bta_u^{m-1}\|^2\bigg).
	\end{align}
	From Lemma \ref{lem:Tu}, we obtain the following bound
	\begin{align} \label{eqfufinal}
		|F_u(\hbta_{u}^{m})| & \le \frac{\nu}{16}\vertiii{\hbta_{u}^{m}}^2 +  C\Big(\left(h^{-2}\|\xi_{c_1}^{m-1}-\xi_{c_2}^{m-1}\|^2 + \vertiii{\xi_{c_1}^{m-1}-\xi_{c_2}^{m-1}}^2\right)\left(\vertiii{\xi_{\phi}^{m}}^2 + \vertiii{\eta_{\phi}^{m}}^2\right)  \nonumber\\
		& + \|\eta_{c_1}^{m-1}-\eta_{c_2}^{m-1}\|^2\left(\vertiii{\xi_{\phi}^{m}}^2 + \vertiii{\eta_{\phi}^{m}}^2\right) +  \left(\|\xi_{c_1}^{m-1}-\xi_{c_2}^{m-1}\|^2+ \|\eta_{c_1}^{m-1}-\eta_{c_2}^{m-1}\|^2\right)\|\phi^{m}\|_{H^2}^2 \\
		& + \left(\|\xi_{\phi}^{m}\|^2 + h^2\vertiii{\xi_{\phi}^{m}} + \|\eta_{\phi}^{m}\|^2 \right)\|\tc_{1}^{m-1}-\tc_{2}^{m-1}\|_{H^2}^2 + \left(\|\tc_1^{m}-\tc_1^{m-1}\|^2+\|\tc_2^{m}-\tc_2^{m-1}\|^2\right)\|\phi^{m}\|_{H^2}^2 \Big). \nonumber
	\end{align}
	The fifth and sixth terms of \eqref{eqetau} can be bounded as follows:
	\begin{align}
		&|	\frac{1}{2}\cD(\hbta_u^{m},p^{m}-p^{m-1}) - \frac{1}{2}\cD(\hbta_u^{m}, \xi_p^{m}-\xi_p^{m-1})| \nonumber\\
		& \quad	\le C \left(\|p^{m}-p^{m-1}\|^2 + \|\nabla(p^{m}-p^{m-1})\|^2 + \|\xi_{p}^{m}-\xi_{p}^{m-1}\|^2 + \|\nabla(\xi_{p}^{m}-\xi_{p}^{m-1})\|^2\right) + \frac{\nu}{16}\vertiii{\hbta_{u}^{m}}^2.	 	 
	\end{align}
	Using $\cD(\e_u^m,q_h)= 0$ and the Stokes projection operator \eqref{stokesproj}, we rewrite the seventh to ninth terms as below
	\begin{align}
		- \frac{1}{2}\cD(\bta_u^{m},p^{m}-p^{m-1}) + \frac{1}{2}\cD(\bta_u^{m}, \eta_p^{m}-\eta_p^{m-1}) + \frac{1}{2}\cD(\bta_u^{m}, \xi_p^{m}-\xi_p^{m-1}) \nonumber\\ 
		=  - \frac{1}{2}\cD(\bta_u^{m},p_h^{m}-p_h^{m-1})  = - \frac{1}{2}\cD(\e_u^{m}-\bxi_u^{m},p_h^{m}-p_h^{m-1}) =0.
	\end{align}	 
	The last two terms of \eqref{eqetau} can be estimated as
	\begin{align} \label{eqa3u}
		- \frac{1}{2} &\cA_1(p^{m}-p^{m-1},\eta_p^{m}+\eta_p^{m-1}) +  \frac{\Dt}{2} \cA_1(\xi_p^{m}-\xi_p^{m-1}, \eta_p^{m}+\eta_p^{m-1}) \nonumber\\
		& \le  \frac{\Dt}{2} \left(\vertiii{p^{m}-p^{m-1}}^2 +  \vertiii{\xi_p^{m}-\xi_p^{m-1}}^2\right) + \frac{(\Dt)^2}{8}\vertiii{\eta_p^{m}+\eta_p^{m-1}}^2.
	\end{align}
	Collecting all the estimates \eqref{eqetau1}-\eqref{eqa3u} and putting them in \eqref{eqetau}. Then, multiply by $\Dt$ and take summation over $m=1$ to $ N$ and
	use approximation property with assumption of Theorem \ref{thm:l2} and triangle inequality yield
	\begin{align} \label{eqetaf}
		\|\bta_u^{N}\|^2 +  \nu\Dt\sum_{m=1}^{N}\vertiii{\hbta_u^{m}}^{2}   + (\Dt)^2 \vertiii{\eta_p^{N}}^{2}  \lesssim h^{2k+2}  + (\Dt)^2   +  \Dt\sum_{m=1}^{N}  \left(\|\bta_u^{m-1}\|^2+\|\eta_{c_1}^{m}\|^2 +\|\eta_{c_2}^{m}\|^2 \right)  \nonumber\\
		+ \Dt\sum_{m=1}^{N} \left(1+\|\eta_{c_1}^{m-1}\|^2+\|\eta_{c_2}^{m-1}\|^2\right)  \vertiii{\eta_{\phi}^{m}}^2  + \frac{(\Dt)^3}{2}\sum_{m=1}^{N} \left(  \vertiii{\eta_p^{m}}^2+\vertiii{\eta_p^{m-1}}^2\right).
	\end{align}
	The last term of the right-hand side of the above inequality \eqref{eqetaf} is rewritten as
	\begin{align*}
		\frac{(\Dt)^3}{2}\sum_{m=1}^{N} \left(  \vertiii{\eta_p^{m}}^2+\vertiii{\eta_p^{m-1}}^2\right) = \frac{(\Dt)^3}{2} \vertiii{\eta_p^{N}}^2 + (\Dt)^3\sum_{m=1}^{N} \vertiii{\eta_p^{m-1}}^2.
	\end{align*}
	Substitute it in \eqref{eqetaf} completes the rest of the proof.
\end{proof}
Now, we are ready to prove all the estimates of Theorem \ref{thm:l2}.
\begin{proof}[Proof of Theorem \ref{thm:l2}]
	From \eqref{errorsplit} and \eqref{usplit}, we find that
	\begin{align*}
		\|c_1(t_N)-c_{1h}^{N}\| \le \|\xi_{c_1}^N\| +\| \eta_{c_1}^N\|,  ~~
		\|c_2(t_N)-c_{2h}^{N}\| \le \|\xi_{c_2}^N\| + \|\eta_{c_2}^N\|,  ~~
		\|\bu(t_N)-\bu_h^{N}\| \le \|\bxi_{u}^N\| + \|\bta_{u}^N\|.
	\end{align*}
	From the approximation properties (Lemmas \ref{lem:modi} and \ref{lem:stokes}), we have the bounds for $ \|\xi_{c_1}^N\|, \|\xi_{c_2}^N\|$ and $ \|\bxi_{u}^N\|$. Hence, it is enough to find the estimates for $\| \eta_{c_1}^N\|, \|\eta_{c_2}^N\| $ and $ \|\bta_{u}^N\|$.
	First we recall the Lemma \ref{lem:l2cfinal} for $i=1$ and $i=2$ and combine these two with Lemma \ref{lem:eqetaf} to obtain
	\begin{align}
		\|\eta_{c_1}^{N}\|^2  + \|\eta_{c_2}^{N}\|^2 & + \|\bta_u^{N}\|^2  + \frac{(\Dt)^2}{2}\vertiii{\eta_p^{N}}^{2} + \sum_{m=1}^{N}\left(\kappa_1\vertiii{\eta_{c_1}^{m}}^2 + \kappa_2\vertiii{\eta_{c_2}^{m}}^2 + \nu\vertiii{\hbta_{u}^{m}}^2\right)   \nonumber\\
		\lesssim ~ &  h^{2k+2} + (\Dt)^2 + \Dt\sum_{m=1}^{N} \left( \|\bta_u^{m-1}\|^2 + \|\eta_{c_1}^{m-1}\|^2+\|\eta_{c_2}^{m-1}\|^2 + \vertiii{\eta_{\phi}^{m}}^2\right)  \nonumber\\
		&   + \Dt\sum_{m=1}^{N} h^{-2}(\|\eta_{c_1}^{m-1}\|^2+\|\eta_{c_2}^{m-1}\|^2)  \vertiii{\eta_{\phi}^{m}}^2 
		+ (\Dt)^3\sum_{m=1}^{N} \vertiii{\eta_p^{m-1}}^2.
	\end{align}	
	A use of \eqref{estphi1} with $k=1$ and hypothesis of Theorem \ref{thm:l2} and Lemma \eqref{lem:estphif} yields
	\begin{align}
		\|\eta_{c_1}^{N}\|^2 & + \|\eta_{c_2}^{N}\|^2  + \|\bta_u^{N}\|^2  + \frac{(\Dt)^2}{2}\vertiii{\eta_p^{N}}^{2} + \sum_{m=1}^{N}\left(\kappa_1\vertiii{\eta_{c_1}^{m}}^2 + \kappa_2\vertiii{\eta_{c_2}^{m}}^2 + \nu\vertiii{\hbta_{u}^{m}}^2\right)  \nonumber\\
		\lesssim & \left(h^{2k+2}  + (\Dt)^2 \right)  
		+ \Dt\sum_{m=1}^{N}  \left(\|\bta_u^{m-1}\|^2 + \|\eta_{c_1}^{m-1}\|^2+\|\eta_{c_2}^{m-1}\|^2 + \frac{(\Dt)^2}{2}\vertiii{\eta_p^{m-1}}^2\right)  \nonumber\\
		+ & \Dt\sum_{m=1}^{N} h^{-2}(\|\eta_{c_1}^{m-1}\|^2+\|\eta_{c_2}^{m-1}\|^2) \left(\|\bta_u^{m-1}\|^2 + \|\eta_{c_1}^{m-1}\|^2+\|\eta_{c_2}^{m-1}\|^2 + \frac{(\Dt)^2}{2}\vertiii{\eta_p^{m-1}}^2\right).
	\end{align}
	An application of the discrete Gronwall's lemma gives
	\begin{align}\label{estfinal}
		\|\eta_{c_1}^{N}\|^2  + \|\eta_{c_2}^{N}\|^2  + \|\bta_u^{N}\|^2  + \frac{(\Dt)^2}{2}\vertiii{\eta_p^{N}}^{2} + \sum_{m=1}^{N}\left(\kappa_1\vertiii{\eta_{c_1}^{m}}^2 + \kappa_2\vertiii{\eta_{c_2}^{m}}^2 + \nu\vertiii{\hbta_{u}^{m}}^2\right) \nonumber\\
		\le  C \left(h^{2k+2}  + (\Dt)^2 \right).
	\end{align}
	where $C = C \exp(M_N)$ with
	\begin{equation*}
		M_N = C \Dt \sum_{m=1}^{N} \left(1 + h^{-2}(\|\eta_{c_1}^{m-1}\|^2+\|\eta_{c_2}^{m-1}\|^2)\right).
	\end{equation*}
	Using method of induction with the assumption $\Delta t = \mathcal{O}(h)$, one can find the explicit form of $M_N$ as
	\begin{equation*}
		M_N = Ct_N + C(h^{2k}+1)\Dt\sum_{i=1}^{N-1}\exp(M_i).
	\end{equation*}
	Combine the result of \eqref{estfinal} with approximation estimates to complete the rest of the proof.
\end{proof}

\begin{remark} \label{l2energy}
	A use of triangle inequality with Lemmas \ref{lem:estnon}, \ref{lem:modi}  and \ref{lem:stokes} and equation \eqref{estfinal} gives
	\begin{align*}
		\Delta t\sum_{m=1}^{N}\left(\kappa_1\vertiii{e_{c_1}^{m}}^2 + \kappa_2\vertiii{e_{c_2}^{m}}^2 + \nu\vertiii{\heu^{m}}^2 + \vertiii{e_\phi^m}^2\right) \le  C \left(h^{2k}  + (\Dt)^2 \right).
	\end{align*}
	
\end{remark}

Now, we prove the $L^2$-error bound for the electrostatic potential $\phi$ using a duality argument.

\begin{theorem} \label{thm:l2phi}
	Under the hypothesis of Theorem \ref{thm:l2} , then there exists a positive constant $C$ such that following holds true for any $N\ge 1$
	\begin{equation*}
		\|\phi(t_N)-\phi_h^N\| \le C\left( h^{k+1} + \Delta t \right).
	\end{equation*}
\end{theorem}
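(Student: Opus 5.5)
We split $\ephi^N=\xi_\phi^N+\eta_\phi^N$ as in \eqref{usplit}. Lemma \ref{lem:ritz} gives $\|\xi_\phi^N\|\lesssim h^{k+1}\|\phi^N\|_{H^{k+1}}$ directly, so everything reduces to bounding $\|\eta_\phi^N\|$ by $C(h^{k+1}+\Dt)$. The plan is to combine the error equation \eqref{erretaphi} at level $m=N$ with the pointwise-in-time bound \eqref{estfinal} on $\|\eta_{c_i}^{N-1}\|$ that was obtained in the proof of Theorem \ref{thm:l2}.

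The direct route avoids duality altogether. Take $\psi_h=\eta_\phi^N$ in \eqref{erretaphi}, use the coercivity of Lemma \ref{lem:prop_bilinear} on the left, and apply Cauchy--Schwarz together with the discrete Poincar\'e inequality \eqref{DPI} on the right. This gives
\begin{equation*}
\mu\gamma^*\vertiii{\eta_\phi^N}\lesssim \|\xi_{c_1}^{N-1}\|+\|\xi_{c_2}^{N-1}\|+\|\eta_{c_1}^{N-1}\|+\|\eta_{c_2}^{N-1}\|+\|\tc_1^N-\tc_1^{N-1}\|+\|\tc_2^N-\tc_2^{N-1}\|.
\end{equation*}
We then bound each term on the right:
\begin{itemize}
\item the $\xi_{c_i}$ terms are $O(h^{k+1})$ by Lemma \ref{lem:modi};
\item the $\eta_{c_i}^{N-1}$ terms are $O(h^{k+1}+\Dt)$ by \eqref{estfinal} applied with $N-1$ in place of $N$ (note $\eta_{c_i}^0$ is $O(h^{k+1})$ for a suitable choice of initial data);
\item the time increments satisfy $\|\tc_i^N-\tc_i^{N-1}\|\le (\Dt)^{1/2}\big(\int_{t_{N-1}}^{t_N}\|\pt\tc_i\|^2\ds\big)^{1/2}$.
\end{itemize}
Finally $\|\eta_\phi^N\|\lesssim\vertiii{\eta_\phi^N}$ by \eqref{DPI}.

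The main obstacle is the time-increment term. The hypothesis $\pt\tc_i\in L^2(0,T;H^{k+1})$ only gives $O((\Dt)^{1/2})$ at a single time level. To get $O(\Dt)$ I would instead write $\|\tc_i^N-\tc_i^{N-1}\|\le \Dt\,\|\pt\tc_i\|_{L^\infty(0,T;L^2)}$. This $L^\infty$ bound follows from $\pt\tc_i\in L^2(0,T;H^{k+1})$ and $\partial_{tt}\tc_i\in L^2(0,T;L^2)$, since $H^1(0,T;L^2)\hookrightarrow C([0,T];L^2)$. So the increment is $O(\Dt)$ after all.

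If one wanted to avoid using the $\ell^\infty$ bound on $\eta_{c_i}$ (for instance to isolate $\phi$ from the coupling), the alternative is a genuine duality argument. Let $z$ solve $-\mu\Delta z=\eta_\phi^N-\overline{\eta_\phi^N}$ with homogeneous Neumann conditions and zero mean; since $\eta_\phi^N\in X_h^0$ has mean zero, the right-hand side is just $\eta_\phi^N$. Elliptic regularity on the convex domain gives $\|z\|_{H^2}\lesssim\|\eta_\phi^N\|$. Then $\|\eta_\phi^N\|^2=\mu\cA_1(z,\eta_\phi^N)=\mu\cA_1(R_hz,\eta_\phi^N)$ by \eqref{eqritz} and the symmetry of $\cA_1$. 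Applying \eqref{erretaphi} with $\psi_h=R_hz$, and using $\|R_hz\|\lesssim\|z\|_{H^2}$, leads to the same bound. In either approach the final estimate is $\|\phi(t_N)-\phi_h^N\|\le C(h^{k+1}+\Dt)$.
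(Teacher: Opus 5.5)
Your proof is correct, but your primary route differs from the paper's. The paper argues only by duality. It poses $-\Delta\Phi=g$ with $\Phi=0$ on $\partial\Omega$ and writes $(g,\eta_\phi^m)=\cA_1(R_h\Phi,\eta_\phi^m)$. It then inserts \eqref{erretaphi} with test function $R_h\Phi$, bounds the three $L^2$ pairings against $\|R_h\Phi\|\lesssim\|g\|$, and takes $g=\eta_\phi^m$; this is essentially your fallback argument. Your main argument instead tests \eqref{erretaphi} with $\eta_\phi^N$ itself and uses coercivity and \eqref{DPI}. That suffices because $R_h$ is the $\cA_1$-projection, so the right-hand side of \eqref{erretaphi} consists only of $L^2$ pairings. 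Hence no dual problem and no elliptic regularity are needed at this step, and you get the stronger superconvergent bound $\vertiii{\eta_\phi^N}\lesssim h^{k+1}+\Dt$. In fact, \eqref{estphi1} in the proof of Lemma \ref{lem:estphif} is already your inequality squared, so the duality step adds nothing here.

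Your version is also more careful than the paper's in three places.
\begin{itemize}
\item You notice that $\|\tc_i^N-\tc_i^{N-1}\|$ is only $O((\Dt)^{1/2})$ from the $L^2$-in-time bound, and you repair it via $\pt\tc_i\in H^1(0,T;L^2)\hookrightarrow C([0,T];L^2)$. The paper simply asserts the rate, and its displayed bound $h^{2k+2}+(\Dt)^2$ should read $h^{k+1}+\Dt$.
\item You account explicitly for $\xi_\phi^N$, which the paper leaves implicit.
\item Your dual problem uses homogeneous Neumann data, which matches $\cA_1$ (interior edges only). With the paper's Dirichlet condition, elementwise integration by parts leaves the uncancelled term $\int_{\partial\Omega}\nabla\Phi\cdot\bn\,\psi_h$, so its identity $\cA_1(\Phi,\psi_h)=(g,\psi_h)$ does not hold.
\end{itemize}

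One caveat applies to both arguments. Lemmas \ref{lem:ritz} and \ref{lem:modi} at a single time level need $\phi^N,\phi^{N-1}\in H^{k+1}$. The stated hypothesis guarantees this, through $L^\infty(0,T;H^3)$, only for $k\le 2$.
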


\begin{proof}
	We first consider the following dual problem:
	\begin{align} 
		- &\Delta \Phi = g \quad \text{in}~ \Omega \label{dual1}\\
		& \Phi = 0 \quad \text{on}~ \partial\Omega,
	\end{align}
	with the following regularity estimate
	\begin{equation} \label{dualreg}
		\|\Phi\|_{H^2} \lesssim \|g\|.
	\end{equation}
	We multiply \eqref{dual1} with $\psi_h$ and take the integration over the domain $\Omega$. Then, an application of integration by parts gives
	\begin{equation}\label{dualeq2}
		\cA_1(\Phi,\psi_h) = (g, \psi_h)
	\end{equation}
	A use of \eqref{eqritz} in \eqref{dualeq2} yields
	\begin{align}\label{dualeq3}
		(g, \psi_h) = \cA_1(R_h\Phi,\psi_h)
	\end{align}
	We now choose $\psi_h=\eta_\phi^m$ in \eqref{dualeq3} and use \eqref{erretaphi} with the Cauchy-Schwarz inequality, Theorem \ref{thm:l2} and regularity assumption of Theorem \ref{thm:l2} to obtain 
	\begin{align}
		(g, \eta_\phi^m) & = \left(\xi_{c_1}^{m-1}-\xi_{c_2}^{m-1}, R_h\Phi\right) + \left(\eta_{c_1}^{m-1}-\eta_{c_2}^{m-1}, R_h\Phi\right) + \left((\tc_1^{m}-\tc_1^{m-1})-(\tc_2^{m}-\tc_2^{m-1}), R_h\Phi\right)\nonumber\\
		& \le \left(\|\xi_{c_1}^{m-1}-\xi_{c_2}^{m-1}\| + \|\eta_{c_1}^{m-1}-\eta_{c_2}^{m-1}\| + \|(\tc_1^{m}-\tc_1^{m-1})-(\tc_2^{m}-\tc_2^{m-1})\|\right) \|R_h\Phi\| \nonumber\\
		& \lesssim (h^{2k+2}+(\Delta t)^2) \|R_h\Phi\|.
	\end{align}
	We complete the rest of the proof by taking $g=\eta_\phi^m$ with regularity estimate \eqref{dualreg}.
\end{proof}

\subsection{Optimal \texorpdfstring{$H^1$}{H1} Error Bounds}

In this section, we derive error estimates for the fully discrete solution in the energy norm. 

\begin{lemma} \label{lem:h1c}
	Under the hypothesis of Theorem \ref{thm:l2}, the following holds for any $N>0$:
	\begin{equation*}
		\kappa_{i} \vertiii{\eta_{c_i}^{N}}^2  + \kappa_i \Delta t \sum_{m=1}^{N}  \vertiii{\eta_{c_i}^{m}-\eta_{c_i}^{m-}}^2 + \Delta t \sum_{m=1}^{N} \|\bar{\partial}\eta_{c_i}^{m}\|^2 \lesssim \left(h^{2k} + (\Delta t)^2\right).
	\end{equation*}
\end{lemma}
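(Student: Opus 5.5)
The plan is to test the error equation \eqref{erretaci} with the discrete time derivative $\chi_{ih}=\bar{\partial}\eta_{c_i}^{m}:=(\eta_{c_i}^{m}-\eta_{c_i}^{m-1})/\Dt$, which lies in $X_h^0$. On the left-hand side this produces $\|\bar{\partial}\eta_{c_i}^{m}\|^2$. The diffusion term becomes
\[
\kappa_i\cA_1(\eta_{c_i}^{m},\bar{\partial}\eta_{c_i}^{m})=\frac{\kappa_i}{2\Dt}\Big(\cA_1(\eta_{c_i}^{m},\eta_{c_i}^{m})-\cA_1(\eta_{c_i}^{m-1},\eta_{c_i}^{m-1})+\cA_1(\eta_{c_i}^{m}-\eta_{c_i}^{m-1},\eta_{c_i}^{m}-\eta_{c_i}^{m-1})\Big),
\]
using the symmetry of $\cA_1$. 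After multiplying by $\Dt$ and summing from $m=1$ to $N$, the first two terms telescope. The coercivity in Lemma \ref{lem:prop_bilinear} then gives $\gamma^*\kappa_i\vertiii{\eta_{c_i}^{N}}^2+\gamma^*\kappa_i\sum_m\vertiii{\eta_{c_i}^{m}-\eta_{c_i}^{m-1}}^2$ on the left. The initial term $\cA_1(\eta_{c_i}^0,\eta_{c_i}^0)$ is controlled by choosing $\tc_{ih}^0=Q_h\tc_{i0}$, or bounded by $h^{2k}$ via continuity and Lemma \ref{lem:modi}.

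The right-hand side terms are then handled one at a time, each absorbed into $\frac14\|\bar{\partial}\eta_{c_i}^{m}\|^2$ by Young's inequality, now in $L^2$ rather than the energy norm.
\begin{itemize}
\item The projection term $(\bar\partial\xi_{c_i}^m,\bar\partial\eta_{c_i}^m)$ is bounded by $\Dt^{-1}\int_{t_{m-1}}^{t_m}\|\pt\xi_{c_i}\|^2\,ds$, which is $O(h^{2k+2})$ after summation.
\item The consistency term $E_{c_i}$ contributes $O(\Dt^2)$, exactly as in \eqref{l2cest3}.
\item For $\Lambda_{c_i}(\bar\partial\eta_{c_i}^m)$ I cannot use the skew-symmetry, since the test function is no longer $\eta_{c_i}^m$. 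I will use the same decomposition $I_{11},\dots,I_{19}$ as in Lemma \ref{lem:trilinear}, but now bound each piece by $\|\bar\partial\eta_{c_i}^m\|$ times energy-norm quantities.
\item The previously vanishing terms, for example $\cN_1(\bu^{m-1},\eta_{c_i}^m,\bar\partial\eta_{c_i}^m)$, are bounded via \eqref{NN1} by $\|\bu^{m-1}\|_{L^\infty}\vertiii{\eta_{c_i}^m}\|\bar\partial\eta_{c_i}^m\|$, up to jump terms treated with the discrete trace inequality \eqref{DTI}.
\item The nonlinear term $\cN_1(\bta_u^{m-1},\eta_{c_i}^m,\bar\partial\eta_{c_i}^m)$ is bounded by $\|\bta_u^{m-1}\|_{L^4}\vertiii{\eta_{c_i}^m}\|\bar\partial\eta_{c_i}^m\|_{L^4}$. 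Using inverse estimates this is at most $h^{-1}\|\bta_u^{m-1}\|\,\vertiii{\eta_{c_i}^m}\|\bar\partial\eta_{c_i}^m\|$, and $h^{-1}\|\bta_u^{m-1}\|\lesssim h^{-1}(h^{k+1}+\Dt)$ is bounded under $\Dt=\mathcal O(h)$ by \eqref{estfinal}.
\end{itemize}

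For $H_{c_i}(\bar\partial\eta_{c_i}^m)$, I first integrate the element-wise $\cG$ terms by parts so that no gradient falls on the test function. Then $\cG(\tc_i^{m-1}+m_0,\eta_\phi^m,\chi_h)$ becomes $(\nabla\cdot((\tc_i^{m-1}+m_0)\nabla\eta_\phi^m),\chi_h)$ plus jump terms, which is bounded by $(\|\Delta_h\eta_\phi^m\|+\vertiii{\eta_\phi^m})\|\tc_i^{m-1}\|_{W^{1,\infty}}\|\chi_h\|$. This is exactly why Lemma \ref{lem:estphif} also controls $\Dt\sum\|\Delta_h\eta_\phi^m\|^2\lesssim h^{2k+2}+\Dt^2$. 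The terms involving $e_{c_i}^{m-1}$ multiplying $\nabla R_h\phi^m$ give $\vertiii{e_{c_i}^{m-1}}\,\|\phi^m\|_{H^3}\|\chi_h\|$. Here $\Dt\sum\vertiii{e_{c_i}^{m-1}}^2\lesssim h^{2k}+\Dt^2$ by Remark \ref{l2energy}, which is where the rate drops to $h^{k}$. The term $\cG(e_{c_i}^{m-1},\eta_\phi^m,\cdot)$ is bounded via the inverse estimate, together with the $L^\infty$ smallness of $e_{c_i}^{m-1}$ that follows from the $L^2$ result of Theorem \ref{thm:l2} and $\Dt=\mathcal O(h)$.

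Collecting these bounds gives
\[
\kappa_i\vertiii{\eta_{c_i}^N}^2+\ldots\lesssim h^{2k}+\Dt^2+\Dt\sum_m\vertiii{\eta_{c_i}^{m}}^2,
\]
and the last sum is already $O(h^{2k+2}+\Dt^2)$ by \eqref{estfinal}, so no Gronwall step is even needed, or a discrete Gronwall step closes it. The main obstacle I expect is the loss of skew-symmetry in the convection terms and the gradient-on-test-function structure of $\cG$ and $\cN_1$. Every term must be rewritten, by element-wise integration by parts and careful treatment of the edge integrals, so that only $\|\bar\partial\eta_{c_i}^m\|$ appears. If some edge term $\int_e\{\cdot\}\jump{\bar\partial\eta_{c_i}^m}$ resists this, I would fall back on an inverse estimate $h^{-1/2}$ compensated by the $h^{1/2}$ from the jump of smooth-minus-projection quantities.
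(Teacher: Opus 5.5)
Your argument follows the paper's route: you test \eqref{erretaci} with $\bar\partial\eta_{c_i}^m$, split $\Lambda_{c_i}$ as in $I_{11}$--$I_{19}$, control $H_{c_i}$ through $\|\Delta_h\eta_\phi^m\|$ (Lemma \ref{lem:estphif}) and $\vertiii{e_{c_i}^{m-1}}$ (Remark \ref{l2energy}), and close without Gronwall. The gap is the time-step restriction. You claim $\Dt=\mathcal O(h)$ suffices, but your own bounds do not close under it, and the paper invokes $\Dt=\mathcal O(h^2)$ at exactly this point. Take $\cG(e_{c_i}^{m-1},\eta_\phi^m,\bar\partial\eta_{c_i}^m)$ and put the inverse estimate on the test function. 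The element term and both edge terms then give
\[
|\cG(e_{c_i}^{m-1},\eta_\phi^m,\bar\partial\eta_{c_i}^m)|\lesssim h^{-1}\|e_{c_i}^{m-1}\|_{L^\infty}\vertiii{\eta_\phi^m}\,\|\bar\partial\eta_{c_i}^m\|.
\]
After Young's inequality and summation you must therefore make $h^{-2}\max_m\|e_{c_i}^{m-1}\|_{L^\infty}^2(h^{2k+2}+\Dt^2)$ of order $h^{2k}+\Dt^2$, which requires $\|e_{c_i}^{m-1}\|_{L^\infty}\lesssim h$. Theorem \ref{thm:l2} and the inverse inequality give only $\|\eta_{c_i}^{m-1}\|_{L^\infty}\lesssim h^{-1}\|\eta_{c_i}^{m-1}\|\lesssim h^{k}+h^{-1}\Dt$. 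For $\Dt\sim h$ this is $\mathcal O(1)$, not small, so the term contributes $h^{-2}\Dt^2=\mathcal O(1)$. Using the pointwise bound $\vertiii{\eta_\phi^m}\lesssim h^{k+1}+\Dt$ from \eqref{l2est1} instead only shifts the requirement to $\Dt\sum_m\|e_{c_i}^{m-1}\|_{L^\infty}^2\lesssim h^2$. That again forces $\Dt\lesssim h^2$, up to a logarithm if you add a discrete Sobolev inequality. This is exactly the $h^{-4}\|e_{c_i}^{m-1}\|^2\vertiii{\eta_\phi^m}^2$ term in \eqref{esthec4}, and it is why the paper assumes $\Dt=\mathcal O(h^2)$.

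The same problem hides in $\cN_1(\bta_u^{m-1},\eta_{c_i}^m,\bar\partial\eta_{c_i}^m)$. Your $L^4$--$L^2$--$L^4$ bound covers only the convective part of $\cN_1$. The pieces $\frac12(\nabla\cdot\bta_u^{m-1})\eta_{c_i}^m\bar\partial\eta_{c_i}^m$ and $\frac12\jump{\bta_u^{m-1}}\cdot\bn_e\avr{\eta_{c_i}^m\bar\partial\eta_{c_i}^m}$ carry a derivative or jump of $\bta_u^{m-1}$. The same inverse and trace estimates bound them only by $h^{-3/2}\|\bta_u^{m-1}\|\,\vertiii{\eta_{c_i}^m}\,\|\bar\partial\eta_{c_i}^m\|$, and $h^{-3/2}\|\bta_u^{m-1}\|\lesssim h^{k-1/2}+h^{-3/2}\Dt$ is unbounded for $\Dt\sim h$. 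Under $\Dt=\mathcal O(h^2)$ both terms close along your lines, so the repair is to adopt the paper's restriction.

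Separately, element-wise integration by parts of $\cG(\tc_i^{m-1}+m_0,\eta_\phi^m,\bar\partial\eta_{c_i}^m)$ does not produce $\|\Delta_h\eta_\phi^m\|$. It produces the broken Laplacian and edge terms such as $\int_e\avr{(\tc_i^{m-1}+m_0)\nabla\bar\partial\eta_{c_i}^m}\cdot\bn_e\jump{\eta_\phi^m}\ds$. Inverse estimates alone leave $h^{-1}\vertiii{\eta_\phi^m}\|\bar\partial\eta_{c_i}^m\|$, which sums to $h^{2k}+h^{-2}\Dt^2$. What you need is the discrete $H^2$ stability
\[
\sum_{E\in\Eh}\|\nabla^2\eta_\phi^m\|_{L^2(E)}^2+\sum_{e\in\Gamma_h}\big(h_e^{-1}\|\jump{\nabla\eta_\phi^m}\|_{L^2(e)}^2+h_e^{-3}\|\jump{\eta_\phi^m}\|_{L^2(e)}^2\big)\lesssim\|\Delta_h\eta_\phi^m\|^2 .
\]
To obtain it, write $\eta_\phi^m=R_hw$, where $-\Delta w=\Delta_h\eta_\phi^m$, $\nabla w\cdot\bn=0$, $(w,1)=0$ (using the paper's sign convention for $\Delta_h$). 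Then use convexity, Lemma \ref{lem:ritz} and inverse estimates. The paper also asserts this step without proof.
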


\begin{proof}
	We take $\chi_{ih} = \bar{\partial}\eta_{c_i}^{m}$ in \eqref{erretaci} to find
	\begin{align} \label{esthec1}
		\begin{aligned}
			\|\bar{\partial}\eta_{c_i}^{m}\|^2 + \frac{\kappa_{i}}{2\Delta t}\left( \vertiii{\eta_{c_i}^{m}}^2 - \vertiii{\eta_{c_i}^{m-1}}^2 + \vertiii{\eta_{c_i}^{m}-\eta_{c_i}^{m-}}^2 \right) = -\left(\bar{\partial}\xi_{c_i}^{m}, \bar{\partial}\eta_{c_i}^{m}\right)+ E_{c_i}(\bar{\partial}\eta_{c_i}^{m}) 
			\\
			+ \Lambda_{c_i}(\bar{\partial}\eta_{c_i}^{m}) + H_{c_i}(\bar{\partial}\eta_{c_i}^{m}).
		\end{aligned}
	\end{align}
	A use of the Cauchy-Schwarz inequality along with Young's inequality yields the bound for the first two terms of the right-hand side of the above equation
	\begin{align} \label{esthec2}
		|\left(\bar{\partial}\xi_{c_i}^{m}, \bar{\partial}\eta_{c_i}^{m}\right)|+|E_{c_i}(\bar{\partial}\eta_{c_i}^{m})| \le \frac{C}{\Delta t} \int_{t_{m-1}}^{t_m} \|\pt\xi_{c_i}(t)\|^2 \dt + C\Delta t \int_{t_{m-1}}^{t_m} \|\partial_{tt}\tc_{i}(t)\|^2 \dt + \frac{1}{4}\|\bar{\partial}\eta_{c_i}^{m}\|^2.
	\end{align}
	An appropriate application of Lemma \ref{lem:estnon} with the Cauchy-Schwartz inequality and Young's inequality helps us to find the following bound for the nonlinear term in \eqref{esthec1} as
	\begin{align}  \label{esthec3}
		|\Lambda_{c_i}(\bar{\partial}\eta_{c_i}^{m})| \le C & \left( h^{-2}\left(\|\e_{u}^{m-1}\|^2 \vertiii{e_{c_i}^{m}}^2 + \vertiii{\e_{u}^{m}}^2\|e_{c_i}^{m-1}\|^2\right) + \vertiii{\e_{u}^{m-1}}^2\vertiii{e_{c_i}^{m}}^2 +  \vertiii{\e_{u}^{m-1}}^2\|\tc_{i}^{m}\|_{H^2}^2 \right.  \nonumber\\
		&\left. +  \vertiii{e_{c_i}^{m}}^2\|\bu^{m-1}\|_{H^2}^2 + \|\nabla(\bu^{m}-\bu^{m-1})\|^2\|\tc_{i}^{m}\|_{H^2}^2\right) + \frac{1}{8}\|\bar{\partial}\eta_{c_i}^{m}\|^2.
	\end{align}
	In a similar way to Lemma \ref{lem:Hci} using an appropriate norm, we can bound the last term of the right-hand side of \eqref{esthec1} by
	\begin{align}  \label{esthec4}
		|H_{c_i}(\bar{\partial}\eta_{c_i}^{m})| \le C\left(h^{-4}\left(\|e_{c_i}^{m-1}\|^2+h^2\vertiii{e_{c_i}^{m-1}}^2\right)\left(\vertiii{\eta_{\phi}^{m}}^2+\vertiii{\xi_{\phi}^{m}}^2\right) +  \|\Delta_h\eta_{\phi}^{m}\|^2\|\tc_{i}\|_{H^2}^2  \right. \nonumber
		\\
		+\left.  \left(\|\tc_{i}^{m}-\tc_{i}^{m-1}\|^2 + h^2\|\nabla^2(\tc_{i}^{m}-\tc_{i}^{m-1})\|^2\right)\|\phi^m\|_{H^3}^2\right) + \frac{1}{8}\|\bar{\partial}\eta_{c_i}^{m}\|^2.
	\end{align}
	Combining all the above bounds \eqref{esthec2}-\eqref{esthec4} in \eqref{esthec1} and multiplying by $2\Delta t$ and taking summation from $m=1$ to $N$ We find that
	\begin{align} \label{esthec5}
		\kappa_{i} \vertiii{\eta_{c_i}^{N}}^2  + & \kappa_i\Delta t \sum_{m=1}^{N}  \vertiii{\eta_{c_i}^{m}-\eta_{c_i}^{m-}}^2 + \Delta t \sum_{m=1}^{N} \|\bar{\partial}\eta_{c_i}^{m}\|^2 \lesssim   \int_{0}^{t_N} \|\pt\xi_{c_i}(t)\|^2 \dt + (\Delta t)^2 \int_{0}^{t_N} \|\partial_{tt}\tc_{i}(t)\|^2 \dt\nonumber\\
		& + \Delta t\sum_{m=1}^{N}   \left( h^{-2} \|\e_{u}^{m-1}\|^2 \vertiii{e_{c_i}^{m}}^2 + h^{-2} \vertiii{\e_{u}^{m}}^2\|e_{c_i}^{m-1}\|^2  + \vertiii{\e_{u}^{m-1}}^2\vertiii{e_{c_i}^{m}}^2  \right)  \nonumber\\
		& +\Delta t \sum_{m=1}^{N} \left( \vertiii{\e_{u}^{m-1}}^2\|\tc_{i}^{m}\|_{H^2}^2 +    \vertiii{e_{c_i}^{m}}^2\|\bu^{m-1}\|_{H^2}^2 + \|\nabla(\bu^{m}-\bu^{m-1})\|^2\|\tc_{i}^{m}\|_{H^2}^2 \right) \nonumber\\
		&  + \Delta t\sum_{m=1}^{N}  \left( h^{-4}\left(\|e_{c_i}^{m-1}\|^2+h^2\vertiii{e_{c_i}^{m-1}}^2\right)\left(\vertiii{\eta_{\phi}^{m}}^2+\vertiii{\xi_{\phi}^{m}}^2\right) + \|\Delta_h\eta_{\phi}^{m}\|^2\|\tc_{i}\|_{H^2}^2\right) \nonumber\\
		& + \Delta t\sum_{m=1}^{N}\left(  \left(\|\tc_{i}^{m}-\tc_{i}^{m-1}\|^2 + h^2\|\nabla^2(\tc_{i}^{m}-\tc_{i}^{m-1})\|^2\right)\|\phi^m\|_{H^3}^2\right).
	\end{align}
	We now apply Lemma \ref{lem:estphif}, Theorem \ref{thm:l2}, Remark \ref{l2energy} and Lemma \ref{lem:modi} along with the regularity assumptions Theorem \ref{thm:l2} and $\Delta t = \mathcal{O}(h^2)$ to conclude the rest of the proof.
\end{proof}

\begin{lemma} \label{lem:h1u}
	Under the hypothesis of Theorem \ref{thm:l2}, the following estimate holds for any $N>0$:
	\begin{align*}
		\nu\vertiii{\bta_{u}^{N}}^2 & + \Delta t \sum_{m=1}^{N} \|\bar{\partial}\bta_{u}^{m}\|^2 + \Delta t \sum_{m=1}^{N} \vertiii{\bta_{u}^{m}-\bta_{u}^{m-1}}^2  \lesssim   \left(h^{2k}+(\Delta t)^2\right).
	\end{align*}
	where $\bar{\partial}\e_u^{m} = \frac{\e_u^{m}-\e_u^{m-1}}{\Dt}$. 
\end{lemma}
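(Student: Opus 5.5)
The plan is to mirror the proof of Lemma \ref{lem:h1c}, now for the velocity error equations \eqref{erretau}--\eqref{erretauf}. The pressure-correction splitting forces one extra step: the energy norm must be tracked on the end-of-step velocity $\bta_u^m$, while the viscous term acts on the intermediate $\hbta_u^m$.

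First, from \eqref{erretauf} I read off $\bta_u^m=\hbta_u^m+\Dt\,\mathbb{G}_h(\eta_p^m-\eta_p^{m-1})+(\text{consistency terms in } p^m-p^{m-1},\ \xi_p^m-\xi_p^{m-1})$. Here $\mathbb{G}_h$ is the discrete gradient dual to $\cD$. I then add \eqref{erretau} and \eqref{erretauf}, which gives an equation for $(\bta_u^m-\bta_u^{m-1})/\Dt=\bar{\partial}\bta_u^m$ of the form
\begin{align*}
(\bar{\partial}\bta_u^m,\bv_h)+\nu\cA_2(\hbta_u^m,\bv_h)-\cD(\bv_h,\eta_p^m)=\text{RHS}(\bv_h).
\end{align*}
I test this with $\bv_h=\bar{\partial}\bta_u^m$. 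To handle the pressure term, I use $\cD(\bta_u^m,q_h)=0$, which holds by the argument already used in Lemma \ref{lem:eqetaf}: $\cD(\e_u^m,q_h)=0$ together with \eqref{stokesproj}. Applying it at levels $m$ and $m-1$ makes $\cD(\bar{\partial}\bta_u^m,\eta_p^m)$ vanish.

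In the viscous term I write $\hbta_u^m=\bta_u^m+(\hbta_u^m-\bta_u^m)$. The main part, via $2(a-b,a)=|a|^2-|b|^2+|a-b|^2$ for the symmetric form $\cA_2$ and Lemma \ref{lem:prop_bilinear}, produces $\frac{\nu}{2\Dt}\big(\vertiii{\bta_u^m}^2-\vertiii{\bta_u^{m-1}}^2+\vertiii{\bta_u^m-\bta_u^{m-1}}^2\big)$. The remainder $\nu\cA_2(\hbta_u^m-\bta_u^m,\bar{\partial}\bta_u^m)$ is where I expect the main difficulty. The difference is $\mathcal{O}(\Dt)\,\mathbb{G}_h(\eta_p^m-\eta_p^{m-1})$ plus data terms. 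I would bound it by an inverse estimate \eqref{inv.DG} and Young's inequality, giving $C\Dt^2h^{-2}\vertiii{\eta_p^m-\eta_p^{m-1}}^2+\frac14\|\bar{\partial}\bta_u^m\|^2$. Under the mesh condition $\Dt=\mathcal{O}(h^2)$ already used in Lemma \ref{lem:h1c}, this reduces to $C\Dt\,\vertiii{\eta_p^m-\eta_p^{m-1}}^2$. I would then control $\Dt\sum_m$ of this by the pressure information $(\Dt)^2\vertiii{\eta_p^N}^2$ and $(\Dt)^3\sum\vertiii{\eta_p^{m}}^2$ coming from \eqref{estfinal}. If that is too weak, the fallback is to test \eqref{erretap} with $\eta_p^m-\eta_p^{m-1}$, which yields an increment bound by $\Dt^{-1}\vertiii{\hbta_u^m}$; that quantity is controlled in \eqref{estfinal}.

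For the right-hand side I bound the remaining terms as follows, each against $\frac18\|\bar{\partial}\bta_u^m\|^2$:
\begin{itemize}
\item The $\bxi_u$-difference and $E_u$ terms, by Cauchy--Schwarz exactly as in \eqref{esthec2}.
\item The pressure data terms $\cD(\cdot,p^m-p^{m-1})$ and $\cD(\cdot,\xi_p^m-\xi_p^{m-1})$, by the second form of $\cD$ together with Lemma \ref{lem:stokes} and the assumption $p_t\in L^2(0,T;H^k)$.
\item $\Lambda_u(\bar{\partial}\bta_u^m)$, by splitting as in Lemma \ref{lem:trilinear} but placing the $L^2$ norm on the test function, analogously to \eqref{esthec3}. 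The $h^{-2}$ factors are absorbed using Theorem \ref{thm:l2}, Remark \ref{l2energy} and $\Dt=\mathcal{O}(h^2)$.
\item $F_u(\bar{\partial}\bta_u^m)$, using Lemma \ref{lem:gbdd} in the form $\cT\lesssim\|\cdot\|_{L^\infty}\vertiii{\cdot}\|\bw_h\|$, plus Lemma \ref{lem:estphif} and Lemma \ref{lem:h1c} for the $\eta_{c_i}$ and $\eta_\phi$ contributions.
\end{itemize}

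Finally, I multiply by $2\Dt$, sum from $m=1$ to $N$, and use $\vertiii{\bta_u^0}\lesssim h^k$ from the choice of initial data. The approximation estimates, Theorem \ref{thm:l2}, Remark \ref{l2energy}, Lemma \ref{lem:estphif} and Lemma \ref{lem:h1c} then reduce the right-hand side to $C(h^{2k}+\Dt^2)+C\Dt\sum_{m}\vertiii{\bta_u^{m-1}}^2$. The discrete Gronwall lemma closes the estimate.
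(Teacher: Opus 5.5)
\textbf{The gap: the splitting remainder.} The step that fails is the one you flag yourself, the remainder $\nu\cA_2(\hbta_u^m-\bta_u^m,\bar{\partial}\bta_u^m)$. By \eqref{fds4}, $\hbta_u^m-\bta_u^m=\bu_h^m-\hbu_h^m=\Dt\,\mathbb{G}_h(p_h^m-p_h^{m-1})$ with $(\mathbb{G}_hq,\bw_h)=\cD(\bw_h,q)$. So the remainder equals $\nu\Dt\,\cD(A_h\bar{\partial}\bta_u^m,p_h^m-p_h^{m-1})$. Up to sign and the factor $\nu$, this is the term the paper generates in \eqref{h1est2} by testing \eqref{erretauf} with $A_h\bar{\partial}\bta_u^m$. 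The paper then discards it when it passes straight to the telescoping identity, so there is no estimate there to borrow. You are right that it does not vanish in general, since $A_h$ does not preserve discrete divergence-freeness.

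\textbf{Why your bound is too weak.} First, it is off by $h^{-2}$. Besides $\vertiii{\bar{\partial}\bta_u^m}\lesssim h^{-1}\|\bar{\partial}\bta_u^m\|$ you also need $\vertiii{\mathbb{G}_hq_h}\lesssim h^{-1}\|\mathbb{G}_hq_h\|\lesssim h^{-1}\vertiii{q_h}$; there is no bound $\vertiii{\mathbb{G}_hq_h}\lesssim\vertiii{q_h}$. The true cost is therefore $C(\Dt)^2h^{-4}\vertiii{\eta_p^m-\eta_p^{m-1}}^2$ plus data terms, which under $\Dt=\mathcal{O}(h^2)$ is only $\mathcal{O}(1)\,\vertiii{\eta_p^m-\eta_p^{m-1}}^2$.

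Second, even granting your count $C\Dt\vertiii{\eta_p^m-\eta_p^{m-1}}^2$, after multiplying by $\Dt$ and summing you need $(\Dt)^2\sum_{m=1}^N\vertiii{\eta_p^m-\eta_p^{m-1}}^2\lesssim h^{2k}+(\Dt)^2$. But \eqref{estfinal} only gives $(\Dt)^2\vertiii{\eta_p^m}^2\lesssim h^{2k+2}+(\Dt)^2$ for each $m$, i.e.\ $\vertiii{\eta_p^m}=\mathcal{O}(1+h^{k+1}/\Dt)$. Summing $N=T/\Dt$ such bounds gives $T\big(h^{2k+2}/\Dt+\Dt\big)$, one power of $\Dt$ short. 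A bound on $(\Dt)^3\sum_m\vertiii{\eta_p^m}^2$ controls a quantity with one more factor of $\Dt$ than the one you need. You would end with $\vertiii{\bta_u^N}\lesssim h^k+(\Dt)^{1/2}$.

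\textbf{The fallback loses the same factor.} Testing \eqref{erretap} gives $\vertiii{\eta_p^m-\eta_p^{m-1}}\lesssim\Dt^{-1}\vertiii{\hbta_u^m}+\dots$, hence $(\Dt)^2\sum_m\vertiii{\eta_p^m-\eta_p^{m-1}}^2\lesssim\sum_m\vertiii{\hbta_u^m}^2+\dots$. What is actually controlled is $\Dt\sum_m\vertiii{\hbta_u^m}^2\lesssim h^{2k+2}+(\Dt)^2$, as in Lemma \ref{lem:eqetaf} and Remark \ref{l2energy}; the factor $\Dt$ is only missing from \eqref{estfinal} as printed. So again you get $h^{2k+2}/\Dt+\Dt$.

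\textbf{What is missing.} You need an independent estimate showing that the pressure increments $\eta_p^m-\eta_p^{m-1}$ are themselves of order $\Dt$. Nothing in \eqref{estfinal} or in Lemmas \ref{lem:estphif}--\ref{lem:h1c} provides it. For incremental pressure-correction schemes such a bound typically comes from a second energy estimate on the time-differenced error equations. That requires more temporal regularity of $\bu$ and $p$ than Theorem \ref{thm:l2} assumes.

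\textbf{A related issue.} The same difficulty sits behind the identity $\cD(\bta_u^m,q_h)=0$ that you take over from Lemma \ref{lem:eqetaf} to cancel $\cD(\bar{\partial}\bta_u^m,\eta_p^m)$. Combining \eqref{fds3} and \eqref{fds4} gives $\cD(\bu_h^m,q_h)=\Dt\big[(\mathbb{G}_h(p_h^m-p_h^{m-1}),\mathbb{G}_hq_h)-\cA_1(p_h^m-p_h^{m-1},q_h)\big]$. This does not vanish for the interior-penalty form $\cA_1$, so that cancellation also leaves a pressure-increment term of the same kind.
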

\begin{proof}
	We choose $\bv_h = \bar{\partial}\bta_u^{m}$ in \eqref{erretau} and $\bw_h = \bar{\partial}\bta_u^{m}$ in \eqref{erretauf}. Then, add both the resulting equations and simplify them with the help of the Stokes operator \eqref{stokesproj} and the continuity equation to find
	\begin{align} \label{h1est1}
		\|\bar{\partial}\bta_{u}^{m}\|^2 + \nu \cA_2(\hbta_{u}^{m}, \bar{\partial}\bta_{u}^{m}) = -\left( \bar{\partial}\bxi_{u}^{m},\bar{\partial}\bta_{u}^{m} \right) + E_u(\bar{\partial}\bta_{u}^{m}) + \Lambda_u(\bar{\partial}\bta_{u}^{m}) + F_u(\bar{\partial}\bta_{u}^{m}).
	\end{align}
	To estimate the second term of the left side, we first define a discrete operator $A_h:\bV_h\to\bV_h$ by $(A_h\bv_h,\bw_h) = \cA_2(\bv_h,\bw_h)$. Now we choose $\bw_h = A_h\bar{\partial}\bta_{u}^{m}$ in \eqref{erretauf} and apply \eqref{stokesproj} to obtain
	\begin{align}\label{h1est2}
		\cA_2(\hbta_{u}^{m}, \bar{\partial}\bta_{u}^{m}) & = \cA_2(\bta_{u}^{m}, \bar{\partial}\bta_{u}^{m}) - \Delta t \cD(A_h\bar{\partial}\bta_{u}^{m}, p_h^m-p_h^{m-1}) \nonumber\\
		& = \frac{1}{2\Delta t}\left(\vertiii{\bta_{u}^{m}}^2 - \vertiii{\bta_{u}^{m-1}}^2 + \vertiii{\bta_{u}^{m}-\bta_{u}^{m-1}}^2\right).
	\end{align}
	An application of the Cauchy-Schwarz inequality, along with the Young's inequality, yields
	\begin{align} \label{h1est3}
		|\left(\bar{\partial}\bxi_{u}^{m},\bar{\partial}\bta_{u}^{m} \right)| + |E_u(\bar{\partial}\bta_{u}^{m})| \le \frac{C}{\Delta t}\int_{t_{m-1}}^{t_m} \|\pt\bxi_{u}(t)\|^2 \dt + C\Delta t \int_{t_{m-1}}^{t_m} \|\partial_{tt}\bu(t)\|^2
		\dt + \frac{1}{4} \|\bar{\partial}\bta_{u}^{m}\|^2.
	\end{align}
	Arguing the similar way of \eqref{esthec3}, we find that
	\begin{align}  \label{h1est4}
		|\Lambda_{u}(\bar{\partial}\eta_{u}^{m})| \le C & \left( h^{-2}\left(\|\e_{u}^{m-1}\|^2 \vertiii{\e_{u}^{m}}^2 + \vertiii{\e_{u}^{m}}^2\|\e_{u}^{m-1}\|^2\right) + \vertiii{\e_{u}^{m-1}}^2\vertiii{\e_{u}^{m}}^2 +  \vertiii{\e_{u}^{m-1}}^2\|\bu^{m}\|_{H^2}^2 \right.  \nonumber\\
		&\left. +  \vertiii{\e_{u}^{m}}^2\|\bu^{m-1}\|_{H^2}^2 + \|\nabla(\bu^{m}-\bu^{m-1})\|^2\|\bu^{m}\|_{H^2}^2\right) + \frac{1}{8}\|\bar{\partial}\eta_{u}^{m}\|^2.
	\end{align}
	Similar to Lemma \ref{lem:Tu}, an appropriate use of the H\"{o}lder inequality gives
	\begin{align} \label{h1est5}
		|F_u(\bar{\partial}\eta_{u}^{m})|  \le C\Big( \left(h^{-2}\|e_{c_1}^{m-1}-e_{c_2}^{m-1}\|^2+\vertiii{\xi_{c_1}^{m-1}-\xi_{c_2}^{m-1}}^2\right)\vertiii{e_{\phi}^{m}}^2   + \|e_{c_1}^{m-1}-e_{c_2}^{m-1}\|^2\|\phi^{m}\|_{H^3}^2 \nonumber\\
		+ \vertiii{e_{\phi}^{m}}^2 \|\tc_{1}^{m-1}-\tc_{2}^{m-1}\|_{H^2}^2 + \left(\|\tc_1^{m}-\tc_1^{m-1}\|^2+\|\tc_2^{m}-\tc_2^{m-1}\|^2\right)\|\phi^{m}\|_{H^3}^2 \Big) + \frac{1}{8}\vertiii{\bar{\partial}\eta_{u}^{m}}^2.
	\end{align}
	After inserting all the estimates \eqref{h1est2}-\eqref{h1est5} in \eqref{h1est1}, we multiply the resulting inequality by $2\Delta t$ and take summation from $m=1$ to $N$ to derive the following
	\begin{align}
		\nu\vertiii{\bta_{u}^{N}}^2 & + \Delta t \sum_{m=1}^{N} \|\bar{\partial}\bta_{u}^{m}\|^2 + \Delta t \sum_{m=1}^{N} \vertiii{\bta_{u}^{m}-\bta_{u}^{m-1}}^2 \lesssim \int_{0}^{t_N} \|\pt\bxi_{u}(t)\|^2 \dt + \left(\Delta t\right)^2\int_0^{t_N} \|\partial_{tt}\bu(t)\|^2 \dt \nonumber\\
		& + \Delta t \sum_{m=1}^{N} \left(h^{-2}\|\e_{u}^{m-1}\|^2 \vertiii{\e_{u}^{m}}^2 + h^{-2}\vertiii{\e_{u}^{m}}^2\|\e_{u}^{m-1}\|^2 + \vertiii{\e_{u}^{m-1}}^2\vertiii{\e_{u}^{m}}^2 \right)  \nonumber\\
		& + \Delta t \sum_{m=1}^{N} \left( +  \vertiii{\e_{u}^{m-1}}^2\|\bu^{m}\|_{H^2}^2 +  \vertiii{\e_{u}^{m}}^2\|\bu^{m-1}\|_{H^2}^2 + \|\nabla(\bu^{m}-\bu^{m-1})\|^2\|\bu^{m}\|_{H^2}^2\right) \nonumber\\
		& + \Delta t \sum_{m=1}^{N}  \left(\left(h^{-2}\|e_{c_1}^{m-1}-e_{c_2}^{m-1}\|^2+\vertiii{\xi_{c_1}^{m-1}-\xi_{c_2}^{m-1}}^2\right)\vertiii{e_{\phi}^{m}}^2   + \|e_{c_1}^{m-1}-e_{c_2}^{m-1}\|^2\|\phi^{m}\|_{H^3}^2\right) \nonumber\\
		& + \Delta t \sum_{m=1}^{N}\vertiii{e_{\phi}^{m}}^2 \|\tc_{1}^{m-1}-\tc_{2}^{m-1}\|_{H^2}^2 + \left(\|\tc_1^{m}-\tc_1^{m-1}\|^2+\|\tc_2^{m}-\tc_2^{m-1}\|^2\right)\|\phi^{m}\|_{H^3}^2.
	\end{align}
	A use of Lemma \ref{lem:estphif}, Theorem \ref{thm:l2}, Remark \ref{l2energy} and Lemma \ref{lem:modi}, along with the regularity assumptions of Theorem \ref{thm:l2} and $\Delta t = \mathcal{O}(h)$, completes the rest of the proof.
\end{proof}

Combining the results of Lemmas \ref{lem:ritz}, \ref{lem:modi}, and \ref{lem:stokes} along with \eqref{estphi1}, Lemmas \ref{lem:h1c} and \ref{lem:h1u}, we obtain the following theorem.

\begin{theorem}
	Under the hypothesis of Theorem \ref{thm:l2} , then there exists a positive constant $C$ such that following holds true for any $N\ge 1$
	\begin{align}
		\vertiii{\phi(t_N)-\phi_h^N} + \vertiii{\bu(t_N)-\bu_h^N} \le C\left( h^{k} + \Delta t \right), \\
		\vertiii{c_1(t_N)-\tc_{1h}^N} + \vertiii{c_2(t_N)-\tc_{2h}^N} \le C\left( h^{k} + \Delta t\right). \label{h1energyc}
	\end{align}
\end{theorem}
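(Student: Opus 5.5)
The plan is to split every error as in \eqref{usplit}, bound the projection parts by the approximation lemmas, and bound the discrete parts by the estimates already proved. For each variable we use the triangle inequality in the energy norm, e.g.
\begin{align*}
\vertiii{\phi(t_N)-\phi_h^N} \le \vertiii{\xi_\phi^N} + \vertiii{\eta_\phi^N}, \qquad \vertiii{\bu(t_N)-\bu_h^N} \le \vertiii{\bxi_u^N} + \vertiii{\bta_u^N},
\end{align*}
and similarly $\vertiii{e_{c_i}^N}\le \vertiii{\xi_{c_i}^N}+\vertiii{\eta_{c_i}^N}$. Note that $c_i(t_N)-c_{ih}^N = \tc_i^N-\tc_{ih}^N$, because $m_0$ cancels by \eqref{fds5}. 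We read the left side of \eqref{h1energyc} in this sense.

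The projection parts are handled by Lemma \ref{lem:ritz}, Lemma \ref{lem:modi} and Lemma \ref{lem:stokes} with $s=k+1$, each giving $Ch^{k}$. This requires pointwise-in-time $H^{k+1}$ regularity at $t_N$. We obtain it from $L^2(0,T;H^{k+1})$ together with $\pt(\cdot)\in L^2(0,T;H^{k+1})$, since these imply continuity into $H^{k+1}$.

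For the discrete parts:
\begin{itemize}
\item For $\eta_{c_i}^N$ and $\bta_u^N$ we invoke Lemma \ref{lem:h1c} and Lemma \ref{lem:h1u} directly. They give $\vertiii{\eta_{c_i}^N}^2+\vertiii{\bta_u^N}^2\lesssim h^{2k}+(\Dt)^2$, and taking square roots yields $h^k+\Dt$.
\item For $\eta_\phi^N$ we use the pointwise estimate \eqref{estphi1} at $m=N$ rather than the summed Lemma \ref{lem:estphif}. It bounds $\vertiii{\eta_\phi^N}^2$ by $h^{2k+2}$, the local term $\Dt\int_{t_{N-1}}^{t_N}\|\pt\tc_i\|^2$, and $\|\eta_{c_i}^{N-1}\|^2$.
\item The first two of these are at most $C(h^{2k+2}+(\Dt)^2)$ once $\pt \tc_i$ is bounded in $L^2$ on the last interval; alternatively, bound $\|\tc_i^N-\tc_i^{N-1}\|$ directly by $\Dt\,\sup\|\pt\tc_i\|$.
\item The third is bounded by \eqref{estfinal} from the proof of Theorem \ref{thm:l2}, giving $\|\eta_{c_i}^{N-1}\|^2\lesssim h^{2k+2}+(\Dt)^2$.
\end{itemize}

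Collecting these, $\vertiii{\eta_\phi^N}\lesssim h^{k+1}+\Dt$, which is better than needed. Summing all contributions proves both displayed bounds.

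The main obstacle is ensuring the discrete lemmas give genuinely pointwise (not time-averaged) control at $t_N$ with only $O(\Dt)$ time error. This matters for $\phi$, because its equation is lagged by one step. I would handle it first via \eqref{estphi1} as above. If the integral term only yields $O(\Dt^{1/2})$ under the stated regularity, I would instead assume $\pt\tc_i\in L^\infty(0,T;L^2)$, which follows from $\pt\tc_i\in L^2(H^{k+1})$ and $\partial_{tt}\tc_i\in L^2(L^2)$. The time-step restrictions $\Dt=\mathcal{O}(h)$ and $\Dt=\mathcal{O}(h^2)$ used in Lemmas \ref{lem:h1c} and \ref{lem:h1u} are inherited as hypotheses.
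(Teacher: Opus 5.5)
Your proposal is correct and follows the paper's own route: the paper obtains this theorem by combining Lemmas~\ref{lem:ritz}, \ref{lem:modi} and \ref{lem:stokes} for the projection parts, \eqref{estphi1} for $\eta_\phi^N$, and Lemmas~\ref{lem:h1c} and \ref{lem:h1u} for $\eta_{c_i}^N$ and $\bta_u^N$, exactly as you do. Your added care (using \eqref{estfinal} for $\|\eta_{c_i}^{N-1}\|$, noting $\pt\tc_i\in C([0,T];L^2)$, the cancellation of $m_0$, and the inherited time-step restrictions) only makes explicit what the paper leaves implicit; one small correction is that no $\pt\phi$ hypothesis is assumed, so the pointwise bound on $\|\phi(t_N)\|_{H^{k+1}}$ (needed for $\xi_\phi^N$ and in Lemma~\ref{lem:modi}) should come from $\phi\in L^\infty(0,T;H^3)$ when $k\le 2$, or from the Poisson equation \eqref{eqphi} with data $c_1-c_2$, rather than from the time-derivative argument you give.
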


\subsection{Pressure Estimate}

Before finding the pressure bounds, we prove the following lemma, which will be used to derive the pressure error bound.
\begin{lemma} \label{lem:pree}
	Under the assumptions of Theorem \ref{thm:l2}, the following estimate holds for any $N>1$:
	\begin{align*}
		\Delta t \sum_{m=1}^{N} \|\theta_p^m\|^2 \lesssim \left(h^{2k}+(\Delta t)^2\right).
	\end{align*}
\end{lemma}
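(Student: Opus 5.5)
Although $\theta_p^m$ has not been defined explicitly yet, I take it to be the discrete pressure error $\eta_p^m=\bS_hp^m-p_h^m$. This is the quantity that appears naturally once the two velocity error equations are combined. The plan is a standard inf-sup argument (Lemma \ref{lem:infsup}) applied to a single momentum error equation. The right-hand side is then controlled by quantities already bounded in Theorem \ref{thm:l2}, Remark \ref{l2energy} and Lemmas \ref{lem:estphif}, \ref{lem:h1c} and \ref{lem:h1u}.

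\textbf{Step 1: a single momentum equation for $\eta_p^m$.} I add \eqref{erretau} with test function $\bv_h$ to \eqref{erretauf} with $\bw_h=\bv_h$. The terms $\pm\hbta_u^m/\Dt$ cancel, and so do the terms $\cD(\bv_h,p^m-p^{m-1})$ and $\cD(\bv_h,\xi_p^m-\xi_p^{m-1})$. Moreover $-\cD(\bv_h,\eta_p^{m-1})-\cD(\bv_h,\eta_p^m-\eta_p^{m-1})=-\cD(\bv_h,\eta_p^m)$. This leaves, for all $\bv_h\in\bV_h$,
\begin{equation*}
\cD(\bv_h,\eta_p^m)=(\bar{\partial}\bta_u^m,\bv_h)+\nu\cA_2(\hbta_u^m,\bv_h)+(\bar{\partial}\bxi_u^m,\bv_h)-E_u(\bv_h)-\Lambda_u(\bv_h)-F_u(\bv_h).
\end{equation*}

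\textbf{Step 2: inf-sup bound at each time level.} I restrict to $\bv_h\in\tilde{\bV}_h$ and apply Lemma \ref{lem:infsup}, which gives $\beta^*\|\eta_p^m\|\le\sup_{\bv_h}\cD(\bv_h,\eta_p^m)/\vertiii{\bv_h}$. The linear terms are routine. Using the Cauchy--Schwarz inequality together with \eqref{DPI}, they are bounded by $\|\bar{\partial}\bta_u^m\|$, by $\vertiii{\hbta_u^m}$ (continuity of $\cA_2$ in Lemma \ref{lem:prop_bilinear}), and by $\Dt^{-1/2}(\int_{t_{m-1}}^{t_m}\|\pt\bxi_u\|^2)^{1/2}+\Dt^{1/2}(\int_{t_{m-1}}^{t_m}\|\partial_{tt}\bu\|^2)^{1/2}$, each multiplied by $\vertiii{\bv_h}$. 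For $\Lambda_u(\bv_h)$ and $F_u(\bv_h)$ I redo the splittings of Lemmas \ref{lem:trilinear} and \ref{lem:Tu} with a general $\bv_h$ in place of $\hbta_u^m$. I then invoke Lemma \ref{lem:estnon} with \eqref{Lptoenergy}, \eqref{GNI} and \eqref{AI}, so that every term is of the form (error quantity)$\times\vertiii{\bv_h}$. The one change from Lemma \ref{lem:trilinear} is that skew-symmetry is no longer available: the term $\cN_2(\bu_h^{m-1},\hbta_u^m,\bv_h)$ does not vanish and must be bounded by Lemma \ref{lem:bdd}, giving a contribution $\vertiii{\bu_h^{m-1}}\vertiii{\hbta_u^m}\vertiii{\bv_h}$.

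\textbf{Step 3: square, sum, and conclude.} I square the bound from Step 2, multiply by $\Dt$ and sum over $m=1,\dots,N$. The linear terms are then controlled as follows. The sum $\Dt\sum_m\|\bar{\partial}\bta_u^m\|^2$ is bounded by Lemma \ref{lem:h1u}. The sum $\Dt\sum_m\vertiii{\hbta_u^m}^2$ is bounded by \eqref{estfinal}. The projection and consistency terms are bounded by Lemma \ref{lem:stokes} and the regularity hypotheses of Theorem \ref{thm:l2}. Each of these contributes $\lesssim h^{2k}+(\Dt)^2$.

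\textbf{Main obstacle.} I expect the nonlinear terms to be the hardest part. Products such as $\vertiii{\e_u^{m-1}}^2\vertiii{\e_u^m}^2$, $h^{-2}\|\e_u^{m-1}\|^2\vertiii{\e_u^m}^2$ and $h^{-2}\|e_{c_i}^{m-1}\|^2\vertiii{e_\phi^m}^2$ must be shown to be $O(h^{2k}+(\Dt)^2)$ after summation. To handle them, I first use Theorem \ref{thm:l2} and Lemmas \ref{lem:h1c} and \ref{lem:h1u}, together with $\Dt=\mathcal{O}(h)$, to obtain uniform-in-$m$ bounds $\vertiii{\e_u^m}+\vertiii{e_{c_i}^m}\lesssim1$ and $h^{-1}\|\e_u^m\|\lesssim1$. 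These also give $\vertiii{\bu_h^{m-1}}\lesssim1$, which handles the non-skew term from Step 2. With these bounds, each product reduces to one factor summed in the energy norm, which Remark \ref{l2energy} bounds by $C(h^{2k}+(\Dt)^2)$. This yields $\Dt\sum_{m}\|\eta_p^m\|^2\lesssim h^{2k}+(\Dt)^2$.
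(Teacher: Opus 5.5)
Your proposal is correct and follows essentially the same route as the paper. Both add the two velocity error equations with a common test function to eliminate the intermediate velocity, apply the inf-sup condition of Lemma \ref{lem:infsup} on $\tilde{\bV}_h$, bound each term by an error quantity times $\vertiii{\bv_h}$, then square, sum, and close with Lemma \ref{lem:h1u}, Remark \ref{l2energy} and \eqref{estfinal}. The only difference is cosmetic: the paper adds the unsplit equations \eqref{eeu} and \eqref{eeuf}, so it bounds $\cD(\bv_h,\xi_p^m)$ and $\nu\cA_2(\heu^m,\bv_h)$ directly and splits $\Lambda_u(\bv_h)$ in terms of $\e_u^{m-1}$ and $\heu^m$ (needing no skew-symmetry), whereas you use the split equations \eqref{erretau} and \eqref{erretauf}, where the Stokes projection cancels those terms.
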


\begin{proof}
	Choose $\bw_{h} = \bv_{h}$ in \eqref{eeuf} and add with \eqref{eeu} to obtain
	\begin{align}\label{pree1}
		\cD(\bv_{h},\theta_p^{m}) = -\cD(\bv_{h},\xi_p^{m}) + (\bar{\partial}\e_u^{m},\bv_{h}) + \nu \cA_2(\heu^{m},\bv_h) -  E_{u}(\bv_h) - \Lambda_u(\bv_h) - F_u(\bv_h)
	\end{align}

	A use of the Cauchy-Schwarz inequality and the Young's inequality with Lemma \ref{lem:prop_bilinear} yields
	\begin{align}\label{pree2}
		|\cD(\bv_{h},\xi_p^{m})| + |(\bar{\partial}\e_u^{m},\bv_{h})| + |\nu \cA_2(\heu^{m},\bv_h)| + |E_{u}(\bv_h)|  & \lesssim \left(\|\xi_p^{m}\| + \|\bar{\partial}\e_u^{m}\| + \vertiii{\heu^{m}}\right)\vertiii{\bv_h} \nonumber\\
		& \quad + \Dt\left(\int_{t_{m-1}}^{t_m}\|\partial_{tt}\bu\|^{2}\dt \right)^{\frac{1}{2}} \vertiii{\bv_{h}}.
	\end{align}
	To bound the fifth term on the right-hand side of \eqref{pree1}, a use of Lemma \ref{lem:bdd} and \ref{lem:estnon} gives
	\begin{align*}
		|\Lambda_u(\bv_{h})| \lesssim \left(\vertiii{\e_u^{m-1}}\vertiii{\heu^{m}} + \|\e_u^{m-1}\|\|\bu^{m}\|_{H^2} +  \|\bu^{m-1}\|_{H^2}\vertiii{\heu^{m}} + \|\bu^{m}-\bu^{m-1}\|\|\bu^{m}\|_{H^2}\right)\vertiii{\bv_{h}}.
	\end{align*}
	Last term on the right-hand side of \eqref{pree1} can be bounded as
	\begin{align}
		|F_u(\bv_{h})|\lesssim\big(\vertiii{\eco^{m-1}-\ect^{m-1}}\vertiii{\e_\phi^{m}} + \vertiii{\eco^{m-1}-\ect^{m-1}}\vertiii{\phi^{m}} +  \vertiii{\tc_{1}^{m-1}-\tc_{2}^{m-1}}\vertiii{\e_\phi^{m}} \nonumber\\
		+ \|(\tc_{1}^{m}-\tc_{1}^{m-1})-(\tc_{2}^{m}-\tc_{2}^{m-1})\|\|\phi^{m}\|_{H^2}\big)\vertiii{\bv_{h}}.
	\end{align}
	Combining all the above bounds to obtain 
	\begin{align}
		&\cD(\bv_h,\theta_p^{m})  \lesssim \vertiii{\bv_{h}} \bigg( \|\xi_p^{m}\| + \|\bar{\partial}\e_u^{m}\|  + \vertiii{\heu^{m}} + \Dt\left(\int_{t_{m-1}}^{t_m}\|\partial_{tt}\bu\|^{2}\dt \right)^{\frac{1}{2}} + \vertiii{\e_u^{m-1}}\vertiii{\heu^{m}} \nonumber
		\\
		& + \vertiii{\e_u^{m-1}}\vertiii{\bu^{m}} +  \vertiii{\bu^{m-1}}\vertiii{\heu^{m}} + \|\bu^{m}-\bu^{m-1}\|\|\bu^{m}\|_{H^2} + \vertiii{\eco^{m-1}-\ect^{m-1}}\vertiii{\e_\phi^{m}} \nonumber
		\\
		& + \vertiii{\eco^{m-1}-\ect^{m-1}}\vertiii{\phi^{m}} +  \vertiii{\tc_{1}^{m-1}-\tc_{2}^{m-1}}\vertiii{\e_\phi^{m}}  + \|(\tc_{1}^{m}-\tc_{1}^{m-1})-(\tc_{2}^{m}-\tc_{2}^{m-1})\|\|\phi^{m}\|_{H^2} \bigg)
	\end{align}
	From Lemma \ref{lem:infsup}, a use of Lemma \ref{lem:stokes} and \ref{lem:h1u}, Remark \ref{l2energy} and the regularity assumption (\textbf{A1}) yields
	\begin{align}
		\Delta t \sum_{m=1}^{N} \|\theta_p^m\|^2 
		& \le \left(\frac{1}{\beta^*}\right)^2 \Delta t \sum_{m=1}^{N} \left(\sup_{\bv_h\in\tilde{\bV}_h}\frac{\cD(\bv_h,\theta_p^m)}{\vertiii{\bv_h}}\right)^2 \nonumber\\
		& \lesssim \left(h^{2k}+(\Delta t)^2\right).
	\end{align}
	An application of Lemma \ref{lem:stokes} with triangle inequality completes the rest of the proof.
\end{proof}

We obtain the following theorem by combining the result of Lemma \ref{lem:stokes}  with Lemma \ref{lem:pree}.

\begin{theorem}
	Under the hypothesis of Theorem \ref{thm:l2} , then there exists a positive constant $C$ such that following holds true for any $N\ge 1$
	\begin{align}
		\left(\sum_{m=1}^N \|p(t_m)-p_h^m\|^2\right)^\frac{1}{2}  \le C\left( h^{k} + \Delta t \right).
	\end{align}
\end{theorem}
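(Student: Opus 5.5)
The pressure bound follows by splitting the error through the Stokes projection and then combining the projection error with the discrete error already controlled in Lemma \ref{lem:pree}. Following \eqref{usplit}, I write
\[
p(t_m)-p_h^m = \xi_p^m + \theta_p^m, \qquad \xi_p^m = p^m-\bS_hp^m,\quad \theta_p^m=\bS_hp^m-p_h^m .
\]
Here $\theta_p^m$ is the discrete pressure error that appears in Lemma \ref{lem:pree}, playing the role of $\eta_p^m$. By the triangle inequality and $(a+b)^2\le 2a^2+2b^2$,
\[
\Delta t\sum_{m=1}^N\|p(t_m)-p_h^m\|^2 \le 2\Delta t\sum_{m=1}^N\|\xi_p^m\|^2 + 2\Delta t\sum_{m=1}^N\|\theta_p^m\|^2 .
\]

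For the projection part, Lemma \ref{lem:stokes} with $s=k$ gives
\[
\|\xi_p^m\|\lesssim h^{k}\left(\|\bu^m\|_{\bH^{k+1}}+\|p^m\|_{H^{k}}\right).
\]
The time sum $\Delta t\sum_m \|\bu^m\|_{\bH^{k+1}}^2+\|p^m\|_{H^k}^2$ is bounded by the regularity hypotheses of Theorem \ref{thm:l2}. To control pointwise values by the $L^2$-in-time norms, I use $\|v(t_m)\|^2\lesssim \frac1{\Delta t}\int_{t_{m-1}}^{t_m}\|v\|^2\,dt+\Delta t\int_{t_{m-1}}^{t_m}\|\pt v\|^2\,dt$, applied with $\bu,\pt\bu\in L^2(0,T;\bH^{k+1})$ and $p,p_t\in L^2(0,T;H^k)$. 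This yields $\Delta t\sum_m\|\xi_p^m\|^2\lesssim h^{2k}$. The discrete part is exactly Lemma \ref{lem:pree}, which gives $\Delta t\sum_m\|\theta_p^m\|^2\lesssim h^{2k}+(\Delta t)^2$.

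Adding the two bounds and taking square roots gives the claimed $\mathcal{O}(h^k+\Delta t)$ estimate. This is under the reading that the left-hand side carries the natural $\Delta t$ weight, $\bigl(\Delta t\sum_m\|p(t_m)-p_h^m\|^2\bigr)^{1/2}$, i.e. the discrete $\ell^2(L^2)$ norm. The statement as written has no $\Delta t$ factor. Without it, the argument only delivers $(\Delta t)^{-1/2}(h^k+\Delta t)$, so the weighted norm is the interpretation I would state explicitly.

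All the real work is in Lemma \ref{lem:pree}, which rests on the inf-sup condition (Lemma \ref{lem:infsup}) and on the $\ell^2$ bounds for $\bar\partial\e_u^m$ and $\vertiii{\heu^m}$ from Lemma \ref{lem:h1u} and Remark \ref{l2energy}. Since that lemma is assumed, the remaining obstacle is the normalization issue just described, together with checking that the hypotheses give the uniform-in-$m$ control of $\|p^m\|_{H^k}$ used above.
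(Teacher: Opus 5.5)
Your proposal is correct and matches the paper's argument. The paper proves the theorem in one line: it splits $p(t_m)-p_h^m=\xi_p^m+\theta_p^m$ and combines Lemma~\ref{lem:stokes} with Lemma~\ref{lem:pree}. Note that you need Lemma~\ref{lem:stokes} with $s=k+1$, not $s=k$, to get the bound $h^k(\|\bu^m\|_{\bH^{k+1}}+\|p^m\|_{H^k})$ that you wrote.

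Your remark on the normalization is also right. Lemma~\ref{lem:pree} carries the factor $\Delta t$, so the paper's argument only yields $\bigl(\Delta t\sum_{m=1}^N\|p(t_m)-p_h^m\|^2\bigr)^{1/2}\le C(h^k+\Delta t)$. The unweighted version as printed cannot hold in general: for a generic smooth $p$, each summand is bounded below by the squared best-approximation error in $M_h$, which is of order $h^{2k}$, so the unweighted sum grows like $h^{2k}/\Delta t$.
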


\section{Numerical Results} \label{Sec:numerical}

This section demonstrates a few numerical examples for the fully discrete dG formulation \eqref{fds11}-\eqref{fds5}
of the system \eqref{eqphi}-\eqref{eqdiv}. We first consider one example to verify the accuracy of the fully discrete dG scheme. All numerical simulations have been computed using the software FreeFem++ \cite{Hec12}.

\subsection{Convergence Test for dG Scheme}
Since it is not easy to find an analytical solution to the system  \eqref{eqphi}-\eqref{eqdiv},  in order to verify the accuracy of the fully discrete scheme, we consider the modified system with added source terms $(f_\phi,\psi_h)$, $(f_{c_i},\chi_h)$ and $(\f_u,\bv_h)$ on the right hand sides of \eqref{fds11}, \eqref{fds12} and \eqref{fds2}, respectively.

We now consider the following example with a known solution.
\begin{example}\label{exm1}
	For the experiment, we choose $f_\phi, f_{c_i}, \f_u$ in such a way that the solution of \eqref{eqphi}-\eqref{eqdiv} becomes as follow:
	\begin{align*}
		\phi(x,y,t) &= \exp(-t)(\cos(x\pi)\cos(y\pi) - (\cos(2x\pi)\cos(2y\pi))/4)/(2\pi^2), \\
		c_1(x,y,t) &= \exp(-t)(\cos(x\pi)\cos(y\pi) + 1)/2,\\
		c_2(x,y,t) & = \exp(-t)(\cos(2x\pi)\cos(2y\pi) + 1)/2,\\
		u_1(x,y,t) &= \exp(-t)(\sin(2y\pi) - \cos(2x\pi) \sin(2y\pi)),\\
		u_2(x,y,t) &=  -\exp(-t)(\sin(2x\pi) - \cos(2y\pi)\sin(2x\pi)),\\
		p(x,y,t)   &= \exp(-t)(\cos(2x\pi) + \sin(2y\pi)).
	\end{align*}
\end{example}

With computational domain $\Omega=[0,1]\times[0,1]$ for the simulation, now the discrete space $X_h^0$ consists of  discontinuous $\mathbb{P}_k, k=1,2$ elements. On the other hand, the discrete velocity and pressure spaces $(\bV_h, M_h)$ are constructed using stable discontinuous $(\bm{\mathbb{P}}_k,\mathbb{P}_{k-1}),k=1,2$ pairs. For the numerical simulation, we set the parameters $\mu, \nu, \kappa_1, \kappa_2, \beta_1, -\beta_2$ all are equal to $1$. We choose penalty parameter $\sigma=10$ for $k=1$ and $\sigma = 40$ for $k=2$.

To verify the convergence rate in spatial direction, a regular uniform triangulation with mesh size $h=2^{-i},~i=1,2,\dots,6$ is used to discretize the domain. The time discretization parameter 
$\Delta t$ is chosen as $\mathcal{O}(h^{k+1})$ with the final time $T=0.1$. In Figures \ref{fig:P1ex1} and \ref{fig:P2ex1}, we represent the numerical errors for both the concentrations, the electrostatic potential, the fluid velocity and the fluid pressure with respect to the spatial discretizing parameter $h$ for the DG scheme \eqref{fds11}-\eqref{fds5}. From these figures, we observe that the spatial rates of convergence are $\mathcal{O}(h^{k+1})$ in $L^2$-norm for both the concentrations, the electrostatic potential, the fluid velocity and  $\mathcal{O}(h^{k})$ in $H^1$-norm. For the pressure, the convergence rate is $\mathcal{O}(h^{k})$ in $L^2$-norm. All these results coincide with our theoretical findings.
%
%
% P1dcP0 DG
\begin{figure}[h!]
	\centering
	\begin{subfigure}[b]{0.48\textwidth}
		\includegraphics[scale=0.50]{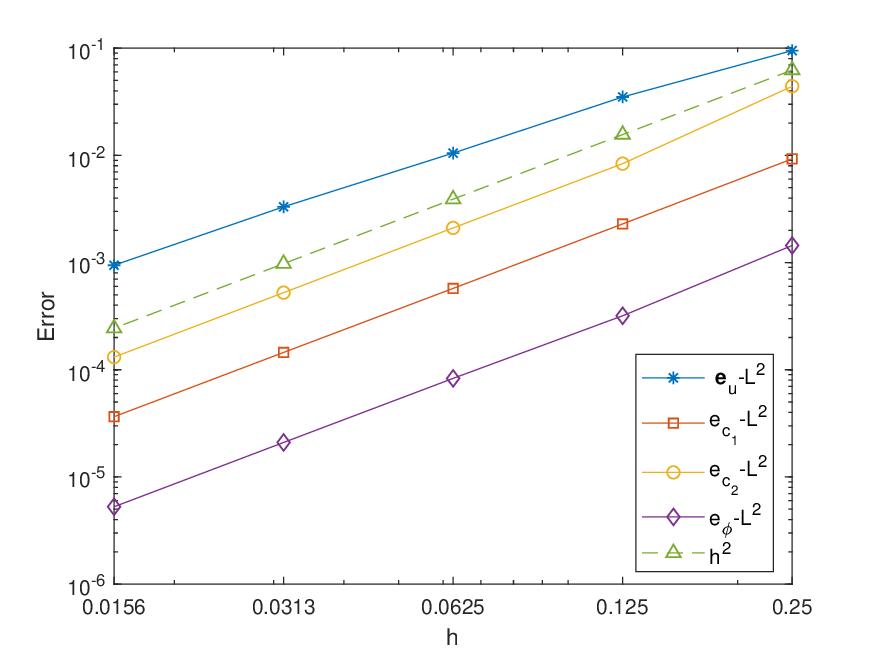}
	\end{subfigure}
	\hfill
	\begin{subfigure}[b]{0.48\textwidth}
		\includegraphics[scale=0.50]{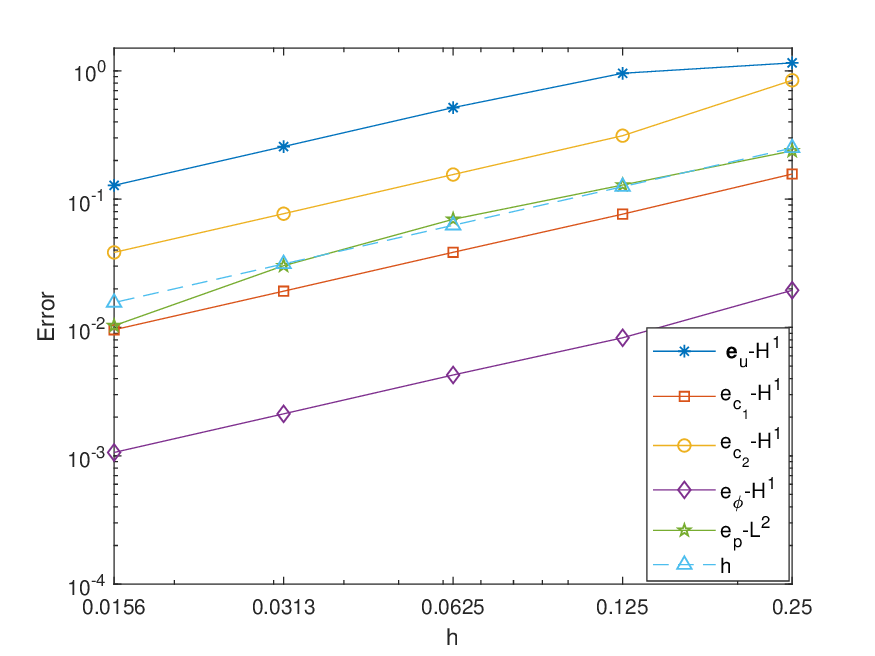}
	\end{subfigure}
	\caption{Numerical errors with $k=1$ for Example \ref{exm1} with DG method.}
	\label{fig:P1ex1}
\end{figure}

%P2P1 DG
\begin{figure}[h!]
	\centering
	\begin{subfigure}[b]{0.48\textwidth}
		\includegraphics[scale=0.50]{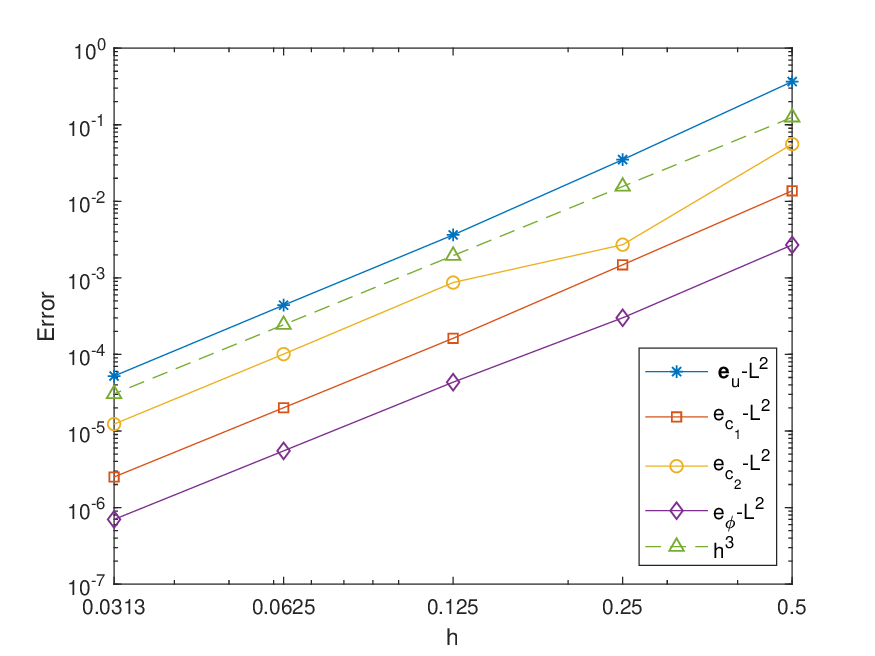}
	\end{subfigure}
	\hfill
	\begin{subfigure}[b]{0.48\textwidth}
		\includegraphics[scale=0.50]{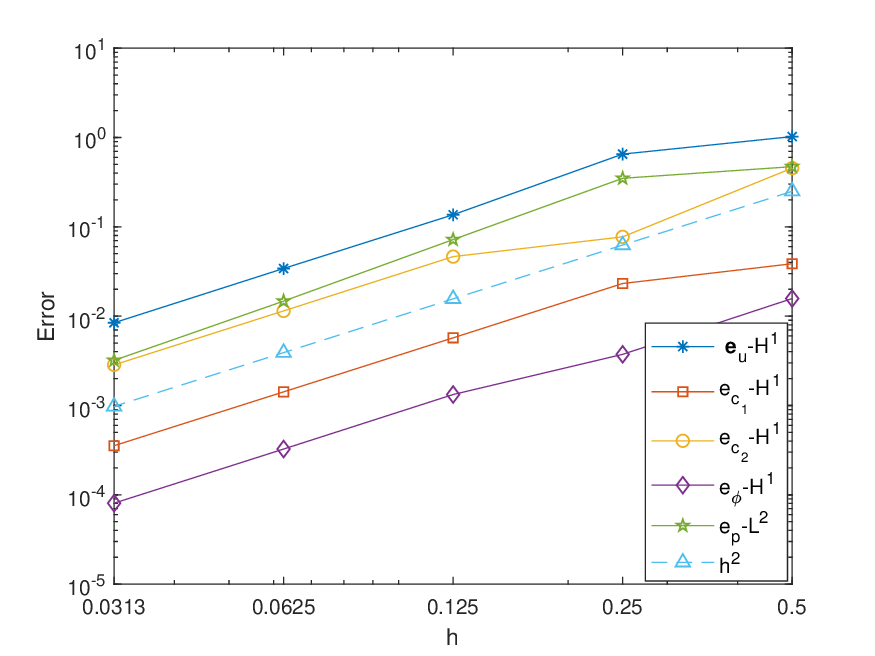}
	\end{subfigure}
	\caption{Numerical errors with $k=2$ for Example \ref{exm1} with DG method.}
	\label{fig:P2ex1}
\end{figure}

%%%%%%%%%%%%%%%%%%%%%%%%%%%%%%%%%%%%%%%%%%%%%%%%%%%%%%%%%%%%%%%%%%%%%%%%%%%%%%%%%%%%%%%%%%%%%%%%%%%%%%%%%%%%%%
%
%
In order to verify the convergence rates in temporal direction, we chose a uniform partition of the time interval $[0, 0.1]$ with $\Delta t = 0.1\times 2^{-i},~i=1,2,\dots,6$. The mesh parameter $h$ has to be fixed as $\mathcal{O}(\Delta t^{\frac{1}{k+1}})$ for $L^2$-norm and $\mathcal{O}(\Delta t^{\frac{1}{k}})$ for $H^1$-norm. The Figure \ref{fig:P1time_ex1} represents the numerical errors in $L^2$ and $H^1$-norms for $k=1,2$. From this figure, we conclude that the convergence rate is linear across all variables in the temporal direction, consistent with our theoretical results.
%
%
% Conv time DG
\begin{figure}[!h]
	\centering
	\begin{subfigure}[b]{0.48\textwidth}
		\includegraphics[scale=0.50]{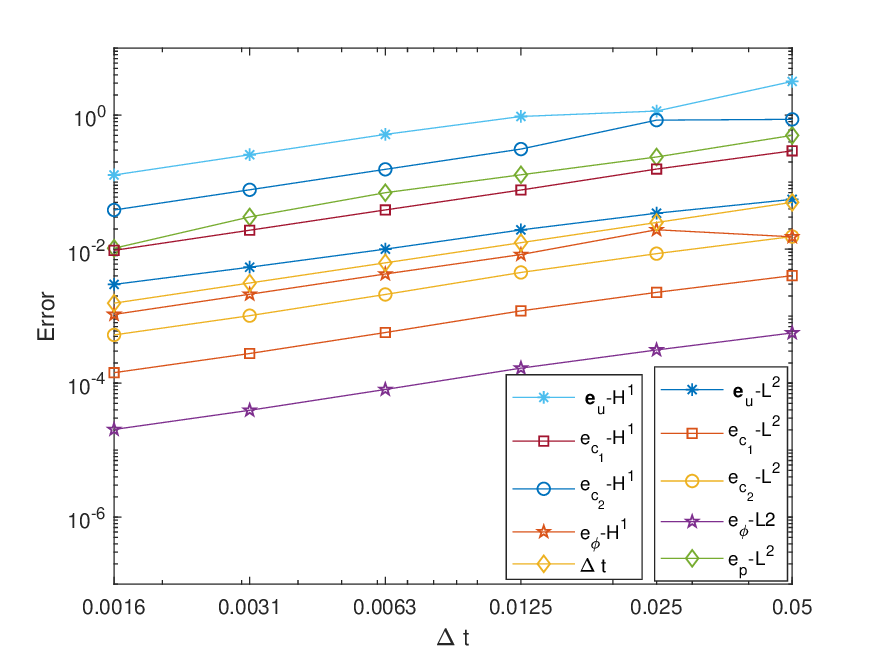}
	\end{subfigure}
	\hfill
	\begin{subfigure}[b]{0.48\textwidth}
		\includegraphics[scale=0.50]{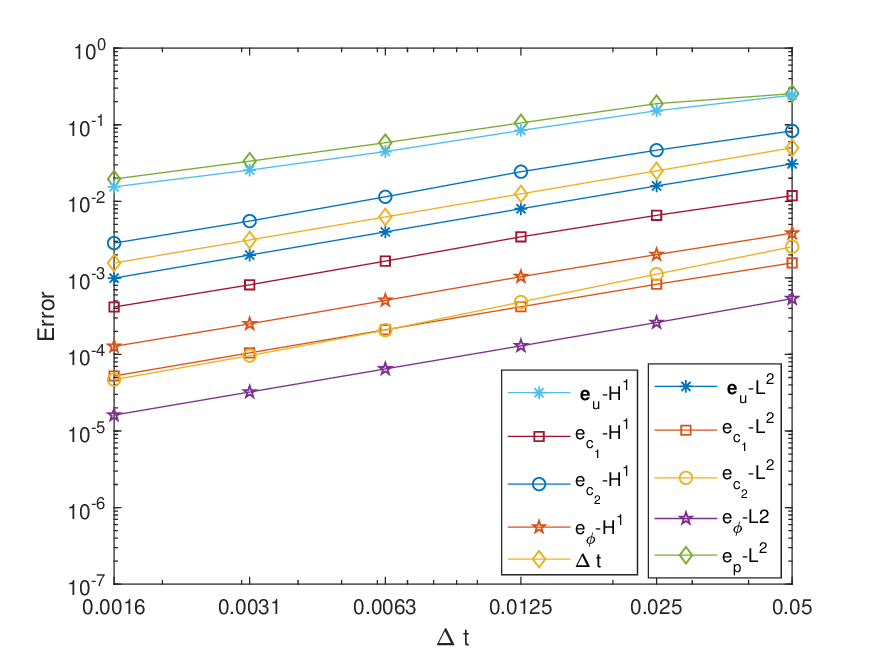}
	\end{subfigure}
	\caption{Numerical errors with respect to time variable for Example \ref{exm1} with DG method.}
	\label{fig:P1time_ex1}
\end{figure}

\subsection{Mass Conservation and Energy Dissipation}
In this subsection, we provide a few numerical results for the fully discrete system  \eqref{fds11}-\eqref{fds5}. 
\begin{example}
	To verify the physical property, we consider the following initial data:
	\begin{align*}
		u_{10} &= 10\sin(2\pi x)\cos(2\pi y), & 		u_{20} &= -10\sin(2\pi y)\cos(2\pi x), \\
		c_{10} &= \cos(2\pi x) + 1, &
		c_{20} &= \cos(2\pi y) + 1. 
	\end{align*}
	
\end{example}

We consider similar parameter values and domain for this simulation as Example \ref{exm1}. We set the space discretization parameter $h=\frac{1}{64}$ and time step $\Delta t = 0.001$. In Figure \ref{fig:mass}, we present the mass and the deviation of mass for both the concentration of positive and negative charge ions for the fully discrete DG schemes \eqref{fds11}-\eqref{fds5}.
We now observe that the mass deviation for both ions is minimal (within machine error). This verifies the mass conservation property of the fully discrete concentration of positive and negative charge ions as shown in Lemma \ref{lem:mass}. 
In Figure \ref{fig:minmax}, we present the minimum and maximum values at each time level for the ion concentrations, the electric potential energy, and total free energy. We observe that the minimum value of ions at each time level remains non-negative; further, both energies decrease as time increases.  

% Mass DG
\begin{figure}[!h]
	\centering
	\begin{subfigure}[b]{0.48\textwidth}
		\includegraphics[scale=0.450]{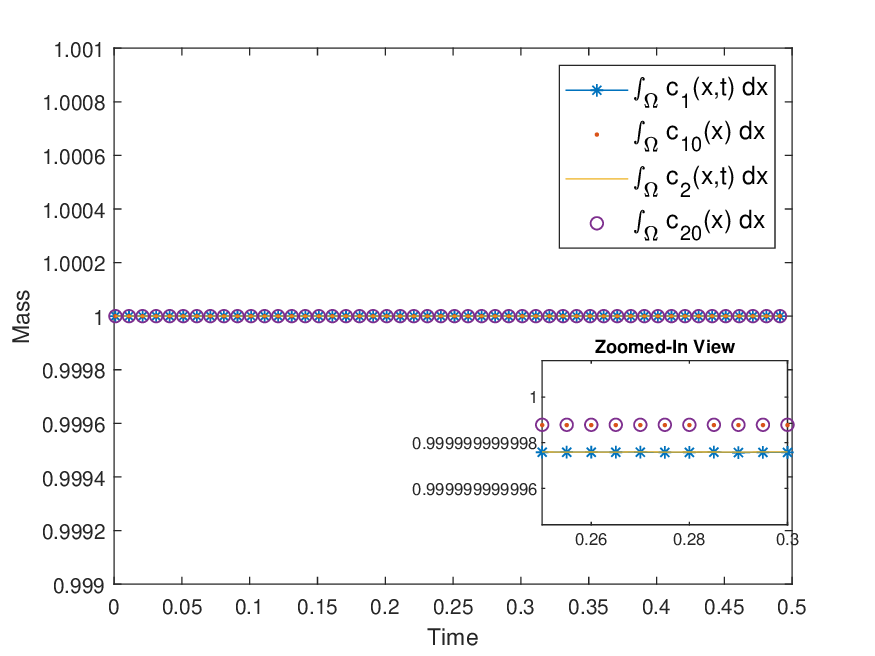}
	\end{subfigure}
	\hfill
	\begin{subfigure}[b]{0.48\textwidth}
		\includegraphics[scale=0.450]{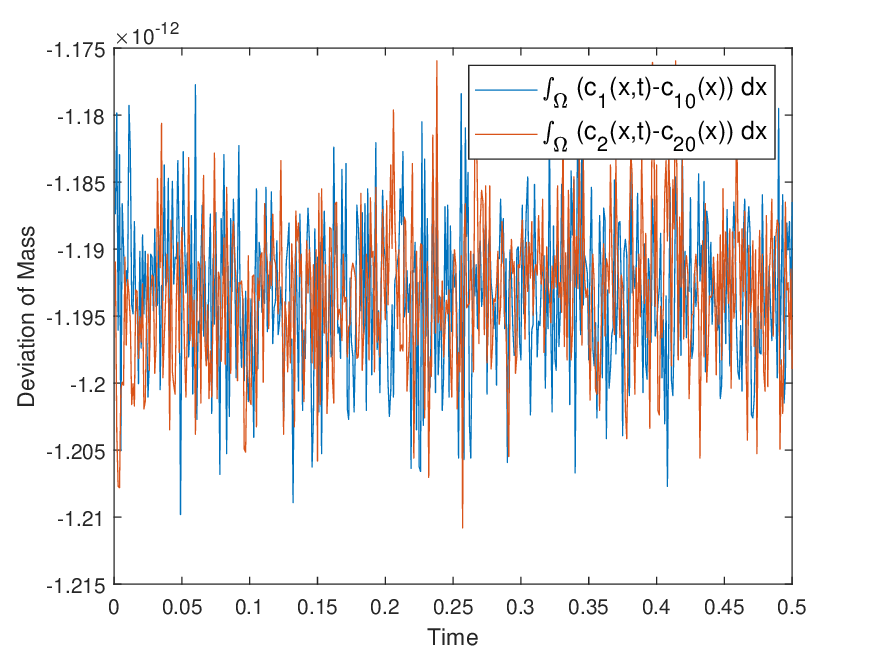}
	\end{subfigure}
	\caption{Mass (left) and deviation of mass (right) for both the positive and negative charge ions concentration with the DG method.}
	\label{fig:mass}
\end{figure}
%
% Min max and energy DG
\begin{figure}[!h]
	\centering
	\begin{subfigure}[b]{0.48\textwidth}
		\includegraphics[scale=0.450]{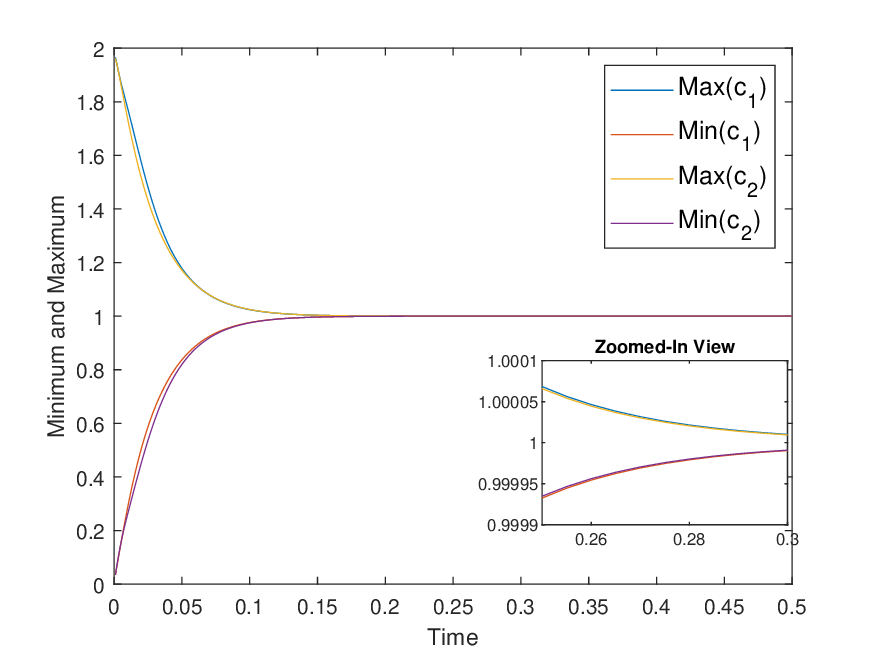}
	\end{subfigure}
	\hfill
	\begin{subfigure}[b]{0.48\textwidth}
		\includegraphics[scale=0.450]{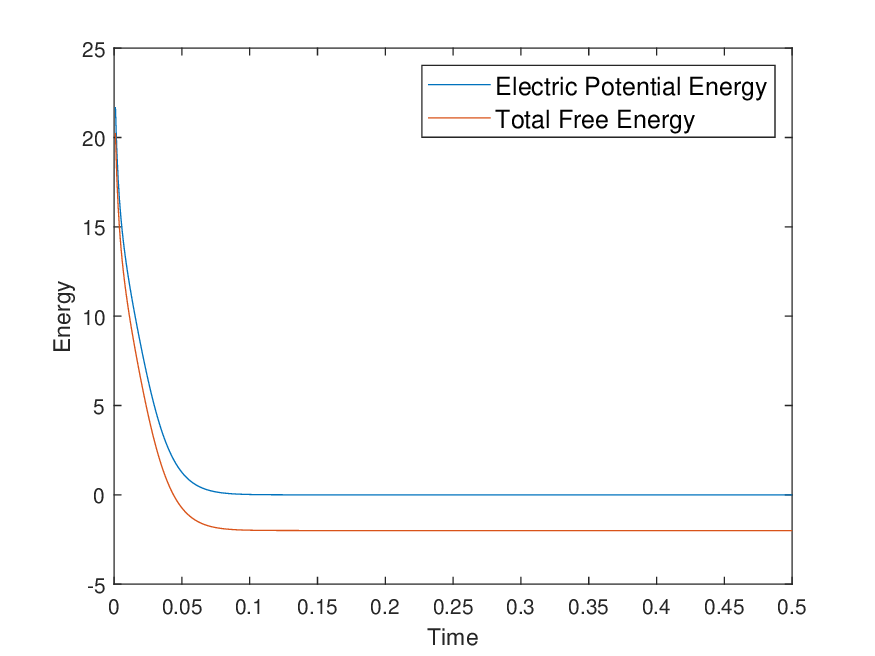}
	\end{subfigure}
	\caption{Minimum and maximum values at time $t$ for both the positive and negative charge ions concentration (left) and electric potential and total energy (right) of the system with DG method.}
	\label{fig:minmax}
\end{figure}

\subsection{Ion Spreading in a Charged Reservoir}
This subsection presents a numerical example of how ions spread and move in the presence of an electric field within a closed micro-channel with charged walls, inspired by \cite[Section 5.2]{LBM20}. With the micro-channel $\Omega = [0, 1]\times [0, 2]$ and the geometry given in Figure \ref{fig:geo1}, we solve the system \eqref{eqphi}-\eqref{eqdiv} with no-slip boundary everywhere for fluid flow ($\bu = 0$) and zero flux for both the ions ($\nabla c_i\cdot\bn = 0$) given in \eqref{eqbd}. However, we consider a sub-boundary condition for the electrostatic potential: we assume a uniformly positively charged surface ($\sigma = 1$) on the lower boundary, the upper boundary is grounded ($\phi = 0$), and the other two walls are insulators ($\nabla\phi\cdot\bn=0$).
We also consider the Gaussian profile as the initial data for the ions, with a positive profile on the lower left side of the channel and an equal negative profile on the upper right portion of the channel. The initial concentration profile is given by
\begin{equation}
	c_i(x,0) = \frac{r_0}{2\pi R^2}\exp\left\{{-\frac{((x-\frac{L_x}{2}+\frac{q_i}{8})^2+(y-\frac{L_y}{2}+\frac{q_i}{2})^2)}{2R^2}}\right\}
\end{equation}
where $L_x = 1, L_y = 2, q_1 = 1, q_2 = -1, R =0.25, r_0 = 3$. Set the initial fluid velocity and pressure to zero.

For this test, we choose the parameter $\nu = 0.08, \mu = 0.01, \kappa = 0.5, \beta_1 = 0.01, \beta_2 = -0.01$ similar to \cite{LBM20}. For space discretization, we use $(\mathbb{P}_2,\mathbb{P}_1,\mathbb{P}_2)$ pair to approximate the discrete spaces $(\bm{V}_h\times M_h\times X_h^0)$. We chose the mesh discretizing parameter $h=\frac{1}{64}$, time discretizing parameter $\Delta t = 0.01$ and final time $T=10.$  
We plot the net charge of ion concentrations $(c_1-c_2)$, electrostatic potential $(\phi)$ and streamlines of the fluid velocity at different time levels $t=0, 0.01, 0.25, 0.5, 0.75, 1.0, 2.5, 5.0, 10.0$.

From  Figures \ref{fig:c1-c2}-\ref{fig:psi}, we observe that initially the positive and negative ions are placed in the left button and the right top portion of the channel. Then, the system evolves with time due to the opposite attractions between ions and the presence of an electric field at the bottom. The negatively charged ions migrate downward from the right side of the boundary, while the positively charged ions move upward from the left side of the boundary. These create a local charge imbalance due to redistribution and generate an electric body force that drives fluid motion through the channel and forms vortices.

Over time, the system gradually approaches an equilibrium state in which negative ions accumulate near the bottom, positively charged ions are uniformly distributed near the grounded top, and fluid motion becomes stable as the system's total energy reaches its minimum.

We also observed for this example that the mass of the ions (both positive and negative) is conserved, as shown in Figure \ref{fig:massion}. In Figure \ref{fig:minmaxion}, we present minimum and maximum values of the ion concentration, the electric potential energy and the total energy at each time level, and we observe that the minimum and maximum values remain positive, and the energy initially increases up to a peak, and then it tends downwards. 

\raggedbottom

\begin{figure}[!h]
	\centering
	\begin{tikzpicture}[scale=2.5]
		
		\usetikzlibrary{arrows.meta}
		
		% Domain dimensions
		\def\Lx{1}
		\def\Ly{2}
		
		% Draw rectangular domain
		\draw[thick] (0,0) rectangle (\Lx,\Ly);
		
		% Top electrode
		\draw[thick] (0,\Ly) -- (\Lx,\Ly);
		\draw[thick] (0.5,\Ly) -- (0.5,\Ly+0.15);
		\draw[thick] (0.4,\Ly+0.15) -- (0.6,\Ly+0.15);
		\node at (0.5,\Ly+0.28) {$\phi=0$};
		
		% Dimension arrows
		\draw[<->] (-0.15,0) -- (-0.15,\Ly);
		\node[left] at (-0.15,\Ly/2) {$L_y=2$};
		
		\draw[<->] (0,-0.15) -- (\Lx,-0.15);
		\node[below] at (\Lx/2,-0.15) {$L_x=1$};
		
		% Bottom surface charge line
		\draw[red,thick] (0,0) -- (\Lx,0);
		\node[below] at (0.5,0.17) {Surf. charge $\sigma$};
		
		% Positive surface charges
		\foreach \x in {0.1,0.3,0.5,0.7,0.9}
		\node at (\x,-0.06) {$+$};
		
		%%%%%%%%%%%%%%%%%%%%%%%%%%%%%%%%%%%%%%%%
		% POSITIVE ION (from Gaussian center)
		% Center = (0.375 , 0.5)
		%%%%%%%%%%%%%%%%%%%%%%%%%%%%%%%%%%%%%%%%
		
		\filldraw[red!60] (0.375,0.5) circle (0.1);
		\node at (0.375,0.5) {$+$};
		
		\filldraw[red!40, even odd rule]
		(0.375,0.5) circle (0.17)
		(0.375,0.5) circle (0.1);
		
		\filldraw[red!20, even odd rule]
		(0.375,0.5) circle (0.25)
		(0.375,0.5) circle (0.17);
		
		% Arrow
		\draw[-{Latex}] (1,0.5) -- (0.625,0.5);
		
		\node[right] at (1,0.5)
		{positive ions $c_{+}$};
		
		%%%%%%%%%%%%%%%%%%%%%%%%%%%%%%%%%%%%%%%%
		% NEGATIVE ION (mirror Gaussian center)
		% Center = (0.625 , 1.5)
		%%%%%%%%%%%%%%%%%%%%%%%%%%%%%%%%%%%%%%%%
		
		\filldraw[blue!60] (0.625,1.5) circle (0.1);
		\node at (0.625,1.5) {$-$};
		
		\filldraw[blue!40, even odd rule]
		(0.625,1.5) circle (0.17)
		(0.625,1.5) circle (0.1);
		
		\filldraw[blue!20, even odd rule]
		(0.625,1.5) circle (0.25)
		(0.625,1.5) circle (0.17);
		
		% Arrow
		\draw[-{Latex}] (1,1.5) -- (0.875,1.5);
		
		\node[right] at (1,1.5)
		{negative ions $c_{-}$};
		
		% Radius indication
		\node[above] at (0.780,1.75) {$R=0.25$};
		
	\end{tikzpicture}
	\caption{Domain for ion spreading. Set up of the geometry, boundary conditions	for the electrostatic potential, and the initial distribution of	positively and negatively charged ion concentrations}
	\label{fig:geo1}
\end{figure}
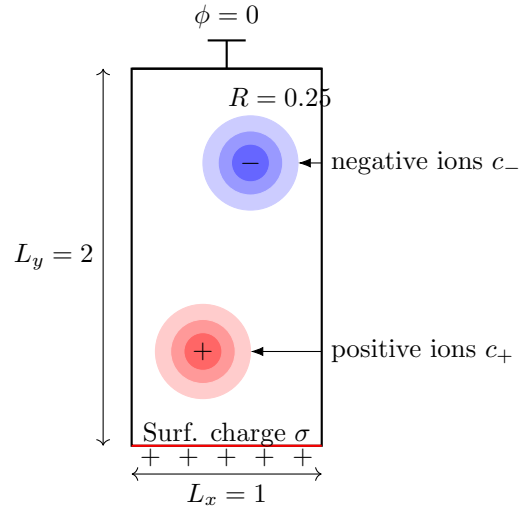
%
% c1-c2
\begin{figure}[!h]
	\centering
	\includegraphics[scale=0.24]{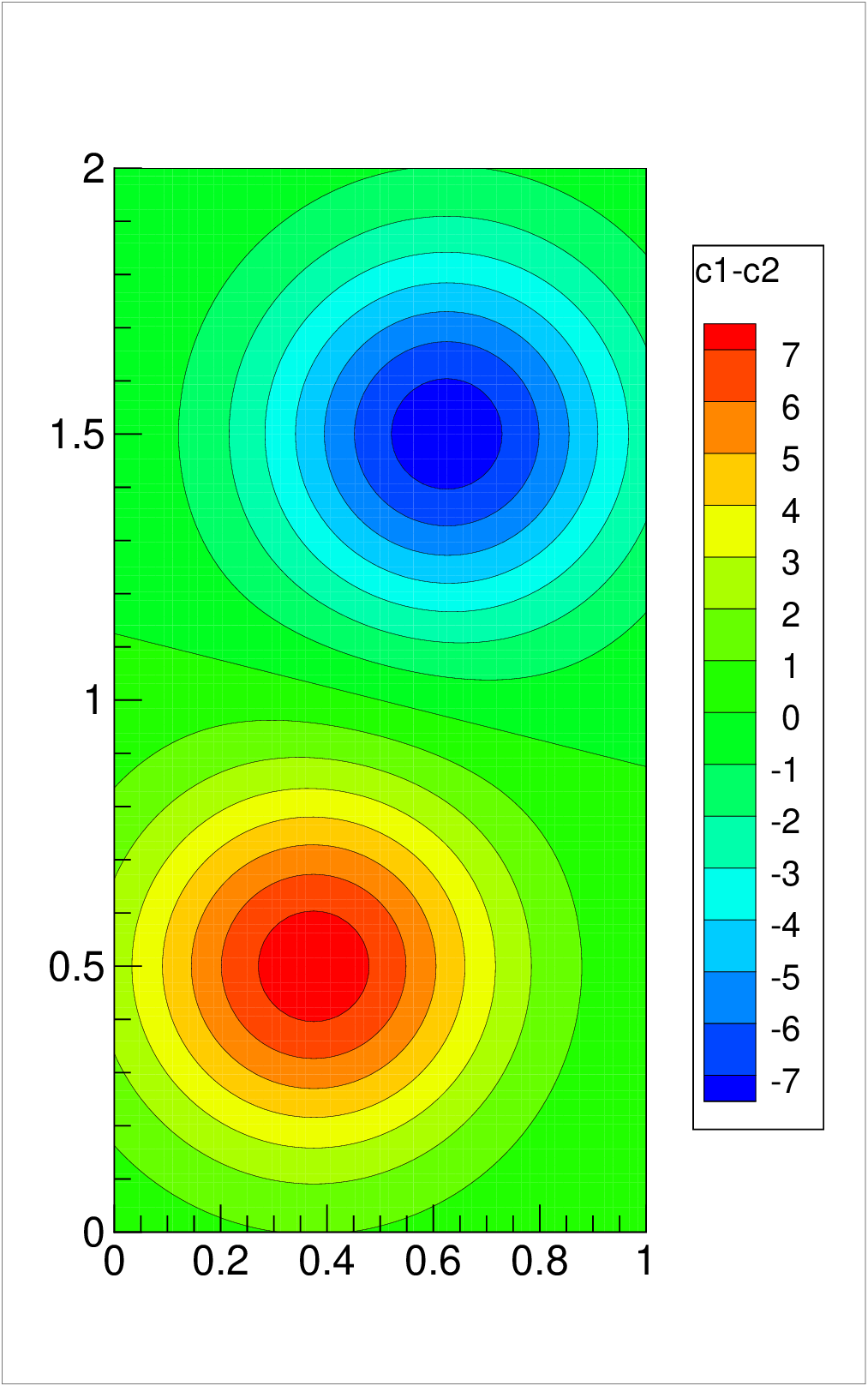}
	\includegraphics[scale=0.24]{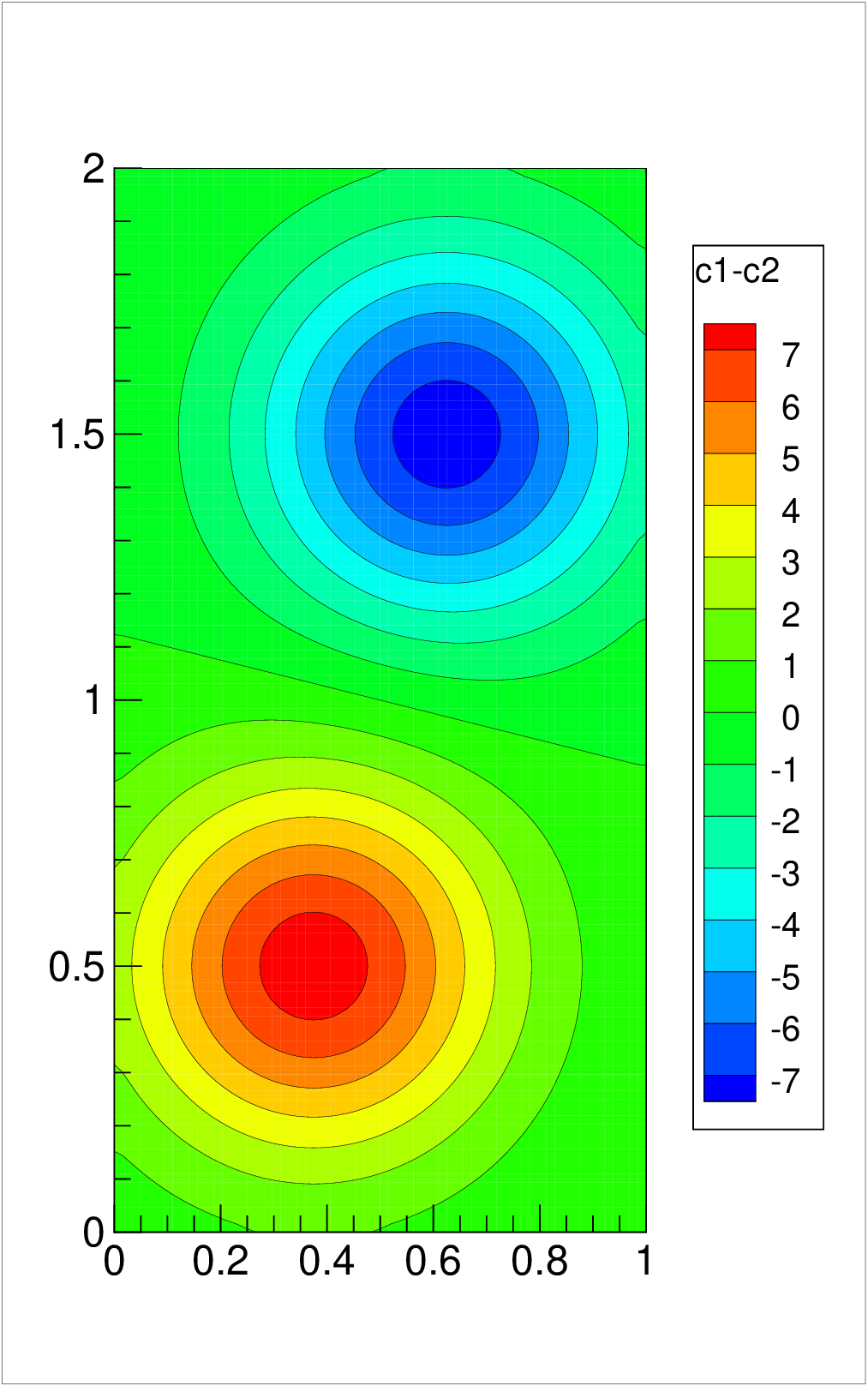}
	\includegraphics[scale=0.24]{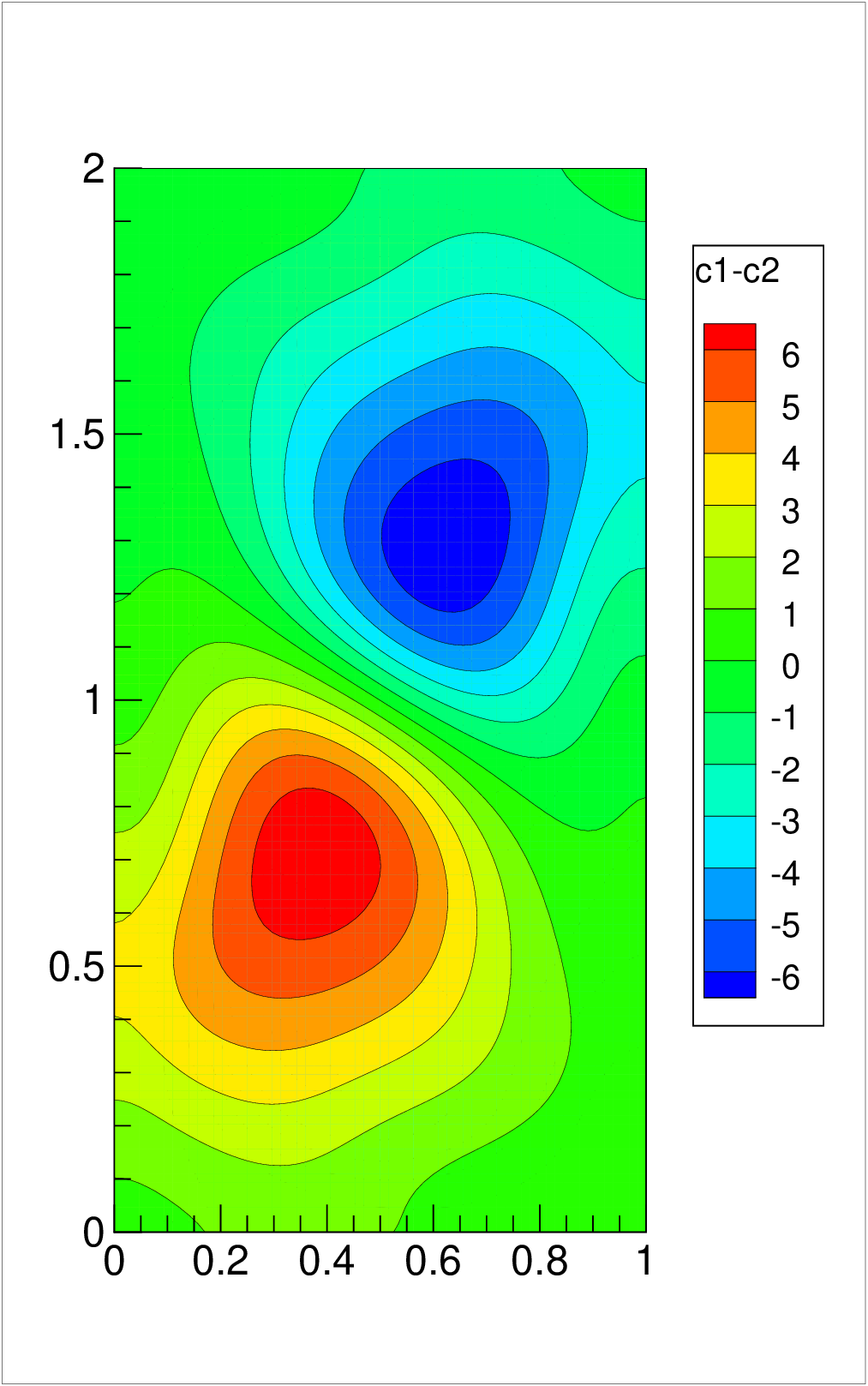}
	\includegraphics[scale=0.24]{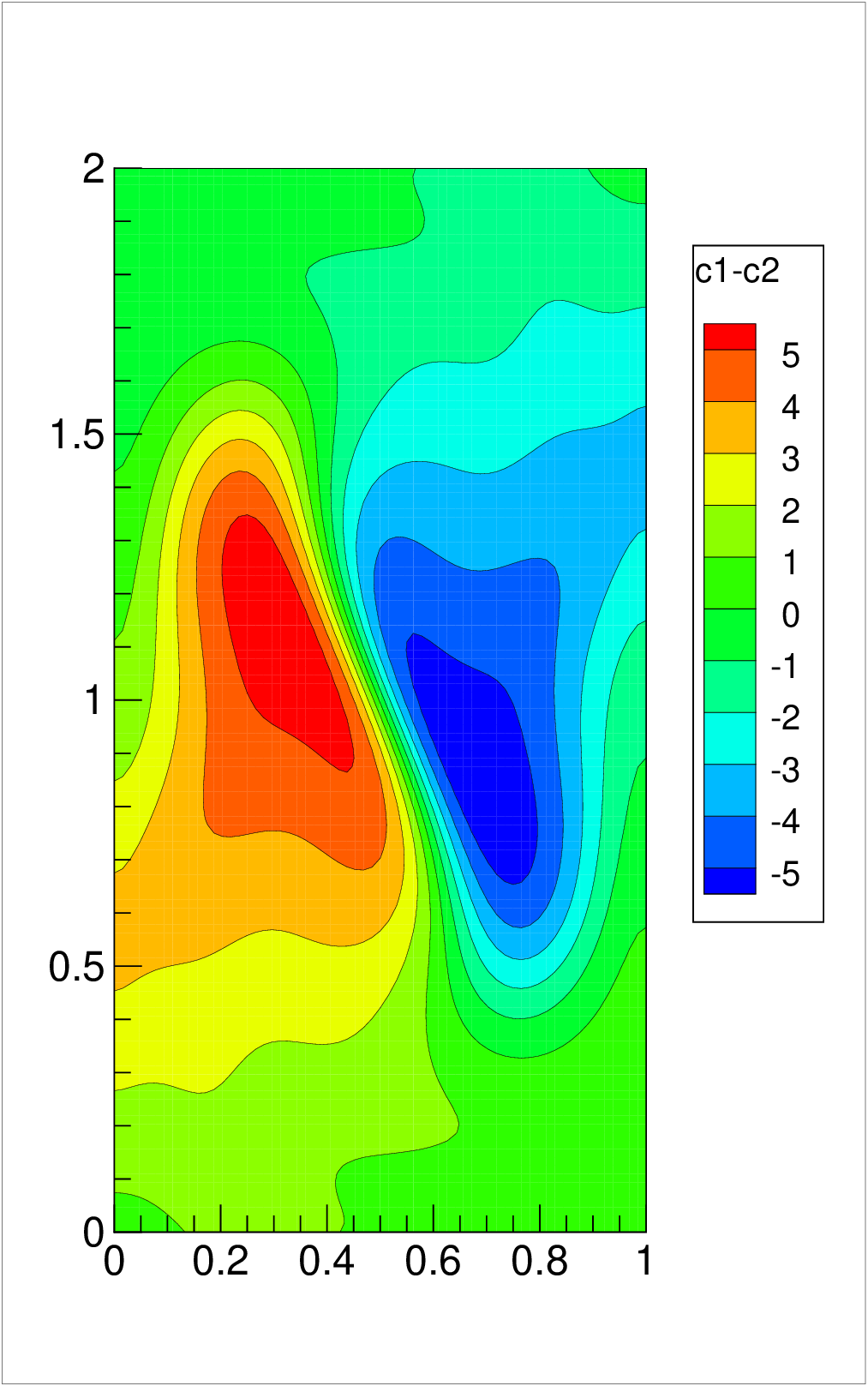}
	\includegraphics[scale=0.24]{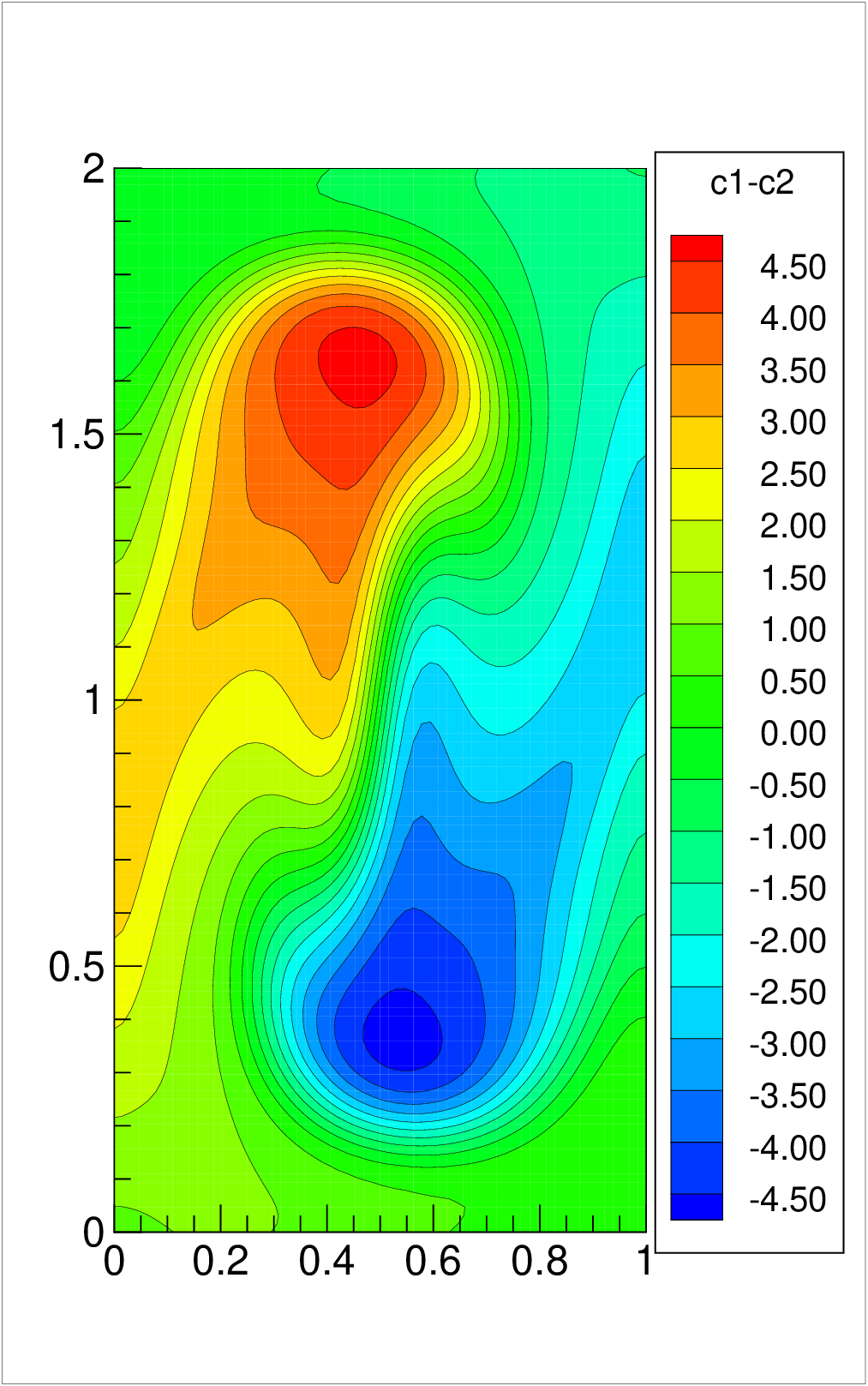}
	\includegraphics[scale=0.24]{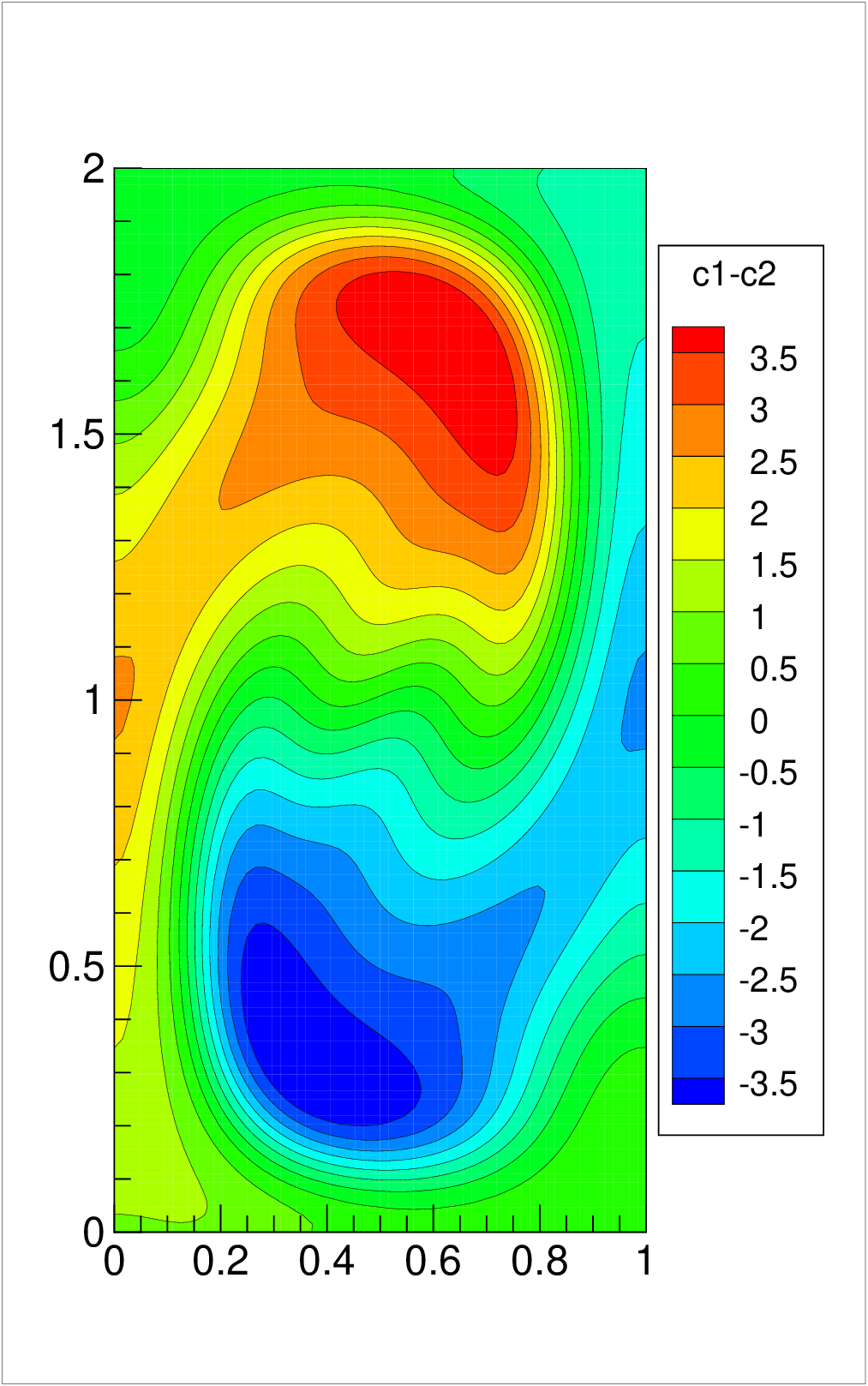}
	\includegraphics[scale=0.24]{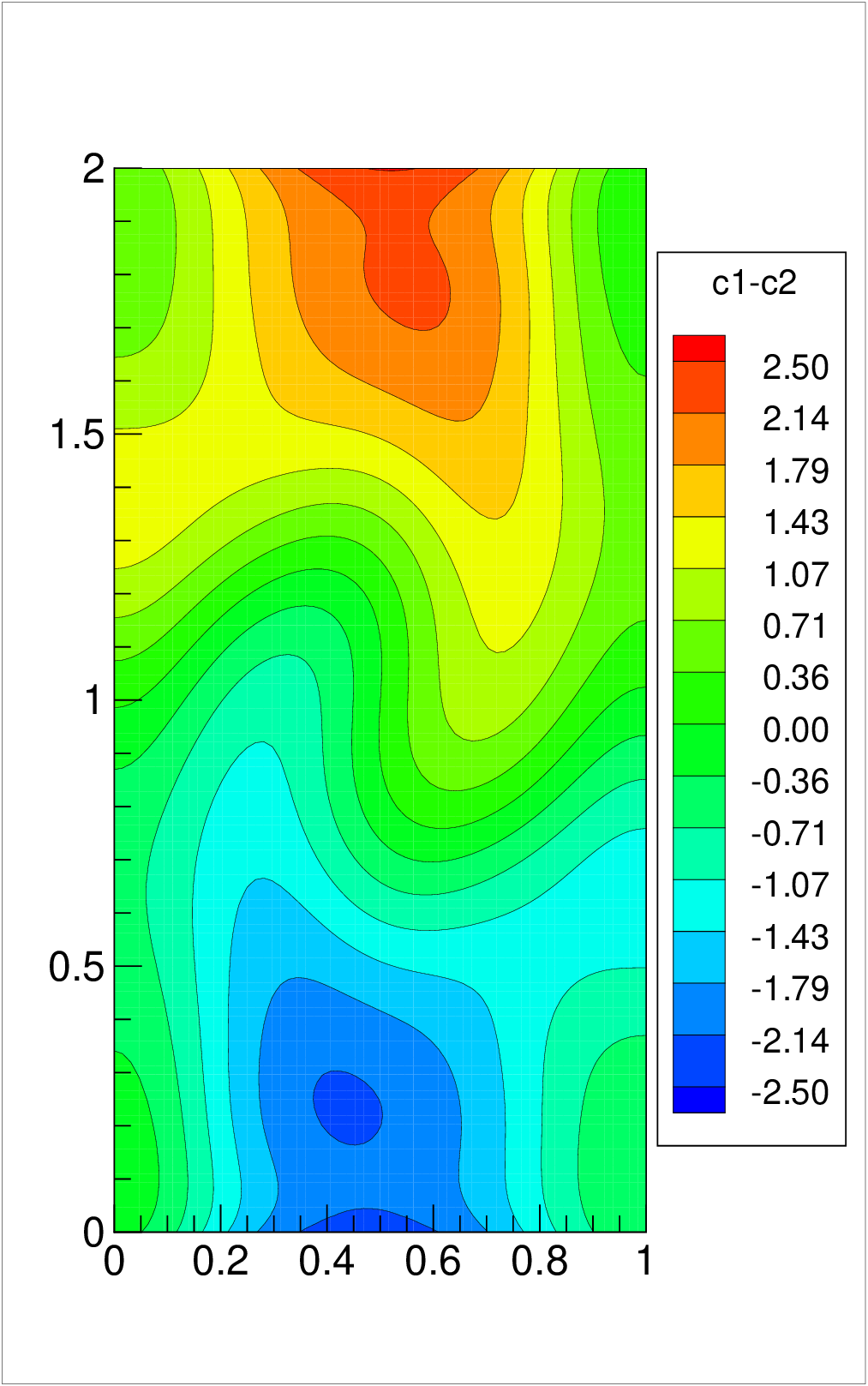}
	\includegraphics[scale=0.24]{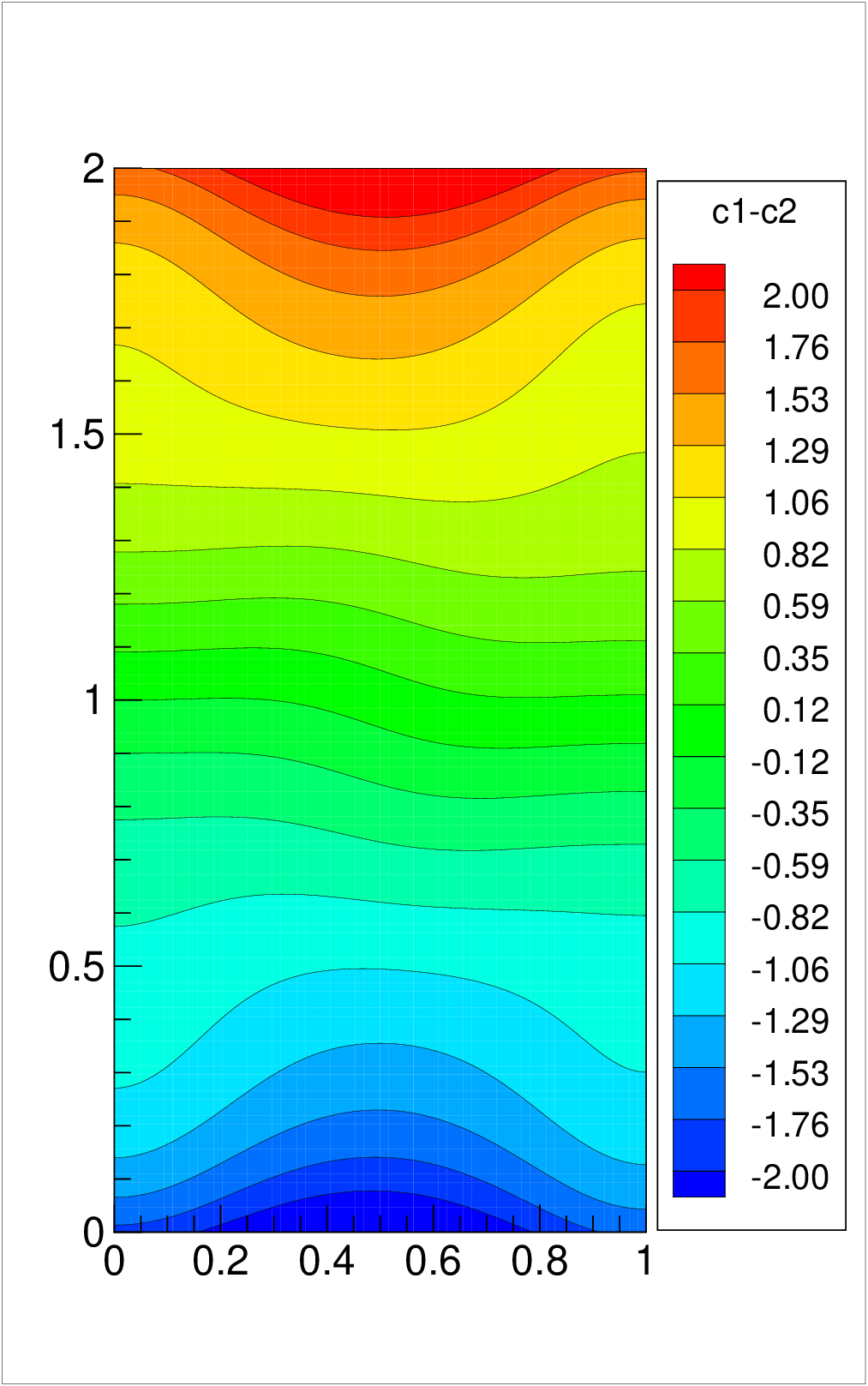}
	\includegraphics[scale=0.24]{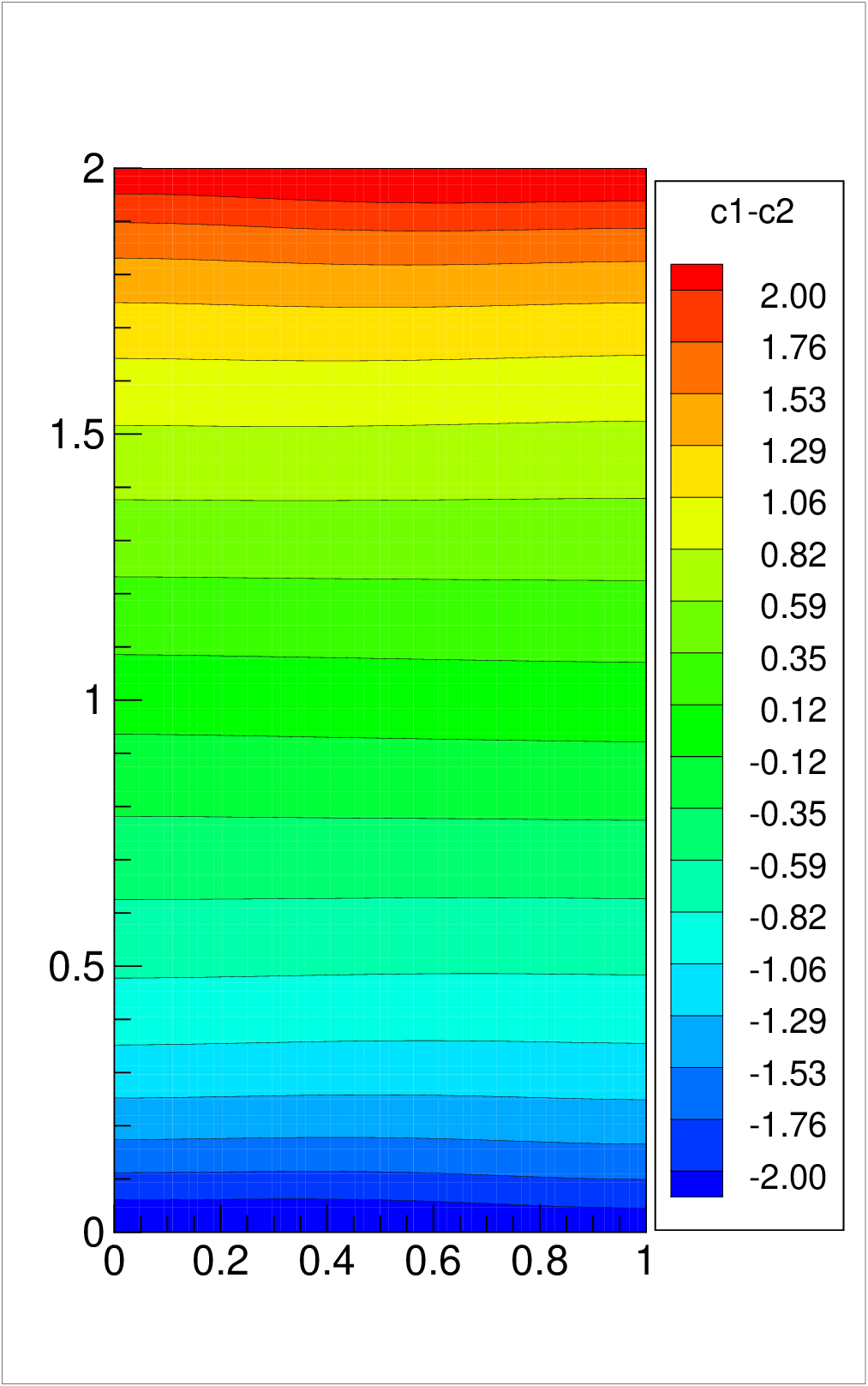}
	\caption{The evolution of net charge of ion concentrations ($c_1-c_2$) in different time levels.}		
	\label{fig:c1-c2}
\end{figure}

% phi
\begin{figure}[!h]
	\centering
	\includegraphics[scale=0.24]{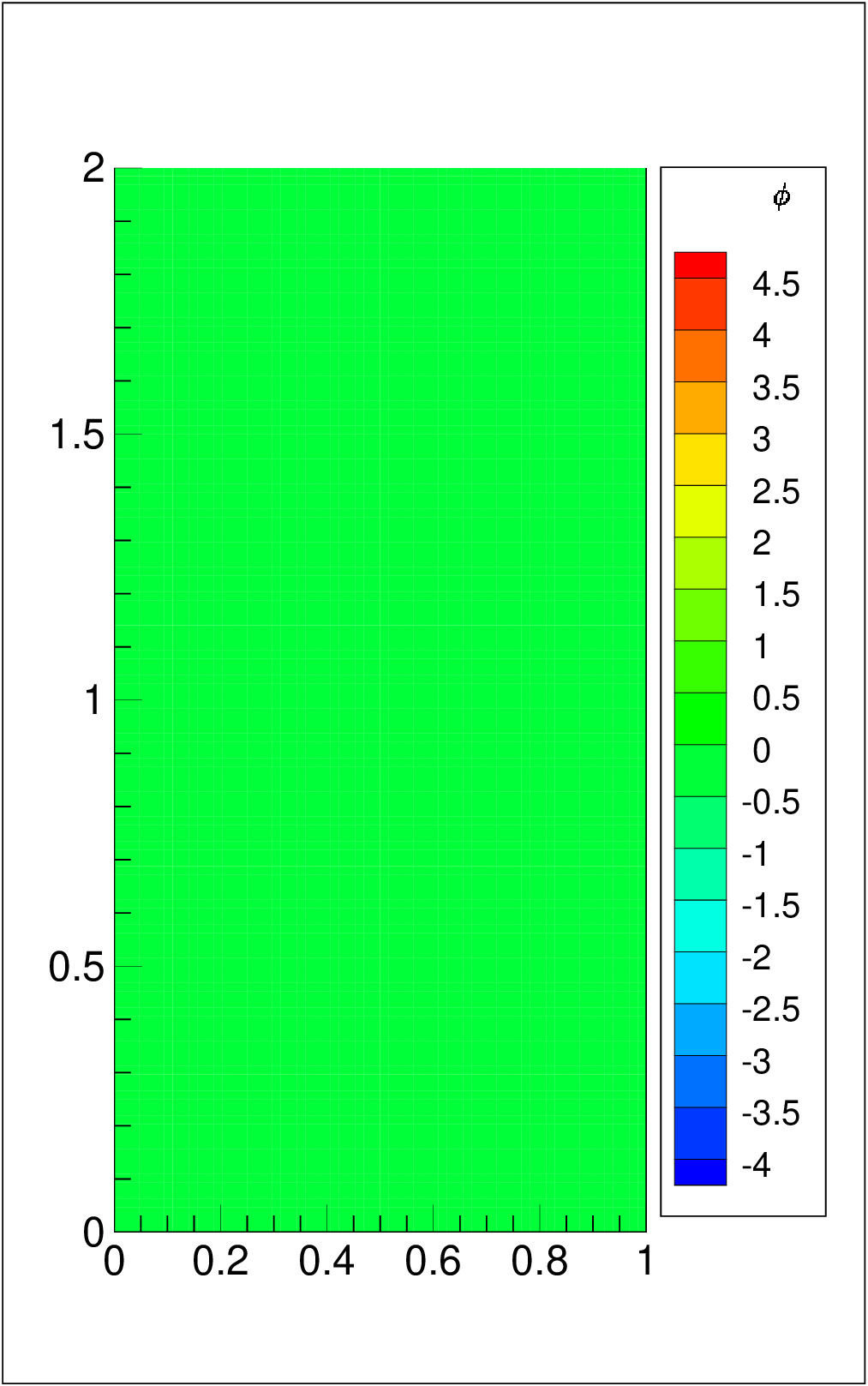}
	\includegraphics[scale=0.24]{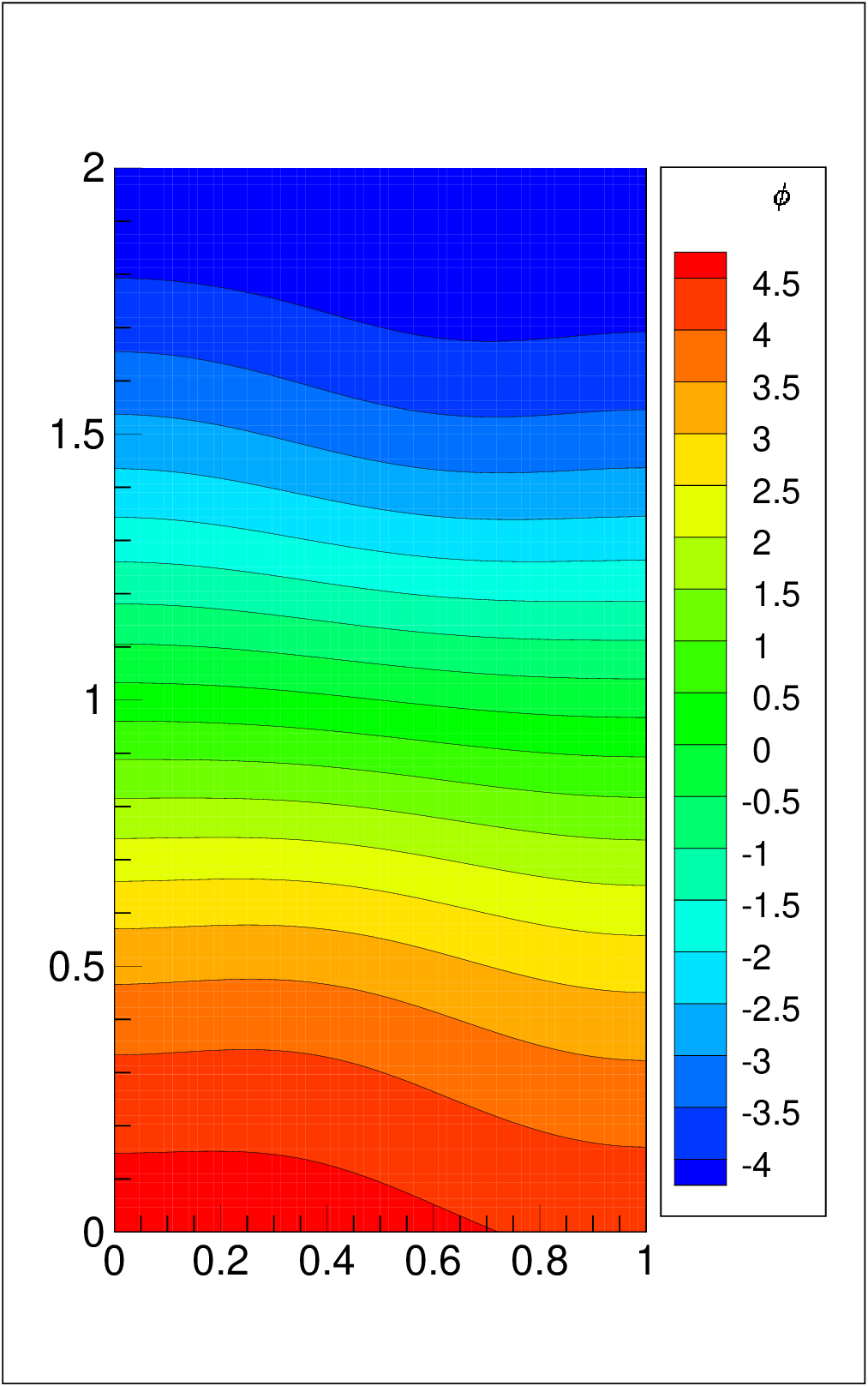}
	\includegraphics[scale=0.24]{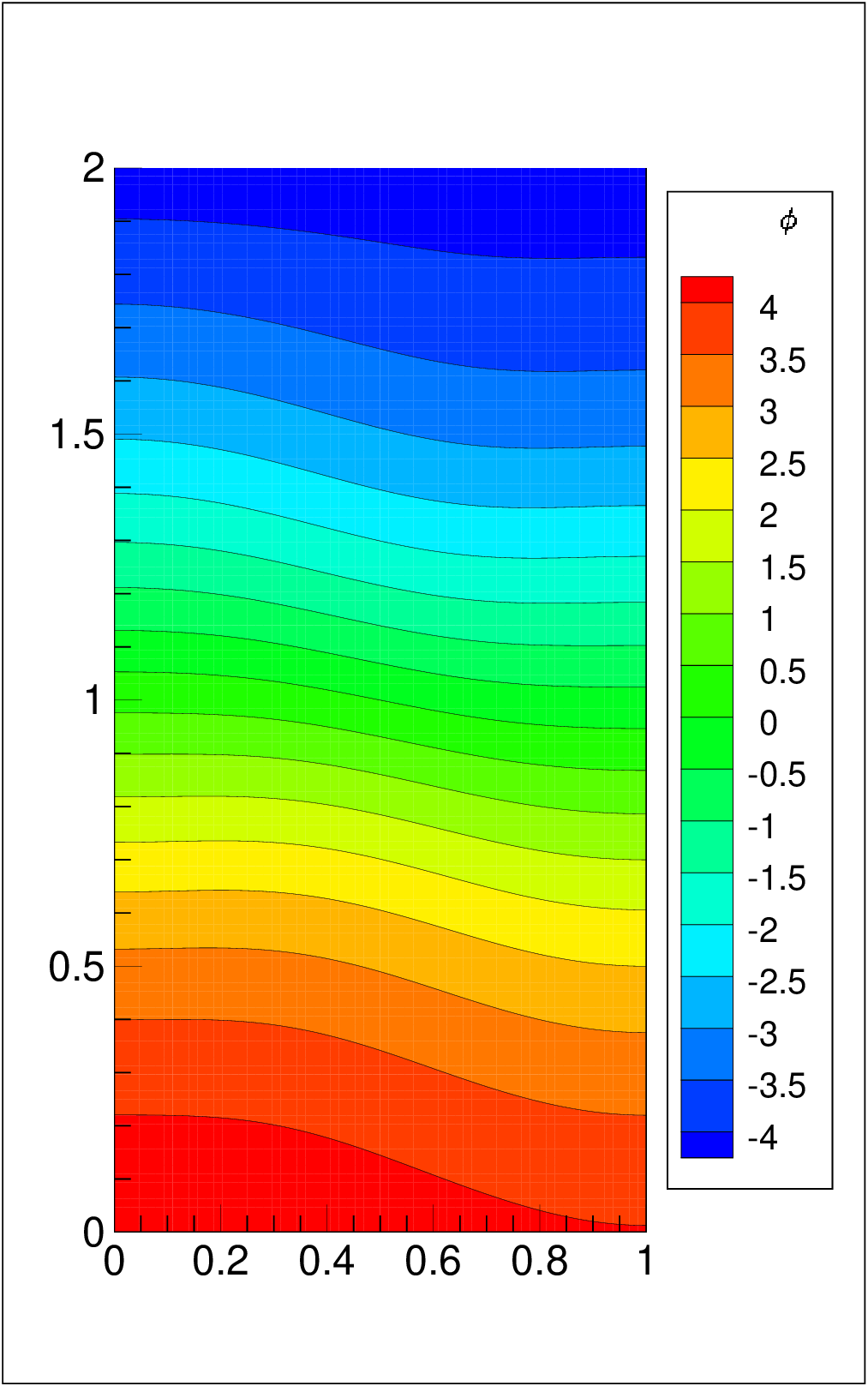}
	\includegraphics[scale=0.24]{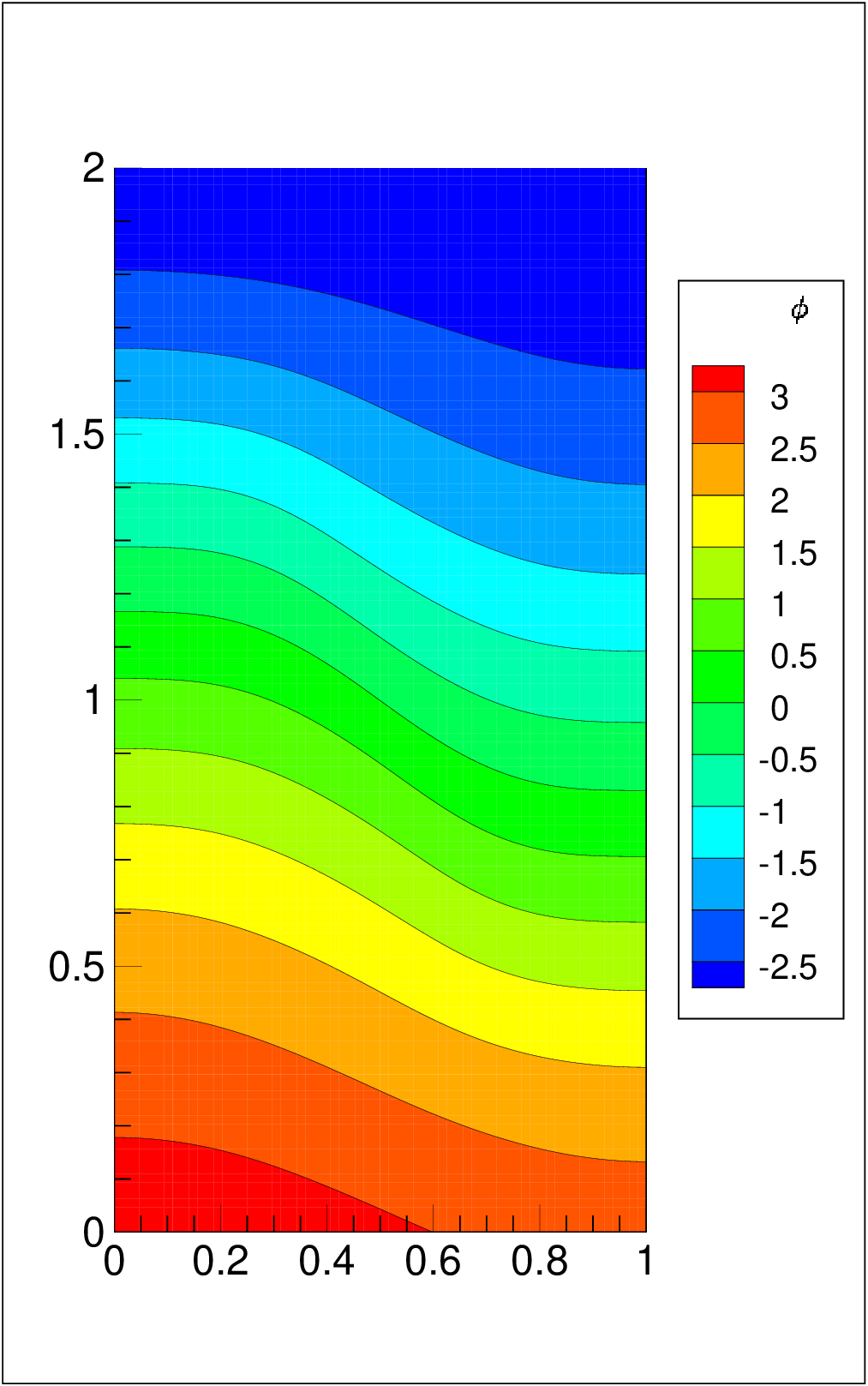}
	\includegraphics[scale=0.24]{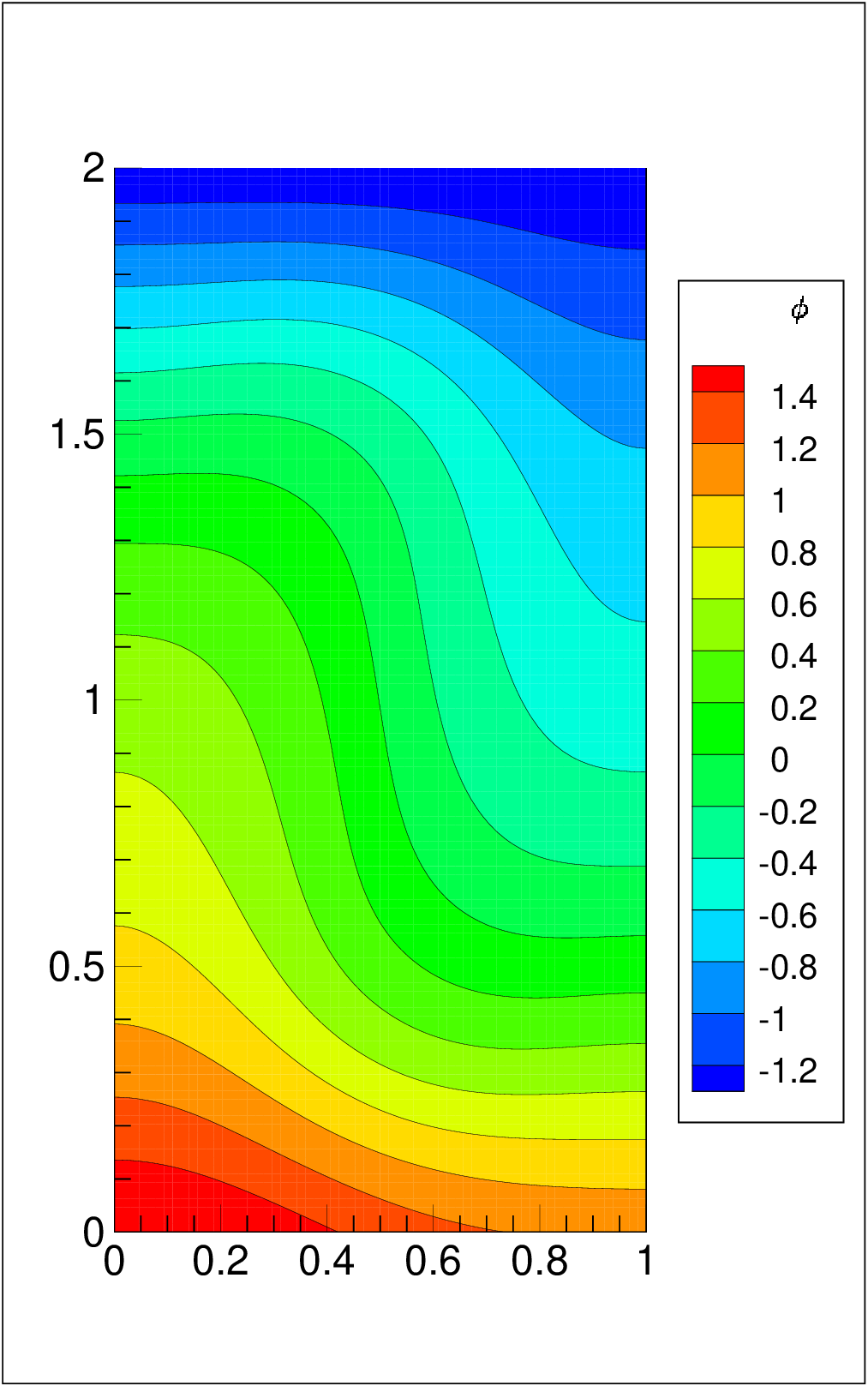}
	\includegraphics[scale=0.24]{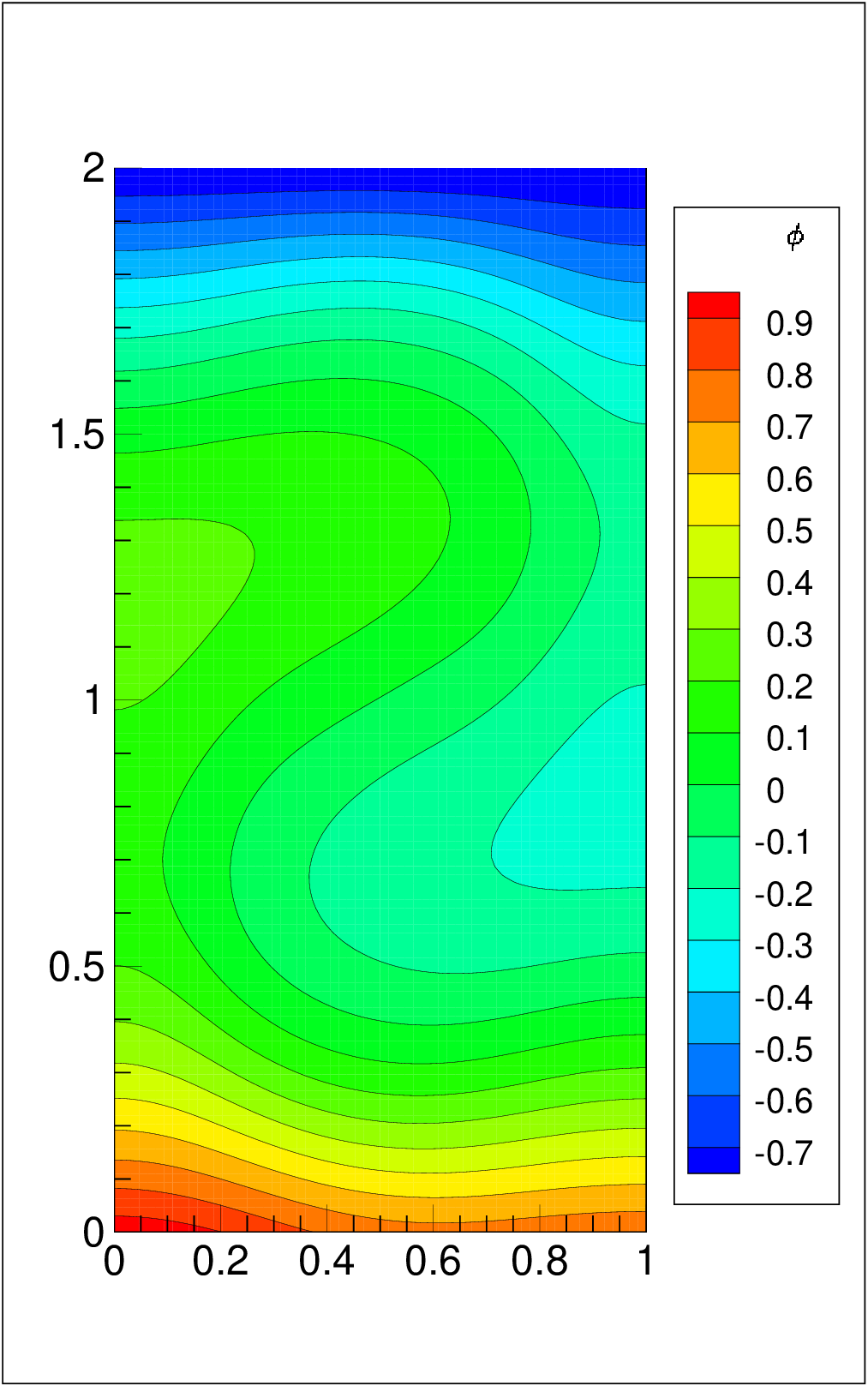}
	\includegraphics[scale=0.24]{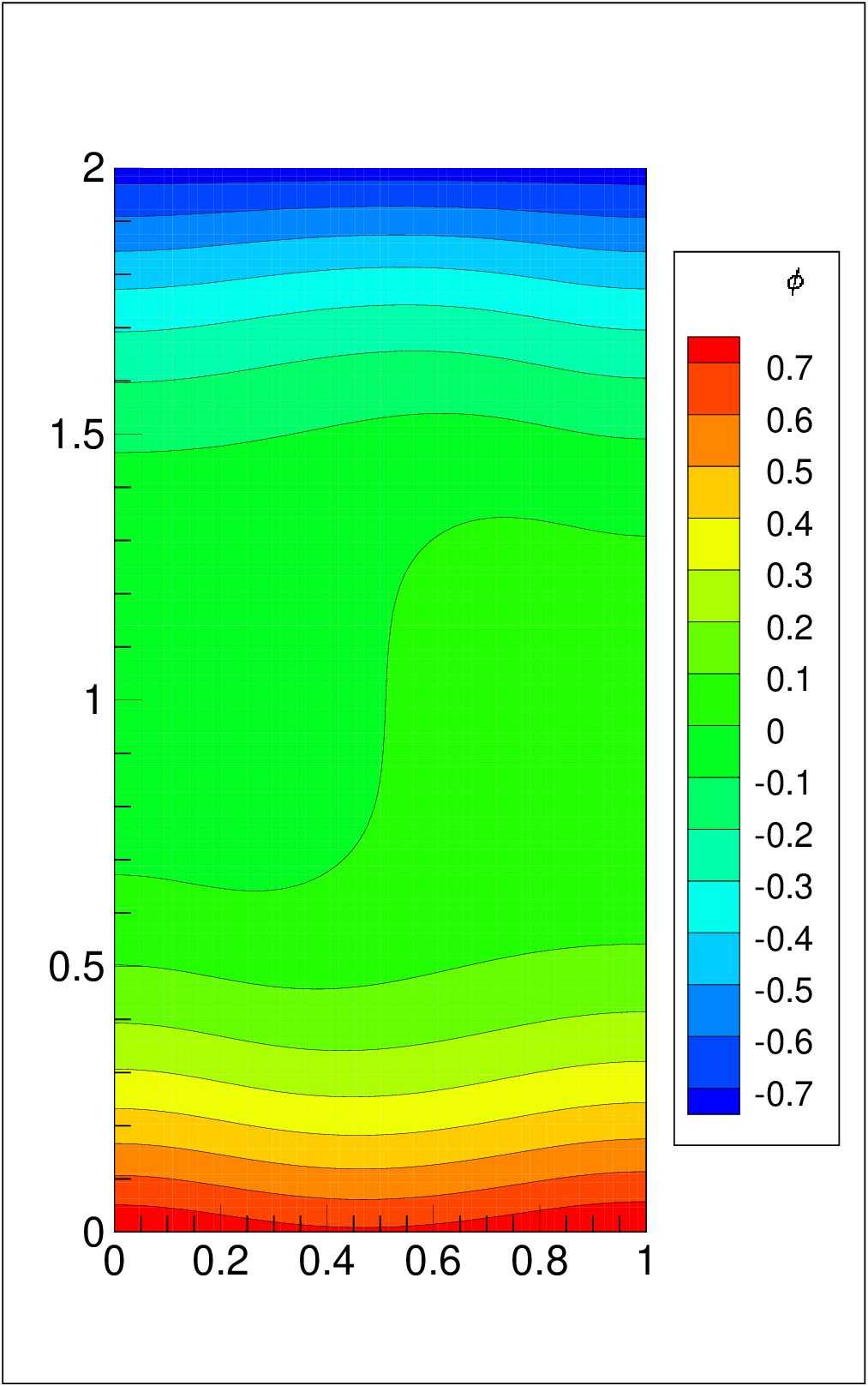}
	\includegraphics[scale=0.24]{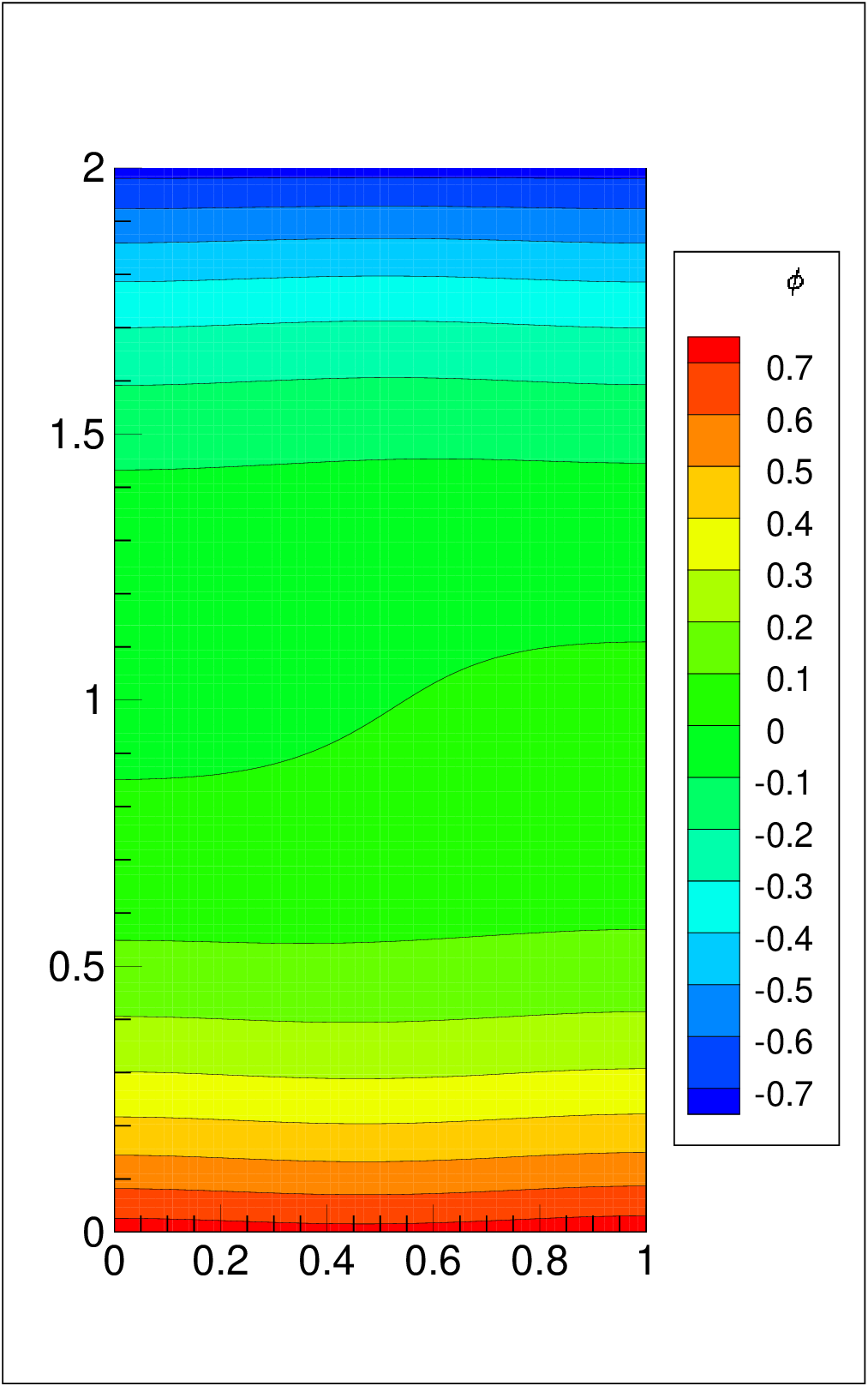}
	\includegraphics[scale=0.24]{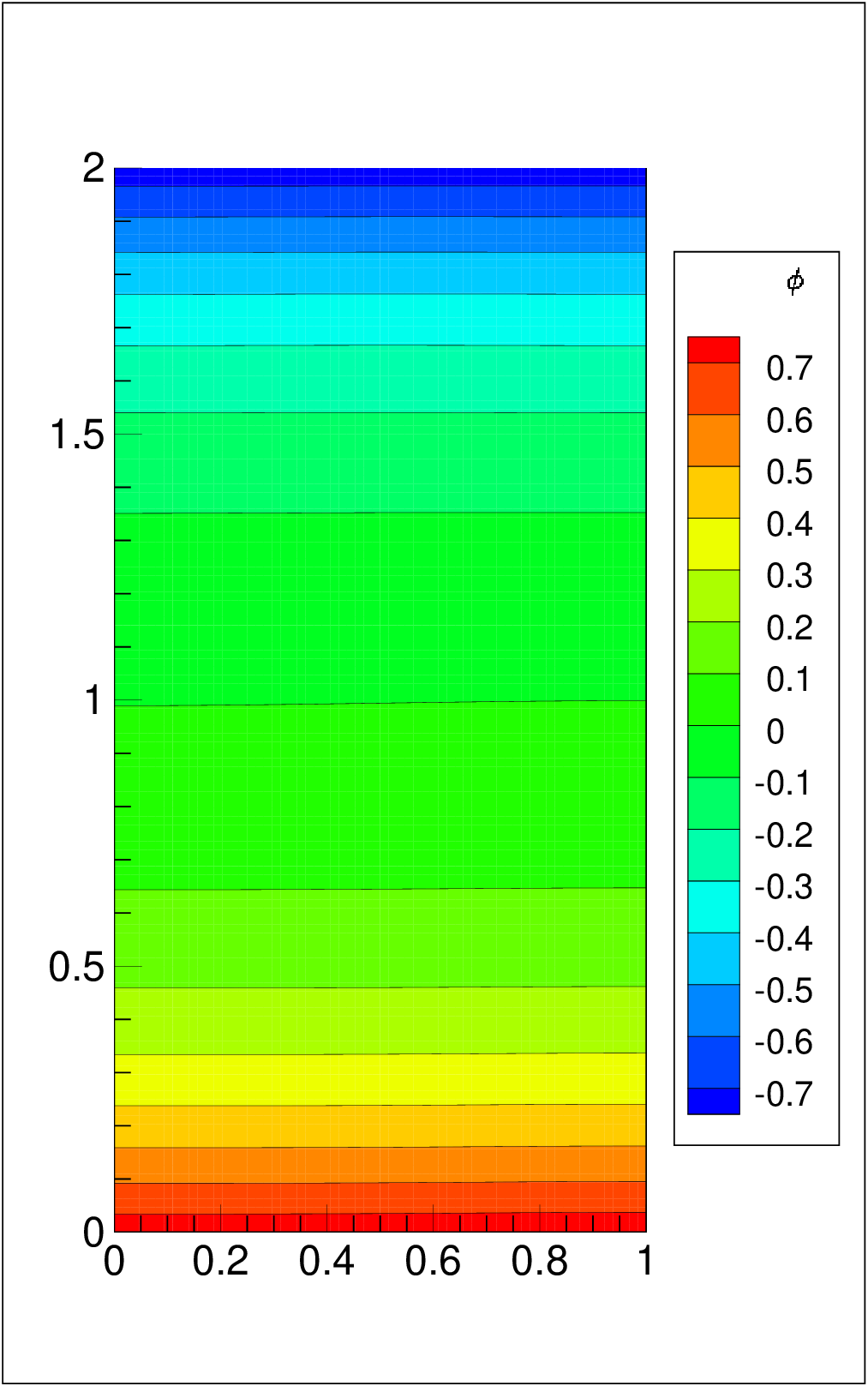}
	\caption{The evolution of electrostatic potential ($\phi$) in different time levels. }	
	\label{fig:phi}	
\end{figure}
% psi
\begin{figure}[!h]
	\centering
	\includegraphics[scale=0.24]{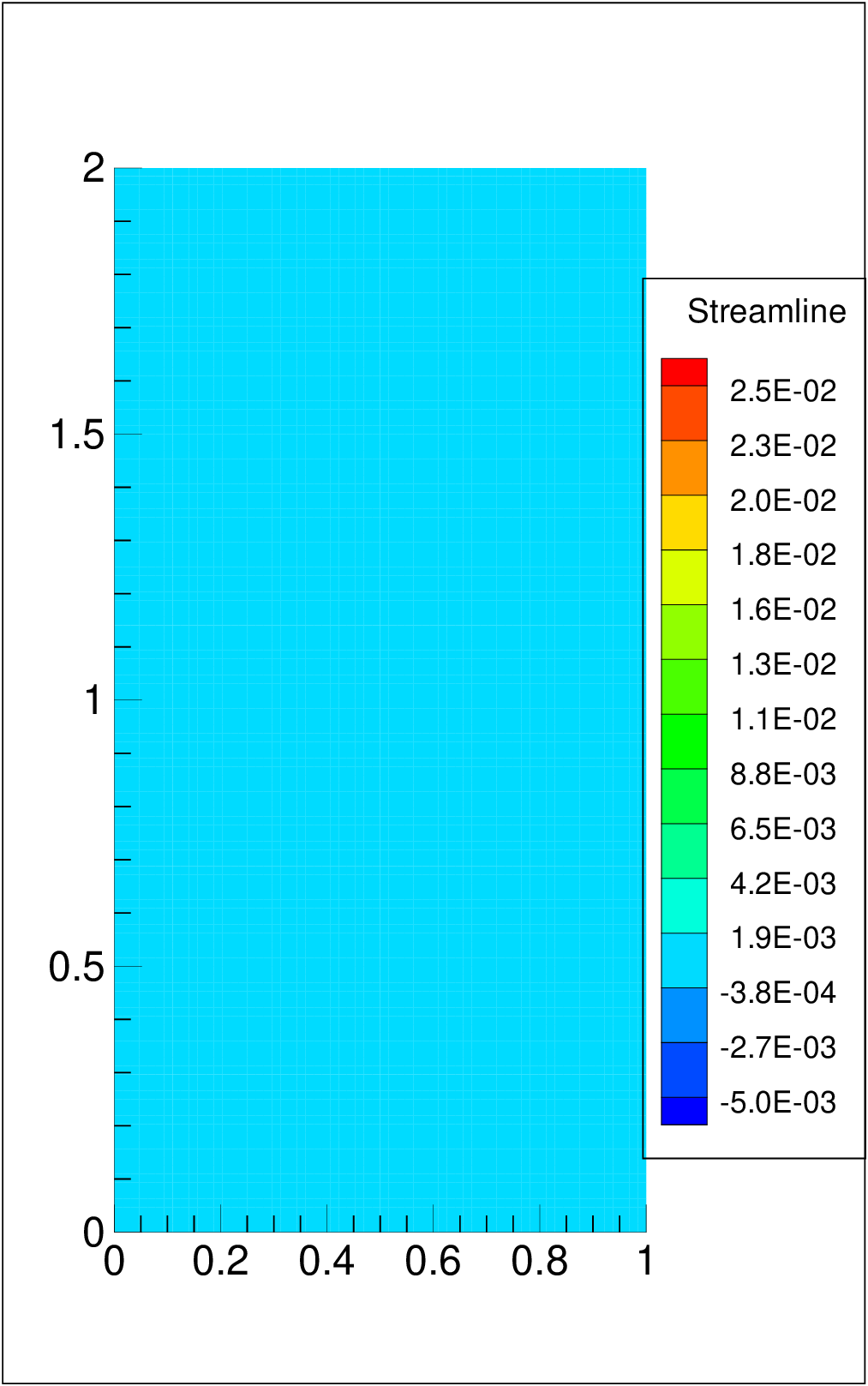}
	\includegraphics[scale=0.24]{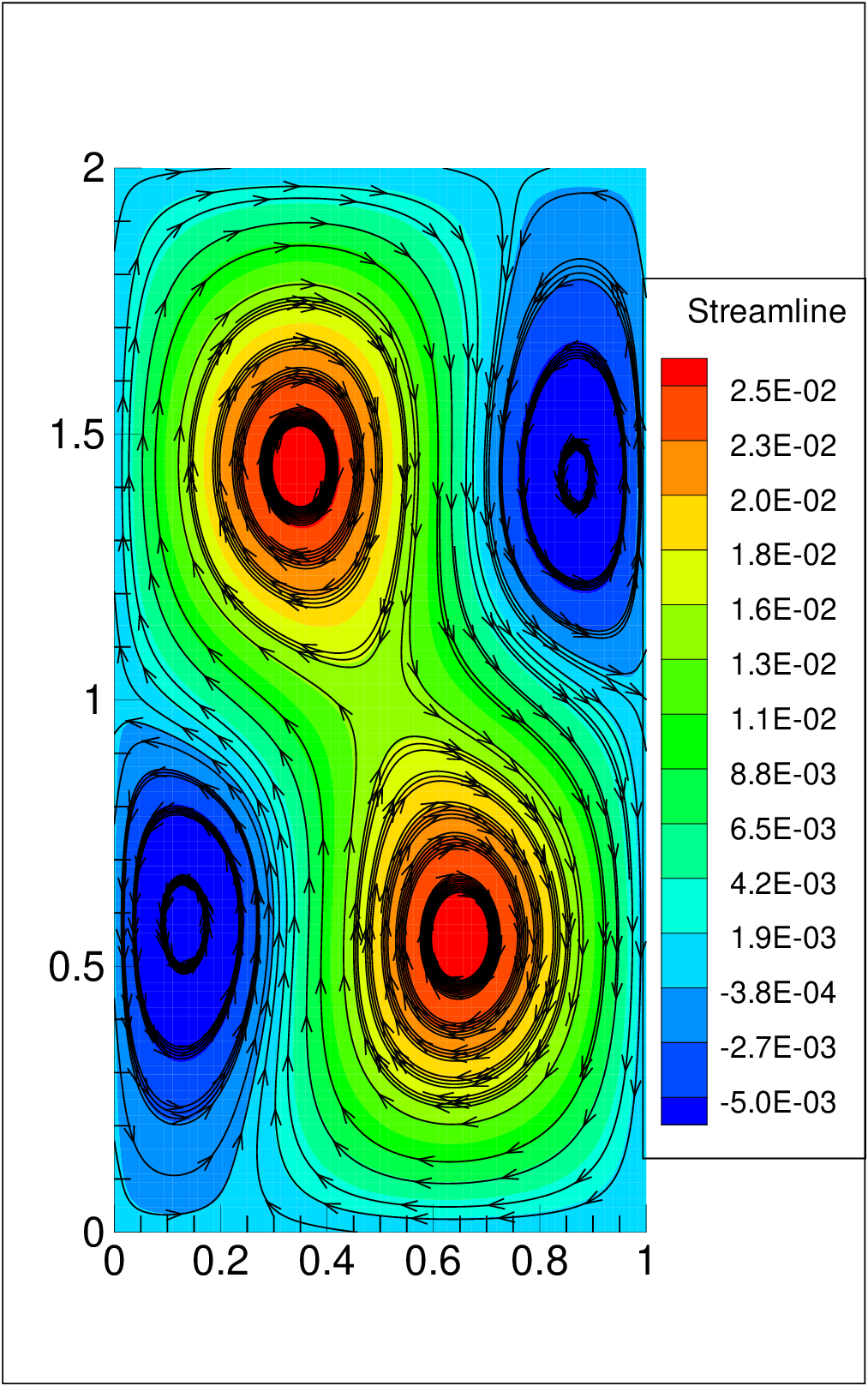}
	\includegraphics[scale=0.24]{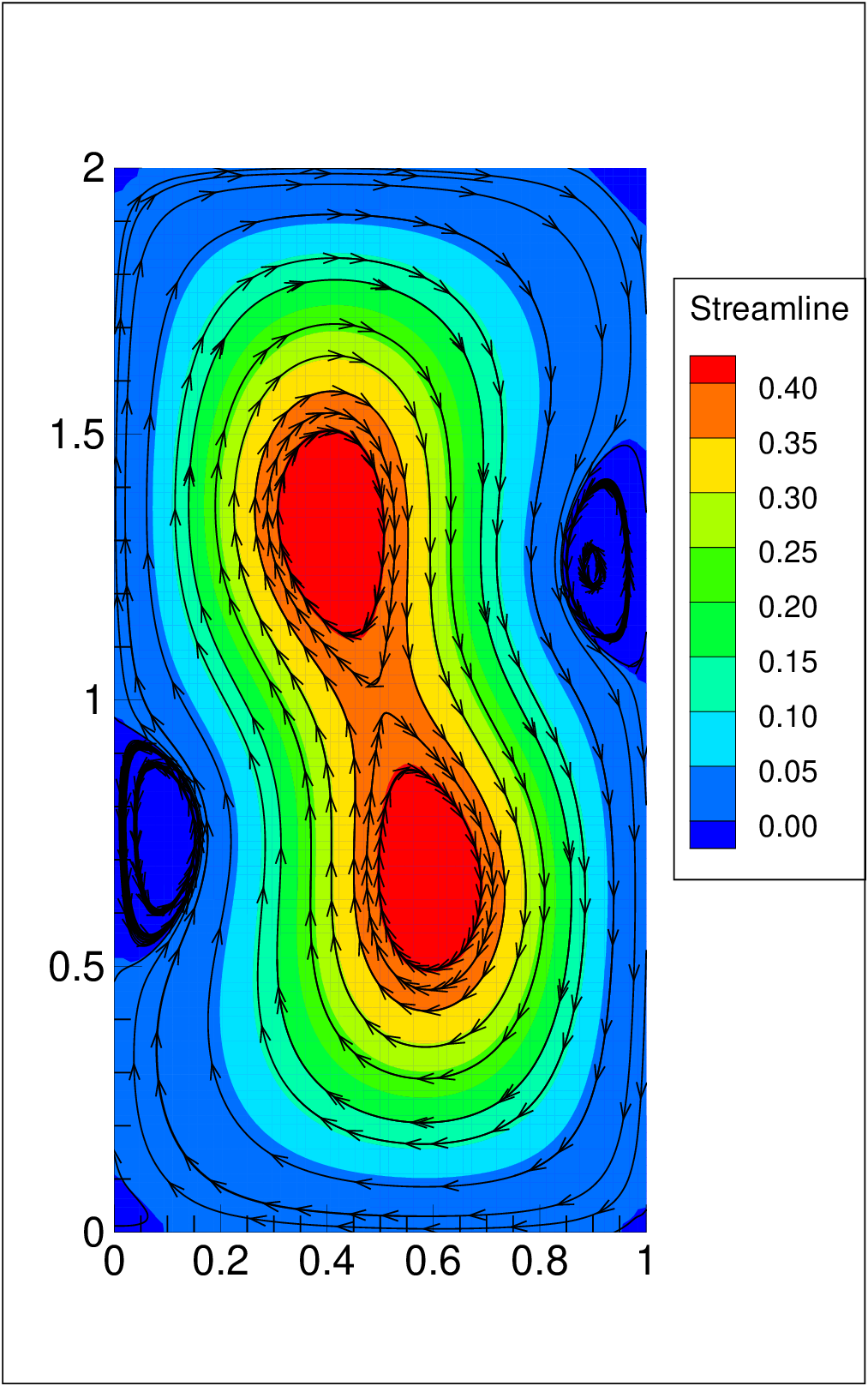}
	\includegraphics[scale=0.24]{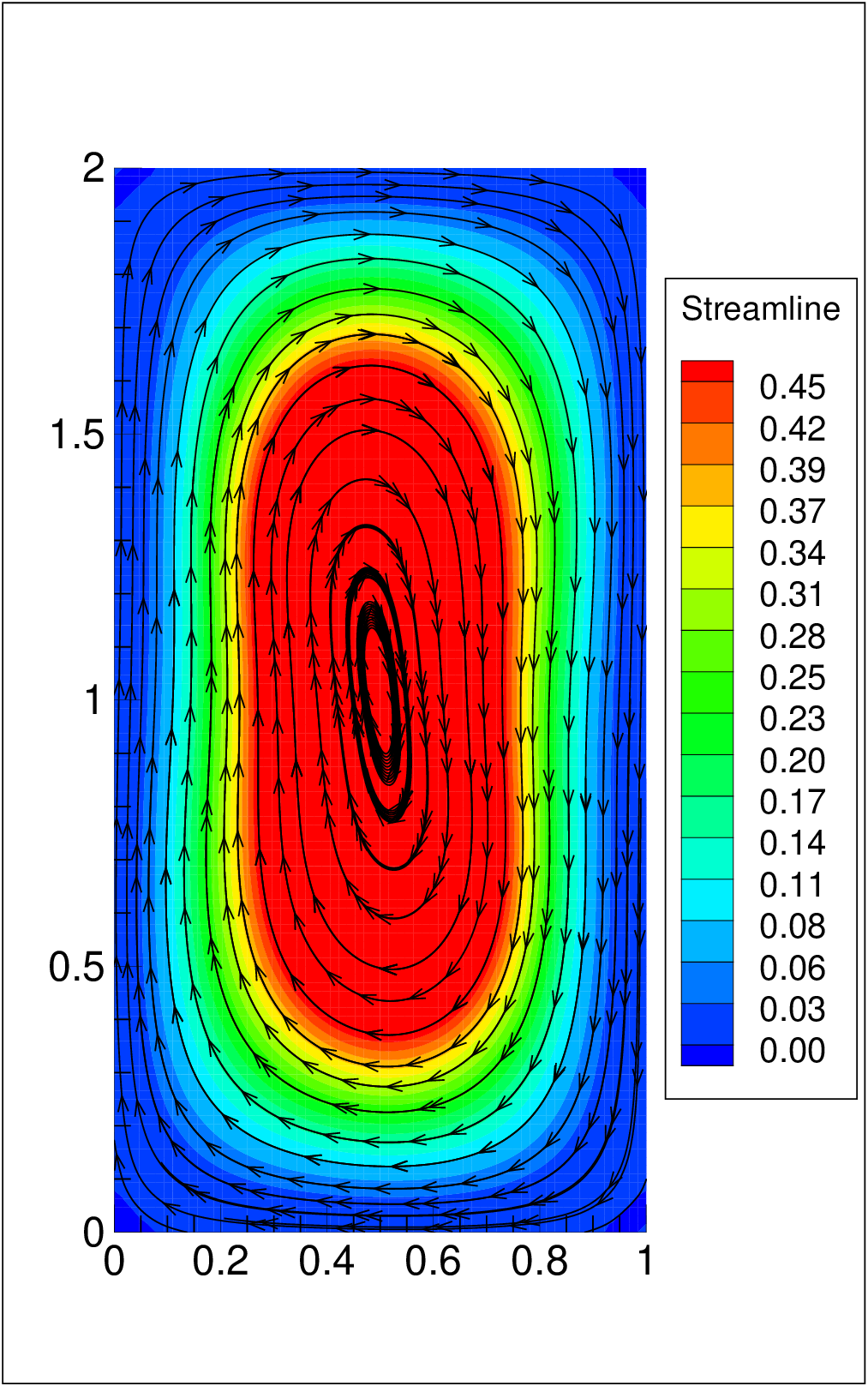}
	\includegraphics[scale=0.24]{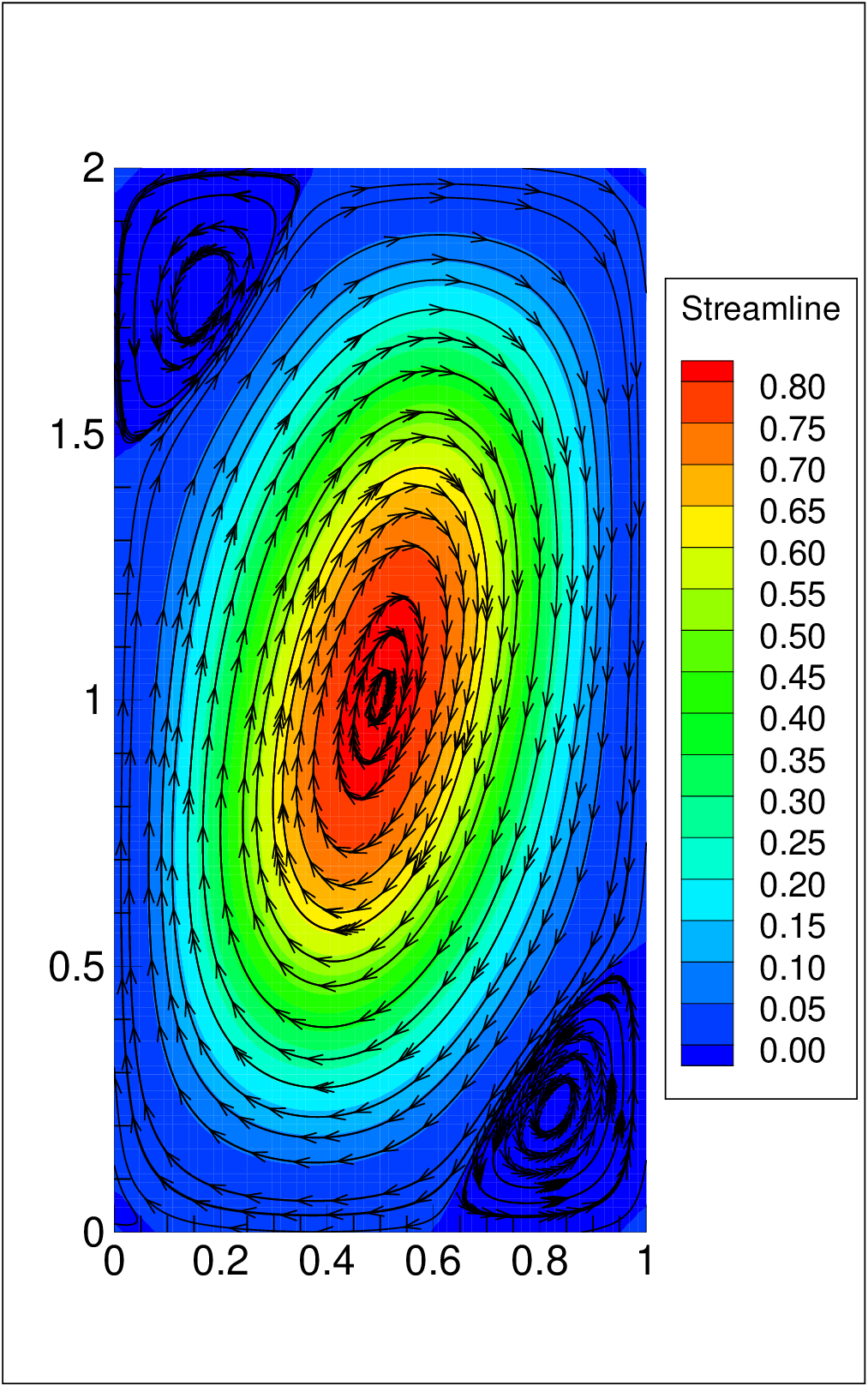}
	\includegraphics[scale=0.24]{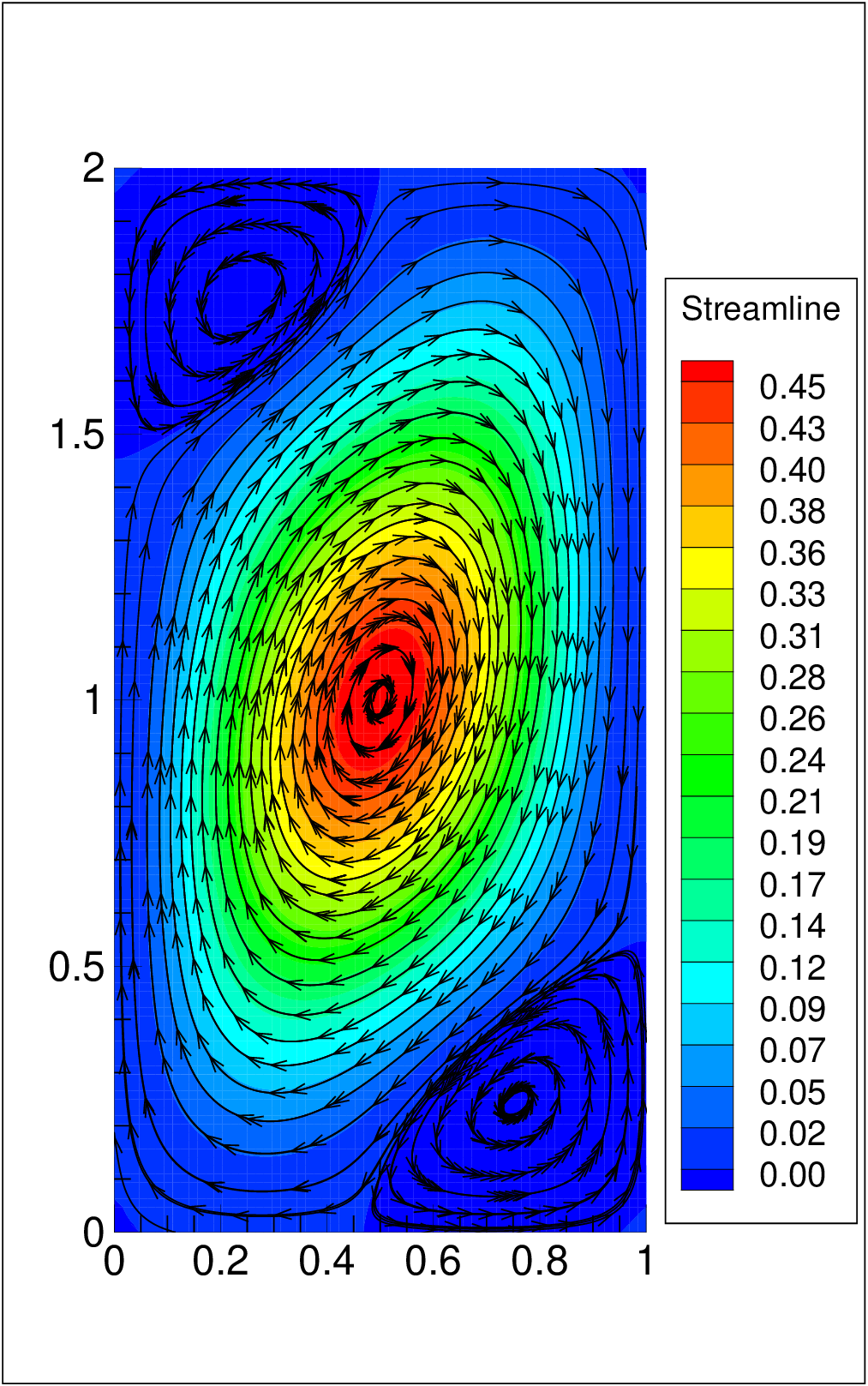}
	\includegraphics[scale=0.24]{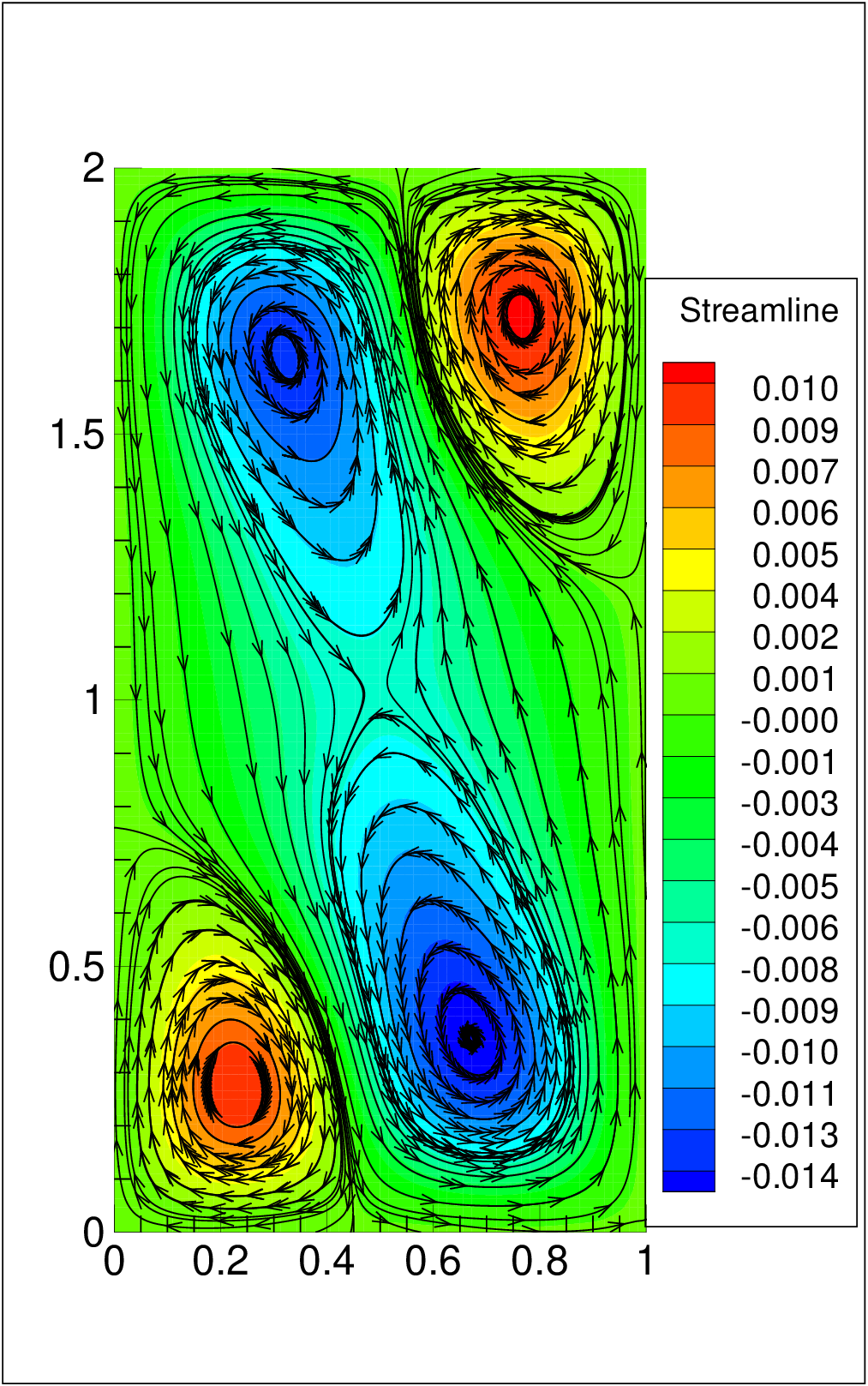}
	\includegraphics[scale=0.24]{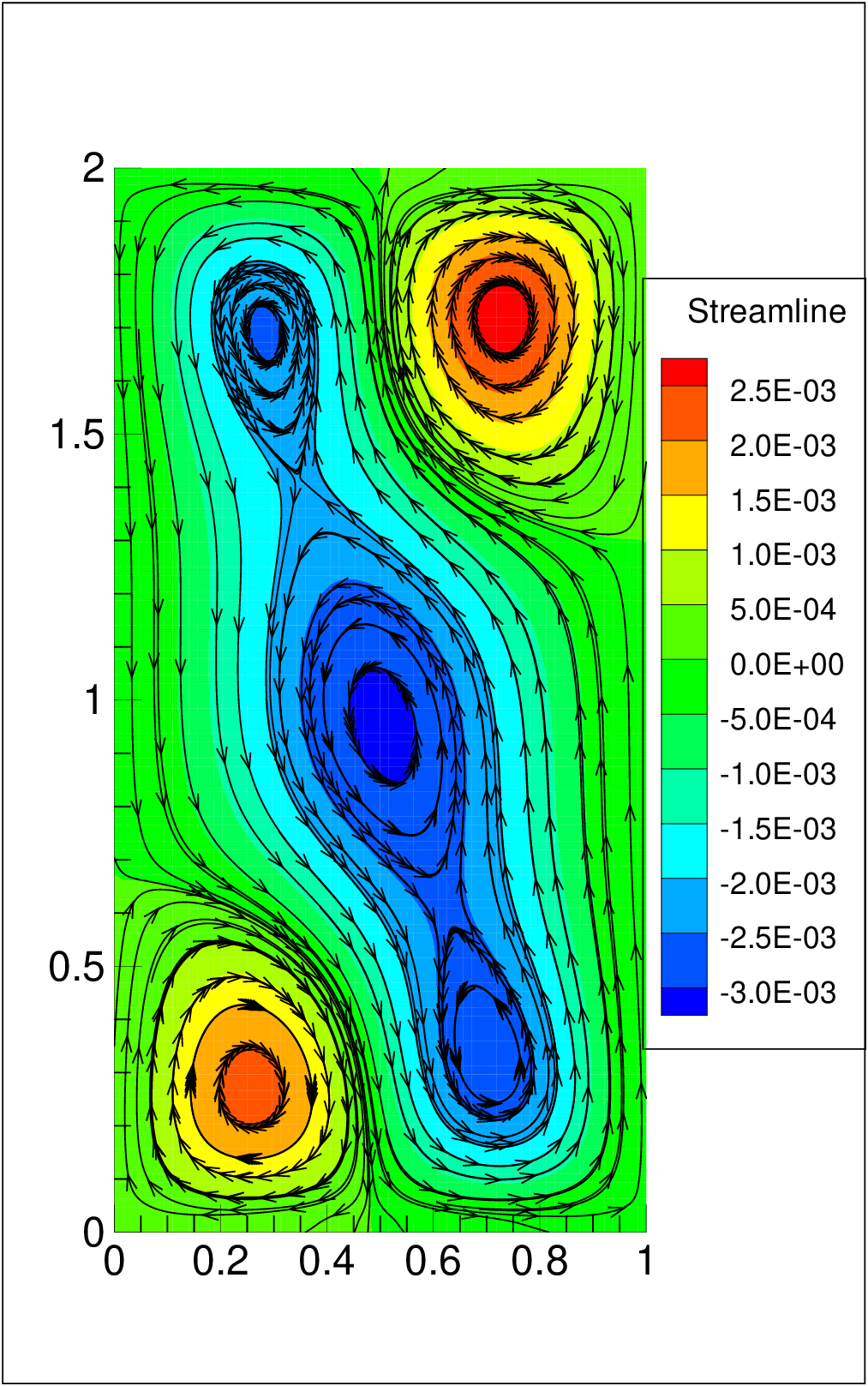}
	\includegraphics[scale=0.24]{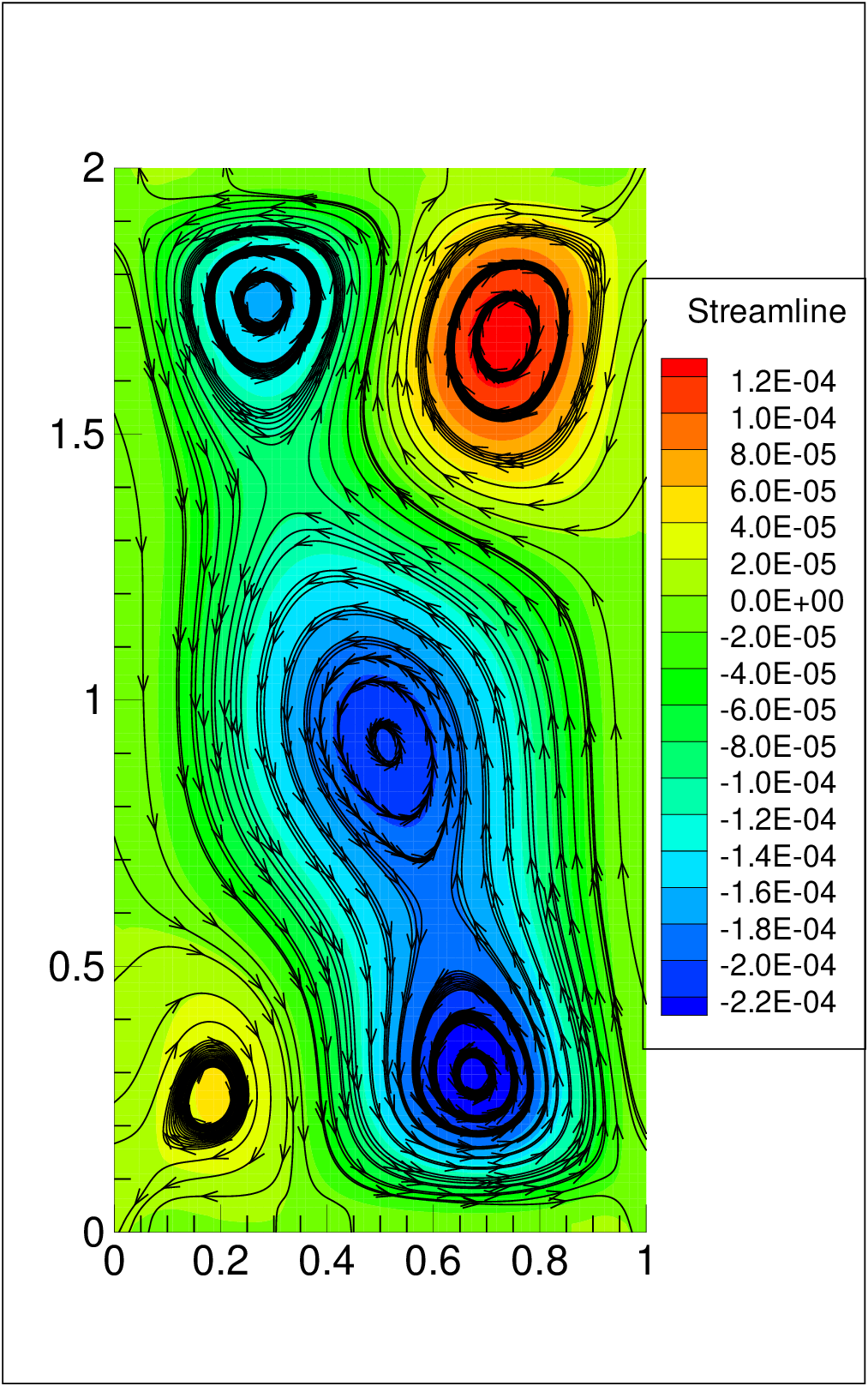}
	\caption{The evolution of streamlines of fluid velocity in different time levels.}	
	\label{fig:psi}	
\end{figure}
%
%
% Mass DG
\begin{figure}[!h]
	\centering
	\begin{subfigure}[b]{0.48\textwidth}
		\includegraphics[scale=0.450]{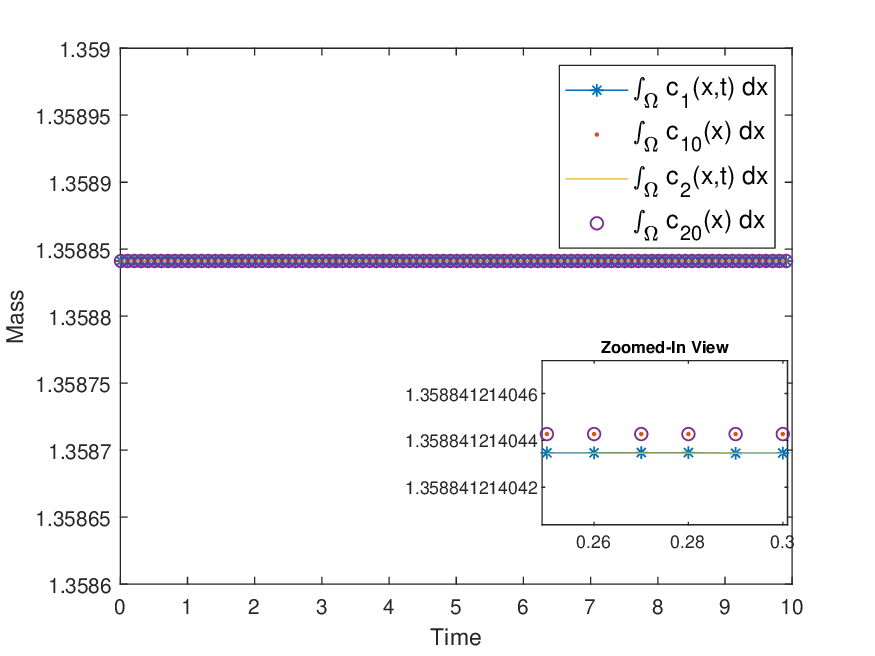}
	\end{subfigure}
	\hfill
	\begin{subfigure}[b]{0.48\textwidth}
		\includegraphics[scale=0.450]{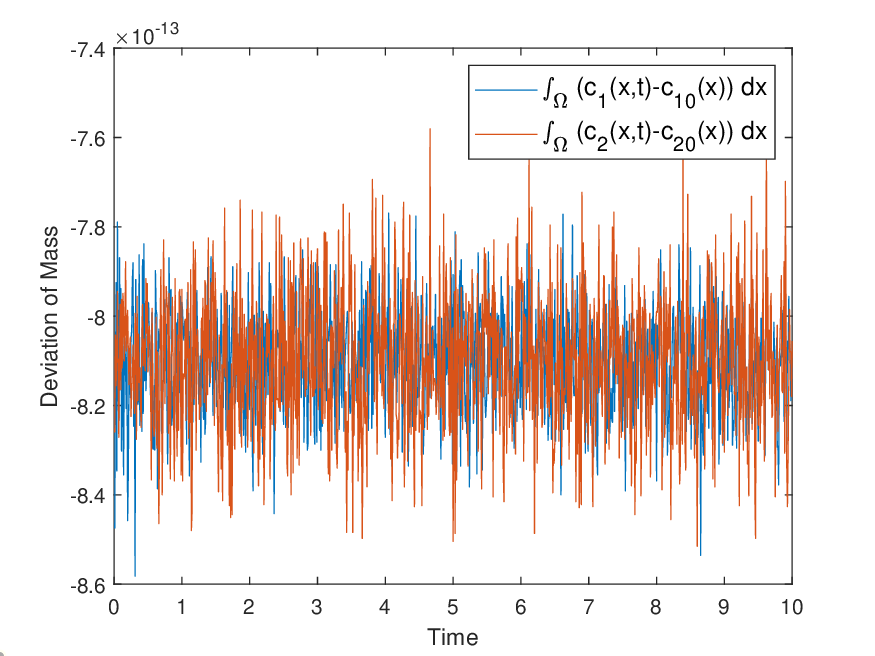}
	\end{subfigure}
	\caption{Mass (left) and deviation of mass (right) for both the positive and negative charge ions concentration with the DG method.}
	\label{fig:massion}
\end{figure}
%
% Min max and energy DG
\begin{figure}[!h]
	\centering
	\begin{subfigure}[b]{0.48\textwidth}
		\includegraphics[scale=0.450]{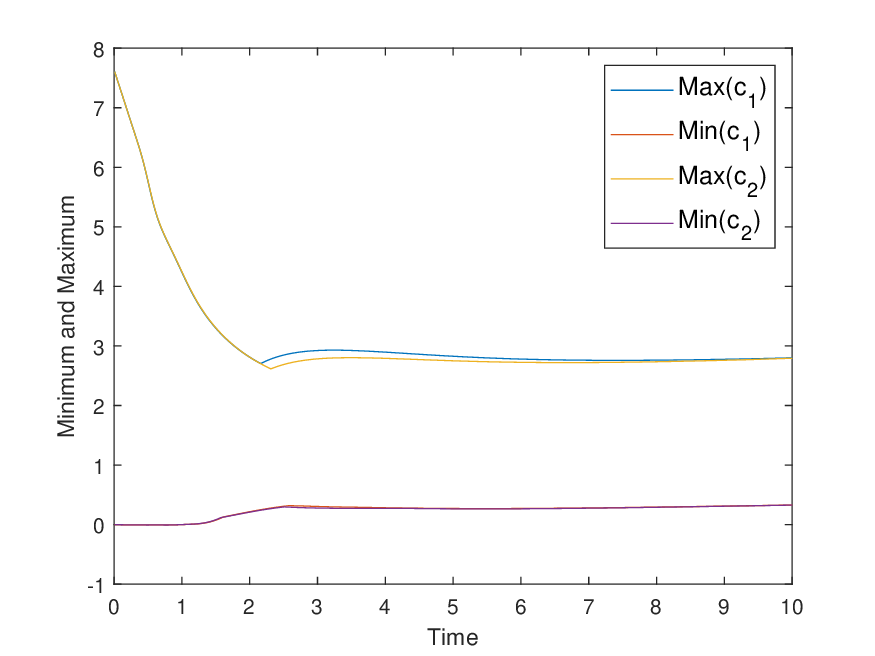}
	\end{subfigure}
	\hfill
	\begin{subfigure}[b]{0.48\textwidth}
		\includegraphics[scale=0.450]{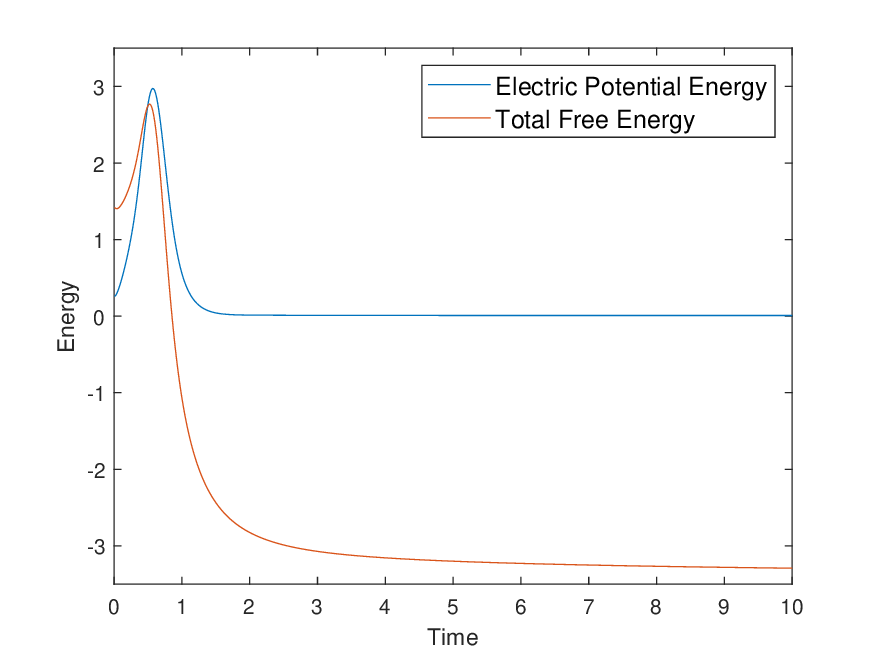}
	\end{subfigure}
	\caption{Minimum and maximum values at time $t$ for both the positive and negative charge ions concentration (left) and electric potential and total energy (right) of the system.}
	\label{fig:minmaxion}
\end{figure}

\section{Conclusion}
In this paper, we have analyzed a fully discrete discontinuous Galerkin method for the coupled Poisson-Nernst-Planck-Navier-Stokes system based on a first-order pressure correction scheme. We have obtained optimal error estimates in the $L^2$-norm with $\mathcal{O}(h^{k+1}+\Delta t)$ and the energy-norm with $\mathcal{O}(h^k+\Delta t)$ for the fully discrete concentrations of positive and negative ions, electrostatic potential, and fluid velocity. We have also derived an error bound in $L^2$-norm with $\mathcal{O}(h^k+\Delta t)$ for the fluid pressure. Numerical simulations are performed to verify the accuracy of our proposed fully discrete scheme and conservation properties.

\vspace{2em}
\noindent
{\bf {Acknowledgement}}. 
The second author acknowledges the support by SERB, Govt. India,  via the Project No. CRG/2023/000721.

\vspace*{0.3cm}
\noindent{\tt Funding} This work was supported by  SERB, Govt. India,  via the Project No. CRG/2023/000721.

\vspace*{0.3cm}
\noindent{\tt  Data Availability} All data supporting the findings of this study are available within the article.

\vspace*{0.3cm}

\noindent{\bf Declarations}

\vspace*{0.3cm}

\noindent {\tt Conflict of interest} The authors declare that they have no Conflict of interest.

\bibliography{ref_PNPNSE}

@article {Arn82,
	AUTHOR = {Arnold, D. N.},
	TITLE = {An interior penalty finite element method with discontinuous
	elements},
	JOURNAL = {SIAM J. Numer. Anal.},
	FJOURNAL = {SIAM Journal on Numerical Analysis},
	VOLUME = {19},
	YEAR = {1982},
	NUMBER = {4},
	PAGES = {742--760},
	ISSN = {0036-1429},
	MRCLASS = {65N30},
	MRNUMBER = {664882},
	DOI = {10.1137/0719052},
	URL = {https://doi.org/10.1137/0719052},
}

@article {BGR23,
	AUTHOR = {Bajpai, S. and Goswami, D. and Ray, K.},
	TITLE = {A priori error estimates of a discontinuous {G}alerkin method	for the {N}avier-{S}tokes equations},
	JOURNAL = {Numer. Algorithms},
	FJOURNAL = {Numerical Algorithms},
	VOLUME = {94},
	YEAR = {2023},
	NUMBER = {2},
	PAGES = {937--1002},
	ISSN = {1017-1398,1572-9265},
	MRCLASS = {65M12 (65M60 76D05)},
	MRNUMBER = {4636811},
	DOI = {10.1007/s11075-023-01525-w},
	URL = {https://doi.org/10.1007/s11075-023-01525-w},
}

@book{Cia78,
	title={The Finite Element Method for Elliptic Problem},
	author={Ciarlet, P. G.},
	year={1978},
	publisher={North-Holland Publ. Comp., Amsterdam}
}

@book{DE11,
	title={Mathematical aspects of discontinuous Galerkin methods},
	author={Di Pietro, D. A. and Ern, A.},
	volume={69},
	year={2011},
	publisher={Springer Science \& Business Media}
}

@incollection {DD75,
	AUTHOR = {Douglas, J. and Dupont, T.},
	TITLE = {Interior penalty procedures for elliptic and parabolic {G}alerkin methods},
	BOOKTITLE = {Computing methods in applied sciences ({S}econd {I}nternat.
	{S}ympos., {V}ersailles, 1975)},
	SERIES = {Lecture Notes in Phys., Vol. 58},
	PAGES = {207--216},
	PUBLISHER = {Springer, Berlin-New York},
	YEAR = {1976},
	MRCLASS = {65N30},
	MRNUMBER = {440955},
	MRREVIEWER = {J. R. Cannon},
}

@book{GR79,
	title={Finite element approximation of the {N}avier-{S}tokes equations},
	author={Girault, V. and Raviart, P. A.},
	volume={749},
	year={1979},
	publisher={Springer Berlin}
}

@article {GRW05spliting,
	AUTHOR = {Girault, V. and Rivi\`ere, B. and Wheeler, M.	F.},
	TITLE = {A splitting method using discontinuous {G}alerkin for the transient incompressible {N}avier-{S}tokes equations},
	JOURNAL = {M2AN Math. Model. Numer. Anal.},
	FJOURNAL = {M2AN. Mathematical Modelling and Numerical Analysis},
	VOLUME = {39},
	YEAR = {2005},
	NUMBER = {6},
	PAGES = {1115--1147},
	ISSN = {0764-583X,1290-3841},
	MRCLASS = {76D05 (65M60 76M10)},
	MRNUMBER = {2195907},
	MRREVIEWER = {Laurent\ E.\ Gosse},
	DOI = {10.1051/m2an:2005048},
	URL = {https://doi.org/10.1051/m2an:2005048},
}

@article {GRW05domain,
	AUTHOR = {Girault, V. and Rivi\`ere, B. and Wheeler, M.	F.},
	TITLE = {A discontinuous {G}alerkin method with nonoverlapping domain decomposition for the {S}tokes and {N}avier-{S}tokes problems},
	JOURNAL = {Math. Comput.},
	FJOURNAL = {Mathematics of Computation},
	VOLUME = {74},
	YEAR = {2005},
	NUMBER = {249},	
	PAGES = {53--84},
	ISSN = {0025-5718,1088-6842},
	MRCLASS = {65N30 (65N55 76D05 76D07 76M25)},
	MRNUMBER = {2085402},
	MRREVIEWER = {Karsten\ Urban},
	DOI = {10.1090/S0025-5718-04-01652-7},
	URL = {https://doi.org/10.1090/S0025-5718-04-01652-7},
}

@article{Hec12,
  AUTHOR = {Hecht, F.},
  TITLE = {New development in FreeFem++},
  JOURNAL = {J. Numer. Math.},
  FJOURNAL = {Journal of Numerical Mathematics},
  VOLUME = {20}, YEAR = {2012},
  NUMBER = {3-4}, PAGES = {251--265},
  ISSN = {1570-2820},
  MRCLASS = {65Y15},
  MRNUMBER = {3043640},
  URL = {https://freefem.org/}
}

@article{HS00,
	AUTHOR = {Hill, A. T. and S\"{u}li, E.},
	TITLE = {Approximation of the global attractor for the incompressible
	{N}avier-{S}tokes equations},
	JOURNAL = {IMA J. Numer. Anal.},
	FJOURNAL = {IMA Journal of Numerical Analysis},
	VOLUME = {20},
	YEAR = {2000},
	NUMBER = {4},
	PAGES = {633--667},
	ISSN = {0272-4979},
	MRCLASS = {37L30 (65P40 76D05 76M10)},
	MRNUMBER = {1795301},
	MRREVIEWER = {Marcel Oliver},
	DOI = {10.1093/imanum/20.4.633},
	URL = {https://doi.org/10.1093/imanum/20.4.633},
	}

@article {MLR23,
	AUTHOR = {Masri, R. and Liu, C. and Riviere, B.},
	TITLE = {Improved a priori error estimates for a discontinuous {G}alerkin pressure correction scheme for the {N}avier-{S}tokes equations},
	JOURNAL = {Numer. Methods Partial Differential Equations},
	FJOURNAL = {Numerical Methods for Partial Differential Equations. An	International Journal},
	VOLUME = {39},
	YEAR = {2023},
	NUMBER = {4},
	PAGES = {3108--3144},
	ISSN = {0749-159X,1098-2426},
	MRCLASS = {65M60 (65M12 76D05)},
	MRNUMBER = {4596555},
}

@book{Riv08,
	title={Discontinuous {G}alerkin methods for solving elliptic and parabolic equations: theory and implementation},
	author={Rivi{\`e}re, B.},
	year={2008},
	publisher={SIAM}
}

@techreport{RH73,
	title={Triangular mesh methods for the neutron transport equation},
	author={Reed, W. H. and Hill, T. R.},
	year={1973},
	institution={Los Alamos Scientific Lab., N. Mex.(USA)}
}

@article{WZWZ23,
	AUTHOR = {Wang, M. and Zou, G. and Wang, B. and Zhao, W.},
	TITLE = {Unconditionally energy-stable discontinuous {G}alerkin method
	for the chemo-repulsion-{N}avier-{S}tokes system},
	JOURNAL = {Comput. Math. Appl.},
	FJOURNAL = {Computers \& Mathematics with Applications. An International
	Journal},
	VOLUME = {150},
	YEAR = {2023},
	PAGES = {132--155},
	ISSN = {0898-1221},
	MRCLASS = {65M60 (35Q30 76V05 92C17)},
	MRNUMBER = {4649001},
	DOI = {10.1016/j.camwa.2023.09.012},
	URL = {https://doi.org/10.1016/j.camwa.2023.09.012},
	}

@article{Jer02,
  title={Analytical approaches to charge transport in a moving medium},
  author={Jerome, J. W.},
  journal={Transp. Theory Stat. Phys.},
  fjournal={Transport Theory and Statistical Physics},
  volume={31},
  number={4-6},
  pages={333--366},
  year={2002},
  publisher={Taylor \& Francis}
}

@article{Ryh09,
  title={Existence, uniqueness, regularity and long-term behavior for dissipative systems modeling electrohydrodynamics},
  author={Ryham, R. J.},
  journal={arXiv preprint arXiv:0910.4973},
  year={2009}
}

@article{ZY15,
  title={Global well-posedness for the {N}avier--{S}tokes--{N}ernst--{P}lanck--{P}oisson system in dimension two},
  author={Zhang, Z. and Yin, Z.},
  journal={Appl. Math. Lett.},
  fjournal={Applied Mathematics Letters},
  volume={40},
  pages={102--106},
  year={2015},
  publisher={Elsevier}
}

@article{BFS14,
  title={Global well-posedness and stability of electrokinetic flows},
  author={Bothe, D. and Fischer, A. and Saal, J.},
  journal={SIAM J. Math. Anal.},
  fjournal={SIAM Journal on Mathematical Analysis},
  volume={46},
  number={2},
  pages={1263--1316},
  year={2014},
  publisher={SIAM}
}

@article{LW20,
  title={Global existence for {N}ernst--{P}lanck--{N}avier--{S}tokes system in $\mathbb{R}^n$},
  author={Liu, J. G. and Wang, J.},
  journal={Commun. Math. Sci.},
  fjournal={Communications in Mathematical Sciences},
  volume={18},
  number={6},
  year={2020}
}

@article{Sch09,
  title={Analysis of the {N}avier--{S}tokes--{N}ernst--{P}lanck--{P}oisson system},
  author={Schmuck, M.},
  journal={Math. Models Methods Appl. Sci.},
  fjournal={Mathematical Models and Methods in Applied Sciences},
  volume={19},
  number={06},
  pages={993--1014},
  year={2009},
  publisher={World Scientific}
}

@article{CIL22,
  title={{N}ernst--{P}lanck--{N}avier--{S}tokes system near equilibrium},
  author={Constantin, P. and Ignatova, M. and Lee, F. N.},
  journal={Pure Appl. Func. Anal.},
  fjournal={Pure and Applied Functional Analysis},
  volume={7},
  number={1},
  pages={175--196},
  year={2022},
  publisher={Yokohama Publications}
}

@article{Lee22,
  title={Global regularity for {N}ernst--{P}lanck--{N}avier--{S}tokes systems with mixed boundary conditions},
  author={Lee, F. N.},
  journal={Nonlinearity},
  volume={36},
  number={1},
  pages={255},
  year={2022},
  publisher={IOP Publishing}
}

@article{PS10,
  title={Convergent finite element discretizationsof the {N}avier-{S}tokes-{N}ernst-{P}lanck-{P}oisson system},
  author={Prohl, A. and Schmuck, M.},
  journal={ESAIM: Math. Model. Numer. Anal.},
  fjournal={ESAIM: Mathematical Modelling and Numerical Analysis},
  volume={44},
  number={3},
  pages={531--571},
  year={2010},
  publisher={EDP Sciences}
}

@article{HS18,
  title={Mixed finite element analysis for the {P}oisson--{N}ernst--{P}lanck/{S}tokes coupling},
  author={He, M. and Sun, P.},
  fjournal={Journal of Computational and Applied Mathematics},
  journal={J. Comput. Appl. Math.},
  volume={341},
  pages={61--79},
  year={2018},
  publisher={Elsevier}
}

@article{LX17,
  title={Efficient time-stepping/spectral methods for the {N}avier-{S}tokes-{N}ernst-{P}lanck-{P}oisson equations},
  author={Liu, X. and Xu, C.},
  journal={Commun. Comput. Phys.},
  fjournal={Communications in Computational Physics},
  volume={21},
  number={5},
  pages={1408--1428},
  year={2017},
  publisher={Cambridge University Press}
}

@article{ZX23,
  title={Efficient time-stepping schemes for the {N}avier-{S}tokes-{N}ernst-{P}lanck-{P}oisson equations},
  author={Zhou, X. and Xu, C.},
  journal={Comput. Phys. Commun.},
  fjournal={Computer Physics Communications},
  volume={289},
  pages={108763},
  year={2023},
  publisher={Elsevier}
}

@article{QW25,
  title={A second-order accurate, positivity-preserving numerical scheme for the {P}oisson--{N}ernst--{P}lanck--{N}avier--{S}tokes system},
  author={Qin, Y. and Wang, C.},
  journal={IMA J. Numer. Anal.},
  fjournal={IMA Journal of Numerical Analysis},
  pages={draf027},
  year={2025},
  publisher={Oxford University Press}
}

@article{LL24,
  title={Error estimates for the finite element method of the {N}avier-{S}tokes-{P}oisson-{N}ernst-{P}lanck equations},
  author={Li, M. and Li, Z.},
  journal={Appl. Numer. Math.},
  fjournal={Applied Numerical Mathematics},
  volume={197},
  pages={186--209},
  year={2024},
  publisher={Elsevier}
}

@article{HC25,
  title={Stability and error analysis of pressure-correction scheme for the {N}avier--{S}tokes--{P}lanck--{N}ernst--{P}oisson equations},
  author={He, Y. and Chen, H.},
  journal={ESAIM: Math. Model. Numer. Anal.},
  fjournal= {ESAIM: Mathematical Modelling and Numerical Analysis},
  volume={59},
  number={3},
  pages={1471--1504},
  year={2025},
  publisher={EDP Sciences}
}

@article{LSL22,
  title={New SAV-pressure correction methods for the {N}avier-{S}tokes equations: stability and error analysis},
  author={Li, X. and Shen, J. and Liu, Z.},
  journal={Math. Comput.},
  fjournal={Mathematics of Computation},
  volume={91},
  number={333},
  pages={141--167},
  year={2022}
}

@article{BHP25,
  title={On a completely discrete discontinuous {G}alerkin method for incompressible {C}hemotaxis-{N}avier-{S}tokes {E}quations},
  author={Bir, B. and Hutridurga, H. and Pani, A. K.},
  journal={J. Sci. Comput.},
  fjournal={Journal of Scientific Computing},
  volume={105},
  number={2},
  pages={1--41},
  year={2025},
}

@article{HC24,
  title={Efficiently high-order time-stepping {R-GSAV} schemes for the {N}avier--{S}tokes--{P}oisson--{N}ernst--{P}lanck equations},
  author={He, Y. and Chen, H.},
  journal={Physica D: Nonlinear Phenomena},
  volume={466},
  pages={134233},
  year={2024},
  publisher={Elsevier}
}

@article{LZW26,
  title={A new fully-decoupled energy-stable BDF2-FEM scheme for the electro-hydrodynamic equations},
  author={Li, M. and Zou, G. and Wang, B.},
  journal={Math. and Comput. Simul.},
  fjournal={Mathematics and Computers in Simulation},
  volume={239},
  pages={172--191},
  year={2026},
  publisher={Elsevier}
}

@article{PLZJH24,
  title={A linear, second-order accurate, positivity-preserving and unconditionally energy stable scheme for the {N}avier--{S}tokes--{P}oisson--{N}ernst--{P}lanck system},
  author={Pan, M. and Liu, S. and Zhu, W. and Jiao, F. and He, D.},
  journal={Commun. Nonlinear Sci. Numer.},
  fjournal={Communications in Nonlinear Science and Numerical Simulation},
  volume={131},
  pages={107873},
  year={2024},
  publisher={Elsevier}
}

@article{BTA04,
  title={Diffuse-charge dynamics in electrochemical systems},
  author={Bazant, M. Z. and Thornton, K. and Ajdari, A.},
  journal={Phys. Rev. E.},
  fjournal={Physical Review E—Statistical, Nonlinear, and Soft Matter Physics},
  volume={70},
  number={2},
  pages={021506},
  year={2004},
  publisher={APS}
}

@article{ZL24,
  title={Global existence of large solutions for the three-dimensional incompressible {N}avier--{S}tokes--{P}oisson--{N}ernst--{P}lanck equations},
  author={Zhao, J. and Li, Y.},
  journal={Math. Method. Appl. Sci.},
  fjournal={Mathematical Methods in the Applied Sciences},
  volume={47},
  number={15},
  pages={11933--11952},
  year={2024},
  publisher={Wiley Online Library}
}

@article{CGHRS24,
  title={Banach spaces-based mixed finite element methods for the coupled {N}avier--{S}tokes and {P}oisson--{N}ernst--{P}lanck equations},
  author={Correa, C. I. and Gatica, G. N. and Henr{\'\i}quez, E. and Ruiz-Baier, R. and Solano, M.},
  journal={Calcolo},
  volume={61},
  number={2},
  pages={31},
  year={2024},
  publisher={Springer}
}

@article{LBM20,
  title={Transient electrohydrodynamic flow with concentration-dependent fluid properties: modelling and energy-stable numerical schemes},
  author={Linga, G. and Bolet, A. and Mathiesen, J.},
  journal={J. Comput. Phys.},
  fjournal={Journal of Computational Physics},
  volume={412},
  pages={109430},
  year={2020},
  publisher={Elsevier}
}

@article{HS21,
  title={Mixed finite element method for modified {P}oisson--{N}ernst--{P}lanck/{N}avier--{S}tokes equations},
  author={He, M. and Sun, P.},
  journal={J. Sci. Comput.},
  fjournal={Journal of Scientific Computing},
  volume={87},
  number={3},
  pages={80},
  year={2021},
  publisher={Springer}
}
\bibliographystyle{plain}

\end{document}